\newtheorem{thm}{Theorem}[section]
\newtheorem{cor}[thm]{Corollary}
\newtheorem{claim}[thm]{Claim}
\newtheorem{lemma}[thm]{Lemma}
\newtheorem{prop}[thm]{Proposition}
\theoremstyle{definition}
\newtheorem{definition}[thm]{Definition}
\newtheorem{remark}[thm]{Remark}
\newtheorem{question}[thm]{Question}
\def\rquotient#1#2{%
	\makeatletter
	\raise.3ex\hbox{$#1$}/\lower.3ex\hbox{$#2$}%
	\makeatother
}	
\newcommand{\subjclass}[2][2010]{%
	\let\@oldtitle\@title%
	\gdef\@title{\@oldtitle\footnotetext{#1 \emph{Mathematics subject classification.} #2}}%
}
\newcommand{\keywords}[1]{%
	\let\@@oldtitle\@title%
	\gdef\@title{\@@oldtitle\footnotetext{\emph{Key words and phrases.} #1.}}%
}
\newcommand{\Address}{{
		\bigskip
		\small
		
		\textsc{University of Montpellier\\ 
Institut Math\'ematiques Alexander Grothendieck\\
Place Eug\`ene Bataillon\\
34090 Montpellier (France)}\par\nopagebreak
		\textit{E-mail address}: \texttt{anthony.genevois@umontpellier.fr}
		
}}
\title{Rotation groups, mediangle graphs, and periagroups: a unified point of view on Coxeter groups and graph products of groups}
\date{\today}
\author{Anthony Genevois}
\subjclass{Primary 20F65. Secondary 20F55, 51F15, 05C25.}
\keywords{Coxeter groups, reflection groups, graph products of groups}
\begin{document}

\maketitle

\begin{abstract}
In this article, we introduce \emph{rotation groups} as a common generalisation of Coxeter groups and graph products of groups. We characterise algebraically these groups by presentations (\emph{periagroups}) and we propose a combinatorial geometry (\emph{mediangle graphs}) to study them. As an application, we give natural and unified proofs for several results that hold for both Coxeter groups and graph products of groups. 
\end{abstract}

\vspace{-0.5cm}

\tableofcontents

\section{Introduction}

\noindent
Coxeter groups are famously characterised as \emph{reflection groups}, namely as groups acting on some spaces and generated by \emph{reflections}, i.e. isometries fixing pointwise a codimension-one subspace and flipping the two halfspaces it delimits. More formally, and following \cite[Section~3.2]{Davis}, a group $G$ is a reflection group if there exist a subset $R \subset G$ and a graph $\Omega$ on which it acts such that:
\begin{itemize}
	\item $R$ is stable under conjugation and it generates $G$;
	\item every edge of $\Omega$ is \emph{flipped} by a unique element of $R$ and every element of $R$ flips an edge of $\Omega$;
	\item for every $r \in R$, removing all the edges flipped by $r$ separates the endpoints of any edge flipped by $r$. 
\end{itemize}
Given such a \emph{reflection system}, it can be proved that $\Omega$ coincides with the Cayley graph $\mathrm{Cayl}(G,R_o)$, where $R_o \subset R$ is the set of the elements of $R$ flipping the edges that contain a given basepoint $o \in \Omega$, and that $G$ is a Coxeter group with $R_o$ as a \emph{basis}. Conversely, if $C$ is a Coxeter group with $S$ as a basis, then $(C \curvearrowright \mathrm{Cayl}(C,S), \{ gsg^{-1}, g \in C\})$ defines a reflection system. Moreover, the Cayley graphs appearing here, which we refer to as \emph{Coxeter graphs}, have a rich combinatorial geometry, including a natural \emph{wall structure} (see \cite[Chapters~3 and~4]{Davis}). Most famously, finite Coxeter subgroups naturally define a higher-dimensional cell structure on Coxeter graphs, which can be endowed with a CAT(0) geometry \cite[Chapter~12]{Davis}. Thus, there is a triptych: dynamical behaviour (reflection systems), algebraic structure (Coxeter groups), combinatorial geometry (Coxeter graphs).

\medskip \noindent
A particular case of interest of this picture is given by \emph{right-angled Coxeter groups}. For them, the combinatorial geometry has been axiomatised. A graph $X$ is \emph{median} if every triple of vertices $x_1,x_2,x_3$ admits a unique \emph{median point}, namely a vertex $m \in X$ such that $d(x_i,x_j)=d(x_i,m)+d(m,x_j)$ for all $i \neq j$. Initially introduced in metric graph theory, median graphs have been introduced in geometric group theory through CAT(0) cube complexes and are now well-established in the field. Even though median geometry appears in the study of various groups, the geometry is tightly related to right-angled Coxeter groups. Namely, right-angled Coxeter graphs are median, and, conversely, every median graph embeds as a convex subgraph in a right-angled Coxeter graph \cite{MR2377497}. 

\medskip \noindent
Algebraically, right-angled Coxeter groups are naturally generalised by \emph{graph products of groups} \cite{GreenGP}. Given a graph $\Gamma$ and a collection of groups $\mathcal{G}= \{G_u \mid u \in V(\Gamma)\}$ indexed by the vertices of $\Gamma$, referred to as the \emph{vertex-groups}, the graph product $\Gamma \mathcal{G}$ is the group admitting
$$\langle G_u, \ u \in V(\Gamma) \mid [G_u,G_v]=1, \ \{u,v\} \in E(\Gamma) \rangle$$
as a relative presentation, where $[G_u,G_v]=1$ is a shorthand for: $[a,b]=1$ for all $a \in G_u$ and $b \in G_v$. Usually, one says that graph products of groups interpolate between direct sums (when $\Gamma$ is a complete graph) and free products (when $\Gamma$ has no edge). The key observation for us is that right-angled Coxeter groups coincide with graph products of cyclic groups of order two. Geometrically, we extended the median geometry of right-angled Coxeter groups in \cite{QM} and proposed \emph{quasi-median graphs} as an axiomatisation of the geometry of graph products of groups. This point of view has been extremely useful, as it allowed us to prove combination theorems, e.g. a graph product of finitely many CAT(0) groups is again CAT(0) \cite[Theorem~8.20]{QM}; to characterise relatively hyperbolic graph products \cite[Theorem~8.35]{QM}; to construct quasi-isometrically rigid subgroups in graph products of finite groups such as right-angled Coxeter groups \cite{Excentric}; to re-interpret and generalise Kim and Koberda's results on embeddings between right-angled Artin groups \cite[Theorem~8.43]{QM}; to revisit and generalise Haglund and Wise's theory of special groups \cite{SpecialBis}; and to study the structure of automorphism groups of graph products of groups \cite{MR4295519, AutGP}. As before, even though quasi-median geometry can be applied to various groups (such as Thompson-like groups \cite{QM, MR4033512}, wreath products \cite{QM, Wreath}, and some graphs of groups \cite{QM, SpecialBis}), quasi-median graphs are tightly connected to graph products. Namely, Cayley graphs of graph products of groups (with respect to their vertex-groups) are quasi-median, and, conversely, every quasi-median graph can be realised as a \emph{gated} subgraph in such a Cayley graph \cite{SpecialBis}. 

\medskip \noindent
In order to complete our triptych, though, a dynamical characterisation is missing. Actually, there is an underlying dynamical point of view in the references mentioned above. Because of the central role played by \emph{rotative-stabilisers} of \emph{hyperplanes}, the action of a graph product on its quasi-median graph can be informally thought of as a \emph{right-angled rotation system}, where vertex-groups fix pointwise codimension-one subspaces and permute freely the (possibly infinitely many) connected components they delimit. However, this point of view has not been formalised so far, and it leads to the natural question of the existence of a good notion of \emph{rotation groups}, generalising both Coxeter groups (thought of as reflection groups) and graph products of groups (thought of as right-angled rotation groups). 

\medskip \noindent
In this article, we propose a triptych solving this problem: rotation groups (dynamical behaviour), \emph{periagroups} (algebraic structure), \emph{mediangle graphs} (combinatorial geometry). 

\begin{thm}\label{thm:BigIntro}
If $(G \curvearrowright X, \mathcal{R})$ is a rotation system and $o \in X$ a basepoint, then $G$ is a periagroup with $\mathcal{R}_o$ as a basis and $X$ is a mediangle graph isomorphic to $\mathrm{Cayl}(G,\mathcal{R}_0)$. Conversely, if $G$ is a periagroup with $\mathcal{B}$ as a basis, then $Y:= \mathrm{Cayl}(G, \mathcal{B})$ is a mediangle graph and $(G\curvearrowright Y, \{ gBg^{-1}, g \in G, B \in \mathcal{B}\})$ is a rotation system. 
\end{thm}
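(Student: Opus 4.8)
The plan is to treat this as a dictionary theorem relating the three frameworks through their common \emph{wall} (or hyperplane) structure, proving the two implications separately but running the same geometric engine in each. In both directions the central object is the partition of the graph into the components delimited by a wall, together with the subgroups rotating about that wall; the graph $\Gamma$, the vertex-groups $\mathcal{G}$, and the labels governing the periagroup presentation will all be read off from the combinatorics of how walls cross. I would establish beforehand (in the preceding sections) the basic properties of walls in a mediangle graph — that each wall separates the graph, that wall-stabilisers are gated and convex, and that geodesics interact with walls in a controlled way — and then invoke these as the workhorse in both implications.

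For the converse implication I would start from the periagroup $G$ with basis $\mathcal{B}$ and build $Y=\mathrm{Cayl}(G,\mathcal{B})$ directly. First, using the periagroup presentation I would establish a \emph{normal form} for elements of $G$ as reduced words over $\bigcup_{B\in\mathcal{B}} B$, analogous to the normal-form theorem for graph products and to Tits' solution of the word problem for Coxeter groups; this pins down the geodesics of $Y$. Next I would define walls in $Y$ as parallelism classes of edges and check that, for each $B\in\mathcal{B}$ and $g\in G$, the subgroup $gBg^{-1}$ fixes a wall pointwise and permutes freely the components it delimits, so that the family $\{gBg^{-1}\}$ satisfies the axioms of a rotation system: conjugation-invariance is immediate, and the separation axiom follows from the wall-separation property. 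Finally, the mediangle axioms for $Y$ would be verified on triples of vertices, using the normal form to produce the required median-and-angle data.

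For the forward implication I would, given a rotation system $(G\curvearrowright X,\mathcal{R})$ and a basepoint $o$, define $\mathcal{R}_o$ as the set of rotation subgroups fixing a wall adjacent to $o$, and show that the orbit map $g\mapsto g\cdot o$ induces an isomorphism $X\cong\mathrm{Cayl}(G,\mathcal{R}_o)$: surjectivity on vertices comes from connectivity of $X$ together with the fact that crossing a wall corresponds to multiplying by an element of some $R\in\mathcal{R}_o$, while injectivity (triviality of vertex-stabilisers) follows from the separation axiom via a minimal-gallery argument. I would then recover $\Gamma$ by declaring two walls through $o$ adjacent when they cross, take the vertex-groups to be the rotation subgroups in $\mathcal{R}_o$, and read the presentation labels off the dihedral-type subgroup generated by the rotations of two crossing walls. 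Verifying that these data satisfy the defining relations of a periagroup exhibits $G$ as a periagroup with $\mathcal{R}_o$ as a basis, and $X$ as the associated mediangle graph.

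I expect the main obstacle to be the \emph{completeness} of the recovered presentation, i.e. showing that $G$ admits no relations beyond the periagroup ones (equivalently, that the normal form is genuinely canonical). This is the analogue of the deletion/exchange condition for Coxeter groups and of confluent rewriting for graph products, and it requires a global-to-local argument: every relation in $G$ must be shown to be a consequence of the relations supported on pairs of crossing walls. Geometrically this amounts to proving that any two reduced words representing the same vertex differ by a sequence of elementary moves, each localised in the convex subgraph stabilised by two crossing walls — a statement I would extract from gatedness of wall-stabilisers together with an induction on word length. Once this local confluence is in hand, both implications close up, and the correspondence is seen to be inverse to itself.
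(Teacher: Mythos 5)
Your forward implication is essentially the route the paper takes: free transitivity of the action obtained from the separation axiom by a word-shortening (minimal-gallery) argument, the identification $X \cong \mathrm{Cayl}(G,\mathcal{R}_o)$, the graph $\Gamma$ and labels read off from pairs of crossing walls and their dihedral subgroups, and completeness of the presentation obtained by showing any two geodesic words differ by elementary moves supported in convex cycles (this is Lemma~\ref{lem:FlipGeod} and Lemma~\ref{lem:ShorteningPath}, feeding into Proposition~\ref{prop:FreeTransitive}, Theorem~\ref{thm:RotationPlurilith} and Proposition~\ref{prop:RotationPeriagroup}). One caveat even here: your ``workhorse'' preliminaries are properties of walls \emph{in mediangle graphs}, but in the forward direction $X$ is not yet known to be mediangle, so those results cannot be invoked; the paper has to redevelop the wall theory (``barriers'') directly from the rotation axioms, proving the triangle condition and the cycle condition for dihedral cycles from scratch before any mediangle machinery becomes available.

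The genuine gap is in the converse implication, and it is a circularity. You propose to (1) establish a normal form for the periagroup ``from the presentation'', (2) use it to pin down the geodesics of $Y$, and (3) deduce that parallelism classes of edges separate $Y$, giving the rotation axioms and the mediangle axioms. But the only mechanism you offer for proving the normal form is ``gatedness of wall-stabilisers together with an induction on word length''. Gatedness of subgraphs of $Y$ is a consequence of the mediangle geometry of $Y$, which is exactly what the normal form was supposed to deliver: in an arbitrary Cayley graph, parallelism classes of edges need not separate anything, and nothing in the bare presentation tells you the walls of $Y$ behave. For Coxeter groups and graph products the normal form is a theorem you can cite, but periagroups are a new class, so it must be proven, and your plan proves it only from geometry that presupposes it. The paper breaks this circle purely algebraically: first the semidirect product decomposition $\Pi \cong \Lambda \mathcal{H} \rtimes C(\Phi)$ (Proposition~\ref{prop:SemiDirect}), which controls intersections of conjugates of vertex-groups (Corollary~\ref{cor:InterVertexGroups}) and identifies the cliques of $Y$ with cosets of vertex-groups (Lemma~\ref{lem:CliquePeria}); then a van der Waerden-style permutation representation of $\Pi$ on the abstract set of pairs $(R,g)$ with $R \in \mathcal{R}$, $g \in R$ --- the analogue of Davis's representation on sectors --- whose well-definedness is checked relation by relation and which yields the separation axiom with no prior knowledge of the geometry of $Y$ (Proposition~\ref{prop:PeriagroupsAsRotation}). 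Mediangle-ness of $Y$ then follows by applying the forward direction to the resulting rotation system; tellingly, in the paper the normal form for periagroups is a \emph{corollary} of Theorem~\ref{thm:BigIntro}, not an ingredient of its proof.
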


\noindent
Here, $\mathcal{R}_o$ denotes the set of the rotative-stabilisers of the cliques containing $o$. In order for our theorem to make sense, let us describe the panels of our triptych.

\paragraph{Rotation groups.} Rotation groups are defined by following the definition of reflection groups given above and inspired by the dynamics of graph products of groups acting on their quasi-median graphs.

\begin{definition}
A \emph{rotation system} is the data of a group $G$, a collection of subgroups $\mathcal{R}$, and a connected graph $\Omega$ on which $G$ acts such that the following conditions are satisfied:
\begin{itemize}
	\item $\mathcal{R}$ is stable under conjugation and it generates $G$.
	\item For every clique $C \subset \Omega$, there exists a unique $R \in \mathcal{R}$ such that $R$ acts freely-transitively on the vertices of $C$. One refers to $R$ as the \emph{rotative-stabiliser} of~$C$.
	\item Every subgroup in $\mathcal{R}$ is the rotative-stabiliser of a clique of $\Omega$.
	\item For every clique $C \subset \Omega$, removing the edges of all the cliques having the same rotative-stabiliser as $C$ separates any two vertices in $C$.
\end{itemize}
\end{definition}

\noindent
Here, a \emph{clique} refers to a maximal complete subgraph. A group $G$ is a \emph{rotation group} if there exist a collection of subgroups $\mathcal{R}$ and a graph $\Omega$ on which $G$ acts such that $(G \curvearrowright \Omega,\mathcal{R})$ is a rotation system. As desired, examples include Coxeter groups and graph products of groups.

\paragraph{Periagroups.} Algebraically, one introduces \emph{periagroups}\footnote{Contraction between ``groups'' and the Greek word ``$\pi \epsilon \rho \iota \alpha \gamma \omega \gamma \eta$'', which can be translated as ``to put in rotative motion''.} as a common generalisation of Coxeter groups and graph products of groups.

\begin{definition}\label{def:Periagroups}
Let $\Gamma$ be a (simplicial) graph, $\lambda : E(\Gamma) \to \mathbb{N}_{\geq 2}$ a labelling of its edges, and $\mathcal{G}= \{G_u \mid u \in V(\Gamma)\}$ a collection of groups indexed by the vertices of $\Gamma$, referred to as the \emph{vertex-groups}. Assume that, for every edge $\{u,v\} \in E(\Gamma)$, if $\lambda(\{u,v\})>2$ then $G_u,G_v$ have order two. The \emph{periagroup} $\Pi(\Gamma,\lambda,\mathcal{G})$ admits
$$\left\langle G_u \ (u \in V(\Gamma)) \mid \langle G_u,G_v \rangle^{\lambda(u,v)}= \langle G_v,G_u \rangle^{\lambda(u,v)} \ (\{u,v\} \in E(\Gamma)) \right\rangle$$
as a relative presentation. Here, $\langle a,b \rangle^k$ refers to the word obtained from $ababab \cdots$ by keeping only the first $k$ letters; and $\langle G_u,G_v \rangle^k = \langle G_v,G_u \rangle^k$ is a shorthand for: $\langle a, b \rangle^k = \langle b,a \rangle^k$ for all non-trivial $a \in G_u$, $b \in G_v$. 
\end{definition}

\noindent
Periagroups of cyclic groups of order two coincide with Coxeter groups; and, if $\lambda \equiv 2$, all the relations are commutations and one retrieves graph products of groups. Thus, periagroups can be thought of as an interpolation between Coxeter groups and graph products of groups. Periagroups of cyclic groups, a common generalisation of Coxeter groups and right-angled Artin groups, also appear in \cite{MR1076077}, and are studied geometrically in \cite{Soergel} under the name \emph{Dyer groups}. 

\medskip \noindent
The vertex-groups given by a fixed description of a group as periagroup define a \emph{basis}. More precisely, let $G$ be a group and $\mathcal{B}$ a collection of non-trivial subgroups. Let $\Gamma$ denote the graph whose vertex-set is $\mathcal{B}$ and whose edges connect two subgroups $A,B \in \mathcal{B}$ whenever $\langle A,B \rangle \neq A \ast B$. For every vertex $u \in \Gamma$, let $G_u$ denote the corresponding subgroup in $\mathcal{B}$, and set $\mathcal{G}=\{G_u, u \in \Lambda\}$. We say that $G$ is a periagroup with $\mathcal{B}$ as a \emph{basis} if there exists a labelling $\lambda : E(\Gamma) \to \mathbb{N}_{\geq 2}$ such that the identity map $\mathcal{B} \to \mathcal{B}$ extends to an isomorphism $\Pi(\Gamma,\lambda,\mathcal{G}) \to G$.

\paragraph{Mediangle graphs.} Our definition of \emph{mediangle graphs} is a variation of the definition of quasi-median graphs as weakly modular graphs with no induced copy of $K_{3,2}$ and $K_4^-$; see Section~\ref{section:Examples}. In order to allow the even cycles arising in the Coxeter graphs, we replace in this definition $4$-cycles with convex even cycles. 

\begin{definition}\label{def:Plurilith}
A connected graph $X$ is \emph{mediangle} if the following conditions are satisfied:
\begin{description}
	\item[(Triangle Condition)] For all vertices $o,x,y \in X$ satisfying $d(o,x)=d(o,y)$ and $d(x,y)=1$, there exists a common neighbour $z \in X$ of $x,y$ such that $z \in I(o,x) \cap I(o,y)$.
	\item[(Intersection of Triangles)] $X$ does not contain an induced copy of $K_4^-$ (i.e. a complete graph $K_4$ minus an edge, or equivalently two $3$-cycles glued along an edge).
	\item[(Cycle Condition)] For all vertices $o ,x,y,z \in X$ satisfying $d(o,x)=d(o,y)=d(o,z)-1$ and $d(x,z)=d(y,z)=1$, there exists a convex cycle of even length that contains the edges $[z,x],[z,y]$ and such that the vertex opposite to $z$ belongs to $I(o,x) \cap I(o,y)$.
	\item[(Intersection of Even Cycles)] The intersection between any two convex cycles of even lengths contains at most one edge. 
\end{description}
\end{definition}

\noindent
Here, given two vertices $x,y \in X$, $I(x,y)$ refers to the \emph{interval} $\{z \in X \mid d(x,y)=d(x,z)+d(z,y)\}$. 

\medskip \noindent
As desired, mediangle graphs extend both Coxeter graphs and quasi-median graphs. A key geometric property is that mediangle graphs are naturally endowed with a wall structure. Namely, define a \emph{hyperplane} as an equivalence class of edges with respect to the transitive closure of the relation that identifies two edges whenever they belong to a common $3$-cycle or when they are opposite in a convex even cycle. It turns out that hyperplanes are as nice as in Coxeter graphs and quasi-median graphs:

\begin{thm}\label{thm:IntroHyp}
In a mediangle graph, a hyperplane separates into at least two convex components. Moreover, the distance between any two vertices coincides with the number of hyperplanes separating them.
\end{thm}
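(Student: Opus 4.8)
The plan is to prove the two assertions of Theorem~\ref{thm:IntroHyp} by exploiting the wall structure encoded in hyperplanes, following the familiar strategy from median and quasi-median geometry but carefully adapting every step to accommodate the even cycles permitted by the \textbf{(Cycle Condition)}. The central technical notion will be that of a \emph{geodesic crossing a hyperplane}: I would say that an edge $e$ \emph{belongs to} a hyperplane $J$ if $e$ lies in the equivalence class $J$, and that a path \emph{crosses} $J$ if it contains an edge of $J$. The whole argument hinges on establishing the following key lemma, which I expect to be the main obstacle: \emph{a geodesic crosses each hyperplane at most once}. Once this is available, both the separation statement and the distance formula follow by standard bookkeeping.

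First I would set up the combinatorial machinery. For a hyperplane $J$, let $\mathcal{N}(J)$ denote the union of the edges (and their endpoints) in the class $J$, and define the \emph{complement} $X \setminus \setminus J$ to be the graph obtained from $X$ by deleting every edge of $J$. The relation defining hyperplanes glues together edges that are either sides of a common triangle or opposite sides of a convex even cycle; crucially, \textbf{(Intersection of Triangles)} (no induced $K_4^-$) and \textbf{(Intersection of Even Cycles)} (two convex even cycles share at most an edge) are precisely the axioms that keep these gluings ``locally coherent'', ensuring that a hyperplane does not accidentally fold back on itself. I would use these two intersection axioms to show that the endpoints of any two edges in the same class $J$ are paired up consistently, so that deleting $J$ cannot reconnect the two sides through a short cut inside a triangle or an even cycle.

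Now for the heart of the matter. To prove that a geodesic crosses a hyperplane at most once, I would argue by contradiction: take a geodesic $\gamma$ from $x$ to $y$ crossing some hyperplane $J$ twice, and among all such counterexamples choose one minimising the length of $\gamma$, or equivalently the number of edges of $\gamma$ lying strictly between the two offending edges $e,f\in J$. The two edges $e$ and $f$ are linked by the transitive closure of the ``triangle or opposite-in-even-cycle'' relation; using the \textbf{(Triangle Condition)} and the \textbf{(Cycle Condition)} I would perform a local surgery — replacing a subpath of $\gamma$ near one crossing by pushing it across a triangle or around the convex even cycle witnessing the adjacency of two intermediate edges of $J$ — to produce a strictly shorter path between $x$ and $y$ of the same length or a geodesic with a smaller number of intermediate edges, contradicting minimality. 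The delicate point is that, unlike in the purely median case where only $4$-cycles appear, here one must handle arbitrarily long convex even cycles, and the convexity hypothesis in the \textbf{(Cycle Condition)} together with \textbf{(Intersection of Even Cycles)} is exactly what guarantees that the rerouting stays geodesic and does not introduce a new double crossing. This is where the bulk of the work lies.

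With the key lemma in hand, the theorem falls out quickly. For the separation statement, deleting $J$ cannot leave $x$ and $y$ in the same component whenever an edge of $J$ joins them, since any path between them in $X\setminus\setminus J$ would, when concatenated with that edge, give a closed walk crossing $J$ an odd number of times, contradicting the fact that crossing numbers are well defined and even on closed walks; the components are convex because a geodesic between two vertices on the same side never crosses $J$ by the key lemma, hence stays on that side. Finally, for the distance formula, I would observe that every edge of a geodesic from $x$ to $y$ lies in a distinct hyperplane (by the at-most-once property, applied to each hyperplane), so $d(x,y)$ hyperplanes separate $x$ and $y$; conversely, each separating hyperplane must be crossed by the geodesic at least once, and hence exactly once, giving the reverse inequality and thus equality. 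The only routine verifications left are that ``at least two components'' is genuine and that convexity of each side is inherited, both immediate from the preceding steps.
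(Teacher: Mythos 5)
Your derivation of the separation statement rests on ``the fact that crossing numbers are well defined and even on closed walks'', and this is where the proposal breaks: that fact is \emph{false} in mediangle graphs. Unlike walls in median graphs, a hyperplane here can delimit more than two sectors --- the theorem itself only claims ``at least two'' components --- and parity fails already in the smallest example: in a $3$-cycle (a perfectly good mediangle graph), all three edges form a single hyperplane $J$ via the triangle relation, and the closed walk around the cycle crosses $J$ three times. So the parity argument cannot yield separation, and nothing else in your sketch does: the two intersection axioms, which you invoke to keep the gluings ``locally coherent'', say nothing about paths that wander far from the triangles and convex cycles carrying $J$. What separation actually requires is the path-transformation calculus that the Triangle and Cycle Conditions provide (Lemmas~\ref{lem:FlipGeod} and~\ref{lem:ShorteningPath}): any path joining the endpoints of an edge $[x,y]$ of $J$ can be converted into that single edge by flips, backtrack removals and triangle shortenings, and each of these moves preserves the property of crossing $J$, since opposite edges of a convex even cycle, and edges of a common $3$-cycle, lie in the same hyperplane. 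That is exactly the paper's Claim~\ref{claim:HyperplanesSeparate}, and it cannot be bypassed by a two-sided wall picture.

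Your key lemma (a geodesic crosses each hyperplane at most once) is also not within reach of the sketched surgery. Two edges $e,f$ of a geodesic lying in the same hyperplane are related only through the \emph{transitive closure} of the elementary relation, and the witnessing triangles and convex even cycles may sit nowhere near $\gamma$; ``pushing $\gamma$ across the cycle witnessing the adjacency of two intermediate edges of $J$'' therefore has no meaning for a minimal counterexample. The paper bridges precisely this difficulty with a chain of intermediate results --- gatedness of cliques (Lemma~\ref{lem:CliquesGated}), gatedness of long convex cycles (Theorem~\ref{thm:LongCyclesConvex}, resting on Proposition~\ref{prop:InterCycleTriangle}), the ladder Lemma~\ref{lem:PreHyp}, and the projection-partition Lemma~\ref{lem:ProjectionPartitions} --- which together show that all cliques of a hyperplane induce the same partition of the vertex set into fibres and sectors; the double-crossing shortening in Theorem~\ref{thm:BigHyp}.(iii) is then a short computation with projections onto the clique containing the first crossing. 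So while your overall skeleton (crossing bound, separation, hyperplane count equals distance) matches the paper's, both load-bearing steps are missing their arguments, and one of them is propped on a false statement.
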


\noindent
We refer to Theorem~\ref{thm:BigHyp} for a more detailed statement. Extending the intuition coming from Coxeter graphs, it is possible to define an \emph{angle} $\measuredangle(J_1,J_2)$ between two \emph{transverse} hyperplanes $J_1,J_2$ in a mediangle graph; see Section~\ref{section:Angles}.

\paragraph{Discussion about Theorem~\ref{thm:BigIntro}.} Now that our main theorem makes sense, a few comments are necessary. 

\medskip \noindent
First of all, in the presentation of a periagroup given by Definition~\ref{def:Periagroups}, there are only commutations between \emph{rotations} (i.e. elements in vertex-groups) that are not of order two. This contrasts with reflections that may generate dihedral groups. Geometrically, this is justified by the fact that, in a mediangle graph, a convex cycle of even length $>4$ cannot share an edge with a $3$-cycle, see Proposition~\ref{prop:InterCycleTriangle}; or equivalently that the angle between two transverse hyperplanes is automatically $\pi/2$ if at least one of these hyperplanes is \emph{thick} (i.e. contains a clique with at least three vertices). In periagroups, one can interpret the latter assertion by saying that the angle between two intersecting axes of rotation is necessarily $\pi/2$ when at least one of the rotations has order $>2$. 

\medskip \noindent
Next, observe that one recovers the intuition suggested in the introduction about graph products of groups. In the right-angled case, i.e. when the angle between any two transverse hyperplane is $\pi/2$, periagroups coincide with graph products of groups and mediangle graphs coincide with quasi-median graphs. So one gets a formal justification of thinking of graph products of groups as right-angled rotation groups. 

\medskip \noindent
Interestingly, our formalism suggests an axiomatisation of the combinatorial geometry of Coxeter graphs, which has not been done so far up to our knowledge: bipartite mediangle graphs, which amounts to considering graphs that are bipartite and that satisfy the third and fourth items from Definition~\ref{def:Plurilith}. However, the geometry captured by mediangle graphs might be larger than the geometry of periagroups. Indeed, contrary to median and quasi-median graphs which can be realised as gated subgraphs of Cayley graphs of right-angled Coxeter groups and graph products of groups, a mediangle graph may not embed as a convex/gated subgraph of the Cayley graph of a periagroup; see Section~\ref{section:Conclusion}. 

\medskip \noindent
Even though one retrieves most of the combinatorial geometry of Coxeter and quasi-median graphs, one can ask whether our axiomatisation is optimal. In other words, are there other axioms needed to get the full strength of this geometry? We suspect that it is not the case, at least for a large part of the geometry.

\paragraph{Applications.} In order to motivate the relevancy of the mediangle geometry as a unified point of view on Coxeter groups and graph products of groups, we deduce from our formalism unified and simple proofs of two results known in several particular cases. 

\begin{thm}\label{thm:IntroRotationSub}
In the periagroup $\Pi(\Gamma,\lambda,\mathcal{G})$, a subgroup generated by conjugates of vertex-groups is a periagroup with vertex-groups that belong to $\mathcal{G}$. 
\end{thm}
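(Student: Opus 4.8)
The plan is to read the statement through the dictionary provided by Theorem~\ref{thm:BigIntro}. Write $G=\Pi(\Gamma,\lambda,\mathcal{G})$ and, using the converse direction of that theorem, realise $G$ as a rotation system $(G\curvearrowright X,\mathcal{R})$ on its mediangle Cayley graph $X:=\mathrm{Cayl}(G,\mathcal{B})$, where $\mathcal{B}$ is the defining basis (the vertex-groups $\mathcal{G}$) and $\mathcal{R}=\{gBg^{-1}\mid g\in G,\ B\in\mathcal{B}\}$; fix a basepoint $o$. By construction every member of $\mathcal{R}$ is the rotative-stabiliser of a clique and is a $G$-conjugate of some vertex-group $G_u$. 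Now let $H\leqslant G$ be generated by conjugates of vertex-groups and set
$$\mathcal{R}_H:=\{R\in\mathcal{R}\mid R\leqslant H\}.$$
This family is stable under $H$-conjugation (if $R\leqslant H$ and $h\in H$, then $hRh^{-1}\in\mathcal{R}$ and $hRh^{-1}\leqslant H$), it generates $H$ (each defining generator of $H$ lies in $\mathcal{R}_H$, and conversely each $R\in\mathcal{R}_H$ is contained in $H$), and each of its members is isomorphic to a vertex-group of $\mathcal{G}$. So it suffices to manufacture a connected graph $\Omega$ making $(H\curvearrowright\Omega,\mathcal{R}_H)$ a rotation system: the direct part of Theorem~\ref{thm:BigIntro} will then immediately present $H$ as a periagroup with basis $(\mathcal{R}_H)_{o'}$ for a suitable basepoint $o'\in\Omega$, and this basis consists of members of $\mathcal{R}_H$, hence of subgroups isomorphic to vertex-groups in $\mathcal{G}$.

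To build $\Omega$ I would exploit the wall structure of $X$ coming from Theorem~\ref{thm:IntroHyp}. Each hyperplane $J$ of $X$ carries a well-defined rotative-stabiliser $R_J\in\mathcal{R}$, namely the common rotative-stabiliser of the cliques dual to $J$, and separates $X$ into convex components. Call $J$ an \emph{$H$-wall} when $R_J\in\mathcal{R}_H$; since $\mathcal{R}_H$ is $H$-conjugation stable, $H$ permutes the $H$-walls. I then declare two vertices of $X$ equivalent when no $H$-wall separates them, let $\Omega$ be the graph whose vertices are the resulting classes and whose edges join two classes lying on the two sides of a single $H$-wall, and let $o'$ be the class of $o$. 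This wallspace is $H$-invariant, so $H$ acts on $\Omega$; and since any two vertices of $X$ are separated by only finitely many hyperplanes (Theorem~\ref{thm:IntroHyp}), $\Omega$ is connected. Note that the resulting basis $(\mathcal{R}_H)_{o'}$ need not consist of the conjugates one started with, exactly as the simple system of a reflection subgroup of a Coxeter group is read off the geometry rather than prescribed in advance.

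The core of the argument is to verify the four rotation-system axioms for $(H\curvearrowright\Omega,\mathcal{R}_H)$. The cliques of $\Omega$ should correspond to the $H$-walls: along such a wall $J$, the subgroup $R_J$ permutes the adjacent classes freely-transitively, producing a clique with rotative-stabiliser $R_J$, and one must check that no larger complete subgraph appears, using the $K_4^-$ and the even-cycle intersection axioms inherited from $X$. The separation axiom for $\Omega$ — that removing all cliques sharing the rotative-stabiliser $R_J$ disconnects the endpoints of an $R_J$-clique — should transport the convex separation of $X$ by $J$ from Theorem~\ref{thm:IntroHyp} through the quotient. Uniqueness of rotative-stabilisers and the convex-even-cycle condition descend similarly from the corresponding features of $X$.

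I expect the main obstacle to be precisely this verification, and within it the identification of the cliques and convex even cycles of $\Omega$ together with the separation axiom: one must ensure that cutting $\Omega$ along the family of cliques attached to a fixed $R\in\mathcal{R}_H$ genuinely disconnects it, which requires that the $H$-walls, although only a sub-family of all hyperplanes of $X$, still separate the classes defining $\Omega$. This is where Theorem~\ref{thm:IntroHyp} does the heavy lifting, guaranteeing through the convexity of the components delimited by a hyperplane that every $\Omega$-path between the two sides of an $H$-wall must cross it. Once the rotation system is in place the conclusion is automatic: Theorem~\ref{thm:BigIntro} exhibits $H$ as a periagroup whose vertex-groups are the rotative-stabilisers in $(\mathcal{R}_H)_{o'}$, each a $G$-conjugate of a vertex-group and therefore isomorphic to a member of $\mathcal{G}$. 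As a sanity check, this recovers the theorem of Deodhar and Dyer that reflection subgroups of Coxeter groups are Coxeter, and, when $\lambda\equiv 2$, the analogous statement for graph products of groups.
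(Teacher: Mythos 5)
Your proposal follows essentially the same route as the paper: its Theorem~\ref{thm:PingPong} constructs exactly your quotient graph $\Omega$ dual to an invariant family of hyperplanes, verifies the rotation-system axioms (Claim~\ref{claim:MergeRotation}), and feeds the result into Proposition~\ref{prop:RotationPeriagroup} (the rotation-system-to-periagroup direction of Theorem~\ref{thm:BigIntro}), while Lemma~\ref{lem:RotativeStabPeria} and Corollary~\ref{cor:RotativeStabPeria} supply the two facts you assert without proof, namely that a hyperplane of $\mathrm{Cayl}(\Pi,\mathcal{G})$ has a well-defined rotative-stabiliser conjugate to a vertex-group which permutes its sectors freely and transitively. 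The only cosmetic differences are that the paper takes the wall family $\mathcal{J}$ to be the $\mathrm{Rot}$-orbit of the hyperplanes dual to the chosen generating conjugates rather than all hyperplanes whose rotative-stabiliser lies in $H$, and that it packages the construction as a standalone geometric theorem which additionally yields the semidirect-product decomposition $G=\mathrm{Rot}\rtimes\mathrm{stab}(Y)$.
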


\noindent
Among Coxeter groups, this amounts to saying that a subgroup generated by reflections is again a Coxeter group, a classical theorem proved independently in \cite{MR1023969} and \cite{MR1076077}. In fact, \cite{MR1076077} proves the result for periagroups of cyclic groups. Among graph products of groups, one retrieves and improves the consequence \cite[Theorem~10.54]{QM} that shows that a subgroup generated by conjugates of vertex-groups is again a graph product of groups. The same statement for right-angled Artin groups (i.e. graph products of infinite cyclic groups) can also be found in \cite{RAAGRACG}. Our proof is based on a purely geometric statement, namely Theorem~\ref{thm:PingPong}, which is of independent interest and which extends \cite[Theorem~10.54]{QM} (proved for quasi-median graphs). 

\medskip \noindent
For our next statement, we need to define \emph{parabolic subgroups}. Given a decorated graph $\Gamma=\Gamma(\Lambda,\lambda)$ and a collection of groups $\mathcal{G}$ indexed by $V(\Lambda)$, a subgroup $H$ of $\Pi(\Gamma,\mathcal{G})$ is \emph{parabolic} if there exists a subgraph $\Xi \leq \Gamma$ such that $H$ is conjugate to the subgroup generated by the elements and vertex-groups associated to the vertices of $\Xi$, denoted by~$\langle \Xi \rangle$. 

\begin{thm}\label{thm:IntroParabolic}
In a periagroup, the intersection between two parabolic subgroups is again a parabolic subgroup. 
\end{thm}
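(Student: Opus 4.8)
The plan is to transport the statement into the mediangle Cayley graph $Y := \mathrm{Cayl}(G,\mathcal{B})$ of the periagroup $G=\Pi(\Gamma,\lambda,\mathcal{G})$ and to realise parabolic subgroups as setwise stabilisers of gated subgraphs. By Theorem~\ref{thm:BigIntro}, $Y$ is a mediangle graph on which $G$ acts freely on vertices. For a subgraph $\Xi\le\Gamma$, the standard parabolic subgroup $\langle\Xi\rangle$ spans the full subgraph $C_\Xi\subseteq Y$ on the vertex set $\langle\Xi\rangle$, which I establish to be gated; applying $g\in G$ shows that each coset $g\langle\Xi\rangle$ spans a gated subgraph $g\cdot C_\Xi$. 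Because the action on vertices is free, the setwise stabiliser of $g\cdot C_\Xi$ is exactly $g\langle\Xi\rangle g^{-1}$. Thus every parabolic subgroup $P$ is the stabiliser of a gated \emph{parabolic subgraph} $C_P$ whose vertex set is a coset of a standard parabolic, and conversely. The theorem then reduces to showing that $\mathrm{Stab}(C_1)\cap\mathrm{Stab}(C_2)$ is parabolic for any two parabolic subgraphs $C_1,C_2$.

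First I would settle the case where $C_1$ and $C_2$ share a vertex. The key input is the lattice identity $\langle\Xi\rangle\cap\langle\Xi'\rangle=\langle\Xi\cap\Xi'\rangle$ for standard parabolics, which follows from the hyperplane description of $Y$ in Theorem~\ref{thm:IntroHyp} (a vertex in both standard parabolic subgraphs is joined to the basepoint using only hyperplanes dual to cliques with labels in $\Xi\cap\Xi'$). Granting this, if $g_1\langle\Xi_1\rangle$ and $g_2\langle\Xi_2\rangle$ meet, their intersection is a coset $c\langle\Xi_1\cap\Xi_2\rangle$, so $C_1\cap C_2$ is again a parabolic subgraph. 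A direct coset computation then gives $\mathrm{Stab}(C_1)\cap\mathrm{Stab}(C_2)=\mathrm{Stab}(C_1\cap C_2)=c\langle\Xi_1\cap\Xi_2\rangle c^{-1}$: the inclusion $\subseteq$ is immediate, and for $\supseteq$ one writes an element of $\mathrm{Stab}(C_1\cap C_2)$ as $cwc^{-1}$ with $w\in\langle\Xi_1\cap\Xi_2\rangle$ and, using $c\in g_i\langle\Xi_i\rangle$, checks directly that $g_i^{-1}(cwc^{-1})g_i\in\langle\Xi_i\rangle$, hence that it lies in $g_i\langle\Xi_i\rangle g_i^{-1}$. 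This disposes of the meeting case.

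The remaining, and principal, difficulty is to reduce the general situation to the meeting case. Conjugating by $g_1^{-1}$ I may assume $C_1=C_{\Xi_1}$ is standard with $\mathrm{Stab}(C_1)=\langle\Xi_1\rangle$, so it suffices to show $\langle\Xi_1\rangle\cap g\langle\Xi_2\rangle g^{-1}$ is parabolic for arbitrary $g$. I would replace $g$ by a minimal-length representative $g'$ of the double coset $\langle\Xi_1\rangle g\langle\Xi_2\rangle$; writing $g=ag'b$ with $a\in\langle\Xi_1\rangle$ and $b\in\langle\Xi_2\rangle$ gives $\langle\Xi_1\rangle\cap g\langle\Xi_2\rangle g^{-1}=a\bigl(\langle\Xi_1\rangle\cap g'\langle\Xi_2\rangle g'^{-1}\bigr)a^{-1}$, so it is enough to treat minimal $g'$. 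Geometrically, minimality means $C_{\Xi_1}$ and $g'C_{\Xi_2}$ sit in \emph{bridge position}: the hyperplanes separating them form a product region and the gate-projection (nearest-point projection) $\mathfrak{g}_{C_{\Xi_1}}(g'C_{\Xi_2})$ — itself a parabolic subgraph, by the gate-stability of the class of parabolic subgraphs — is reached without crossing any hyperplane dual to $C_{\Xi_1}$. The aim is to prove that in this position $\langle\Xi_1\rangle\cap g'\langle\Xi_2\rangle g'^{-1}$ is a \emph{standard} parabolic $\langle\Xi_3\rangle$, where $\Xi_3$ records the cliques along which the two subgraphs run parallel.

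The hard part is exactly this last claim, the analogue for periagroups of Kilmoyer's theorem on intersections of parabolics in Coxeter groups and of its counterpart for graph products. I expect to prove it with the hyperplane machinery of Theorem~\ref{thm:IntroHyp}: an element of $\langle\Xi_1\rangle\cap g'\langle\Xi_2\rangle g'^{-1}$ stabilises both $C_{\Xi_1}$ and $g'C_{\Xi_2}$, hence preserves their bridge and acts trivially along the separating interval, so it inverts no bridge hyperplane; an exchange argument on hyperplanes then forces it into the standard parabolic generated by the cliques common to the two gate subgraphs. The subtlety specific to the mediangle setting — absent from the quasi-median case of \cite{QM} — is the presence of \emph{thick} hyperplanes and convex even cycles of length $>4$, so I must verify that the bridge, gate, and exchange constructions behave correctly at dihedral-type interactions; here I would invoke Proposition~\ref{prop:InterCycleTriangle} (a thick hyperplane is orthogonal to every hyperplane transverse to it) to rule out the mixed configurations. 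Once the standard-parabolic conclusion holds, the meeting-case computation of the second paragraph completes the proof.
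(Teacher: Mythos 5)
Your geometric framework (parabolic subgroups as stabilisers of gated cosets $g\langle\Xi\rangle$ in the mediangle Cayley graph, which is the paper's Corollary~\ref{cor:ParabolicGated}) and your treatment of the meeting case are both correct. The genuine gap is that the entire difficulty of the theorem is deferred to your final claim: that for a minimal-length double-coset representative $g'$, the intersection $\langle\Xi_1\rangle\cap g'\langle\Xi_2\rangle g'^{-1}$ is the standard parabolic generated by the cliques along which the two subgraphs run parallel. You flag this yourself as ``the hard part'', and what you offer for it --- ``preserves their bridge and acts trivially along the separating interval, so it inverts no bridge hyperplane; an exchange argument on hyperplanes then forces it into the standard parabolic'' --- is a restatement of the goal, not an argument: no bridge decomposition, no product structure on the separating region, and no exchange condition of this kind has been established for mediangle graphs, and each would require real work. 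Moreover, two auxiliary assertions you lean on are themselves unproven and of comparable depth: that the gate-projection of one parabolic subgraph onto another is again a parabolic subgraph, and that minimal double-coset representatives exist and realise your ``bridge position''. As written, the proposal settles the easy case and renames the hard case in Kilmoyer-style vocabulary.

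It is worth seeing how the paper avoids this entirely, because you were one step away: you introduce the gate-projection but use it only as scaffolding for the bridge, whereas the paper reads the answer directly off it. Take $C_1=g\langle\Phi\rangle$, $C_2=h\langle\Psi\rangle$ and $P:=\mathrm{proj}_{C_1}(C_2)$, with no normalisation of $g,h$. The intersection of the two parabolic subgroups stabilises both gated subgraphs, hence stabilises $P$; translating so that $1\in P$, every element of the intersection sends $1$ into $P$, and since $P$ is connected (Corollary~\ref{cor:ProjLip}) with all edges labelled by the set $\Xi$ of vertices of $\Gamma$ appearing on edges of $P$, the intersection is contained in $\langle\Xi\rangle$. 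Conversely, each generator $s$ of $\langle\Xi\rangle$ labels an edge of $P$ that is the projection of an edge of $C_2$; by Corollary~\ref{cor:ProjLip} these two edges lie in a common hyperplane, cliques in a common hyperplane have the same stabiliser (Lemma~\ref{lem:RotativeStabPeria}), and the stabiliser of a clique contained in a parabolic subgraph preserves that subgraph; hence $s$ stabilises both $C_1$ and $C_2$ and so lies in both parabolic subgroups. Therefore the intersection equals $\langle\Xi\rangle$ up to conjugation --- no double cosets, no bridges, no exchange condition, which is precisely the step your proposal cannot complete.
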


\noindent
For Coxeter groups, the same result can be found in \cite{MR2466021}; for right-angled Artin groups, in \cite{MR2371978}; and in graph products of groups, in \cite{MR3365774}. Recently, parabolic subgroups received some attention among Artin groups. Theorem~\ref{thm:IntroParabolic} is known for several classes of Artin groups, but it remains open in full generality and it is considered as a difficult problem. The fact that we are able to prove Theorem~\ref{thm:IntroParabolic} for periagroups in a unified way is a good indication that our mediangle geometry is the good geometry.

\paragraph{Organisation of the article.} We begin by fixing the basic definitions and notations in Section~\ref{section:Notation}. The geometry of mediangle graphs is studied in Section~\ref{section:Plurilith}, in which Theorem~\ref{thm:IntroHyp} is proved. Examples of mediangle graphs are also given in Section~\ref{section:Examples}, including one-skeleta of some small cancellation complexes. Next, in Section~\ref{section:Rotation}, we study rotation groups and prove one implication of Theorem~\ref{thm:BigIntro} by showing that they are periagroups. Section~\ref{section:Periagroups} is dedicated to periagroups. We prove that their actions on their Cayley graphs define rotation systems, concluding the proof of Theorem~\ref{thm:BigIntro}; and we deduce from their mediangle geometry a normal form extending the normal forms for Coxeter groups and graph products of groups. Theorems~\ref{thm:IntroRotationSub} and~\ref{thm:IntroParabolic} are proved in the next section. Finally, we include in Section~\ref{section:Conclusion} some concluding remarks and open questions.

\paragraph{Acknowledgements.} I am grateful to Victor Chepoi for his suggestion of (hyper)cellular graphs as a source of examples of mediangle graphs, which can now be found in Section~\ref{section:Examples}.

\section{Basis definitions about graphs}\label{section:Notation}

\noindent
For us, a graph $X$ is a set endowed with an adjacency relation. Consequently, a point in $X$ is automatically a vertex, the edges have distinct endpoints, and two distinct edges cannot have the same endpoints. A path $\alpha$ in $X$ is a sequence of vertices such that any two successive vertices are adjacent. If $x,y \in \alpha$ are two vertices, we denote by $\alpha_{x,y}$ the subpath of $\alpha$ between $x$ and $y$. A path may be identified with its image if this does not create ambiguity.

\medskip \noindent
Given two vertices $x,y \in X$, the \emph{interval} between $x$ and $y$ is
$$I(x,y):= \{ z \in X \mid d(x,y)=d(x,z)+d(z,y)\}.$$
Equivalently, $I(x,y)$ coincides with the union of all the geodesics between $x$ and $y$.

\medskip \noindent
A subgraph $Y \subset X$ is \emph{convex} if a geodesic between any two vertices of $Y$ always lies in $Y$. In other words, $I(x,y) \subset Y$ for all $x,y \in Y$. 

\medskip \noindent
A subgraph $Y \subset X$ is \emph{gated} if, for every $x \in X$, there some $y \in Y$ - referred to as the \emph{gate} or \emph{projection} of $x$ - such that $y \in I(x,z)$ for every $z \in Y$. It is worth noticing that the gate of $x$ coincides with the unique vertex of $Y$ that minimises the distance to $x$. Moreover, gated subgraphs are convex, so one can think about gatedness as a strong convexity property.

\section{Mediangle graphs}\label{section:Plurilith}

\noindent
This section is dedicated to the combinatorial geometry of mediangle graphs as defined by Definition~\ref{def:Plurilith}. It is worth noticing that the second item of the definition provides to \emph{cliques}, i.e. maximal complete subgraphs, a central role due to the following elementary observation.

\begin{lemma}\label{lem:InterClique}
Let $X$ be a graph with no induced copy of $K_4^-$. The intersection between any two distinct cliques contains at most a vertex. 
\end{lemma}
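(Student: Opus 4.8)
The plan is to argue by contradiction, assuming two distinct cliques $C_1, C_2$ share at least two vertices, and to extract from this configuration an induced copy of $K_4^-$, contradicting the hypothesis. Recall that $K_4^-$ is the complete graph on four vertices with one edge removed; equivalently, it is two triangles glued along a common edge.

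First I would suppose $x, y \in C_1 \cap C_2$ with $x \neq y$. Since $C_1, C_2$ are cliques (in particular complete subgraphs), both $x$ and $y$ are adjacent within each clique, so $\{x,y\}$ is an edge. Because $C_1 \neq C_2$ are \emph{distinct maximal} complete subgraphs, neither can be contained in the other; in particular there is a vertex $a \in C_1 \setminus C_2$ and a vertex $b \in C_2 \setminus C_1$. Here I should be slightly careful: distinctness of maximal complete subgraphs guarantees that at least one of $C_1 \setminus C_2$, $C_2 \setminus C_1$ is nonempty, but for the argument I want witnesses on \emph{both} sides, which follows from maximality — if, say, $C_1 \subseteq C_2$ then $C_1$ would fail to be maximal unless $C_1 = C_2$.

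Next I would examine the four vertices $x, y, a, b$. Since $a, x, y \in C_1$ and $C_1$ is complete, the three edges $\{a,x\}, \{a,y\}, \{x,y\}$ are present; similarly $b, x, y \in C_2$ gives $\{b,x\}, \{b,y\}, \{x,y\}$. The crucial point is the pair $\{a,b\}$: I claim $a$ and $b$ are \emph{not} adjacent. Indeed, if they were adjacent, then $C_1 \cup \{b\}$ would be a complete subgraph strictly containing the clique $C_1$ (as $b \notin C_1$), contradicting the maximality of $C_1$. Hence $\{a,b\}$ is a non-edge. Therefore the induced subgraph on $\{x, y, a, b\}$ has exactly all edges except $\{a,b\}$, which is precisely an induced copy of $K_4^-$ (the two triangles $axy$ and $bxy$ sharing the edge $\{x,y\}$). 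This contradicts the assumption that $X$ has no induced $K_4^-$, completing the proof.

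I do not expect any real obstacle here; this is an elementary lemma. The only subtlety worth stating cleanly is the bookkeeping about maximality: one must invoke that $C_1, C_2$ are maximal (not merely complete) at two moments — once to produce the vertices $a, b$ outside the respective other clique, and once to deduce the non-adjacency of $a$ and $b$. Making these two uses of maximality explicit is the whole content of the argument.
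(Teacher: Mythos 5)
Your overall strategy --- exhibiting an induced $K_4^-$ on $\{x,y,a,b\}$ with $x,y$ in the intersection and $a,b$ on either side --- is exactly the idea behind the paper's proof, but your justification of the key step is invalid. You claim that if $a$ and $b$ were adjacent, then $C_1 \cup \{b\}$ would be a complete subgraph, contradicting maximality of $C_1$. This inference is wrong: for $C_1 \cup \{b\}$ to be complete, $b$ must be adjacent to \emph{every} vertex of $C_1$, whereas you only know $b$ is adjacent to $x$, $y$, and (by hypothesis) $a$; the clique $C_1$ may well contain further vertices to which $b$ is not adjacent. Nor is the claim itself a consequence of maximality alone. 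Consider the graph on $\{x,y,a,a',b,b'\}$ where $\{x,y,a,a'\}$ and $\{x,y,b,b'\}$ are complete and the only additional edge is $\{a,b\}$: both sets are maximal cliques sharing the edge $\{x,y\}$, yet the cross pair $a,b$ is adjacent. (This graph of course contains induced copies of $K_4^-$, as the lemma says it must, but it shows that your intermediate claim cannot be derived from maximality without invoking the $K_4^-$ hypothesis --- which you only use \emph{after} the claim, so the argument is circularly ordered.)

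The gap is easily repaired by choosing $b$ \emph{depending on} $a$ rather than arbitrarily: since $a \notin C_2$ and $C_2$ is maximal, the set $C_2 \cup \{a\}$ cannot be complete, so there exists $b \in C_2$ not adjacent to $a$; automatically $b \neq a$ and $b \notin \{x,y\}$ (as $x,y$ are adjacent to $a$), and then $\{x,y,a,b\}$ induces a $K_4^-$, giving the contradiction. The paper runs the same idea in the contrapositive direction: assuming no induced $K_4^-$, every vertex of $C_1$ must be adjacent or equal to every vertex of $C_2$ (otherwise $\{x,y,a,b\}$ would induce a $K_4^-$), so $C_1 \cup C_2$ is complete and maximality forces $C_1 = C_2$. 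Either repair is one line; but as written, your proof does not go through.
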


\begin{proof}
If $C_1,C_2 \subset X$ are two cliques whose intersection contains an edge, the no $K_4^-$ condition implies that $C_1 \subset C_2$ and similarly that $C_2 \subset C_1$, hence $C_1=C_2$. 
\end{proof}

\subsection{Convexity}

\noindent
We begin by studying geodesics in mediangle graphs. The main result of this subsection is Proposition~\ref{prop:ConvexCriterion}, which characterises convexity locally. (See Proposition~\ref{prop:LocallyGated} for a similar characterisation of gated subgraphs.) In its statement, given a mediangle graph $X$, we say that a subgraph $Y \subset X$ is \emph{locally convex} if every convex even cycle $\gamma$ that contains a subpath of length $\geq \mathrm{lg}(\gamma)/2$ in $Y$ must lie entirely in $Y$.

\begin{prop}\label{prop:ConvexCriterion}
Let $X$ be a mediangle graph. A subgraph $Y \subset X$ is convex if and only if it is connected and locally convex.
\end{prop}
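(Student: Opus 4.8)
The plan is to prove both directions of the equivalence. The forward direction (convex $\Rightarrow$ connected and locally convex) is the easy one: a convex subgraph is automatically connected since it contains a geodesic between any two of its vertices, and it is locally convex because a convex even cycle is the union of geodesics between opposite vertices, so once half of it lies in $Y$, convexity forces the whole cycle into $Y$. The substance is entirely in the converse.

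For the converse, I would argue by contradiction. Suppose $Y$ is connected and locally convex but not convex. Then there exist vertices $x, y \in Y$ and a geodesic $\gamma$ between them that leaves $Y$. Among all such pairs, choose one minimising $d(x,y)$; since $Y$ is connected and geodesics of length $\le 1$ trivially stay in $Y$, we have $d(x,y) \ge 2$. The plan is to take a path $\beta$ in $Y$ from $x$ to $y$ (using connectedness) and a geodesic $\gamma$ from $x$ to $y$, and exploit the minimality of $d(x,y)$: I expect to be able to arrange that $\gamma$ meets $Y$ only at its endpoints, and that the neighbour $z$ of $y$ along $\beta$ together with the neighbour of $y$ along $\gamma$ gives the configuration needed to apply the Triangle Condition or the Cycle Condition from Definition~\ref{def:Plurilith}.

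The key mechanism I would use is the following. Let $o = x$, let $w$ be the penultimate vertex of $\gamma$ (so $d(x,w) = d(x,y)-1$ and $w \notin Y$), and let $z$ be the neighbour of $y$ in $Y$ on the path $\beta$. If $d(x,z) = d(x,y)$, the Triangle Condition applied to $o,z,y$ produces a common neighbour closer to $x$, contradicting minimality once I check it must lie in $Y$; if $d(x,z) = d(x,y)-1$, then $o,w,z,y$ satisfy the hypotheses of the Cycle Condition, which yields a convex even cycle through the edges $[y,w]$ and $[y,z]$ whose opposite vertex to $y$ lies in the intervals $I(x,w) \cap I(x,z)$. The point is that this even cycle has more than half its vertices forced into $Y$ (the arc on the $z$-side, built from geodesics staying close to $x$), so local convexity pushes the whole cycle into $Y$, in particular $w \in Y$, a contradiction. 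I would use the Intersection conditions ($K_4^-$ and intersection of even cycles) to ensure the cliques and cycles produced are well-behaved and unique, so that the ``half the cycle is in $Y$'' bookkeeping is unambiguous.

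The main obstacle I anticipate is the bookkeeping in the Cycle Condition case: one must verify precisely that the portion of the even cycle lying in $Y$ has length at least half the cycle's length, so that local convexity applies. This requires carefully tracking which arc of the cycle is built from the known $Y$-path and controlling the distances $d(x, \cdot)$ along both arcs, likely via an induction or a second minimality argument internal to this step. A secondary subtlety is handling the case where $\gamma$ re-enters $Y$ at intermediate vertices; I would dispose of this at the outset by choosing $\gamma$ and the pair $(x,y)$ so that $\gamma \cap Y = \{x,y\}$, reducing to the clean configuration above. Everything else reduces to direct application of the four defining conditions together with Lemma~\ref{lem:InterClique}.
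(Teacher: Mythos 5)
Your forward direction is fine, and the heart of your Cycle Condition case is also sound: with $(x,y)$ a distance-minimal bad pair, the pair $(x,z)$ is at distance $d(x,y)-1$, hence cannot be bad, so $I(x,z)\subset Y$; the distance bookkeeping you defer does work out (every vertex of the $z$-side half of the convex even cycle other than $y$ lies in $I(x,z)$), so local convexity forces the whole cycle, and in particular $w$, into $Y$, a contradiction. The genuine gaps are in the other cases. First, your case analysis is incomplete: $z$ is a neighbour of $y$ on an \emph{arbitrary} path $\beta\subset Y$, so the third possibility $d(x,z)=d(x,y)+1$ can occur, and your setup gives nothing there --- the pair $(x,z)$ is then at distance $d(x,y)+1$, so minimality says nothing about $I(x,z)$, and neither the Triangle nor the Cycle Condition applies to the configuration. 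Second, in your Triangle Condition case the common neighbour $m$ of $z,y$ satisfies $m\in I(x,z)\cap I(x,y)$, but minimising $d(x,y)$ cannot force $m\in Y$: the pair $(x,z)$ has $d(x,z)=d(x,y)$, hence may itself be a bad pair, and membership in $I(x,y)$ is useless since $I(x,y)\not\subset Y$ is precisely the standing assumption. You flag this check (``once I check it must lie in $Y$'') but give no mechanism for it, and with your minimality there is none.

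Both gaps can be repaired by restructuring the induction: induct on the length $L$ of the connecting path $\beta\subset Y$, proving $I(x,y)\subset Y$ for all pairs joined in $Y$ by a path of length $\le L$. Then $I(x,z)\subset Y$ is available by the induction hypothesis in \emph{all} three cases (since $\beta_{x,z}$ has length $L-1$): the case $d(x,z)=d(x,y)+1$ becomes trivial because $I(x,y)\subset I(x,z)$; the Triangle case reduces to the Cycle case because $m\in I(x,z)\subset Y$; and in the Cycle case one must additionally check that $I(x,w)\subset Y$ for each neighbour $w$ of $y$ in $I(x,y)$ (not merely $w\in Y$), which again follows from the induction hypothesis because $w$ is joined to $x$ inside $Y$, through the cycle and $I(x,z)$, by a path of length $d(x,y)-1\le L-1$. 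Note that this is a genuinely different route from the paper's proof, which never isolates a minimal counterexample: it straightens the $Y$-path into a geodesic by flips, backtrack removals and triangle shortenings (Lemmas~\ref{lem:FlipGeod} and~\ref{lem:ShorteningPath}) and observes that each of these operations preserves containment in a connected, locally convex subgraph. That argument disposes of the ``wandering path'' difficulty --- exactly your missing case --- automatically, which is what makes it so short.
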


\noindent
In a graph $X$, a geodesic $\gamma \subset X$ can be turned into a new geodesic by applying a \emph{flip}. Namely, if there exists a convex even cycle $\alpha$ such that $\alpha \cap \gamma$ contains a subsegment $\alpha_0$ of length $\mathrm{lg}(\alpha)/2$, then replacing the subpath $\alpha_0$ in $\gamma$ by $\alpha \backslash \alpha_0$ yields the geodesic we are interested~in. 

\begin{lemma}\label{lem:FlipGeod}
Let $X$ be a graph satisfying the cycle condition and $\alpha, \beta \subset X$ two geodesics with the same endpoints. There exists a sequence of geodesics $\gamma_0=\alpha, \gamma_1, \ldots, \gamma_{n-1}, \gamma_n= \beta$ such that, for every $0 \leq i \leq n-1$, $\gamma_{i+1}$ is obtained from $\gamma_i$ by applying a flip.
\end{lemma}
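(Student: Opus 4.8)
The plan is to induct on the common length $n = \mathrm{lg}(\alpha) = \mathrm{lg}(\beta) = d(a,b)$, where $a,b$ denote the shared endpoints. Write $\alpha = (a = u_0, u_1, \ldots, u_n = b)$ and $\beta = (a = v_0, \ldots, v_n = b)$. The cases $n \leq 1$ are trivial since then the geodesic is unique. If $u_{n-1} = v_{n-1}$, then $\alpha$ and $\beta$ share their last edge $[u_{n-1},b]$, so their initial subpaths to $u_{n-1}$ are geodesics of length $n-1$ between the same endpoints; by the inductive hypothesis they are connected by a sequence of flips, and appending the fixed edge $[u_{n-1},b]$ turns that sequence into a sequence of flips connecting $\alpha$ and $\beta$. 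The substantive case is $u_{n-1} \neq v_{n-1}$, which I would resolve using the cycle condition at the terminal corner.

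Concretely, apply the cycle condition with $o = a$, $z = b$, $x = u_{n-1}$, $y = v_{n-1}$: one has $d(a,u_{n-1}) = d(a,v_{n-1}) = n-1 = d(a,b) - 1$ and $d(u_{n-1},b) = d(v_{n-1},b) = 1$, so there is a convex cycle $C$ of even length $2r$ containing the edges $[b,u_{n-1}]$ and $[b,v_{n-1}]$, whose vertex $w$ opposite to $b$ lies in $I(a,u_{n-1}) \cap I(a,v_{n-1})$. Label $C = (b = c_0, c_1 = u_{n-1}, c_2, \ldots, c_{2r-1} = v_{n-1}, c_0)$ so that $w = c_r$. I would then fix any geodesic $\sigma$ from $a$ to $w$ and define two geodesics from $a$ to $b$: $\alpha'$, obtained by following $\sigma$ and then the half $c_r, c_{r-1}, \ldots, c_1, c_0$ of $C$ through $u_{n-1}$; and $\beta'$, obtained by following the same $\sigma$ and then the other half $c_r, c_{r+1}, \ldots, c_{2r-1}, c_0$ through $v_{n-1}$. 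By construction $\alpha'$ and $\beta'$ agree along $\sigma$ and differ exactly by the two halves of $C$, so a single flip across $C$ sends $\alpha'$ to $\beta'$. Moreover $\alpha'$ ends with the edge $[u_{n-1},b]$, exactly like $\alpha$, and $\beta'$ ends with $[v_{n-1},b]$, exactly like $\beta$; hence by the first reduction (stripping the common last edge and invoking the inductive hypothesis on length $n-1$) the geodesic $\alpha$ is flip-connected to $\alpha'$ and $\beta'$ is flip-connected to $\beta$. Chaining $\alpha \to \cdots \to \alpha' \to \beta' \to \cdots \to \beta$ completes the induction.

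The hard part will be verifying that $\alpha'$ and $\beta'$ are genuinely geodesics, and this is exactly where the full strength of the cycle condition is needed. Since $C$ is convex it is isometrically embedded, so the distance between its vertices is measured along $C$; in particular $d(w,u_{n-1}) = r-1$. Combined with $w \in I(a,u_{n-1})$, this gives $d(a,w) = n-r$, and then the half of $C$ running from $w$ to $b$ through $u_{n-1}$ is distance-monotone from $a$, yielding $d(a,c_j) = n-j$ for each $0 \leq j \leq r$; the same holds for the other half via $w \in I(a,v_{n-1})$. Thus both $\alpha'$ and $\beta'$ are paths of length $n$ from $a$ to $b$, hence geodesics. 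I would also need to check the two bookkeeping points that make the induction legitimate: first, that the single move from $\alpha'$ to $\beta'$ is a valid flip, which holds because $\alpha' \cap C$ contains the subsegment $(c_r, \ldots, c_0)$ of length exactly $r = \mathrm{lg}(C)/2$, so replacing it by the complementary half of $C$ produces $\beta'$; and second, that a flip performed on an initial subgeodesic lifts to a flip on the whole geodesic, which is immediate since a flip fixes endpoints and the convex cycle it uses lies entirely within the subpath, leaving the appended terminal edge untouched.
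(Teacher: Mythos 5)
Your proof is correct and is essentially the paper's own argument in mirror image: the paper inducts on length and applies the cycle condition to the two distinct \emph{first} edges at the common starting endpoint, whereas you apply it to the two distinct \emph{last} edges at the common terminal endpoint, then in both cases the opposite vertex of the resulting convex cycle yields two auxiliary geodesics differing by a single flip, each flip-connected to $\alpha$ resp.\ $\beta$ by the induction hypothesis after stripping a shared edge. Your write-up additionally verifies explicitly that the auxiliary paths $\alpha',\beta'$ are geodesics (using convexity of $C$ and $w \in I(a,u_{n-1}) \cap I(a,v_{n-1})$), a point the paper leaves implicit, but the underlying approach is the same.
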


\begin{proof}
We argue by induction over the length of our geodesics. Fixing one common endpoint $x$ of $\alpha,\beta$, let $e,f$ denote the first edges of $\alpha,\beta$. If $e=f$, then one can apply our induction assumption to $\alpha\backslash e$ and $\beta \backslash f$. 

\medskip \noindent
\begin{minipage}{0.5\linewidth}
Otherwise, if $e \neq f$, we know from the cycle condition that these edges span a convex even cycle $C$ such that, following the notation from the figure on the right, $z \in I(y,a) \cap I(y,b)$. Fix an arbitrary geodesic $[z,y]$ between $z$ and $y$. By applying our induction assumption to $e \cup \alpha_{a,y}$ and $e \cup C_{a,z} \cup [z,y]$, to $f \cup \beta_{b,y}$ and $f \cup C_{b,z} \cup [z,y]$, and by observing that $e \cup C_{a,z} \cup [z,y]$ can be obtained from $f \cup C_{b,z} \cup [z,y]$ by applying a flip, the desired conclusion holds.
\end{minipage}
\begin{minipage}{0.48\linewidth}
\begin{center}
\includegraphics[width=0.8\linewidth]{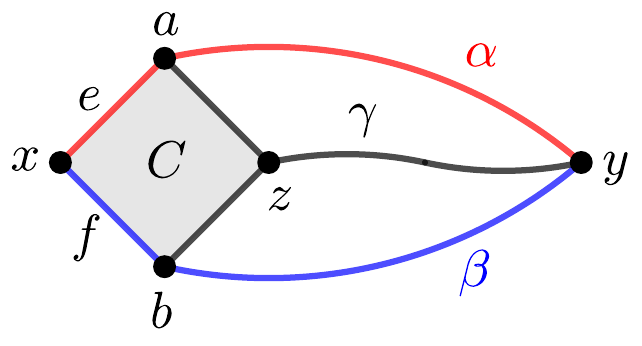}
\end{center}
\end{minipage}

\end{proof}

\noindent
Given a graph $X$, a path $\alpha = (x_0, \ldots, x_n) \subset X$ may be shortened in two ways. If there exists some $0 \leq i \leq n-2$ such that $x_i=x_{i+2}$, then one \emph{removes a backtrack} by replacing $\alpha$ with $(x_0, \ldots, x_i,x_{i+3}, \ldots, x_n)$. If there exists some $0 \leq i \leq n-2$ such that $x_i$ and $x_{i+2}$ are adjacent, one \emph{shortens a triangle} by replacing $\alpha$ with $(x_0, \ldots, x_i,x_{i+2}, \ldots, x_n)$. 

\begin{lemma}\label{lem:ShorteningPath}
Let $X$ be a graph satisfying the triangle and cycle conditions and $x,y \in X$ two vertices. An arbitrary path between $x$ and $y$ can be turned into a geodesic by applying flips, by removing backtracks, and by shortening triangles.
\end{lemma}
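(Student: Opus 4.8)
The plan is to argue by induction on the length of the path $\alpha=(x_0,\dots,x_n)$, with the goal of producing a geodesic between $x=x_0$ and $y=x_n$ using only the three permitted moves. If $\alpha$ is already a geodesic there is nothing to do, so assume $n=\mathrm{lg}(\alpha)>d(x,y)$. The key is to show that whenever a path is not geodesic, one of the three shortening/flipping moves strictly decreases its length (or at least, after a bounded number of flips, enables a length-decreasing move). Since lengths are non-negative integers, this terminates in a geodesic.

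\medskip \noindent
First I would locate the failure of geodesicity. If $\alpha$ is not a geodesic, there exists some index $i$ at which the path stops making progress toward $y$; more concretely, there is a first position where $d(x_0,x_{i+1}) \leq d(x_0,x_i)$, i.e. the distance to the basepoint $x_0$ does not increase when passing from $x_i$ to $x_{i+1}$. I would set $o=x_0$ and examine the local configuration at $x_i$. Two cases arise according to whether $d(o,x_{i+1})=d(o,x_i)-1$ (a genuine ``backward'' step) or $d(o,x_{i+1})=d(o,x_i)$ (a ``sideways'' step).

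\medskip \noindent
In the sideways case, $x_{i-1}$ and $x_{i+1}$ are two neighbours of $x_i$ that are equidistant from $o$ — or more directly one applies the \textbf{Triangle Condition} to the pair $x_{i-1},x_{i+1}$ together with $o$: since $d(x_{i-1},x_{i+1})\le 2$, if they are adjacent one simply shortens a triangle, and if they are at distance $2$ the triangle condition supplies a common neighbour $z\in I(o,x_{i-1})\cap I(o,x_{i+1})$ strictly closer to $o$, allowing us to reroute the subpath $(x_{i-1},x_i,x_{i+1})$ through $z$ and thereby reduce the distance to $o$ along the path. In the backward case, where $x_{i-1}$ and $x_{i+1}$ both lie strictly closer to $o$ than $x_i$, I would apply the \textbf{Cycle Condition} with the vertices $o, x_{i-1}, x_{i+1}, x_i$ (playing the roles of $o,x,y,z$): this produces a convex even cycle $C$ containing the edges $[x_i,x_{i-1}]$ and $[x_i,x_{i+1}]$, whose opposite vertex lies in $I(o,x_{i-1})\cap I(o,x_{i+1})$. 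The subpath $(x_{i-1},x_i,x_{i+1})$ is half of $C$, so a flip replaces it by the other half, which moves the offending vertex $x_i$ to the ``far side'' of the cycle, closer to $o$. In each case I have either shortened the path outright (removing a backtrack, which is the degenerate case $x_{i-1}=x_{i+1}$, or shortening a triangle) or replaced it by a path of the same length whose vertices are collectively closer to $o$.

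\medskip \noindent
The main obstacle I anticipate is the bookkeeping needed to guarantee termination, since a flip does not by itself reduce the length. The natural fix is to track a refined complexity measure, for instance the sum $\sum_j d(o,x_j)$ of distances to the basepoint, which the flip in the backward case strictly decreases; combined with the fact that triangle-shortenings and backtrack-removals decrease the genuine length, a lexicographic or weighted potential argument on the pair (length, sum of distances to $o$) should close the induction. A secondary subtlety is checking that after a flip the resulting sequence is still a legitimate path and that Lemma~\ref{lem:FlipGeod} (guaranteeing flips connect geodesics) is not secretly needed here — indeed, this lemma is really used for the geodesic-to-geodesic statement, whereas the present argument only needs the single-flip move to be well-defined, which the cycle condition directly provides. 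Once the potential is shown to decrease strictly under some applicable move at every non-geodesic path, termination and hence the conclusion follow.
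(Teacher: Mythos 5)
Your overall strategy (induction with a potential function, case analysis at the first step where the distance to the basepoint stops increasing) is reasonable in outline, but both of your cases invoke moves that are not actually available, and the root cause is the same: you dismiss Lemma~\ref{lem:FlipGeod} as unnecessary, whereas it is exactly the missing tool. In your ``backward'' case, the Cycle Condition produces a convex even cycle $C$ containing the edges $[x_i,x_{i-1}]$ and $[x_i,x_{i+1}]$, but $C$ may have length $2\ell>4$ --- this is the whole point of mediangle graphs (think of a dihedral $2\lambda(u,v)$-cycle in a Coxeter graph, or a hexagon in the hexagonal tiling). A flip requires the path to contain a subsegment of $C$ of length $\mathrm{lg}(C)/2=\ell$, while your path meets $C$ only along a subsegment of length $2$; so the claim ``the subpath $(x_{i-1},x_i,x_{i+1})$ is half of $C$, so a flip replaces it by the other half'' is false whenever $\ell>2$, and no single flip applies. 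Your ``sideways'' case is worse: since the prefix up to $x_i$ is a geodesic, $d(o,x_{i-1})=d(o,x_i)-1$ while $d(o,x_{i+1})=d(o,x_i)$, so $x_{i-1}$ and $x_{i+1}$ are neither adjacent nor equidistant from $o$, and the Triangle Condition simply does not apply to that pair; it applies to the adjacent equidistant pair $x_i,x_{i+1}$ and yields a vertex $z$ adjacent to $x_i$ and $x_{i+1}$ --- but $z$ does not lie on your path, and ``rerouting through $z$'' is not among the three permitted operations.

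The paper repairs both defects with one idea. Let $b$ be the farthest vertex such that $\alpha_{x,b}$ is a geodesic and $c$ its successor. In the sideways case ($d(x,c)=d(x,b)$), the Triangle Condition gives $p$ adjacent to $b,c$ with $d(x,p)=d(x,b)-1$; then $[x,p]\cup[p,b]$ is a \emph{second geodesic} from $x$ to $b$, and Lemma~\ref{lem:FlipGeod} converts $\alpha_{x,b}$ into it using flips only; now $p$ lies on the path and one shortens the triangle $(p,b,c)$, dropping the length by one. In the backward case ($d(x,c)=d(x,b)-1$), the Cycle Condition applied to $x$, the predecessor $a$ of $b$, $c$, and $b$ gives the convex even cycle $C$ with opposite vertex $p\in I(x,a)\cap I(x,c)$; the path $[x,p]\cup C_{p,c}\cup C_{c,b}$ is again a geodesic from $x$ to $b$, so Lemma~\ref{lem:FlipGeod} turns the prefix into it by flips, after which the path backtracks along $[b,c]$ and one removes that backtrack. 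In both cases the length strictly decreases and the induction closes, so your auxiliary potential is not even needed. The lesson is that rerouting through a vertex off the path \emph{is} legitimate, but only because any two geodesics with the same endpoints are flip-equivalent --- precisely the content of the lemma you set aside.
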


\begin{proof}
We argue by induction on the length of our path $\alpha$ between $x$ and $y$. Let $b \in \alpha$ denote the farthest vertex from $x$ such that the subsegment of $\alpha$ connecting $x$ to $b$ is a geodesic. Let $a$ (resp. $c$) denote its predecessor (resp. its successor) along $\alpha$. Two cases may happen, depending whether $d(x,c)=d(x,b)$ or $d(x,c)=d(x,b)-1$.
\begin{center}
\includegraphics[width=\linewidth]{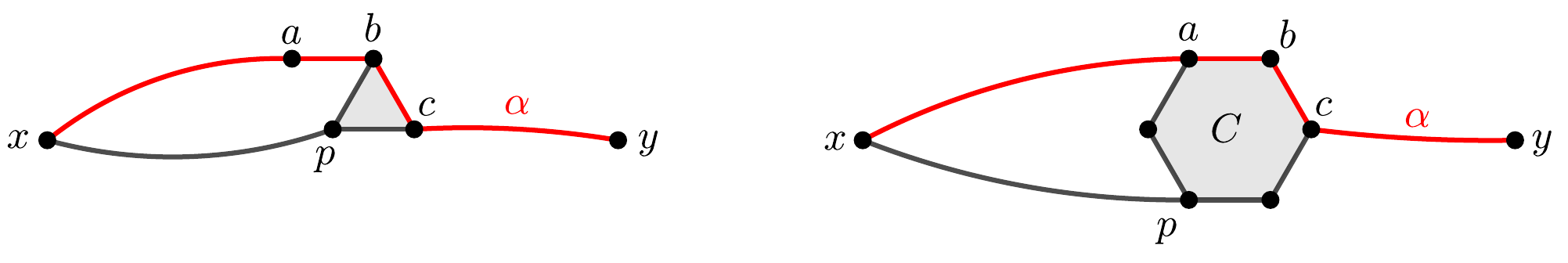}
\end{center}
In the first case, let $p \in X$ denote the vertex obtained by applying the triangle condition. By fixing an arbitrary geodesic $[x,p]$ between $x$ and $p$, we know from Lemma~\ref{lem:FlipGeod} that $[x,p] \cup [p,c] \cup \alpha_{c,y}$ can be obtained from $\alpha$ by applying flips. Next, $[x,p] \cup [p,c] \cup \alpha_{c,y}$ can be shortened by shortening the triangle $(b,c,p)$. The conclusion follows by induction. In the second case, let $p \in X$ denote the vertex obtained by applying the cycle condition. By fixing an arbitrary geodesic $[x,p]$ between $x$ and $p$, we know from Lemma~\ref{lem:FlipGeod} that $[x,p] \cup C_{p,c} \cup C_{c,b} \cup \alpha_{b,y}$ can be obtained from $\alpha$ by applying flips. Next, one can remove a backtrack along the edge $[b,c]$ and obtain the shorter path $[x,p] \cup C_{p,c} \cup \alpha_{c,y}$. Again, the conclusion follows by induction.
\end{proof}

\begin{proof}[Proof of Proposition~\ref{prop:ConvexCriterion}.]
Let $a,b \in Y$ be two vertices. Because $Y$ is connected, there exists a path $\alpha \subset Y$ between $a$ and $b$. According to Lemmas~\ref{lem:FlipGeod} and~\ref{lem:ShorteningPath}, any geodesic between $a$ and $b$ can be obtained from applying flips, removing backtracks, and shortening triangles. All these operations leave a path of $Y$ in $Y$. We conclude that $Y$ is convex. The converse is clear. 
\end{proof}

\subsection{Long convex cycles}

\noindent
This subsection is dedicated to the proof of the following key property satisfied by mediangle graphs:

\begin{thm}\label{thm:LongCyclesConvex}
In a mediangle graph, a convex cycle of even length $>4$ is gated.
\end{thm}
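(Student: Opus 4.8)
The plan is to show that any convex even cycle $C=(c_0,\dots,c_{2n-1})$ with $2n>4$ is gated by producing, for each $x$, a gate inside $C$, and to funnel the whole difficulty into a single combinatorial fact: \emph{no $3$-cycle shares an edge with $C$}. I would first reduce gatedness to this fact by induction on $d(x,C)$. Recall that since $C$ is convex, $d_C$ agrees with the ambient distance, so gatedness for $x$ amounts to $d(x,c)=d(x,y)+d(y,c)$ for all $c\in C$, where $y$ is a closest vertex of $C$ to $x$. Choosing the offending $c$ at minimal cycle-distance from $y$, its predecessor $c^-$ satisfies the gate identity, whence $d(x,c)=d(x,c^-)$: a \emph{plateau} on the consecutive pair $c,c^-$. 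Applying the Triangle Condition to $x,c,c^-$ yields a common neighbour $w\in I(x,c)\cap I(x,c^-)$ with $d(x,w)=d(x,c)-1$, and by the no-$K_4^-$ condition together with Lemma~\ref{lem:InterClique} one checks $w\notin C$ and $w$ is adjacent to no other vertex of $C$; thus $w$ is the apex of a $3$-cycle straddling the edge $\{c,c^-\}$. If $d(x,C)\geq 2$ then $d(w,C)=1<d(x,C)$, so the induction hypothesis forces a unique closest point of $C$ to $w$, contradicting $d(w,c)=d(w,c^-)=1$. Hence everything reduces to the base level $d(x,C)=1$, which is exactly the standalone claim that no vertex adjacent to two consecutive vertices $c_0,c_1$ of $C$ can lie outside $C$.

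To prove this claim I would exploit the convexity of $C$ to pin all distances from $x$. Since $I(c_0,c_j),I(c_1,c_j)\subseteq C$ while $x\notin C$, the vertex $x$ lies in neither interval, which rules out $d(x,c_j)=d_C(c_0,c_j)-1$ and $d(x,c_j)=d_C(c_1,c_j)-1$; combined with the upper bounds coming from $x\sim c_0,c_1$, this squeezes the distance to the exact value $d(x,c_j)=1+\min\!\big(d_C(c_0,c_j),d_C(c_1,c_j)\big)$ for every $j$ (in particular $x$ has no further neighbour on $C$). Reading this off at the antipodal edge gives $d(x,c_n)=d(x,c_{n+1})=n$, a second plateau, and the Triangle Condition again produces an apex $w\sim c_n,c_{n+1}$ with $w\notin C$ and $d(x,w)=n-1$. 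The same formula applied to $w$, together with the identity $\min_0(c_j)+\min_n(c_j)=n-1$ between the distances of $c_j$ to the two antipodal edges, shows that a geodesic $\mu=[x,w]$ meets $C$ only at its endpoints, and that $c_n$ has the two one-closer neighbours $c_{n-1}$ and $w$.

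The endgame is to convert this into two distinct convex even cycles sharing at least two edges, contradicting the Intersection of Even Cycles. The route I favour uses the flip calculus of Lemma~\ref{lem:FlipGeod}: the path $(x,c_1,\dots,c_n,w)$ and the direct geodesic $\mu$ both extend to geodesics between $x$ and $c_n$, so Lemma~\ref{lem:FlipGeod} joins them by flips across convex even cycles. Some flip must replace the edge $c_{n-1}c_n$ incident to $c_n$ by $wc_n$; calling $G$ the convex even cycle realising it, one has $G\neq C$ because $G$ contains the edge $wc_n\notin C$. The goal is then to argue that $G$ runs along $C$ for a subpath of length at least $2$, so that $G$ and $C$ are two distinct convex even cycles sharing two edges, the desired contradiction. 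It may be cleaner to package this as an induction on the length $2n$, treating the hexagon $n=3$ directly and using the antipodal triangle to descend to a shorter convex even cycle carrying a straddling triangle.

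I expect this last paragraph to be the main obstacle. The Triangle and Cycle Conditions are mere existence statements, and they are already satisfied by $C$ together with the straddling triangles, so they do not by themselves manufacture a new cycle to contradict; the contradiction must be extracted from the global geodesic combinatorics. Controlling where the relevant flip occurs and guaranteeing an overlap of at least two edges with $C$ (rather than a single shared edge, which the axioms permit) is the delicate point; the exact distance formula and the location of the antipodal plateau established above are the tools I would lean on to force this overlap.
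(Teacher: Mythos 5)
Your overall skeleton is genuinely close to the paper's: the paper likewise deduces gatedness from the statement that no $3$-cycle shares an edge with $C$ (this is exactly Proposition~\ref{prop:InterCycleTriangle}), by walking along $C$ from a closest vertex and using the Triangle Condition to kill plateaus; and your antipodal configuration, with the distance formula $d(x,c_j)=1+\min(d_C(c_0,c_j),d_C(c_1,c_j))$ and the second apex $w$ adjacent to $c_n,c_{n+1}$ with $d(x,w)=n-1$, is precisely the configuration the paper builds. The genuine gap is the one you flag yourself: the endgame of the straddling-triangle claim, which is the heart of the theorem. Lemma~\ref{lem:FlipGeod} only asserts the existence of \emph{some} sequence of flips joining the two geodesics from $x$ to $c_n$; it gives no control over where the flips occur, so at the moment the terminal edge at $c_n$ first changes, the current geodesic need no longer run along $C$, and the convex even cycle $G$ realising that flip may share only the single edge $[c_{n-1},c_n]$ with $C$ --- which the Intersection of Even Cycles axiom permits. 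The missing idea is the paper's Lemma~\ref{lem:PreHyp}: applied to the pairs of edges $([c_0,x],[c_{n+1},w])$ and $([x,c_1],[w,c_n])$, it produces a ladder of convex even cycles joining the two triangles, whose intermediate vertices lie on geodesics that convexity forces into $C$; the intersection axiom then forces every rung of the ladder to be a $4$-cycle, and Lemma~\ref{lem:StronglyParallelCliques} converts the two ladders into a decomposition of $C$ inside the product of a strictly shorter cycle with a clique, contradicting convexity of $C$. Without this parallelism argument (or a substitute for it), the theorem is not proved.

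There is also a smaller, fixable gap in your reduction. From ``$c^-$ satisfies the gate identity and $c$ is offending'' you only get $d(x,c)\leq d(x,c^-)$, hence $d(x,c)\in\{d(x,c^-),\,d(x,c^-)-1\}$: the plateau is not automatic. The decreasing case is genuinely possible a priori and is handled in the paper by a separate argument, which you would need to reproduce: if $d(x,c)=d(x,c^-)-1$, then (writing $c^{--}$ for the next vertex towards $y$; if $c^-=y$ the decrease contradicts minimality of $y$ outright) one also has $d(x,c^{--})=d(x,c^-)-1$, so the Cycle Condition applies at $c^-$ and yields a convex even cycle containing the two edges $[c^-,c]$ and $[c^-,c^{--}]$; by the intersection axiom this cycle must be $C$ itself, so the antipode of $c^-$ in $C$ lies in $I(x,c^-)$, at distance $d(x,c^-)-n\leq d(x,y)-1$ from $x$, contradicting the choice of $y$ as a closest vertex. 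Finally, a minor remark: your induction on $d(x,C)$ is superfluous, since the apex $w$ produced by the Triangle Condition can never lie in $C$ ($C$ being induced of length $\geq 6$), so any plateau, at any distance, directly yields a straddling triangle.
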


\noindent
We begin by an elementary observation about $4$-cycles.

\begin{lemma}\label{lem:StronglyParallelCliques}
Let $X$ be a graph with no induced copy of $K_4^-$ that satisfies the triangle condition. If there exists a $4$-cycle having two opposite edges in two distinct cliques $C_1$ and $C_2$, then the subgraph of $X$ spanned by $C_1 \cup C_2$ decomposes as the product of $C_1$ with an edge.
\end{lemma}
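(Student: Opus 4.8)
The plan is to label the $4$-cycle cyclically as $a_1, a_2, b_2, b_1$, with the opposite edges $\{a_1,a_2\} \subset C_1$ and $\{b_1,b_2\} \subset C_2$, so that the two remaining edges of the cycle record the adjacencies $a_1 \sim b_1$ and $a_2 \sim b_2$. These already exhibit $a_1 \mapsto b_1$ and $a_2 \mapsto b_2$ as the first two values of a prospective matching $\phi \colon C_1 \to C_2$, and the goal is to extend $\phi$ to a bijection realising the subgraph spanned by $C_1 \cup C_2$ as two disjoint copies of the clique $C_1$ joined by a perfect matching (and nothing more), which is precisely the product of $C_1$ with an edge.

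First I would clear away the degenerate configurations using only the no-$K_4^-$ hypothesis together with Lemma~\ref{lem:InterClique} (each edge lies in a unique clique). If a diagonal of the cycle, say $\{a_1, b_2\}$, were an edge, then $a_1, a_2, b_2$ would form a triangle; depending on whether the other diagonal $\{a_2,b_1\}$ is present one gets either an induced $K_4^-$ on $\{a_1,a_2,b_1,b_2\}$ or a $K_4$, and in the latter case the common clique of the four vertices would contain both edges $\{a_1,a_2\}$ and $\{b_1,b_2\}$, forcing $C_1 = C_2$ --- a contradiction either way. Hence the cycle is induced. The same two ingredients then give $C_1 \cap C_2 = \emptyset$ (a common vertex would be adjacent to all four cycle vertices, producing an induced $K_4^-$) and the key rigidity statement that each vertex of $C_1$ has at most one neighbour in $C_2$ and vice versa (two neighbours $w, w'$ of $u \in C_1$ lying in $C_2$ would yield a triangle $u, w, w'$, hence $u \in C_2$ by uniqueness of the clique through $\{w,w'\}$, contradicting disjointness).

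The heart of the argument is producing, for every $u \in C_1 \setminus \{a_1, a_2\}$, a neighbour inside $C_2$. Here I would invoke the triangle condition with the basepoint taken to be $u$ itself, applied to the adjacent pair $b_1, b_2 \in C_2$. Indeed $u \sim a_1 \sim b_1$ and $u \sim a_2 \sim b_2$ give $d(u,b_1), d(u,b_2) \le 2$, while the at-most-one-neighbour statement forces $u \not\sim b_1$ and $u \not\sim b_2$, so $d(u, b_1) = d(u, b_2) = 2$; the triangle condition then yields a common neighbour $z$ of $b_1, b_2$ with $d(u,z) = 1$. Since $z$ is adjacent to the two endpoints of the edge $\{b_1, b_2\}$, the triangle $z, b_1, b_2$ lies in the unique clique through that edge, namely $C_2$, so $z \in C_2$ is the sought neighbour, and we set $\phi(u) := z$. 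I expect this step --- choosing the basepoint to be the very vertex whose partner one is hunting, and then checking that the resulting common neighbour genuinely lands in $C_2$ --- to be the only non-routine move; everything before it is a formal consequence of the no-$K_4^-$ hypothesis.

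Finally I would assemble the product structure. Uniqueness of the neighbour makes $\phi \colon C_1 \to C_2$ well defined, and the symmetric partner map $\psi \colon C_2 \to C_1$ is its two-sided inverse (if $u \sim w$ with $u \in C_1$ and $w \in C_2$, then necessarily $\phi(u) = w$ and $\psi(w) = u$), so $\phi$ is a bijection and in particular $|C_1| = |C_2|$. The at-most-one-neighbour property says the only edges between $C_1$ and $C_2$ are the matching edges $\{u, \phi(u)\}$, so the subgraph spanned by $C_1 \cup C_2$ is exactly two disjoint copies of $C_1$ joined by a perfect matching, i.e. the product of $C_1$ with an edge, as claimed.
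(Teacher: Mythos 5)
Your proof is correct and takes essentially the same approach as the paper's: the same no-$K_4^-$/unique-clique reasoning (Lemma~\ref{lem:InterClique}) yields the at-most-one-neighbour statement, and your key existence step --- applying the triangle condition with basepoint $u$ to an adjacent pair in the other clique, then forcing the resulting common neighbour into $C_2$ --- is exactly the paper's argument. The only difference is organisational: you first establish inducedness of the $4$-cycle and disjointness of $C_1,C_2$, whereas the paper runs the same contradiction directly through the conclusion $C_1=C_2$.
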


\begin{proof}
First, notice that no vertex of $C_1$ is adjacent to two vertices of $C_2$. Otherwise, Lemma~\ref{lem:InterClique} implies that $C_1$ and $C_2$ intersect. Let $p \in C_1 \cap C_2$ be a vertex in this intersection. Either $p$ belongs to our $4$-cycle, and we deduce from Lemma~\ref{lem:InterClique} that $C_1=C_2$; or $p$ is adjacent to all the vertices of our $4$-cycle, and we deduce again from Lemma~\ref{lem:InterClique} that $C_1=C_2$. Thus, every vertex of $C_1$ is adjacent to at most one vertex of $C_2$. Similarly, a vertex of $C_2$ cannot be adjacent to two vertices of $C_1$. 

\medskip \noindent
Next, we claim that every vertex $p \in C_1$ is adjacent to a vertex of $C_2$. Let $a_1,b_1,a_2,b_2 \in X$ denote the vertices of our $4$-cycle, with $a_1,b_1 \in C_1$, $a_2,b_2 \in C_2$, and $a_1$ (resp. $b_1$) adjacent to $a_2$ (resp. $b_2$). If $p \in \{a_1,b_1\}$, there is nothing to prove. Otherwise, $d(p,a_2)=d(p,b_2)=2$ since $a_2$ and $b_2$ cannot be adjacent to two vertices of $C_1$. By applying the triangle condition, it follows that there exists a common neighbour $q \in X$ of $p,a_2,b_2$. We know from Lemma~\ref{lem:InterClique} that $q$ belongs to $C_2$. 

\medskip \noindent
Thus, we have proved that every vertex of $C_1$ is adjacent to exactly one vertex of $C_2$. Similarly, a vertex of $C_2$ is adjacent to exactly one vertex of $C_1$. The desired conclusion follows.
\end{proof}

\noindent
The following proposition is the core of the proof of Theorem~\ref{thm:LongCyclesConvex}. Observe that its conclusion does not hold for $4$-cycles since the product of a $4$-cycle with a $3$-cycle is a mediangle graph.

\begin{prop}\label{prop:InterCycleTriangle}
In a mediangle graph, the intersection between a convex cycle of even length $>4$ and a $3$-cycle contains at most one vertex.
\end{prop}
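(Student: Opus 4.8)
Suppose for contradiction that a convex even cycle $\gamma$ of length $2n>4$ and a $3$-cycle $T$ share at least two vertices. Because $\gamma$ is convex it is isometrically embedded, hence chordless; so the two common vertices, being adjacent in $T$, are consecutive along $\gamma$ and span an edge $e$ of $\gamma$. Write $\gamma=(v_0,v_1,\dots,v_{2n-1})$ with $e=\{v_0,v_1\}$, and let $z$ be the third vertex of $T$. If $z$ lay on $\gamma$, it would be adjacent to the two consecutive vertices $v_0,v_1$, producing a chord of the (chordless) cycle of length $\geq 6$, which is absurd; hence $z\notin\gamma$.

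First I would show that $z$ is adjacent to no vertex of $\gamma$ other than $v_0,v_1$. For the two cycle-neighbours $v_2$ and $v_{2n-1}$, an adjacency $z\sim v_2$ (resp.\ $z\sim v_{2n-1}$) would make $\{v_0,v_1,z,v_2\}$ (resp.\ $\{v_0,v_1,z,v_{2n-1}\}$) an induced $K_4^-$, forbidden by the Intersection of Triangles condition. For any remaining vertex $v_j$, that is $j\notin\{0,1,2,2n-1\}$, one of the cycle-distances $d(v_0,v_j),d(v_1,v_j)$ is at least $3$, whereas the length-two path $v_0,z,v_j$ (resp.\ $v_1,z,v_j$) would force that distance to be $\leq 2$, contradicting that $\gamma$ is isometric.

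Next I would pin down the position of $z$ opposite $e$. The triangle inequality gives $d(z,v_n),d(z,v_{n+1})\in\{n-1,n\}$, and the value $n-1$ is impossible: indeed $(v_0,z,\dots,v_n)$ would then be a geodesic joining the two vertices $v_0,v_n$ of $\gamma$ yet passing through $z\notin\gamma$, against convexity. Hence $d(z,v_n)=d(z,v_{n+1})=n$. Since $v_n\sim v_{n+1}$ are equidistant from $z$, the Triangle Condition with basepoint $z$ yields a common neighbour $p$ of $v_n,v_{n+1}$ with $p\in I(z,v_n)\cap I(z,v_{n+1})$ and $d(z,p)=n-1$. Thus $\{v_n,v_{n+1},p\}$ is a $3$-cycle sharing the antipodal edge $e^\ast=\{v_n,v_{n+1}\}$ of $\gamma$, and the analysis above applies to it symmetrically (so $p$ meets $\gamma$ only in $v_n,v_{n+1}$ and $d(p,v_0)=d(p,v_1)=n$).

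The plan is now to convert this antipodal triangle into a second convex even cycle meeting $\gamma$ in more than one edge, contradicting the Intersection of Even Cycles condition. Since $p$ and $v_{n-1}$ both lie in $I(z,v_n)$ at distance $n-1$ from $z$ and are both adjacent to $v_n$, the Cycle Condition with basepoint $z$ (taking $w=v_n$, $x=p$, $y=v_{n-1}$) produces a convex even cycle carrying the $\gamma$-edge $\{v_{n-1},v_n\}$ together with the triangle-edge $\{v_n,p\}$; combining this with a geodesic from $z$ to $p$ and the edge $\{z,v_1\}$ should realise a convex even cycle that runs along the arc $v_1,\dots,v_n$ of $\gamma$. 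The main obstacle is precisely the convexity bookkeeping for this cycle: one must verify that it shares at least two consecutive edges with $\gamma$ and that it is genuinely convex, i.e.\ isometric and chordless (a naive concatenation is not isometric for large $n$, so the Cycle Condition must be invoked to supply convexity). This is exactly where the hypothesis $\mathrm{lg}(\gamma)>4$ is essential: for a $4$-cycle one gets $d(z,p)=1$, so $z\sim p$ and the configuration closes up into the product of a $4$-cycle with a triangle, consistent with that product being mediangle; whereas for length $>4$ the triangle cannot be transported coherently around $\gamma$, and the forced overlap of two convex even cycles produces the contradiction.
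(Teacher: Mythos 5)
Your setup is correct and coincides with the first half of the paper's own proof of Proposition~\ref{prop:InterCycleTriangle}: ruling out $d(z,v_n)=n-1$ by convexity, concluding $d(z,v_n)=d(z,v_{n+1})=n$, and producing the antipodal apex $p$ with $d(z,p)=n-1$ via the Triangle Condition is exactly how the paper begins (your extra verification that $z$ has no further neighbours on $\gamma$ is correct but not needed). The genuine gap is the endgame, which you explicitly leave open, and it is where all the difficulty of the proposition lives. The Cycle Condition applied to $(z,p,v_{n-1},v_n)$ produces \emph{some} convex even cycle through the two incident edges $[v_n,p]$ and $[v_n,v_{n-1}]$, with no control on its length; it shares only the single edge $[v_{n-1},v_n]$ with $\gamma$, which the Intersection of Even Cycles axiom permits. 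There is no mechanism in the axioms that certifies that your prescribed concatenation ($[z,v_1]$, the arc $v_1,\dots,v_n$, $[v_n,p]$, a geodesic back from $p$ to $z$) is convex: convexity is the \emph{conclusion} of the Cycle Condition for an unspecified cycle, never a property you can impose on a cycle you have drawn. Worse, the contradiction you aim for provably never materialises: one checks (using convexity of $\gamma$ as in your step ruling out $d(z,v_n)=n-1$) that $d(z,v_m)+d(v_m,p)\geq n+1>n-1$ for every vertex $v_m$ of $\gamma$, so $I(z,p)\cap \gamma=\emptyset$; consequently any convex even cycle having $[v_j,s]$ and $[v_{j+1},s']$ as opposite edges, with $s,s'\in I(z,p)$, cannot equal $\gamma$, hence shares at most one edge with $\gamma$, hence is forced to be a $4$-cycle. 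The axiom you want to violate instead forces all such cycles to be squares.

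The paper bridges precisely this gap with two lemmas that your argument neither uses nor replicates. Lemma~\ref{lem:PreHyp}, applied to the two pairs of ``parallel'' edges joining the triangle at $[v_0,v_1]$ to the antipodal triangle at $[v_n,v_{n+1}]$, produces a ladder of convex even cycles along each half of $\gamma$; convexity of $\gamma$ together with the Intersection of Even Cycles condition forces every rung of these ladders to be a $4$-cycle (the phenomenon described above). Then Lemma~\ref{lem:StronglyParallelCliques} is applied to the cliques of the two triangles and to these squares, transporting the triangle coherently along $\gamma$ and showing that $\gamma$ lies in the product of a strictly shorter cycle with a clique. The final contradiction is with the \emph{convexity} of $\gamma$ (the shorter cycle factor provides a shortcut between antipodal vertices), not with the Intersection of Even Cycles axiom. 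So your configuration is the right one, but the argument that kills it is missing, and the mechanism you sketch for it would not work as stated.
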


\noindent
Our proof of the proposition will rely on the following lemma, which will be also useful in the next subsection on hyperplanes.

\begin{lemma}\label{lem:PreHyp}
Let $X$ be a graph satisfying the cycle condition. Assume that the intersection between any two distinct convex cycles contains at most one edge. Let $(a,b)$ and $(x,y)$ be two pairs of adjacent vertices satisfying $d(a,x)=d(b,y)=d(a,y)-1=d(b,x)-1$. There exist two sequences of vertices
$$r_0=a, r_1, \ldots, r_{k-1},r_k=x \text{ and } s_0=b,s_1, \ldots, s_{k-1},s_k=y$$
lying respectively on geodesics from $a$ to $x$ and $b$ to $y$ such that, for every $0 \leq i \leq k-1$, $[r_i,s_i]$ and $[r_{i+1},s_{i+1}]$ are opposite edges in a convex even cycle.
\end{lemma}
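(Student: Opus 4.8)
The plan is to argue by induction on the common value $d := d(a,x) = d(b,y)$, building the ladder one cell at a time starting from the edge $[a,b]$ and advancing towards $[x,y]$. If $d=0$ then $a=x$ and $b=y$, so there is nothing to prove (take $k=0$). Assume $d\geq 1$. The first task is to produce the convex even cycle containing $[a,b]$ whose opposite edge is the next rung. Since $y\in I(b,x)$, choose a neighbour $b_1$ of $b$ lying on a geodesic from $b$ to $y$; a short computation gives $d(x,b_1)=d=d(x,a)$ while $d(x,b)=d+1$, and $b_1\neq a$ because $d(a,y)=d+1\neq d-1$. Applying the cycle condition with basepoint $o=x$, apex $z=b$ and the two neighbours $a,b_1$ yields a convex cycle $C$ of even length $2m$ containing the edges $[b,a]$ and $[b,b_1]$, whose vertex $r_1$ opposite to $b$ lies in $I(x,a)\cap I(x,b_1)$. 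Let $[r_1,s_1]$ be the edge of $C$ opposite to $[a,b]$, labelled so that $r_1$ is opposite $b$ and $s_1$ is opposite $a$; convexity of $C$ forces $d(a,r_1)=d(b,s_1)=m-1$.

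Next I would record the distances that are forced. From $r_1\in I(x,a)$ and $d(a,r_1)=m-1$ one gets $d(x,r_1)=d':=d-(m-1)$. Sandwiching $d(r_1,y)$ between $d(a,y)-d(a,r_1)=(d+1)-(m-1)=d'+1$ and $d(r_1,x)+d(x,y)=d'+1$ pins down $d(r_1,y)=d'+1$, and symmetrically $d(s_1,x)=d'+1$. The remaining and crucial relation is $d(s_1,y)=d'$, which is exactly what is needed for the opposite edge $[r_1,s_1]$ to satisfy the hypotheses of the lemma again, now with the strictly smaller parameter $d'<d$ (note $m\geq 2$, so progress is genuine).

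Here the hypothesis on intersections of convex cycles enters. I would run the cycle condition a second time with $o=y$, $z=a$ and the two neighbours $b$ and $v$, where $v$ is the neighbour of $a$ on $C$ that lies on the geodesic arc $a\to r_1$; one checks $d(y,v)=d=d(y,b)$ and $v\neq b$, so the condition applies and produces a convex even cycle $C'$ containing the edges $[a,b]$ and $[a,v]$. But $C$ already contains these same two edges, so $C$ and $C'$ share two edges and must coincide by the assumption that distinct convex cycles meet in at most one edge. The cycle condition applied to $C'=C$ then places the vertex opposite to $a$ — which is precisely $s_1$ — inside $I(y,b)$, whence $d(y,b)=d(y,s_1)+d(s_1,b)$ and $d(s_1,y)=d-(m-1)=d'$. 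With $(r_1,s_1)$ and $(x,y)$ now satisfying the hypotheses with parameter $d'$, the induction hypothesis supplies the remaining rungs; since $r_1\in I(a,x)$ and $s_1\in I(b,y)$, prepending $r_0=a$, $s_0=b$ keeps all the $r_i$ (resp. $s_i$) on a single geodesic from $a$ to $x$ (resp. $b$ to $y$), while $[r_0,s_0]=[a,b]$ is opposite $[r_1,s_1]$ in the convex even cycle $C$.

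I expect the main obstacle to be precisely the equality $d(s_1,y)=d'$: the one‑sided triangle‑inequality estimates only give $d(s_1,y)\geq d'$, and the reverse inequality cannot be read off from the metric alone. The key is the coincidence $C=C'$, i.e. that the cell advancing $[a,b]$ towards $x$ is the very same cell advancing it towards $y$; this is exactly where the hypothesis on intersections of convex cycles is indispensable, and it is the step most likely to hide a subtlety — in particular one must choose the second apex's neighbour $v$ to be the $C$‑neighbour of $a$ (rather than an arbitrary geodesic neighbour towards $x$), so that $C$ and $C'$ genuinely share two edges and equality is forced.
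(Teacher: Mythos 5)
Your proof is correct and follows essentially the same route as the paper's: induct on $d(a,x)$, apply the cycle condition once to produce a convex even cycle through $[a,b]$, apply it a second time from the opposite basepoint through a second edge of that cycle, invoke the at-most-one-edge intersection hypothesis to force the two cycles to coincide (which yields the crucial missing distance equality $d(s_1,y)=d'$), and then recurse on the opposite edge. The only difference is cosmetic: the paper starts from a neighbour of $a$ towards $x$ with basepoint $y$ and you start from a neighbour of $b$ towards $y$ with basepoint $x$, which is the mirror image of the same argument.
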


\begin{proof}
We argue by induction over $d(a,x)=d(b,y)$, which we denote by $d$. If $d=0$, there is nothing to prove, so we assume that $d \geq 1$. Let $p$ be the neighbour of $a$ along an arbitrary geodesic from $a$ to $x$. Because $d(y,p)=d=d(b,y)$ and $d(a,y)=d+1$, we can apply the cycle condition and we find convex even cycle $C$ spanned by the edges $[a,b],[a,p]$ such that the opposite vertex of $a$, say $a'$, belongs to $I(a,y)$. See the figure below.

\medskip \noindent
Let $q$ denote the neighbour of $b$ in $C$ distinct from $a$. Observe that $d(q,x) \leq d(q,y)+d(y,x) = d$ and $d(q,x) \geq d(x,b)-d(b,q) = d$, hence $d(q,x)=d=d(x,a)$. So we know from the cycle condition that the edges $[b,a],[b,q]$ span a convex even cycle $C'$ such that the opposite vertex of $b$, say $b'$, belongs to $I(b,x)$. But $C \cap C'$ contains $[b,a]$ and $[b,q]$, so our condition on the intersection of convex cycles implies that $C=C'$. 

\medskip \noindent
\begin{minipage}{0.5\linewidth}
Let $\ell$ denote half the length of $C$. We have $d(b',x)=d(b,x)-d(b,b')= d-\ell +1$ and $d(a',y)=d(a,y)-d(a,a')=d-\ell+1$. We also have $d(b',y)=d(a,y)-d(a,b')=d-\ell+2$ and $d(a',x)=d(b,x)-d(b,a')=d-\ell +2$. Thus, we can apply our induction assumption to the edges $[b',a'],[x,y]$ and the desired conclusion follows.
\end{minipage}
\begin{minipage}{0.48\linewidth}
\begin{center}
\includegraphics[width=0.7\linewidth]{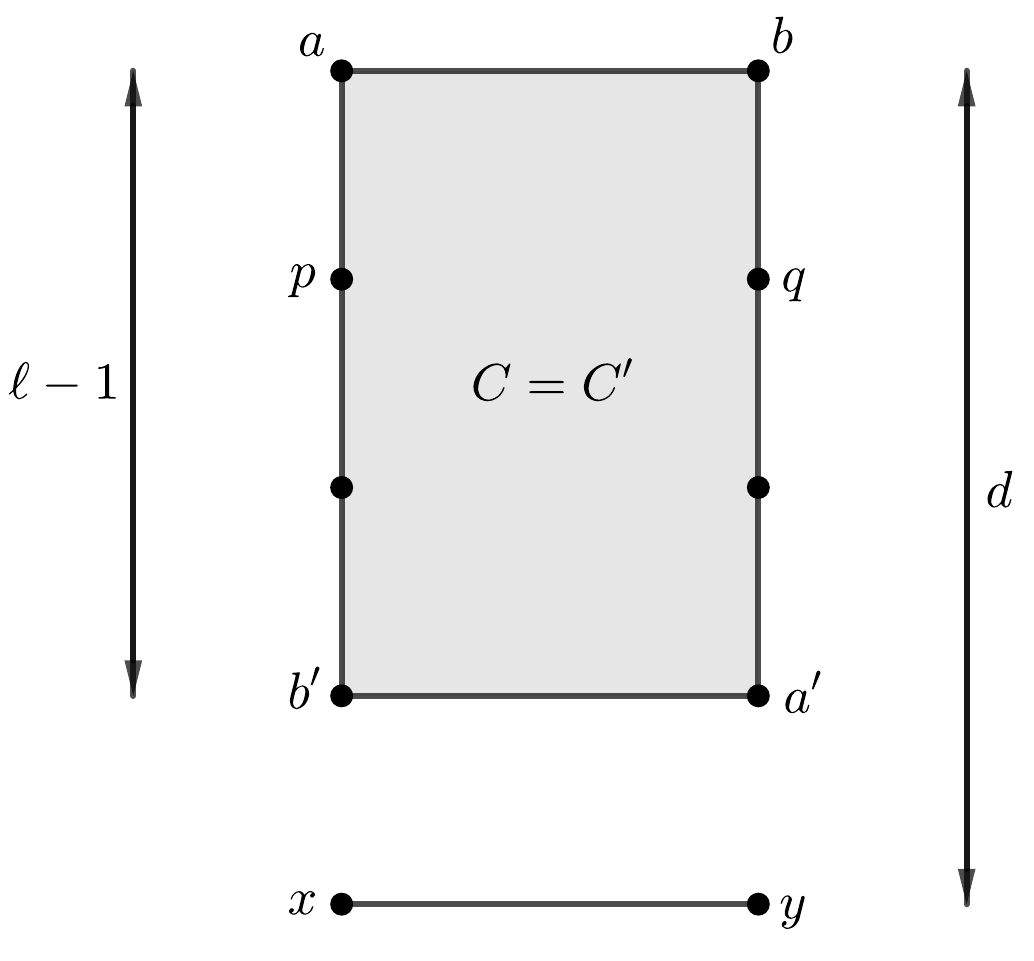}
\end{center}
\end{minipage}
\end{proof}

\begin{proof}[Proof of Proposition~\ref{prop:InterCycleTriangle}.]
Let $X$ be a mediangle graph. Assume for contradiction that there exists a convex cycle $C$ of even length $>4$ sharing an edge with a $3$-cycle. To fix the notation, let $x_0, \ldots, x_{2n-1}$ denote the vertices of $C$ indexed in a cyclic order and let $p \in X$ denote a common neighbour of $x_0$ and $x_1$. Observe that $d(p,x_n) \leq d(p,x_1)+d(x_1,x_n) = n$ and $d(p,x_n) \geq  d(x_n,x_0)-d(x_0,p)=n-1$. But, if $d(p,x_n)=n-1$, then $p$ belongs to $I(x_0,x_n)$, hence $p \in C$ by convexity. Since this is impossible, we must have $d(p,x_n)=n$. One proves similarly that $d(p,x_{n+1})=n$.

\medskip \noindent
\begin{minipage}{0.5\linewidth}
It follows from the triangle condition that $x_n$ and $x_{n+1}$ have a common neighbour $q$ satisfying $d(p,q)=n-1$. By the same argument as in the previous paragraph, one shows that $d(q,x_0)=n=d(q,x_1)$. Thus, one can apply Lemma~\ref{lem:PreHyp} to the edges $[x_0,p],[x_{n+1},q]$, and obtain two sequences of vertices $r_0, \ldots, r_k$ and $s_0, \ldots, s_k$ respectively from $x_0$ to $x_{n+1}$ and from $p$ to $q$. By convexity of $C$, $r_0, \ldots, r_k \in C$. Because of our condition on the intersections of convex cycles, $r_i$ and $r_{i+1}$ must be adjacent for every $0 \leq i \leq k-1$. Thus, we obtain a sequence of $4$-cycles as illustrated by the figure on the right. We apply the same argument to the edges $[p,x_1],[q,x_n]$.
\end{minipage}
\begin{minipage}{0.48\linewidth}
\begin{center}
\includegraphics[width=0.6\linewidth]{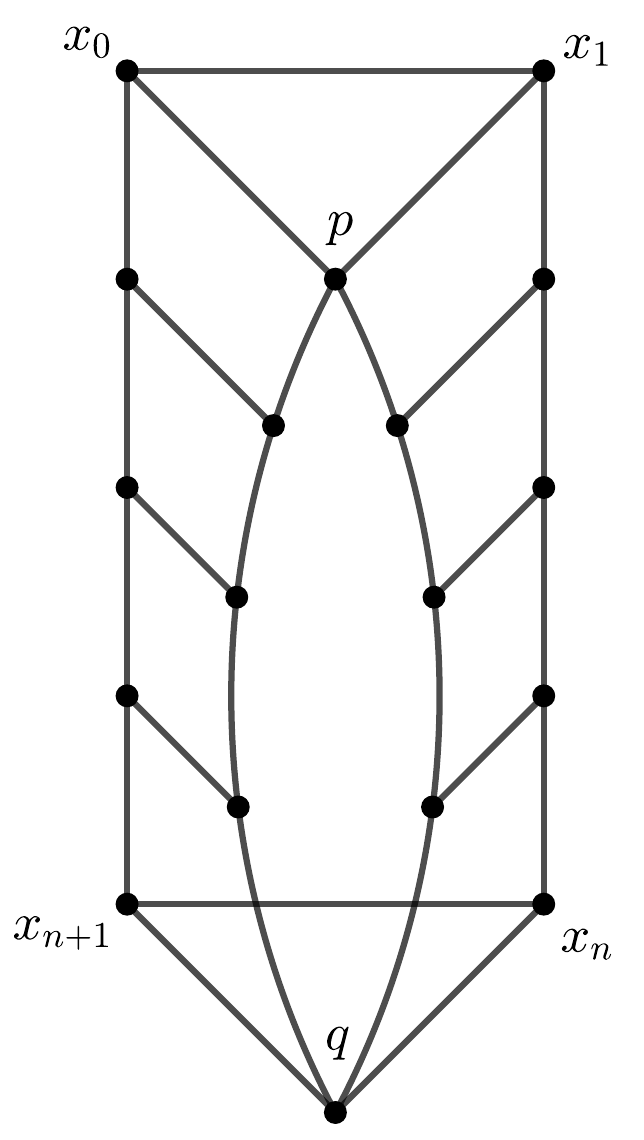}
\end{center}
\end{minipage}

\medskip \noindent
\begin{minipage}{0.48\linewidth}
\begin{center}
\includegraphics[width=0.7\linewidth]{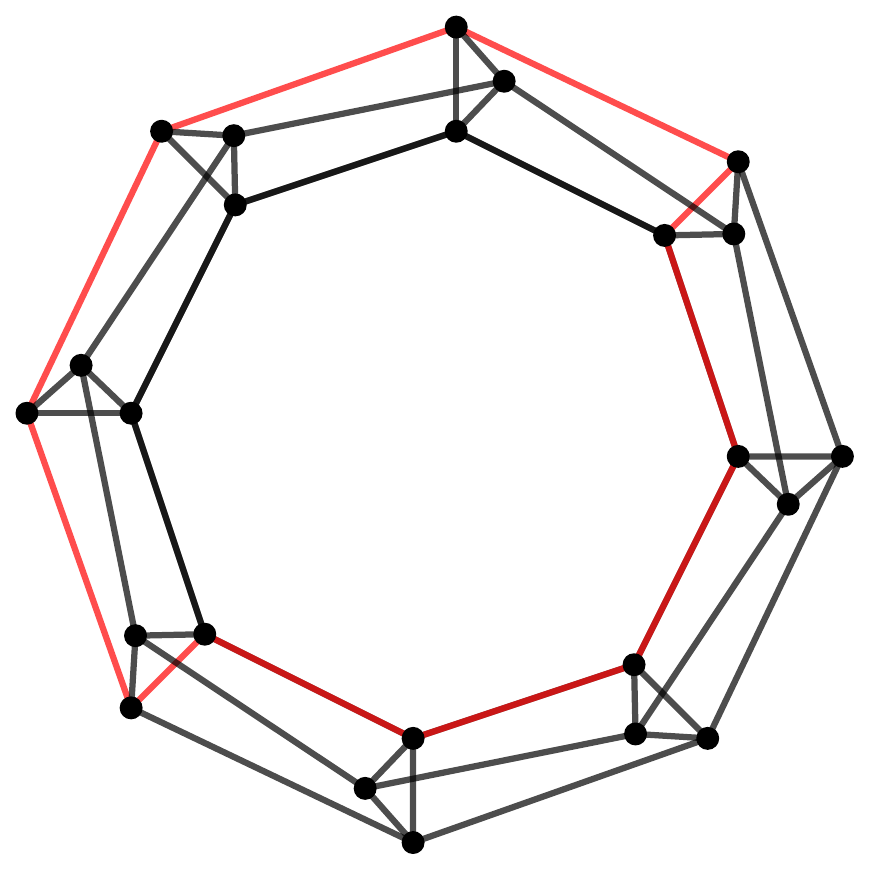}
\end{center}
\end{minipage}
\begin{minipage}{0.5\linewidth}
Now, we apply Lemma~\ref{lem:StronglyParallelCliques} to the cliques containing $3$-cycles $(p,x_0,x_1), (q,x_n,x_{n+1})$ and the edges parallel to $[x_0,p],[p,x_1]$ in all our squares. It follows that our cycle $C$ lies in the product of a shorter cycle with a clique, contradicting the convexity of $C$.
\end{minipage}

\end{proof}

\begin{proof}[Proof of Theorem~\ref{thm:LongCyclesConvex}.]
Let $X$ be a mediangle graph and $C$ is a convex cycle of length $>4$. Fix a vertex $p \in X$ and let $q \in C$ be a vertex minimising the distance to $p$. Denote by $q' \in C$ the vertex opposite to $q$. Let $x_0, \ldots, x_\ell \in C$ denote the vertices of a subpath in $C$ from $q$ to $q'$. We claim that $d(p,x_i)>d(p,x_{i+1})$ for every $0 \leq i \leq \ell-1$. This will prove that $q$ is the gate of $p$ in $C$.

\medskip \noindent
We argue by induction over $i$. We know that $d(p,x_1) \geq d(p,x_0)$, by the definition of $q$; and we know that $d(p,x_1) \neq d(p,x_0)$, since otherwise we would contradict Proposition~\ref{prop:InterCycleTriangle} by applying the triangle condition. So our claim holds for $i=0$. Now, assume that $i \geq 1$. If $d(p,x_{i+1}) \geq d(p,x_i)$, two cases may happen. Either $d(p,x_i)=d(p,x_{i+1})$, but again this would contradict Proposition~\ref{prop:InterCycleTriangle} by applying the triangle condition; or $d(p,x_{i+1})=d(p,x_i)-1$. In the latter case, we have $d(p,x_{i+1})=d(p,x_{i-1})$ because of our induction assumption. The cycle condition implies that the edges $[x_i,x_{i-1}],[x_i,x_{i+1}]$ span a convex even cycle $C'$ such that the opposite vertex of $x_i$, say $x_i'$, belongs to $I(p,x_i)$. But our condition on the intersections of convex cycles implies that $C'=C$. In particular, $x_i'$ belongs to $C$. We have
$$d(p,x_i') = d(p,x_i)- \ell \leq d(p,q)+d(q,x_i)- \ell < d(p,q),$$
contradicting our definition of $q$. Thus, $d(p,x_{i+1})<d(p,x_i)$, as desired.
\end{proof}

\subsection{Hyperplanes}

\noindent
In this subsection, we introduce hyperplanes and prove their main properties. Let us begin by fixing some definitions and notations.

\medskip \noindent
Given a mediangle graph $X$, a \emph{hyperplane} is an equivalence class of edges with respect to the transitive closure of the relation that identifies any two edges that belong to a common $3$-cycle or that are opposite in a convex even cycle. Given a hyperplane $J$, we denote by $X \backslash \backslash J$ the graph with the same vertices as $X$ and whose edges are those of $X$ that do not belong to $J$. The connected components of $X \backslash \backslash J$ are the \emph{sectors delimited by $J$}. The \emph{carrier} $N(J)$ of $J$ is the union of all the convex cycles containing edges in $J$, and the connected components of $N(J) \backslash \backslash J$ are the \emph{fibres}. Two vertices of $X$ are \emph{separated by $J$} if they lie in distinct sectors. Two hyperplanes are \emph{transverse} if they contain two distinct pairs of opposite edges in some convex even cycle.

\medskip \noindent
We sum up the main results of the subsection in the following statement:

\begin{thm}\label{thm:BigHyp}
Let $X$ be a mediangle graph. The following assertions hold.
\begin{itemize}
	\item[(i)] Hyperplanes separate $X$. More precisely, given a hyperplane $J$ and a clique $C \subset J$, any two vertices in $C$ are separated by $J$ and every sector delimited by $J$ contains a vertex of $C$. 
	\item[(ii)] Sectors delimited by hyperplanes are always convex.
	\item[(iii)] A path is a geodesic if and only if it crosses each hyperplane at most once. As a consequence, the distance between any two vertices coincides with the number of hyperplanes separating them.
\end{itemize}
\end{thm}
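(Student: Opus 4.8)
The plan is to organise everything around one invariant, the collection of hyperplanes that a geodesic crosses, and to feed into it the structural results already available. I would begin with the elementary but crucial fact that \emph{cliques are gated}. This follows from the triangle condition and the absence of induced $K_4^-$ (Lemma~\ref{lem:InterClique}): if $c$ is a vertex of a clique $C$ closest to some $x$ and a second vertex $c'\in C$ satisfied $d(x,c')=d(x,c)$, the triangle condition would produce a common neighbour $z$ of $c,c'$ with $d(x,z)=d(x,c)-1$, and $z$ would lie in $C$ since $[c,c']$ lies in a unique clique, contradicting minimality. This gate map will drive the separation statement. The second preliminary is \emph{flip-invariance}: opposite edges of a convex even cycle lie in the same hyperplane, so a flip exchanges one half of the cycle for the other without changing the multiset of hyperplanes crossed. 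Combined with Lemma~\ref{lem:FlipGeod}, this shows that the set of hyperplanes crossed by a geodesic depends only on its endpoints; in particular, \emph{some} geodesic crosses a given hyperplane if and only if \emph{every} one does.

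The heart of the theorem, and what I expect to be the main obstacle, is the claim that a geodesic crosses each hyperplane \emph{at most once}. I would prove it by a minimal counterexample: a geodesic $x_0,\dots,x_n$ with $n$ minimal among those having two edges in a common hyperplane $J$. Minimality forces exactly the extreme edges $[x_0,x_1]$ and $[x_{n-1},x_n]$ to lie in $J$, and a gate computation shows the gate of $x_n$ in the clique on $[x_0,x_1]$ is $x_1$ (and symmetrically), so these two edges sit across $J$ in opposite directions. The aim is then to assemble the chain of elementary moves relating them into a single controllable configuration: using the triangle and cycle conditions at the far endpoint, together with the gatedness of cliques (respectively of the long convex even cycle through $J$, Theorem~\ref{thm:LongCyclesConvex}), I would transform these extreme edges into a pair of genuinely parallel edges of $J$, apply the ladder of Lemma~\ref{lem:PreHyp}, and collapse it, via the Intersection of Even Cycles axiom, into a single convex even cycle having $[x_0,x_1]$ and $[x_{n-1},x_n]$ as opposite edges. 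Convexity of that cycle then yields $d(x_0,x_n)=n-2$, contradicting $d(x_0,x_n)=n$. The delicate points are the reduction from an arbitrary chain of elementary equivalences to one merged cycle, and the need to handle separately the \emph{thick} regime (where $J$ contains a clique of size $\ge 3$ and the relevant cycles are $4$-cycles treated by Lemma~\ref{lem:StronglyParallelCliques}) from the \emph{long} regime (a convex even cycle of length $>4$); here Proposition~\ref{prop:InterCycleTriangle} is what guarantees the two regimes never interfere inside a single hyperplane.

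Granting the at-most-once claim, assertion (iii) is immediate: the length of a geodesic equals the number of hyperplanes it crosses, which by flip-invariance is an invariant of its endpoints, and a path crossing each hyperplane at most once has length equal to that number and is therefore a geodesic. For the separation statement (i), I would fix a clique $C\subset J$ and study the gate map $\pi=\mathrm{proj}_C$. The two facts to establish are that an edge outside $J$ preserves the gate (if the gates of its endpoints differed they would be adjacent vertices of $C$, and the resulting $4$-cycle would place the edge opposite an edge of $C$, forcing it into $J$), and that an edge of $J$ changes the gate (immediate when the edge lies in $C$, and otherwise supplied by the same local hyperplane structure used above). Since every vertex on a geodesic from $x$ to $\pi(x)$ has gate $\pi(x)$, such a geodesic avoids $J$, so $x$ lies in the sector of $\pi(x)$; this makes $\pi$ constant on sectors, shows every sector meets $C$, and, as $\pi$ fixes each vertex of $C$, shows distinct vertices of $C$ lie in distinct sectors.

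Finally, convexity of sectors (ii) follows cleanly: if $u,v$ lie in one sector then $\pi(u)=\pi(v)$, so by the at-most-once claim any geodesic between them crosses $J$ either zero or one time, and crossing once would change the gate and contradict $\pi(u)=\pi(v)$; hence the geodesic avoids $J$ entirely and, being connected and starting in the sector of $u$, stays inside it. (One can alternatively phrase this through the local convexity criterion of Proposition~\ref{prop:ConvexCriterion}, since flips, backtrack removals and triangle shortenings never leave a sector.) I would close by recording the distance formula of (iii) as the direct combination of the at-most-once claim with flip-invariance, and by noting that the whole argument reduces, at its two pressure points, to the same structural input: the local product/ladder description of a hyperplane's carrier, which is where the combinatorial work is concentrated.
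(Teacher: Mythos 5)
Your overall architecture is sound and close in spirit to the paper's: gates on cliques drive the separation statement, flip-invariance via Lemma~\ref{lem:FlipGeod} makes the set of crossed hyperplanes an invariant of the endpoints, and your assembly of (i), (ii) and the easy direction of (iii) from two local facts (an edge outside $J$ preserves the gate on $C$, an edge of $J$ changes it) is correct. But there is a genuine gap at what you yourself call the heart. Your minimal-counterexample argument needs the two antiparallel extreme edges $[x_0,x_1]$, $[x_{n-1},x_n]$ to end up as opposite edges of a \emph{single} convex even cycle, and the proposed mechanism fails twice over. First, the ``transform into genuinely parallel edges'' step cannot be carried out as described: in your configuration the gate in the clique $C_1$ on $[x_0,x_1]$ of both $x_{n-1}$ and $x_n$ is $x_1$, so no vertex of $C_1$ is strictly closer to $x_n$ than $x_1$, and the parallelism hypothesis $d(a,x)=d(b,y)=d(a,y)-1=d(b,x)-1$ of Lemma~\ref{lem:PreHyp} is unattainable for $C_1$ and the last edge. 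Second, and more fundamentally, the Intersection of Even Cycles axiom does not ``collapse'' a ladder into one cycle: a ladder is a sequence of \emph{distinct} convex even cycles in which consecutive cycles share exactly one edge (a rung), which is precisely what the axiom permits. Already in the grid $\mathbb{Z}^2$, two parallel edges of one hyperplane at distance $k$ are joined by $k$ distinct $4$-cycles that do not merge. So the contradiction $d(x_0,x_n)=n-2$ is never reached.

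What is missing is a tool controlling the arbitrary chain of elementary relations (common $3$-cycles, opposite edges in convex even cycles) by which the two extreme edges are declared to lie in the same hyperplane: this chain can wander anywhere in $X$, and nothing in your argument tames it. The paper's solution is exactly Lemma~\ref{lem:ProjectionPartitions}: the projection-partitions of any two cliques in the same hyperplane coincide, proved by induction along that chain using Theorem~\ref{thm:LongCyclesConvex} in the long regime and Lemma~\ref{lem:StronglyParallelCliques} in the thick one. This lemma is also the unproven assertion hiding in your part (i), where ``an edge of $J$ changes the gate \dots\ supplied by the same local hyperplane structure used above'' defers to the broken step (similarly, your justification of the other fact via ``the resulting $4$-cycle'' should be Lemma~\ref{lem:PreHyp}: the gates may be far from $x,y$, so there is a ladder, not a single square). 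With Lemma~\ref{lem:ProjectionPartitions} in hand, your minimal counterexample dies instantly: $[x_{n-1},x_n]$ lies in $J$, so its endpoints must have distinct projections on $C_1$, yet you computed both gates to be $x_1$. For comparison, the paper proves (i) first from this lemma together with Lemmas~\ref{lem:PreHyp} and~\ref{lem:ShorteningPath}, then (ii), and only then (iii) by showing that a path crossing a hyperplane twice can be shortened; your reversed order is viable, but only once Lemma~\ref{lem:ProjectionPartitions} replaces the ladder-collapse step.
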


\begin{remark}
Theorem~\ref{thm:BigHyp} focuses on the results needed in the rest of the article, but more can be said about hyperplanes. For instance:
\begin{itemize}
	\item[(iv)] Carriers and fibres delimited by hyperplanes are always isometrically embedded.
	\item[(v)] Carriers, fibres, and sectors delimited by square-hyperplanes are always gated. Moreover, for every square-hyperplane $J$, for every clique $C \subset J$, and for every fibre $F \subset J$, the map $x \mapsto (\mathrm{proj}_F(x), \mathrm{proj}_C(x))$ defines an isometry $N(J) \to F \times C$.
\end{itemize}
Here, a \emph{square-hyperplane} refers to a hyperplane crossing only $3$- and $4$-cycles. Observe that, contrary to quasi-median graphs, sectors may not be gated. For instance, in the graph given by the hexagonal tiling of the Euclidean plane, no sector is gated.
\end{remark}

\noindent
We begin by proving two elementary observations.

\begin{lemma}\label{lem:4CycleConvex}
In a graph satisfying the cycle condition, every induced $4$-cycle is convex.
\end{lemma}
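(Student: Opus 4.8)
The plan is to reduce convexity of an induced $4$-cycle to a single application of the cycle condition, choosing the basepoint so that the ``opposite vertex'' it produces is forced to be a specific vertex of the cycle. Write the induced $4$-cycle as $C$ with vertices $a,b,c,d$ in cyclic order, so that the edges are $[a,b],[b,c],[c,d],[d,a]$ and the diagonals $ac,bd$ are non-edges. Since $C$ is induced, $a{-}b{-}c$ is a geodesic and $d(a,c)=2$; likewise $d(b,d)=2$. Convexity only needs to be checked on the two diagonal pairs (adjacent pairs being trivial), so it suffices to show $I(a,c)$ and $I(b,d)$ are contained in $\{a,b,c,d\}$. By the symmetric roles of the two diagonals, it is enough to treat one of them.

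The key step is to apply the cycle condition with $o=a$, $z=c$, $x=b$, $y=d$. I would first check the hypotheses: $d(a,b)=d(a,d)=1$ and $d(a,c)=2$, so $d(o,x)=d(o,y)=d(o,z)-1$, while $d(b,c)=d(d,c)=1$ gives $d(x,z)=d(y,z)=1$. The cycle condition then yields a convex even cycle $\gamma$ containing the edges $[c,b]$ and $[c,d]$ whose vertex opposite to $c$ lies in $I(a,b)\cap I(a,d)$. Since $a,b$ are adjacent, $I(a,b)=\{a,b\}$, and similarly $I(a,d)=\{a,d\}$, so their intersection is the single vertex $\{a\}$. Hence the vertex of $\gamma$ opposite to $c$ is exactly $a$.

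It remains to identify $\gamma$ with $C$. Here I would use that a convex cycle is isometrically embedded: if $\mathrm{lg}(\gamma)=2\ell$, then the vertex opposite to $c$ is at distance $\ell$ from $c$ inside $\gamma$, hence at distance $\ell$ in $X$ by convexity. Since that opposite vertex is $a$ and $d_X(a,c)=2$, we get $\ell=2$, so $\gamma$ is a $4$-cycle. Its two edges at $c$ go to $b$ and $d$, and its vertex opposite $c$ is $a$, so $\gamma$ has vertices $c,b,a,d$ in cyclic order; that is, $\gamma=C$. As $\gamma$ is convex by construction, $C$ is convex. I do not anticipate a serious obstacle: the only point requiring a little care is justifying that a convex even cycle is isometrically embedded (so that ``opposite vertex'' forces the length), which follows immediately from the definition of convexity applied to the two vertices $a$ and $c$.
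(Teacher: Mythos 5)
Your proof is correct and is essentially the same argument as the paper's: the paper's (terser) proof likewise notes that an induced $4$-cycle is isometrically embedded, applies the cycle condition (with basepoint a vertex of the cycle, exactly as you do, so that the opposite vertex is forced into $I(a,b)\cap I(a,d)=\{a\}$), and uses convexity of the resulting even cycle to force it to coincide with the original $4$-cycle. Your write-up simply spells out the details (identifying the opposite vertex and pinning down the length via isometric embedding) that the paper leaves implicit.
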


\begin{proof}
An induced $4$-cycle is isometrically embedded. Therefore, as a consequence of the cycle condition, two opposite vertices belong to a convex $4$-cycle. Its convexity implies that it has to coincide with the original $4$-cycle.
\end{proof}

\noindent
Lemma~\ref{lem:4CycleConvex} is of course false for longer cycles. For instance, $3$-cubes contains isometrically embedded $6$-cycles that are not convex. 

\begin{lemma}\label{lem:CliquesGated}
Let $X$ be a graph satisfying the triangle condition with no $K_4^-$. Cliques in $X$ are gated.
\end{lemma}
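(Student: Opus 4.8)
The plan is to identify the gate of an arbitrary vertex $x \in X$ with the vertex of $C$ that minimises the distance to $x$, and to verify the two defining properties of a gate: that this minimiser is unique, and that it lies on a geodesic from $x$ to every vertex of $C$. Let $d := \min_{c \in C} d(x,c)$ and let $y \in C$ realise this minimum.

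The heart of the argument — and the step I expect to be the main obstacle — is uniqueness of the minimiser. I would argue by contradiction: suppose $a, b \in C$ are distinct vertices both with $d(x,a) = d(x,b) = d$. Since $C$ is complete, $a$ and $b$ are adjacent, so the triangle condition (applied with basepoint $x$) produces a common neighbour $z$ of $a$ and $b$ lying in $I(x,a) \cap I(x,b)$, whence $d(x,z) = d-1$. The triple $\{z,a,b\}$ spans a triangle contained in some clique $C'$; but the edge $\{a,b\}$ then lies in both $C$ and $C'$, so Lemma~\ref{lem:InterClique} forces $C = C'$ and in particular $z \in C$. This contradicts the minimality of $d$, since $d(x,z) = d - 1 < d$. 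Hence $y$ is the unique vertex of $C$ at distance $d$ from $x$.

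The gate property then follows quickly. For any $c \in C$ with $c \neq y$, uniqueness of the minimiser gives $d(x,c) > d$, while the triangle inequality gives $d(x,c) \leq d(x,y) + d(y,c) = d + 1$, using that $y$ and $c$ are adjacent in the clique $C$. Therefore $d(x,c) = d+1 = d(x,y) + d(y,c)$, which is exactly the statement that $y \in I(x,c)$. As this holds for every $c \in C$, the vertex $y$ is a gate for $x$, and since $x$ was arbitrary we conclude that $C$ is gated.

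The whole argument uses only the triangle condition and the absence of induced $K_4^-$ (through Lemma~\ref{lem:InterClique}), so it applies in the stated generality. The only delicate point is the uniqueness step, where the clique-intersection lemma is precisely what rules out a ``shortcut'' vertex $z$ sitting outside $C$; the rest is bookkeeping with the triangle inequality.
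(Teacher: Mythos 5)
Your proof is correct and follows essentially the same route as the paper: uniqueness of the distance-minimiser in $C$ via the triangle condition combined with Lemma~\ref{lem:InterClique}, after which the minimiser is the gate. The only difference is that you spell out the final verification that the minimiser lies in $I(x,c)$ for every $c \in C$, which the paper leaves implicit.
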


\begin{proof}
Let $C \subset X$ be a clique and $p \in X$ a vertex. If there exist two vertices $a,b \in C$ minimising the distance to $p$, then the triangle condition implies that $a,b$ have a common neighbour $c \in X$ satisfying $d(p,c)=d(p,C)-1$. Lemma~\ref{lem:InterClique} implies that $c$ belongs to $C$, a contradiction. Thus, there exists a unique vertex in $C$ minimising the distance to $p$. It defines the gate of $p$ in $C$. 
\end{proof}

\noindent
Given a gated clique $C$ in a graph $X$, we refer to the partition $\{ \mathrm{proj}_C^{-1}(x) \mid x \in C\}$ of $X$ as the \emph{projection-partition associated to $C$}. 

\begin{lemma}\label{lem:ProjectionPartitions}
Let $X$ be a mediangle graph. The projection-partitions defined by two cliques $C_1,C_2$ lying in the same hyperplane coincide. 
\end{lemma}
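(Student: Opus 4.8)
The plan is to prove the stronger statement that $\mathrm{proj}_{C_2} = \phi \circ \mathrm{proj}_{C_1}$ for a canonical bijection $\phi : C_1 \to C_2$ attached to the hyperplane, since this immediately gives $\mathrm{proj}_{C_2}^{-1}(\phi(x)) = \mathrm{proj}_{C_1}^{-1}(x)$ for every $x \in C_1$, hence equality of the two partitions. The projections make sense because cliques are gated by Lemma~\ref{lem:CliquesGated}. My first move is to reduce to an elementary step: since each edge lies in a unique clique by Lemma~\ref{lem:InterClique} and two edges sharing a $3$-cycle lie in the same clique, the chain of elementary moves witnessing that $C_1,C_2$ lie in a common hyperplane can be regrouped into a sequence of cliques $C_1 = D_0, D_1, \ldots, D_m = C_2$ in which consecutive ones carry two opposite edges of a single convex even cycle. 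As equality of partitions is transitive, it suffices to handle one such step, so I assume henceforth that $C_1,C_2$ carry opposite edges of a convex even cycle $C$, and I split according to whether $\mathrm{lg}(C) > 4$ or $\mathrm{lg}(C)=4$.

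When $\mathrm{lg}(C)>4$, Proposition~\ref{prop:InterCycleTriangle} forces $C_1,C_2$ to be single edges $C_1=\{a_1,b_1\}$ and $C_2=\{a_2,b_2\}$, and I let $\phi$ pair the endpoints joined by the two arcs of $C$. The key leverage is that $C$ is gated by Theorem~\ref{thm:LongCyclesConvex}: writing $g = \mathrm{proj}_C(v)$, the gate identity $d(v,z)=d(v,g)+d(g,z)$ for $z\in C$ shows that minimising $d(v,\cdot)$ over $C_1$ (resp. $C_2$) is the same as minimising $d(g,\cdot)$, so both $\mathrm{proj}_{C_1}(v)$ and $\mathrm{proj}_{C_2}(v)$ are computed inside $C$ starting from $g$. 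The desired identity then reduces to a direct computation in the cycle $C$: deleting the two opposite edges $C_1,C_2$ splits $C$ into two arcs, each vertex lies on exactly one arc, and the endpoints of that arc are precisely the gates of $g$ in $C_1$ and $C_2$, which correspond under $\phi$.

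The case $\mathrm{lg}(C)=4$ is the delicate one, and I expect it to be the main obstacle, because a convex $4$-cycle need not be gated, so one cannot project onto $C$ as above. Here Lemma~\ref{lem:StronglyParallelCliques} realises $C_1 \cup C_2$ as a product $C_1 \times K_2$, yielding the bijection $\phi$ sending each $x\in C_1$ to its unique neighbour in $C_2$. I would argue by contradiction: set $x = \mathrm{proj}_{C_1}(v)$ and suppose $\mathrm{proj}_{C_2}(v)=\phi(x')$ with $x'\neq x$. The clique gate conditions give $d(v,x')=d(v,x)+1$ and $d(v,\phi(x))=d(v,\phi(x'))+1$; combining these with the triangle inequalities along the edges $x\sim\phi(x)$ and $x'\sim\phi(x')$ pins down $d(v,x)=d(v,\phi(x'))=:k$ and $d(v,x')=d(v,\phi(x))=k+1$. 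Now $\phi(x)$, at distance $k+1$, is adjacent to both $x$ and $\phi(x')$, each at distance $k$, so the Cycle Condition produces a convex even cycle $D$ through the edges $[\phi(x),x]$ and $[\phi(x),\phi(x')]$ whose vertex opposite $\phi(x)$ lies in $I(v,x)$.

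To close the contradiction, observe that the induced $4$-cycle on $x,\phi(x),\phi(x'),x'$ is convex by Lemma~\ref{lem:4CycleConvex} and already contains the two edges $[\phi(x),x]$ and $[\phi(x),\phi(x')]$; hence by the Intersection of Even Cycles axiom $D$ coincides with this $4$-cycle. Its vertex opposite $\phi(x)$ is $x'$, forcing $x'\in I(v,x)$ and thus $d(v,x')\le d(v,x)$, contradicting $d(v,x')=d(v,x)+1$. This establishes $\mathrm{proj}_{C_2}=\phi\circ\mathrm{proj}_{C_1}$ in the elementary step, and transitivity of equality of partitions then finishes the proof. The only genuinely subtle point is the $4$-cycle case; everything else is bookkeeping and a computation inside a single cycle.
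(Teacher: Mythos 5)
Your proof is correct and follows essentially the same route as the paper: the same reduction to a single convex even cycle followed by iteration, the same use of Proposition~\ref{prop:InterCycleTriangle} and Theorem~\ref{thm:LongCyclesConvex} to factor both projections through the gated long cycle, and the same contradiction in the $4$-cycle case via Lemma~\ref{lem:StronglyParallelCliques}, the cycle condition, Lemma~\ref{lem:4CycleConvex}, and the intersection-of-even-cycles axiom. The only cosmetic difference is that you package the conclusion as the identity $\mathrm{proj}_{C_2}=\phi\circ\mathrm{proj}_{C_1}$, which the paper expresses via the map $\sigma$ sending each vertex of $C_1$ to its unique neighbour in $C_2$.
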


\begin{proof}
We assume that $C_1$ and $C_2$ contain opposite edges from a convex even cycle $C$, the general case following by iteration. 

\medskip \noindent
First, assume that $C$ has length $>4$. As a consequence of Proposition~\ref{prop:InterCycleTriangle}, $C_1$ and $C_2$ are edges. Because $C$ is gated according to Theorem~\ref{thm:LongCyclesConvex}, the projection on $C_1$ (resp. $C_2$) coincides with the projection on $C$ followed by the projection on $C_1$ (resp. $C_2$). As a consequence, if we set $C_1= \{a_1,b_1\}$ and $C_2= \{a_2,b_2\}$ with $d(a_1,a_2)<d(a_1,b_2)$, then $\mathrm{proj}_{C_1}^{-1}(a_1)=\mathrm{proj}_{C_2}^{-1}(a_2)$ and $\mathrm{proj}_{C_1}^{-1}(b_1)= \mathrm{proj}_{C_2}^{-1} (b_2)$.

\medskip \noindent
Next, assume that $C$ is a $4$-cycle. According to Lemma~\ref{lem:StronglyParallelCliques}, $C_1 \cup C_2$ decomposes as a product of $C_1$ with an edge. In other words, every vertex in $C_1$ (resp. $C_2$) is adjacent to exactly one vertex in $C_2$ (resp. $C_1$). We claim that, for every vertex $p \in X$, if $a_1 \in C_1$ denotes its projection on $C_1$, then its projection on $C_2$ is the unique vertex $a_2 \in C_2$ adjacent to $a_1$. This is allows us to conclude since this observation implies that 
$\mathrm{proj}_{C_1}^{-1}(x)= \mathrm{proj}_{C_2}^{-1}(\sigma(x))$ where $\sigma : C_1 \to C_2$ denotes the map that sends each vertex of $C_1$ to its unique neighbour in $C_2$.

\medskip \noindent
Assume for contradiction that the projection $b_2 \in C_2$ of $p$ is distinct from $a_2$. So $d(p,a_2)=d(p,b_2)+1$. For convenience, we set $k:=d(p,a_1)$. Observe that
$$k+1 =d(p,a_1)+d(a_1,b_1)= d(p,b_1) \leq d(p,b_2)+d(b_2,b_1) = d(p,b_2)+1,$$
hence $d(p,b_2) \geq k$, or equivalently $d(p,a_2) \geq k+1$. We also have $|d(p,a_2)-k| \leq 1$. Consequently, $d(p,a_2)=k+1$ and $d(p,b_2)=k$. 

\medskip \noindent
\begin{minipage}{0.5\linewidth}
By applying the cycle condition, we know that the edges $[a_2,b_2]$ and $[a_2,a_1]$ span a convex even cycle whose vertex opposite to $a_2$ belongs to $I(p,a_2)$. But it follows from Lemma~\ref{lem:4CycleConvex} and from our condition on the intersections on convex cycles that the $4$-cycle $(a_1,b_1,b_2,a_2)$ is our convex even cycle. Consequently, $d(p,b_1)=d(p,a_2)-2=k-1<d(p,a_1)$, a contradiction with the definition of $a_1$.
\end{minipage}
\begin{minipage}{0.48\linewidth}
\begin{center}
\includegraphics[width=0.7\linewidth]{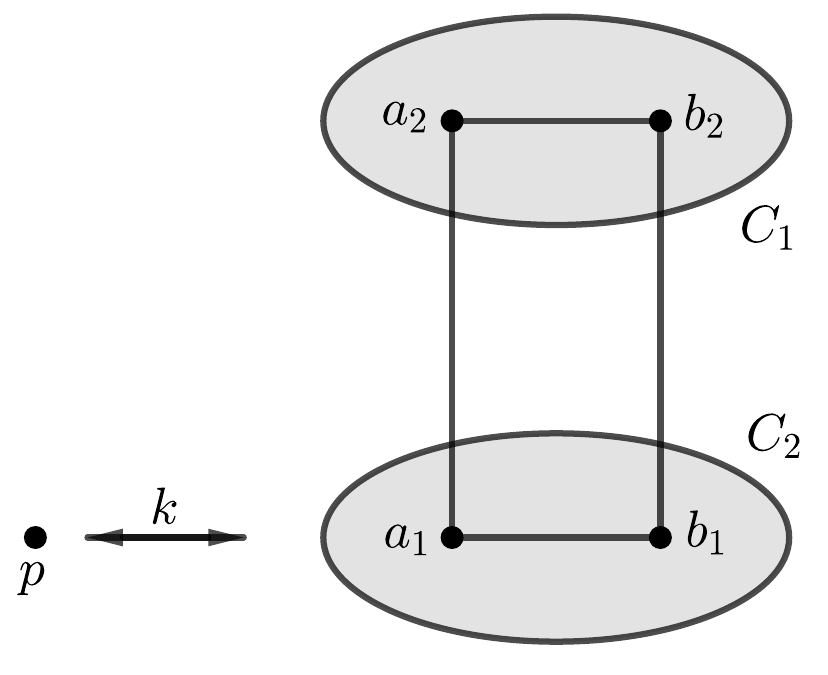}
\end{center}
\end{minipage}

\end{proof}

\begin{proof}[Proof of Theorem~\ref{thm:BigHyp}.]
We begin by proving the following observation:

\begin{claim}\label{claim:HyperplanesSeparate}
Let $J$ be a hyperplane and $x,y \in X$ two adjacent vertices. If $[x,y]$ belongs to $J$, then $J$ separates $x$ and $y$. 
\end{claim}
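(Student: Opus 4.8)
The plan is to establish the separation claim by defining the correct sector-invariant first and then showing it is genuinely locally constant across edges. The key tool is Lemma~\ref{lem:ProjectionPartitions}: since $[x,y]$ lies in the hyperplane $J$, it is contained in some clique $C$, which is gated by Lemma~\ref{lem:CliquesGated}. I would use the projection-partition associated to $C$, and the fact that all cliques in $J$ induce the \emph{same} partition, to build a well-defined labelling of the vertices of $X$ that distinguishes $x$ from $y$ and is respected by every edge \emph{not} belonging to $J$.

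Concretely, first I would fix the gated clique $C$ containing the edge $[x,y]$, so that $\mathrm{proj}_C(x)=x$ and $\mathrm{proj}_C(y)=y$ are distinct vertices of $C$. The strategy is to show that if $u,v$ are adjacent vertices whose edge $[u,v]$ does \emph{not} lie in $J$, then $\mathrm{proj}_C(u)=\mathrm{proj}_C(v)$; equivalently, $u$ and $v$ lie in the same block of the projection-partition of $C$. Granting this, any path from $x$ to $y$ avoiding the edges of $J$ would stay within a single block of the projection-partition, which is impossible since its endpoints project to the distinct vertices $x$ and $y$. Hence every $x$--$y$ path must cross an edge of $J$, which is exactly the assertion that $J$ separates $x$ and $y$ in $X\backslash\backslash J$.

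The heart of the argument, and the step I expect to be the main obstacle, is verifying that an edge $[u,v]\notin J$ cannot change the projection onto $C$. Suppose otherwise, so $\mathrm{proj}_C(u)\neq\mathrm{proj}_C(v)$. Since $u,v$ are adjacent and their gates on the gated clique $C$ differ, the standard gate-and-adjacency analysis forces a short configuration: either $[u,v]$ together with the edges to the gates spans a triangle (when the two gates are adjacent, which they always are in a clique) or a convex $4$-cycle, via the triangle and cycle conditions. In the triangle case, $[u,v]$ would be identified with an edge of $C$ in a common $3$-cycle and hence lie in $J$; in the $4$-cycle case, $[u,v]$ would be opposite to an edge of $C$ in a convex even cycle and again lie in $J$. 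Either way we contradict $[u,v]\notin J$. Making this dichotomy precise requires care: one must rule out the degenerate possibilities using Lemma~\ref{lem:InterClique} (no $K_4^-$) so that the gates are distinct adjacent vertices of $C$ and the resulting small cycle is genuinely convex, invoking Lemma~\ref{lem:4CycleConvex} in the length-four case.

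Once Claim~\ref{claim:HyperplanesSeparate} is in place, the consistency of the projection-partitions guaranteed by Lemma~\ref{lem:ProjectionPartitions} ensures the construction does not depend on which clique of $J$ one chooses, so the separation is an intrinsic property of the hyperplane. This claim then feeds directly into the proof of the remaining parts of Theorem~\ref{thm:BigHyp}: part~(i) will follow by identifying the sectors with the blocks of the common projection-partition, part~(ii) by combining convexity of sectors with the local convexity criterion of Proposition~\ref{prop:ConvexCriterion}, and part~(iii) by counting hyperplane crossings along geodesics using the shortening moves of Lemma~\ref{lem:ShorteningPath}.
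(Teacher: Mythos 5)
Your reduction is fine: if every edge $[u,v]\notin J$ satisfied $\mathrm{proj}_C(u)=\mathrm{proj}_C(v)$, then a path from $x$ to $y$ avoiding the edges of $J$ would keep the projection onto $C$ constant, contradicting $\mathrm{proj}_C(x)=x\neq y=\mathrm{proj}_C(y)$. The hole is in the step you yourself call the heart of the argument. You justify the implication ``$\mathrm{proj}_C(u)\neq\mathrm{proj}_C(v)$ implies $[u,v]\in J$'' by saying that $[u,v]$ together with ``the edges to the gates'' spans a triangle or a convex $4$-cycle. But $u$ and $v$ need not be adjacent to their gates: they may lie at any distance $d\geq 2$ from $C$, in which case $u$, $v$, $\mathrm{proj}_C(v)$, $\mathrm{proj}_C(u)$ span a long rectangle with two sides of length $d$, and no single application of the triangle or cycle condition places $[u,v]$ and an edge of $C$ in a common $3$-cycle or convex even cycle. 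Your dichotomy only covers the case $d\leq 1$, whereas the claim must apply to every edge of every path from $x$ to $y$, however far from $C$ that edge is.

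What this step actually requires is the chain statement of the paper, Lemma~\ref{lem:PreHyp}, which you never invoke: by induction on $d$, the edges $[\mathrm{proj}_C(u),\mathrm{proj}_C(v)]$ and $[u,v]$ are connected by a sequence of edges in which consecutive ones are opposite in a convex even cycle, hence they lie in the same hyperplane $J$. Its hypotheses follow from gatedness of $C$ (Lemma~\ref{lem:CliquesGated}): if, say, $d(u,C)=d(v,C)+1$, then $d(u,\mathrm{proj}_C(v))\leq 1+d(v,C)=d(u,C)$, and since every vertex of $C$ other than $\mathrm{proj}_C(u)$ is at distance $d(u,C)+1$ from $u$, this would force $\mathrm{proj}_C(v)=\mathrm{proj}_C(u)$; so $d(u,C)=d(v,C)$, and the gate property then gives $d(u,\mathrm{proj}_C(v))=d(v,\mathrm{proj}_C(u))=d(u,C)+1$, which is exactly the configuration required by Lemma~\ref{lem:PreHyp}. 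With that substitution your proof closes up (and note that Lemma~\ref{lem:ProjectionPartitions} is not actually needed for this claim; comparing different cliques of $J$ only matters later, for Claim~\ref{claim:HypCliqueSameProj}). For comparison, the paper's own proof is shorter and of a different kind: by Lemma~\ref{lem:ShorteningPath}, any path from $x$ to $y$ can be transformed into the edge $[x,y]$ by flips, backtrack removals and triangle shortenings, and each of these moves pulls crossings of $J$ back to the original path, so every path from $x$ to $y$ crosses $J$.
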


\noindent
According to Lemma~\ref{lem:ShorteningPath}, every path from $x$ to $y$ can be transformed into the edge $[x,y]$ by applying flips, by removing backtracks, and by shortening triangles. Consequently, in order to conclude that every path from $x$ to $y$ crosses $J$, it suffices to observe that, if the result of such an elementary operation crosses $J$, then the initial path has to cross $J$, which is clear. Claim~\ref{claim:HyperplanesSeparate} is proved. 

\medskip \noindent
Let $J$ be a hyperplane and $C \subset J$ a clique. Let us prove that the connected components of $X \backslash \backslash J$ coincide with pieces of projection-decomposition given by $C$. This will prove $(i)$ since any two vertices in $C$ obviously have distinct projections on $C$, and because the vertices along an arbitrary geodesic from one vertex of $X$ to its projection on $C$ all have the same projection on $C$. In other words, we want to prove that:

\begin{claim}\label{claim:HypCliqueSameProj}
Two vertices of $X$ lie in the same sector delimited by $J$ if and only if they have the same projection on $C$. 
\end{claim}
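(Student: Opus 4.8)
The plan is to show that the two partitions of $V(X)$ in play — the partition into sectors of $J$ and the projection-partition $\{\mathrm{proj}_C^{-1}(x) \mid x \in C\}$ — coincide. By the logic of each implication, both containments reduce to a statement about single edges, which is where Claim~\ref{claim:HyperplanesSeparate}, Lemma~\ref{lem:ProjectionPartitions}, and Lemma~\ref{lem:PreHyp} enter.

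For the direction ``same projection $\Rightarrow$ same sector'', I would first invoke the standard gate property (already recorded just before the claim) that every vertex on a geodesic from $u$ to its projection $\mathrm{proj}_C(u)$ projects to $\mathrm{proj}_C(u)$ as well. It then suffices to prove that such a geodesic $\gamma_u$ contains no edge of $J$: this places $u$ in the same sector as $\mathrm{proj}_C(u)$, and two vertices sharing a projection then lie in the sector of their common gate. Suppose some edge $[a,b]$ of $\gamma_u$ lies in $J$. Then $[a,b]$ spans a clique $C'$ with $C' \subset J$, and Lemma~\ref{lem:ProjectionPartitions} gives that the projection-partitions of $C'$ and $C$ agree. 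Since $a,b$ are distinct vertices of the clique $C'$, they lie in different pieces of the $C'$-partition, hence in different pieces of the $C$-partition; but both lie on $\gamma_u$ and so project to the same vertex of $C$, a contradiction.

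For the converse ``same sector $\Rightarrow$ same projection'', it suffices to show that any two adjacent vertices $a,b$ with $[a,b] \notin J$ have the same projection on $C$, and then chain this along a path in $X \backslash \backslash J$. Arguing by contradiction, assume $x := \mathrm{proj}_C(a) \neq \mathrm{proj}_C(b) =: y$; note that $x,y$ are adjacent, as they lie in the clique $C$. Using gatedness to write $d(a,y)=d(a,x)+1$ and $d(b,x)=d(b,y)+1$, a short bookkeeping with the inequalities $|d(a,z)-d(b,z)| \leq 1$ pins down $d(a,x)=d(b,y)$ and $d(a,y)=d(b,x)=d(a,x)+1$. This is exactly the hypothesis of Lemma~\ref{lem:PreHyp} applied to the pairs $(a,b)$ and $(x,y)$. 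That lemma produces a chain of convex even cycles linking the edge $[a,b]$ to the edge $[x,y]$ through the opposite-edge relation, so $[a,b]$ and $[x,y]$ belong to the same hyperplane. Since $[x,y]$ is an edge of $C \subset J$, this forces $[a,b] \in J$, contradicting our hypothesis.

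The main obstacle is the converse direction, and specifically recognising that the distance configuration forced by an edge outside $J$ whose endpoints project to distinct vertices is precisely the configuration fed into Lemma~\ref{lem:PreHyp}; once that identification is made, the opposite-edge chain transports membership in $J$ from $[x,y]$ to $[a,b]$. The distance bookkeeping itself is routine, but it is the hinge of the argument, and it is where the hypothesis that $C$ is a clique — so that $x,y$ are adjacent and $d(x,y)=1$ — is essential.
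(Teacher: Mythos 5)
Your proof is correct and follows essentially the same route as the paper's: both reduce the claim to single edges, using Lemma~\ref{lem:PreHyp} (after the same distance bookkeeping) to show that adjacent vertices with distinct projections on $C$ span an edge of $J$, and Lemma~\ref{lem:ProjectionPartitions} to show that edges of $J$ have endpoints with distinct projections, before extending to arbitrary vertices by chaining along a path through the common gate, respectively along a path in $X \backslash \backslash J$. The only cosmetic difference is that you bypass Claim~\ref{claim:HyperplanesSeparate}, which the paper invokes merely to state a slightly stronger adjacent-vertex dichotomy than its general case actually uses.
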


\noindent
We begin by proving the claim for adjacent vertices. If two adjacent vertices $x,y \in X$ have distinct projections on $C$, then it follows from Lemma~\ref{lem:PreHyp} that the edge $[x,y]$ belongs to $J$, and we deduce from Claim~\ref{claim:HyperplanesSeparate} that $x,y$ lie in distinct sectors delimited by $J$. Conversely, if $x,y$ lie in distinct sectors, then necessarily the edge $[x,y]$ belongs to $J$ and it follows from Lemma~\ref{lem:ProjectionPartitions} that $x,y$ have distinct projections on $C$ (since they obviously have distinct projections on the unique clique containing $[x,y]$). 

\medskip \noindent
From now on, we do not assume anymore that our two vertices $x,y$ are adjacent. If they have the same projection on $C$, say $p$, then the union of two geodesics from $x$ to $p$ and from $p$ to $y$ defines a path $\alpha$ from $x$ to $y$ all of whose edges have their endpoints with the same projection on $C$. It follows from the case previously studied that any two consecutive vertices along $\alpha$ lie in the same sector delimited by $J$, proving that $x$ and $y$ also lie in the same sector. Conversely, if $x,y$ have distinct projections on $C$, then along any path between $x$ and $y$ there will be at least one edge whose endpoints have distinct projections on $C$; from the case previously studied, this implies that this edge belongs to $J$. Thus, every path between $x$ and $y$ has to contain an edge from $J$, proving that $J$ separates $x$ and $y$. This concludes the proof of Claim~\ref{claim:HypCliqueSameProj}. 

\medskip \noindent
So far, we have proved $(i)$. Now, let us prove $(ii)$. So fix a hyperplane $J$ and $x,y \in X$ two vertices not separated by $J$. We first claim that there exists a geodesic between $x$ and $y$ not crossed by $J$. To see this, let $\gamma$ be an arbitrary geodesic from $x$ to $y$. If $J$ does not cross $\gamma$, we are done. Otherwise, let $e,f \subset \gamma$ denote the first and last edges lying in $J$. Necessarily, the first and last endpoint $a,b$ of $e,f$ respectively belong to the same fibre $F$ of $J$, namely the fibre contained in the sector delimited by $J$ that contains $x,y$. According to Lemma~\ref{lem:PreHyp}, which applies to $e,f$ thanks to Claim~\ref{claim:HypCliqueSameProj}, there exists a geodesic $\alpha$ from $e$ to $f$ lying in $F$. Because two edges of a convex even cycle lie in the same hyperplane if and only if they are opposite, as a consequence of Claim~\ref{claim:HypCliqueSameProj}, $J$ does not cross $\alpha$. Thus, by replacing the subpath of $\gamma$ between $a,b$ by $\alpha$, one obtains a geodesic between $x,y$ not crossed by $J$. Consequently, it follows from Lemma~\ref{lem:FlipGeod} that, in order to show that no geodesic between $x,y$ is crossed by $J$, it suffices to observe that applying a flip to a path not crossed by $J$ produces a new path that is still not crossed by $J$, which is clear. Thus, $(ii)$ is proved. 

\medskip \noindent
Next, let us prove $(iii)$. We begin by proving that a path crossing a hyperplane twice can be shortened, and so is not a geodesic. Let $J$ be a hyperplane and let $\gamma$ be a path crossing $J$ (at least) twice. Let $[a,b]$ (resp. $[p,q]$) be the first (resp. second) edge along $\gamma$ that lies in $J$. Necessarily, $p$ and $b$ lie in the same fibre of $J$, which amounts to saying that the projection of $p$ on the (unique) clique $C$ containing $[a,b]$ is $b$. Let $c$ denote the projection of $q$ on $C$. Because $[p,q]$ belongs to $J$, we know that $p$ and $q$ have different projections on $C$, so $c \neq b$. If $c=a$, then $a$ and $q$ belong to the same sector delimited by $J$, and it follows from $(ii)$ that $\gamma$ is not a geodesic. So let us assume that $c \neq a$. 

\medskip \noindent
\begin{minipage}{0.5\linewidth}
Observe that
$$d(q,c)<d(q,b) \leq 1+d(p,b),$$
where the strict inequality is justified by the fact that $c$ is the unique vertex of $C$ minimising the distance to $q$. Similarly, $d(p,b) < 1+d(q,c)$, proving that $d(p,b)=d(q,c)$. Thus, replacing the subpath $\gamma_{a,q}$ of $\gamma$ with the concatenation of the edge $[a,c]$ and an arbitrary geodesic $[c,q]$ from $c$ to $q$ shortens~$\gamma$.
\end{minipage}
\begin{minipage}{0.48\linewidth}
\begin{center}
\includegraphics[width=0.7\linewidth]{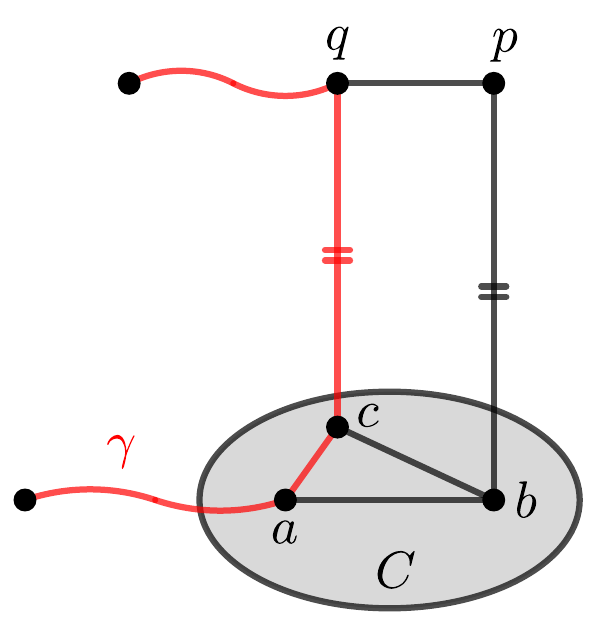}
\end{center}
\end{minipage}

\medskip \noindent
Thus, our claim is proved. Observe that it implies that the distance between two vertices $x,y \in X$ coincides with the number of hyperplanes $N(x,y)$ separating them. Indeed, the inequality $d(x,y) \geq N(x,y)$ is clear since a path from $x$ to $y$ must cross all the hyperplanes separating $x,y$. But now we know that the hyperplanes crossed by a given geodesic from $x$ to $y$ are crossed only once, which implies that they all separate $x,y$. Therefore, the length of this geodesic, which is $d(x,y)$, must be at least $N(x,y)$. Hence $d(x,y)=N(x,y)$ as desired. As a consequence, a consequence, a path is a geodesic if and only if its length coincides with the number of hyperplanes separating its endpoints, which amounts to saying that it crosses each hyperplane at most once. The item $(iii)$ is proved.
\end{proof}

\subsection{Projections}

\noindent
In this section, we collect a few results describing the intersection between hyperplanes and projections on gated subgraphs. 

\begin{prop}\label{prop:MinVertices}
Let $X$ be a mediangle graph and $Y,Z \subset X$ two gated subgraphs. If $y \in Y$ and $z \in Z$ are two vertices minimising the distance between $Y$ and $Z$, then the hyperplanes separating $y$ and $z$ are exactly the hyperplanes separating $Y$ and $Z$.
\end{prop}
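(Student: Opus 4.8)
The plan is to establish the equality of the two sets of hyperplanes by proving both inclusions, writing $\mathcal{H}(a,b)$ for the set of hyperplanes separating two vertices $a,b$. One inclusion is immediate: if a hyperplane $J$ separates $Y$ and $Z$, meaning it places $Y$ and $Z$ in distinct sectors, then since $y \in Y$ and $z \in Z$ it separates $y$ and $z$. All the content lies in the converse, and the first step is to exploit the minimality of $d(y,z)$. Since $Y$ and $Z$ are gated and $d(y,z)=d(Y,Z)$, the vertex $y$ is the gate of $z$ in $Y$ and $z$ is the gate of $y$ in $Z$ (the gate being the unique closest vertex). By the defining property of gates, $y \in I(z,y')$ for every $y' \in Y$ and $z \in I(y,z')$ for every $z' \in Z$; equivalently $d(z,y')=d(z,y)+d(y,y')$ and $d(y,z')=d(y,z)+d(z,z')$.

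Next I would convert these additivity relations into statements about hyperplanes via Theorem~\ref{thm:BigHyp}(iii), which identifies distances with numbers of separating hyperplanes. The partition of $X$ into the sectors of a fixed hyperplane shows that $\mathcal{H}(y',z) \subseteq \mathcal{H}(y',y) \cup \mathcal{H}(y,z)$ for every $y' \in Y$: a hyperplane that leaves $y',y$ in a common sector and also $y,z$ in a common sector leaves $y',z$ in a common sector, hence does not separate them. Combining this inclusion with the cardinality identity $|\mathcal{H}(y',z)| = d(y',z) = d(y',y)+d(y,z) = |\mathcal{H}(y',y)| + |\mathcal{H}(y,z)|$ furnished by the additivity above squeezes every inequality into an equality, forcing the union on the right to be disjoint; in particular $\mathcal{H}(y',y) \cap \mathcal{H}(y,z) = \emptyset$ for every $y' \in Y$.

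The conclusion then follows quickly. Fix a hyperplane $J \in \mathcal{H}(y,z)$. The disjointness just obtained means $J \notin \mathcal{H}(y',y)$ for all $y' \in Y$, that is, every vertex of $Y$ lies in the same sector of $J$ as $y$, so $Y$ is confined to a single sector. The symmetric argument, using $z=\mathrm{proj}_Z(y)$, places all of $Z$ in the sector of $J$ containing $z$. As $J$ separates $y$ and $z$, these two sectors are distinct, so $J$ separates $Y$ and $Z$, which is the remaining inclusion.

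The point demanding care — and the reason minimality of $d(y,z)$ is genuinely needed — is that a hyperplane of a mediangle graph may determine more than two sectors, so the statement ``$J$ does not separate $y$ from $y'$'' must be read as ``$y$ and $y'$ lie in a common sector'' rather than ``they lie on a fixed side''. The a priori inclusion $\mathcal{H}(y',z) \subseteq \mathcal{H}(y',y) \cup \mathcal{H}(y,z)$ is weak on its own; it is precisely the additivity-plus-cardinality argument that upgrades it to a disjoint decomposition, and this disjointness is exactly what prevents any hyperplane separating $y$ and $z$ from crossing $Y$ or $Z$, thereby confining each of $Y$ and $Z$ to a single sector.
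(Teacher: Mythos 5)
Your proof is correct, and it follows the same skeleton as the paper's -- the easy inclusion, then showing that a hyperplane separating $y$ from $z$ can cross neither $Y$ nor $Z$ -- but it implements the key step by a genuinely different mechanism. The paper's proof is two lines: since $y$ is the gate of $z$ in $Y$ and $z$ the gate of $y$ in $Z$, it invokes Lemma~\ref{lem:ProjHypSep} twice, and that lemma is itself proved with the geodesic half of Theorem~\ref{thm:BigHyp}.$(iii)$ (the gate lies on a geodesic from the outside vertex to any vertex of the gated subgraph, and a geodesic crosses each hyperplane at most once). You instead use the counting half of Theorem~\ref{thm:BigHyp}.$(iii)$: combining the additivity $d(y',z)=d(y',y)+d(y,z)$ coming from the gate property with the inclusion $\mathcal{H}(y',z)\subseteq \mathcal{H}(y',y)\cup\mathcal{H}(y,z)$ and the finiteness of these sets, you squeeze out the disjoint decomposition $\mathcal{H}(y',z)=\mathcal{H}(y',y)\sqcup\mathcal{H}(y,z)$, and this disjointness is exactly what confines $Y$ to a single sector of any $J$ separating $y$ from $z$. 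In effect you reprove Lemma~\ref{lem:ProjHypSep} by double counting rather than citing it. What the paper's route buys is modularity: Lemma~\ref{lem:ProjHypSep} is established once and reused elsewhere (Corollary~\ref{cor:ProjLip}, Lemma~\ref{lem:ProjProj}). What your route buys is self-containedness and a slightly stronger intermediate statement -- the disjoint interval decomposition -- which is useful in its own right; your closing remark also makes explicit why the argument is insensitive to hyperplanes delimiting more than two sectors, a point the paper leaves implicit.
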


\noindent
We begin by proving the proposition when one of the two gated subgraphs is a single vertex.

\begin{lemma}\label{lem:ProjHypSep}
Let $X$ be a mediangle graph, $x \in X$ a vertex, and $Y \subset X$ a gated subgraph. The hyperplanes separating $x$ from its projection on $Y$ coincide with the hyperplanes separating $x$ from $Y$. 
\end{lemma}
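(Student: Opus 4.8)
The plan is to reduce everything to the hyperplane-counting description of distance from Theorem~\ref{thm:BigHyp}(iii), combined with the defining property of the gate. Write $p := \mathrm{proj}_Y(x)$ for the gate of $x$ in $Y$. By gatedness, $p \in I(x,y)$ for every $y \in Y$, so for each such $y$ the concatenation of any geodesic $[x,p]$ with any geodesic $[p,y]$ is again a geodesic from $x$ to $y$. I will prove the two inclusions of the asserted equality of hyperplane-sets separately, where I take ``$J$ separates $x$ from $Y$'' to mean that $J$ separates $x$ from every vertex of $Y$ (equivalently, no vertex of $Y$ lies in the sector of $J$ containing $x$).

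The inclusion ``separating $x$ from $Y$ $\Rightarrow$ separating $x$ from $p$'' is immediate: if $J$ separates $x$ from every vertex of $Y$, then in particular it separates $x$ from $p \in Y$. For the reverse inclusion, suppose $J$ separates $x$ from $p$ and fix an arbitrary $y \in Y$. By Theorem~\ref{thm:BigHyp}(iii) a geodesic crosses exactly the hyperplanes separating its endpoints, so any geodesic $[x,p]$ crosses $J$. Since $p \in I(x,y)$, this geodesic extends to a geodesic $[x,p] \cup [p,y]$ from $x$ to $y$, which therefore also crosses $J$; applying Theorem~\ref{thm:BigHyp}(iii) once more, $J$ separates $x$ and $y$. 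As $y \in Y$ was arbitrary, $J$ separates $x$ from every vertex of $Y$, i.e.\ $J$ separates $x$ from $Y$. The two inclusions give the desired equality.

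The argument is short, and the only point that genuinely needs the hypothesis is the role of the gate: it is precisely the condition $p \in I(x,y)$ that lets us route a geodesic from $x$ to $y$ through $p$, so that each hyperplane separating $x$ from $p$ is forced to separate $x$ from $y$ as well. I expect this to be the main (mild) obstacle, since the superficially similar weaker fact that $J$ separates $p$ from $y$ would \emph{not} suffice to conclude that $J$ separates $x$ from $y$. The remaining translation between ``crossed once by a geodesic'' and ``separates the endpoints'' is exactly the content of Theorem~\ref{thm:BigHyp}(iii), so no further geometric input (convexity of sectors, carriers, or the multiplicity of sectors) is required.
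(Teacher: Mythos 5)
Your proof is correct and takes essentially the same route as the paper's: both exploit the gate property $p \in I(x,y)$ to route a geodesic from $x$ through $p$ to an arbitrary point of $Y$, and then invoke Theorem~\ref{thm:BigHyp}.$(iii)$. The only cosmetic difference is one of direction: the paper argues contrapositively (a hyperplane crossing $Y$ cannot separate $x$ from $p$, since the geodesic $x \to p \to z$ would otherwise cross it twice), whereas you argue directly (a hyperplane separating $x$ from $p$ is crossed by the geodesic $x \to p \to y$, hence separates $x$ from $y$).
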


\begin{proof}
Let $y \in Y$ denote the projection of $x$ on $Y$ and let $J$ be a hyperplane crossing $Y$. Fix a vertex $z \in Y$ such that $J$ separates $y$ and $z$. Because $y$ is the projection of $x$, it lies on a geodesic from $x$ to $z$. According to Theorem~\ref{thm:BigHyp}.$(iii)$, such a geodesic cannot cross $J$ twice, so its subsegment from $x$ to $y$ does not cross $J$. A fortiori, $J$ does not separate $x$ from $y$. Thus, a hyperplane separating $x$ from $y$ does not cross $Y$, which implies that it separates $x$ from $Y$. Conversely, it is clear that a hyperplane separating $x$ from $Y$ separates $x$ from $y$. 
\end{proof}

\begin{cor}\label{cor:ProjLip}
Let $X$ be a mediangle graph, $x,y \in X$ two vertices, and $Y \subset X$ a gated subgraph. The hyperplanes separating the projections of $x,y$ on $Y$ coincide with the hyperplanes that cross $Y$ and separate $x,y$. As a consequence, the projection on $Y$ is $1$-Lipschitz.
\end{cor}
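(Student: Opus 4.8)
\textbf{Proof strategy for Corollary~\ref{cor:ProjLip}.}

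The plan is to reduce the statement to Lemma~\ref{lem:ProjHypSep} by inserting the projection of one endpoint as an intermediate gate. Write $x' := \mathrm{proj}_Y(x)$ and $y' := \mathrm{proj}_Y(y)$ for the two projections. The key idea is that projecting onto a gated subgraph is idempotent and compatible with nested gates: since $x' \in Y$, its own projection on $Y$ is itself, and more importantly the projection of $y$ on $Y$ equals the projection of $y$ on $Y$ computed via any gate, so the three points $x, x', y'$ interact through the gate property of $Y$.

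First I would prove that a hyperplane $J$ separates $x'$ and $y'$ if and only if $J$ crosses $Y$ and separates $x$ and $y$. For the forward direction, suppose $J$ separates $x'$ from $y'$. Since both $x',y' \in Y$, the hyperplane $J$ crosses $Y$. It remains to see that $J$ separates $x$ from $y$. Here I would use Lemma~\ref{lem:ProjHypSep}: the hyperplanes separating $x$ from $x'$ are exactly those separating $x$ from $Y$, hence none of them crosses $Y$; in particular $J$ (which crosses $Y$) does not separate $x$ from $x'$. Symmetrically $J$ does not separate $y$ from $y'$. Combining these three facts — $J$ separates $x'$ from $y'$ but not $x$ from $x'$ nor $y$ from $y'$ — and using that ``separated by $J$'' partitions the vertices into sectors (Theorem~\ref{thm:BigHyp}), $x$ lies in the same sector as $x'$ and $y$ in the same sector as $y'$, so $J$ separates $x$ from $y$.

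For the converse, suppose $J$ crosses $Y$ and separates $x$ from $y$. Again by Lemma~\ref{lem:ProjHypSep}, since $J$ crosses $Y$ it separates neither $x$ from $x'$ nor $y$ from $y'$; so $x,x'$ lie in a common sector of $J$ and $y,y'$ in a common sector. As $J$ separates $x$ from $y$, these two sectors are distinct, whence $J$ separates $x'$ from $y'$. This gives the claimed equality of hyperplane sets. The $1$-Lipschitz conclusion is then immediate from Theorem~\ref{thm:BigHyp}.$(iii)$: the distance $d(x',y')$ equals the number of hyperplanes separating $x'$ and $y'$, which by the equality just established is the number of hyperplanes simultaneously crossing $Y$ and separating $x,y$; this is at most the total number of hyperplanes separating $x,y$, which equals $d(x,y)$.

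I expect the only delicate point to be the careful bookkeeping with sectors — making precise the elementary ``same sector'' reasoning, i.e.\ that lying in a common sector is an equivalence relation for each fixed hyperplane and that these relations are exactly the ones controlled by Lemma~\ref{lem:ProjHypSep}. Everything else is a direct application of the separation results from Theorem~\ref{thm:BigHyp} together with the single-vertex case already handled in Lemma~\ref{lem:ProjHypSep}, so no new geometric input beyond those two statements should be needed.
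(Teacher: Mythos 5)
Your proposal is correct and follows essentially the same route as the paper: both directions of the hyperplane-set equality are deduced from Lemma~\ref{lem:ProjHypSep} exactly as in the paper's proof (a hyperplane separating $x'$ from $y'$ crosses $Y$, hence cannot separate $x$ from $x'$ nor $y$ from $y'$, and conversely), and the $1$-Lipschitz conclusion is obtained from Theorem~\ref{thm:BigHyp}.$(iii)$ in the same way. The sector bookkeeping you flag as the delicate point is indeed the only content beyond those two citations, and it goes through exactly as you describe.
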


\begin{proof}
If a hyperplane separates the projections $x',y'$ of $x,y$, then it must cross $Y$, so it follows from Lemma~\ref{lem:ProjHypSep} that it cannot separate $x$ from $x'$ nor $y$ from $y'$. Necessarily, it has to separate $x$ and $y$. Conversely, if a hyperplane separates $x,y$ and crosses $Y$, then it follows again from Lemma~\ref{lem:ProjHypSep} that it cannot separate $x$ from $x'$ nor $y$ from $y'$, which implies that it has to separate $x'$ and $y'$. This proves the first assertion of our corollary. The second assertion follows from what we have just proved and Theorem~\ref{thm:BigHyp}.$(iii)$. 
\end{proof}

\begin{proof}[Proof of Proposition~\ref{prop:MinVertices}.]
Clearly, the hyperplanes separating $Y$ and $Z$ separate $y$ and $z$. Conversely, since $y$ has to be the projection of $z$ on $Y$ and $z$ the projection $y$ on $Z$, applying Lemma~\ref{lem:ProjHypSep} twice implies that a hyperplane separating $y$ and $z$ cannot cross $Y$ nor $Z$, so it has to separate $Y$ and $Z$. 
\end{proof}

\noindent
We conclude the subsection with the following observation, which will be useful later.

\begin{lemma}\label{lem:ProjProj}
Let $X$ be a mediangle graph and $Y,Z \subset X$ two gated subgraphs. For every $z \in \mathrm{proj}_Z(Y)$, $\mathrm{proj}_Z \left( \mathrm{proj}_Y(z) \right)=z$. As a consequence, $z$ and $\mathrm{proj}_Y(z)$ are two vertices minimising the distance between $Y$ and $Z$. 
\end{lemma}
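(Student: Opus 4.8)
The plan is to translate everything into the language of separating hyperplanes, using the two projection statements Lemma~\ref{lem:ProjHypSep} and Corollary~\ref{cor:ProjLip} as the only real inputs. Write $z = \mathrm{proj}_Z(w)$ for some fixed $w \in Y$, and set $y := \mathrm{proj}_Y(z)$; the goal is to show $\mathrm{proj}_Z(y) = z$. Since two vertices of a mediangle graph coincide exactly when no hyperplane separates them (Theorem~\ref{thm:BigHyp}.$(iii)$), it suffices to prove that no hyperplane separates $\mathrm{proj}_Z(y)$ from $\mathrm{proj}_Z(w) = z$. By Corollary~\ref{cor:ProjLip}, applied to the projection onto $Z$ of the two vertices $y$ and $w$, the hyperplanes separating $\mathrm{proj}_Z(y)$ and $\mathrm{proj}_Z(w)$ are precisely those that cross $Z$ and separate $y$ and $w$. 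So the first objective reduces to showing that no hyperplane crosses $Z$ and separates $y$ and $w$.

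I would then argue by contradiction. Suppose some hyperplane $J$ crosses $Z$ and separates $y$ and $w$. Since $y = \mathrm{proj}_Y(z)$, Lemma~\ref{lem:ProjHypSep} tells us that every hyperplane separating $z$ and $y$ in fact separates $z$ from $Y$, and in particular cannot cross $Y$. As $y,w \in Y$ and $J$ separates them, $J$ does cross $Y$; hence $J$ does not separate $z$ and $y$. Thus $z$ and $y$ lie in a common sector of $J$, while $y$ and $w$ lie in distinct sectors, which forces $z$ and $w$ into distinct sectors: $J$ separates $z$ and $w$. But $z = \mathrm{proj}_Z(w)$, so Lemma~\ref{lem:ProjHypSep} (applied now to the vertex $w$ and the subgraph $Z$) says that any hyperplane separating $w$ and $z$ separates $w$ from $Z$ and therefore cannot cross $Z$ -- contradicting the assumption that $J$ crosses $Z$. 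This proves $\mathrm{proj}_Z(y) = z$.

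For the consequence, I would show that the hyperplanes separating $y$ and $z$ are exactly the hyperplanes separating $Y$ and $Z$. Applying Lemma~\ref{lem:ProjHypSep} to $z$ and $Y$ (using $y = \mathrm{proj}_Y(z)$) shows that any hyperplane separating $y$ and $z$ leaves all of $Y$ on the $y$-side without crossing it, and applying it to $y$ and $Z$ (using the just-proved $z = \mathrm{proj}_Z(y)$) shows it leaves all of $Z$ on the $z$-side without crossing it; hence such a hyperplane separates the whole of $Y$ from the whole of $Z$. Conversely, any hyperplane separating $Y$ and $Z$ separates $y \in Y$ from $z \in Z$. By Theorem~\ref{thm:BigHyp}.$(iii)$ the number of hyperplanes separating $y$ and $z$ equals $d(y,z)$, and by Proposition~\ref{prop:MinVertices} the number separating $Y$ and $Z$ equals $d(Y,Z)$ (the distance realised by any minimising pair). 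Therefore $d(y,z) = d(Y,Z)$, so $z$ and $\mathrm{proj}_Y(z) = y$ minimise the distance between $Y$ and $Z$.

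The step I expect to be the main obstacle is the sector bookkeeping: I must keep track consistently of which sector of each hyperplane contains $Y$, $Z$, and the individual vertices $y$, $z$, $w$, and in particular read ``$J$ separates $z$ from $Y$'' as the assertion that $Y$ lies entirely in a single sector distinct from the one containing $z$. The logical heart of the argument -- chaining ``$J$ crosses $Y$, so $J$ does not separate $z,y$'' with ``$J$ separates $y,w$, so $J$ separates $z,w$'' and finally with ``$z=\mathrm{proj}_Z(w)$, so $J$ does not cross $Z$'' -- is exactly where a sidedness error would most easily slip in, so that is the portion I would verify most carefully.
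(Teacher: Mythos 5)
Your proof is correct and follows essentially the same route as the paper's: both argue by contradiction using Corollary~\ref{cor:ProjLip} together with two applications of Lemma~\ref{lem:ProjHypSep} and the same sidedness chaining, the only cosmetic difference being that you land the final contradiction at $Z$ (a hyperplane separating $w$ from $\mathrm{proj}_Z(w)$ crossing $Z$) while the paper lands it at $Y$. Your explicit verification of the ``consequence'' via Proposition~\ref{prop:MinVertices} and Theorem~\ref{thm:BigHyp}.$(iii)$ is also sound, and in fact spells out a step the paper leaves implicit.
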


\begin{proof}
Let $p \in Y$ be a vertex such that $z= \mathrm{proj}_Z(p)$. Let $z'$ denote the projection of $z$ on $Y$ and $z''$ the projection of $z'$ on $Z$. If $z \neq z''$, then there exists some hyperplane $J$ separating $z$ and $z''$. Because $z= \mathrm{proj}_Z(p)$ and $z''= \mathrm{proj}_Z(z')$, it follows from Corollary~\ref{cor:ProjLip} that $J$ separates $p$ and $z'$. A fortiori, it crosses $Y$. But $J$ also separate $z$ and $z'= \mathrm{proj}_Y(z)$, contradicting Lemma~\ref{lem:ProjHypSep}. 
\end{proof}

\subsection{Angles between hyperplanes}\label{section:Angles}

\noindent
Given two transverse hyperplanes $J_1,J_2$ in a mediangle graph $X$, we want to define an angle $\measuredangle(J_1,J_2)$ between them. First, if they both cross a convex even cycle $C$, we define the \emph{angle between $J_1,J_2$ at $C$} by
$$\measuredangle_C(J_1,J_2): = 2 \pi \cdot \frac{1+d( J_1 \cap C, J_2 \cap C)}{\mathrm{length}(C)}.$$
This number coincides with the geometric angle between the straight lines connecting the midpoints of the two pairs of opposite edges in $J_1,J_2$ when $C$ is thought of as a regular Euclidean polygon. The key observation is that this angle does not depend on the cycle $C$ under consideration, allowing us to define the \emph{angle} $\measuredangle(J_1,J_2)$ as the angle at an arbitrary convex even cycle they both cross.

\begin{prop}\label{prop:Angle}
Let $X$ be a mediangle graph and $J_1,J_2$ two transverse hyperplanes. We have
$$\measuredangle_{C_1}(J_1,J_2)= \measuredangle_{C_2}(J_1,J_2)$$
for any two convex even cycles $C_1,C_2$ both crossed by $J_1,J_2$. 
\end{prop}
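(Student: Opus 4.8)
The plan is to compare the two cycles through the projection of one onto the other, exploiting that a convex even cycle of length $>4$ is gated by Theorem~\ref{thm:LongCyclesConvex}. First I would dispose of the small lengths. If $C_1$ and $C_2$ are both $4$-cycles, then in each of them $J_1\cap C$ and $J_2\cap C$ are two distinct pairs of opposite edges, which therefore share a vertex, so $d(J_1\cap C,J_2\cap C)=0$ and $\measuredangle_C(J_1,J_2)=\pi/2$ in both cases; there is nothing to prove. The remaining possibility of a $4$-cycle paired with a longer cycle will be \emph{excluded} by the very inequality used below, so I may assume from now on that both $C_1,C_2$ have length $>4$ and hence are gated.

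Set $\psi:=\mathrm{proj}_{C_2}$ restricted to $C_1$; it is $1$-Lipschitz by Corollary~\ref{cor:ProjLip}. The two facts I would establish about $\psi$ are: \textbf{(a)} $\psi$ carries each edge of $J_i\cap C_1$ onto an edge of $J_i\cap C_2$ (for $i=1,2$); and \textbf{(b)} $\psi$ sends the two opposite edges of $J_1\cap C_1$ to the two \emph{distinct} opposite edges of $J_1\cap C_2$. For \textbf{(a)}, the only hyperplane separating the two endpoints of an edge of $J_1\cap C_1$ is $J_1$ itself, and $J_1$ crosses $C_2$; by Corollary~\ref{cor:ProjLip} the projected endpoints are then separated by exactly $J_1$, so by Theorem~\ref{thm:BigHyp}$.(iii)$ they are joined by an edge lying in $J_1\cap C_2$. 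For \textbf{(b)}, the two opposite edges of $J_1\cap C_1$ lie in the two different sectors determined by $J_2$ (the two edges of $J_2$ cut $C_1$ into two arcs containing one of them each), and $J_2$ crosses $C_2$; so Corollary~\ref{cor:ProjLip} forces their images to remain in different $J_2$-sectors, hence to be the two different edges of $J_1\cap C_2$. Finally, by Lemma~\ref{lem:ProjHypSep} no hyperplane crossing $C_2$ separates a vertex from its projection, so $\psi$ preserves the side of every vertex with respect to both $J_1$ and $J_2$; this pins down the images of the four $J_1$-endpoints of $C_1$ exactly.

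With these facts in hand, the two numerical inputs of the angle are controlled. The distance $d(J_1\cap C_1,J_2\cap C_1)$ is realized between a $J_1$-endpoint and a $J_2$-endpoint of $C_1$; applying the $1$-Lipschitz map $\psi$ together with \textbf{(a)} gives $d(J_1\cap C_2,J_2\cap C_2)\le d(J_1\cap C_1,J_2\cap C_1)$, and running the symmetric projection $\mathrm{proj}_{C_1}|_{C_2}$ gives the reverse inequality, so the two distances coincide. For the lengths, write $2n_i=\mathrm{length}(C_i)$. The two appropriately matched opposite $J_1$-endpoints of $C_i$ lie at distance $n_i-1$ in $C_i$; by \textbf{(b)} and the side-preservation just noted, $\psi$ maps the matched pair of $C_1$ onto the matched pair of $C_2$, whence $n_2-1\le n_1-1$, and symmetry yields $n_1=n_2$. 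The same inequality applied to a hypothetical $4$-cycle paired with a cycle of length $2n>4$ reads $n-1\le 1$ with $n\ge 3$, a contradiction, which is exactly how the mixed case is excluded. Since $\measuredangle_{C_i}(J_1,J_2)=2\pi\,(1+d)/(2n_i)$ with both $d:=d(J_1\cap C_i,J_2\cap C_i)$ and $n_i$ now independent of $i$, the two angles agree.

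The main obstacle is fact \textbf{(b)} together with the length comparison it feeds: one must rule out that the projection collapses the two opposite $J_1$-edges of $C_1$ onto a single edge of $C_2$, since such collapsing would in principle allow cycles of different lengths to carry the same pair of transverse hyperplanes at different angles. The resolution is that $J_2$ separates those two edges and survives projection by Corollary~\ref{cor:ProjLip}, forcing the two images apart; combined with the side-preservation coming from Lemma~\ref{lem:ProjHypSep}, this rigidifies $\psi$ enough to transport the entire combinatorial position of $J_1,J_2$ from $C_1$ to $C_2$, and in particular to equate both the lengths and the separating distances.
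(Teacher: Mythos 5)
Your proof is correct, and it reaches the conclusion by a genuinely different route than the paper's. The paper's proof also projects $C_1$ onto $C_2$ and uses Corollary~\ref{cor:ProjLip} to see that edges of $J_i\cap C_1$ are sent to edges of $J_i\cap C_2$ (your fact \textbf{(a)}); but from there it invokes Proposition~\ref{prop:ProjCycle}: since the image of the projection contains at least two edges, the projection restricts to an isometry $C_1\to C_2$, and since the angles are defined metrically, they agree. You never use Proposition~\ref{prop:ProjCycle}; instead you let the two transverse hyperplanes themselves rigidify the projection: the non-collapsing fact \textbf{(b)} (the $J_2$-separation of the two $J_1$-edges survives projection by Corollary~\ref{cor:ProjLip}), side-preservation from Lemma~\ref{lem:ProjHypSep}, and the matched-pair comparison give equality of lengths, while the two-way Lipschitz comparison gives equality of the separating distances; this is weaker than the isometry statement of Proposition~\ref{prop:ProjCycle} but exactly sufficient. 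What each approach buys: the paper's argument is shorter because the real work is outsourced to Proposition~\ref{prop:ProjCycle}, a stronger structural statement of independent interest (it applies to any two long convex cycles, not only those crossed by a common pair of transverse hyperplanes); yours is more self-contained, needing only the basic hyperplane machinery (Theorem~\ref{thm:BigHyp}, Corollary~\ref{cor:ProjLip}, Lemma~\ref{lem:ProjHypSep}, Theorem~\ref{thm:LongCyclesConvex}). A further point in your favour: since Proposition~\ref{prop:ProjCycle} and Theorem~\ref{thm:LongCyclesConvex} only concern cycles of length $>4$, the paper's proof tacitly assumes both $C_1,C_2$ are long; you explicitly dispose of the case of two $4$-cycles and, more interestingly, show that the mixed case (a $4$-cycle and a longer cycle crossed by the same two transverse hyperplanes) cannot occur at all --- a small gap in the paper's treatment that your inequality $n-1\le 1$ closes, and which is needed there because a priori an angle of $\pi/2$ at a long cycle is numerically possible.
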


\noindent
The proposition will be an easy consequence of the following result, which is of independent interest:

\begin{prop}\label{prop:ProjCycle}
Let $X$ be a mediangle graph and $C_1,C_2 \subset X$ two convex cycles of length $>4$. The projection of $C_1$ on $C_2$ is either a single vertex, a single edge, or $C_2$ entirely. In the latter case, the projection on $C_2$ restricts to an isometry $C_1 \to C_2$. 
\end{prop}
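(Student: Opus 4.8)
The plan is to exploit gatedness. Since $C_1$ and $C_2$ are convex cycles of length $>4$, Theorem~\ref{thm:LongCyclesConvex} tells us they are gated, so the projections $\mathrm{proj}_{C_1}, \mathrm{proj}_{C_2}$ are well-defined and, by Corollary~\ref{cor:ProjLip}, $1$-Lipschitz. Writing $P := \mathrm{proj}_{C_2}(C_1)$, a $1$-Lipschitz projection sends adjacent vertices to equal or adjacent vertices, so $P$ is a connected subgraph of the cycle $C_2$; hence $P$ is either a single vertex, an arc (induced path), or all of $C_2$. The whole content of the proposition is therefore to rule out the case where $P$ is an arc of length $\geq 2$, and then to identify the projection when $P=C_2$.

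For the main step, suppose $P$ is an arc $z_0, z_1, \ldots, z_\ell$ with $\ell \geq 2$; I would derive a contradiction. For each $0 \leq i \leq \ell$, consider the fibre $F_i := \mathrm{proj}_{C_2}^{-1}(z_i) \cap C_1$. Using Corollary~\ref{cor:ProjLip}, no hyperplane crossing $C_2$ separates two vertices of $C_1$ having the same projection, so each $F_i$ is convex in $C_1$, that is, an arc; moreover the $F_i$ are nonempty (as $z_i \in P$) and partition $C_1$. Now traverse $C_1$ cyclically: two consecutive vertices have equal or $C_2$-adjacent projections, and since $P$ is a proper arc, two of its vertices are $C_2$-adjacent precisely when their indices are consecutive. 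Labelling each vertex of $C_1$ by the index of its fibre, the label is constant on each arc $F_i$ and changes by exactly $\pm 1$ when one passes from a fibre to a cyclically adjacent one. This exhibits a surjective graph homomorphism from the cyclically arranged family of fibres onto the path $0 - 1 - \cdots - \ell$, whose point-preimages (the $F_i$) are connected.

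I would then rule this out directly: removing the single arc $F_1$ from $C_1$ leaves a connected arc of $C_1$, along which the label changes by $\pm 1$ at each step while never taking the value $1$; such a walk on the path cannot pass from $\{0\}$ to $\{2,\ldots,\ell\}$, since it would have to hit $1$. Hence the whole arc $C_1 \setminus F_1$ projects into a single one of these two sides, contradicting surjectivity, because $z_0$ and $z_\ell$ have their fibres $F_0, F_\ell$ outside $F_1$ yet lie on opposite sides. Thus $\ell \leq 1$, so $P$ is a single vertex or a single edge. Finally, when $P = C_2$, Lemma~\ref{lem:ProjProj} gives $\mathrm{proj}_{C_2} \circ \mathrm{proj}_{C_1} = \mathrm{id}_{C_2}$; as both maps are $1$-Lipschitz, $\mathrm{proj}_{C_1}$ restricts to an isometric embedding $C_2 \to C_1$ whose image, being a connected subgraph of the cycle $C_1$ isometric to a cycle, must be all of $C_1$. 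Therefore $\mathrm{proj}_{C_1} : C_2 \to C_1$ is a bijective isometry, and its inverse $\mathrm{proj}_{C_2} : C_1 \to C_2$ is the desired isometry.

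The main obstacle is the arc case. One must avoid the angle machinery (Proposition~\ref{prop:Angle} is deduced \emph{from} this proposition, so invoking it would be circular) and instead isolate a purely combinatorial obstruction. The two facts that make this work are that the fibres of the projection meet $C_1$ in connected arcs, and that the projections of consecutive vertices of $C_1$ differ by a single index; together they reduce the geometric statement to the elementary observation that a cycle admits no surjective graph homomorphism with connected fibres onto a path of length $\geq 2$.
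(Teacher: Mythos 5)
Your proof is correct, but it follows a genuinely different route from the paper's. The paper re-invokes the mediangle axioms inside the proof: it pulls back three consecutive vertices $a_2,b_2,c_2$ of $\mathrm{proj}_{C_2}(C_1)$ to consecutive vertices of $C_1$ via Lemma~\ref{lem:ProjProj} and Proposition~\ref{prop:MinVertices}, applies the Cycle Condition at $b_2$ as seen from the antipode $p_1$ of $b_1$, and uses the Intersection of Even Cycles axiom to identify the resulting convex cycle with $C_2$ itself; this shows antipodes project to antipodes, and the ensuing distance computations give $\ell_1=\ell_2$, surjectivity, and the isometry claim all at once. You instead never touch the axioms (nor Proposition~\ref{prop:MinVertices}): you use only the already-established projection machinery (Theorem~\ref{thm:LongCyclesConvex}, Corollary~\ref{cor:ProjLip}, Theorem~\ref{thm:BigHyp}.$(iii)$, Lemma~\ref{lem:ProjProj}) to reduce the statement to elementary combinatorics of cycles: fibres of the projection are arcs, labels along $C_1$ change by at most one, and a discrete intermediate-value argument rules out a proper arc image of length $\geq 2$. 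Your remark that Proposition~\ref{prop:Angle} must be avoided is also correct. Two compressed steps in your write-up deserve an explicit line, though both are routine: (1) ``no hyperplane crossing $C_2$ separates $x,y$, hence $F_i$ is convex'' requires observing that for $z \in I(x,y)$ every hyperplane separating $x$ from $z$ also separates $x$ from $y$ (by Theorem~\ref{thm:BigHyp}.$(iii)$), so the projections of $x$ and $z$ are unseparated and therefore equal; (2) ``a connected subgraph of $C_1$ isometric to a cycle must be all of $C_1$'' needs a short count, e.g.\ a proper arc of $k$ vertices in $C_1$ has diameter $\min(k-1,\ell_1)$, which cannot equal $\ell_2$ when $k=2\ell_2$ unless the arc is the whole cycle; alternatively, once the trichotomy is established you can simply apply it with the roles of $C_1,C_2$ exchanged, since $\mathrm{proj}_{C_1}(C_2)$ is isometric to $C_2$ and so is neither a vertex nor an edge. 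The trade-off between the two approaches: the paper's argument produces the metric by-products ($\ell_1=\ell_2$, antipodality, the distance $D$ being realised) that make the isometry claim immediate, whereas yours is more self-contained and elementary at the level of cited results, at the cost of the small extra argument in the surjective case.
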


\begin{proof}
Assume that the projection of $C_1$ on $C_2$ is neither a single vertex nor a single edge. Because this projection has to be connected, as a consequence of Corollary~\ref{cor:ProjLip}, this amounts to saying that $\mathrm{proj}_{C_2}(C_1)$ contains at least three consecutive vertices, say $a_2,b_2,c_2$. According to Lemma~\ref{lem:ProjProj}, $a_2,b_2,c_2$ are the respective projections of $a_1:= \mathrm{proj}_{C_1}(a_2)$, $b_1:= \mathrm{proj}_{C_1}(b_2)$, $c_1:= \mathrm{proj}_{C_1}(c_2)$, which are also successive vertices along $C_1$. Let $p_1$ (resp. $p_2$) denote the vertex of $C_1$ (resp. $C_2$) opposite to $b_1$) (resp. $b_2$). 

\medskip \noindent
\begin{minipage}{0.5\linewidth}
Set $D:= d(C_1,C_2)$ and let $\ell_1$ (resp. $\ell_2$) denote half the length of $C_1$ (resp. $C_2$). According to Lemma~\ref{lem:ProjProj} and Proposition~\ref{prop:MinVertices}, the distances $d(a_1,a_2)$, $d(b_1,b_2)$, and $d(c_1,c_2)$ are all equal to $D$.  
\end{minipage}
\begin{minipage}{0.48\linewidth}
\begin{center}
\includegraphics[width=0.9\linewidth]{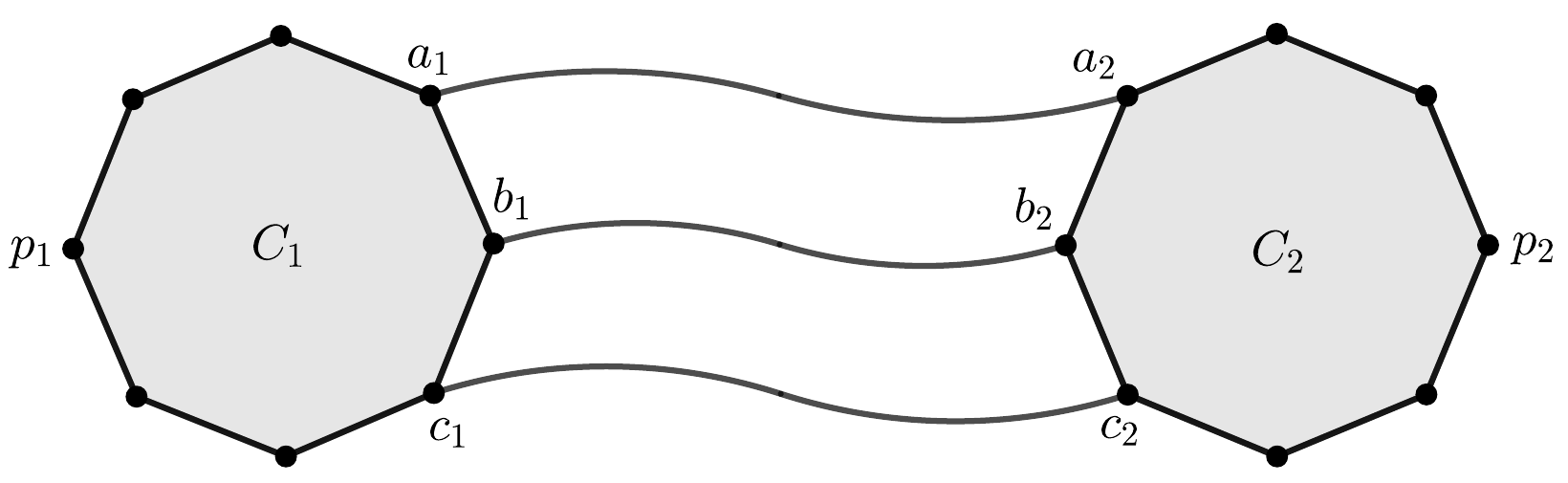}
\end{center}
\end{minipage}

\medskip \noindent
Observe that $d(p_1,b_2)=D+\ell_1$ and $d(p_1,a_2)=D+\ell_1-1=d(p_1,c_2$. Therefore, we can apply the cycle condition and find a convex cycle spanned by the edges $[b_2,a_2],[b_2,c_2]$ such that the vertex opposite to $b_2$ belongs to $I(p_1,b_2)$. But our condition on the intersections of convex cycles implies that this cycle must be $C_2$, hence $p_2 \in I(p_1,b_2)$. Similarly, one shows that $p_1 \in I(p_2,b_1)$. Thus, we have
$$D+ \ell_1= d(p_1,b_2)=d(p_1,p_2)+d(p_2,b_2)=d(p_1,p_2)+\ell_2$$
and 
$$D+ \ell_2= d(p_2,b_1)= d(p_1,p_2)+d(p_1,b_1)= d(p_1,p_2)+ \ell _1.$$
By combining these two equalities, we deduce that $\ell_1=\ell_2$ and that $d(p_1,p_2)=D$. It follows from the definition of $D$ that $p_2$ is the projection of $p_1$ on $C_2$. It follows that $\mathrm{proj}_{C_2}(C_1)=C_2$. Indeed, the shortest path along $C_1$ between $p_1$ and $c_1$ has to send to a path of length $< \ell_2$ between $p_2$ and $c_2$, so it must be the shortest path along $C_2$ between $p_2$ and $c_2$. Similarly, the shortest path along $C_2$ between $p_2$ and $a_2$ must lie in $\mathrm{proj}_{C_2}(C_1)$.

\medskip \noindent
Thus, we have proved that the projection on $C_2$ restricts to a surjective map $C_1 \to C_2$. In fact, since we have seen that $C_1$ and $C_2$ have the same size, the map is a bijection, its inverse being given by the projection on $C_1$. Since projections are $1$-Lipschitz, we conclude that $C_1 \to C_2$ is an isometry. 
\end{proof}

\begin{proof}[Proof of Proposition~\ref{prop:Angle}.]
It follows from Corollary~\ref{cor:ProjLip} that $\mathrm{proj}_{C_2}$ sends an edge in $J_1 \cap C_1$ (resp. $J_2 \cap C_1$) to an edge of $J_1 \cap C_2$ (resp. $J_2 \cap C_2$). As a consequence, the projection of $C_1$ on $C_2$ contains at least two edges, so Proposition~\ref{prop:ProjCycle} implies that $\mathrm{proj}_{C_2}$ induces an isometry $C_1 \to C_2$. As said before, the projection sends edges from $J_1$ (resp. $J_2$) to edges from $J_1$ (resp. $J_2$). Since the angles between $J_1$ and $J_2$ at $C_1$ and $C_2$ are defined metrically, the desired conclusion follows. 
\end{proof}

\subsection{Examples of mediangle graphs}\label{section:Examples}

\noindent
In this section, we collect a few examples of mediangle graphs.

\paragraph{Median and quasi-graphs.} A connected graph $X$ is \emph{median} if, for all vertices $x,y,z \in X$, the intersection $I(x,y) \cap I(y,z) \cap I(x,z)$ is reduced to a single vertex, referred to as the \emph{median point}. One can think of median graphs as one-skeleta of nonpositively curved cell complexes made of cubes \cite{MR1748966, Roller}, such as products of simplicial trees. 

\begin{prop}\label{prop:Mmediangle}
A graph is median if and only if it is mediangle and it has no $3$-cycle and no convex even cycle of length $>4$. In particular, median graphs are mediangle.
\end{prop}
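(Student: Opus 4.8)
The plan is to prove both implications of the equivalence, and then the ``in particular'' statement follows immediately since the median condition forces the absence of $3$-cycles and long convex even cycles.

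First I would establish the forward implication: a median graph is mediangle with no $3$-cycle and no convex even cycle of length $>4$. The absence of $3$-cycles is immediate, since if $x,y,z$ formed a triangle, the three intervals $I(x,y), I(y,z), I(x,z)$ would each contain all three vertices, so their intersection could not be a single vertex. For the absence of long convex even cycles, I would argue that in a convex even cycle of length $2n$ with $n \geq 3$, taking two vertices at distance $2$ and the vertex between them, the three intervals would fail the uniqueness of the median (or more directly, two antipodal vertices $x,z$ together with a neighbour $y$ of $z$ along the cycle would produce two distinct median candidates). Then I must verify the four axioms from Definition~\ref{def:Plurilith}. The Triangle Condition and Cycle Condition are the median-graph analogues of standard quadrangle/triangle conditions: the Triangle Condition, given $d(o,x)=d(o,y)$ and $d(x,y)=1$, is vacuous in a triangle-free graph unless one reinterprets it — but since there are no $3$-cycles, one checks it holds vacuously or produces the required common neighbour via the median point $m(o,x,y)$. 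The Cycle Condition, under the no-long-cycle hypothesis, amounts to the standard quadrangle condition of median graphs, with the convex even cycle being the unique $4$-cycle spanned by the two edges $[z,x],[z,y]$. The two intersection conditions (no $K_4^-$, and convex even cycles meeting in at most an edge) follow because median graphs contain no $K_4^-$ and their only convex even cycles are $4$-cycles, whose pairwise intersections in a median graph are at most an edge.

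Conversely, I would show that a mediangle graph with no $3$-cycle and no convex even cycle of length $>4$ is median. The key point is that under these hypotheses the only convex even cycles are $4$-cycles, so the Cycle Condition becomes exactly the quadrangle condition and the Triangle Condition becomes vacuous (triangle-freeness). The standard characterisation of median graphs states that a graph is median if and only if it is connected, satisfies the quadrangle condition, and contains no induced $K_{2,3}$ and no $K_4^-$ (equivalently, it is weakly modular and bipartite with the appropriate forbidden subgraphs). I would exploit the hyperplane theory already established: by Theorem~\ref{thm:BigHyp}, hyperplanes separate $X$ into convex sectors and the distance equals the number of separating hyperplanes. From this a direct median-point construction is available: given $x,y,z$, the candidate median is the vertex lying, for each hyperplane, in the same sector as the majority of $\{x,y,z\}$; one checks this is a well-defined vertex and lies in all three intervals, and uniqueness follows from the fact that distance counts separating hyperplanes.

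The main obstacle I expect is the verification that the ``majority vote'' over hyperplanes actually yields an existing vertex of $X$ — that is, that the intersection of the three ``majority'' halfspaces is nonempty. In median-graph theory this is the Helly property for halfspaces, and proving it from scratch requires showing that the sectors delimited by hyperplanes satisfy a Helly-type condition for three pairwise-intersecting convex sets. I would handle this by reducing to the quadrangle condition: two of the three points, say $x,y$, together with the hyperplanes separating them, determine an interval $I(x,y)$, and one shows the projection of $z$ onto $I(x,y)$ (which is gated by the projection results in Subsection on Projections) is the desired median. Alternatively, the whole converse can be shortened by noting that a triangle-free mediangle graph with only $4$-cycles as convex even cycles is precisely a weakly modular graph with no $K_{2,3}$ and no $K_4^-$, which is one of the standard equivalent definitions of median graphs; the cleanest route is therefore to match the four axioms directly against this known characterisation rather than reconstructing median points by hand.
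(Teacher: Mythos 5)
Your overall architecture is sound and, in its ``cleanest route'' form, coincides with the paper's actual proof: the paper simply combines Proposition~\ref{prop:QMmediangle} with the classical fact that median graphs are exactly the quasi-median graphs without $3$-cycles, i.e.\ the triangle-free weakly modular graphs with no induced $K_4^-$ and no induced $K_{2,3}$. However, there is a concrete error in your forward direction, in the step ruling out convex even cycles of length $>4$ in a median graph: neither of the two configurations you name violates the median axiom. For ``two vertices at distance $2$ and the vertex between them'', say $0,1,2$ along the cycle, one has $I(0,1)\cap I(1,2)\cap I(0,2)=\{1\}$, a perfectly good unique median. For ``two antipodal vertices and a neighbour of one of them'', say $0$, $n$, $n-1$ on a convex cycle $C$ of length $2n$, one gets $I(0,n)=C$, $I(n-1,n)=\{n-1,n\}$, and $I(0,n-1)=\{0,1,\dots,n-1\}$ (convexity of $C$ guarantees these intervals, computed in the ambient graph, lie in $C$), so the triple intersection is $\{n-1\}$: again a unique median and no contradiction. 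A configuration that does work is three vertices spread around the cycle, e.g.\ $0,2,4$ on a convex hexagon (the three intervals have empty triple intersection), or $0$, $2$, $n+1$ in general; alternatively, and closer to the paper, apply the quadrangle condition to the antipodal pair $o,o'$ and the two neighbours $a,b$ of $o'$: it yields a common neighbour $o''$ of $a,b$ with $o''\in I(o,a)\cap I(o,b)\subset C$ and $d(o,o'')=d(o,a)-1$, which is impossible in a cycle of length $>4$ since there the only common neighbour of $a,b$ inside $C$ is $o'$.

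A secondary point: the first ``standard characterisation'' you invoke --- connected, quadrangle condition, no induced $K_{2,3}$, no $K_4^-$ --- does not characterise median graphs ($K_3$ and $C_5$ satisfy all of it); what is true, and what you need, is the weakly modular/bipartite version in your parenthetical. In your converse this is available, but not ``vacuously from triangle-freeness'' as you write: the point is that the graph is mediangle, hence satisfies the Triangle Condition, and triangle-freeness then forces the hypothesis of that condition to never occur, i.e.\ $d(o,x)\neq d(o,y)$ for every edge $[x,y]$ and vertex $o$, which is bipartiteness. Relatedly, your justification that median graphs have no $3$-cycles misstates the intervals: for adjacent $x,y$ one has $I(x,y)=\{x,y\}$, so the three intervals of a triangle do not ``each contain all three vertices''; rather their intersection is empty, which gives the same conclusion. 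With these repairs your argument goes through and is essentially the paper's.
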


\noindent
A connected graph $X$ is \emph{weakly modular} if the following conditions are satisfied:
\begin{description}
	\item[(Triangle Condition)] For all vertices $o,x,y \in X$ satisfying $d(o,x)=d(o,y)$ and $d(x,y)=1$, there exists a common neighbour $z \in X$ of $x,y$ such that $z \in I(o,x) \cap I(o,y)$.
	\item[(Quadrangle Condition)] For all vertices $o,x,y,z \in X$ satisfying $d(o,x)=d(o,y)=d(o,z)-1$ and $d(x,z)=d(y,z)=1$, there exists a common neighbour $w \in X$ of $x,y$ such that $w \in I(o,x) \cap I(o,y)$.
\end{description}
If in addition $X$ does not contain an induced subgraph isomorphic to $K_4^-$ (i.e. a complete graph $K_4$ minus an edge, or equivalently two $3$-cycles glued along an edge) or to the bipartite complete graph $K_{3,2}$ (i.e. two $4$-cycles glued along two consecutive edges), then $X$ is a \emph{quasi-median graph}. 

\medskip \noindent
One can think of quasi-median graphs as a generalisation of median graphs that allows $3$-cycles. Indeed, it turns out that median graphs coincide with quasi-median graphs with no $3$-cycles. Following this idea, one can also think of quasi-median graphs as one-skeleta of nonpositively curved cell complexes made of \emph{prisms}, i.e. products of simplices \cite{QM}. 

\begin{prop}\label{prop:QMmediangle}
A graph is quasi-median if and only if it is mediangle and has no convex even cycle of length $>4$. In particular, quasi-median graphs are mediangle.
\end{prop}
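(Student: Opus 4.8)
The plan is to prove the biconditional by matching the two sets of axioms one at a time. Both definitions demand the triangle condition and the absence of an induced $K_4^-$, so these two properties transfer verbatim in either direction and cost nothing. The whole content is therefore to relate, on the quasi-median side, the quadrangle condition together with the ban on induced $K_{3,2}$, to, on the mediangle side, the cycle condition together with the intersection-of-even-cycles condition and the absence of convex even cycles of length $>4$. The guiding principle is that, once long convex even cycles are excluded, every convex even cycle is a convex $4$-cycle, at which point the cycle condition and the quadrangle condition say exactly the same thing, and the intersection-of-even-cycles condition and $K_{3,2}$-freeness become two faces of the same coin.

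For the implication ``mediangle and no convex even cycle of length $>4$ $\Rightarrow$ quasi-median'', given data $o,x,y,z$ as in the quadrangle condition, I would feed it to the cycle condition, obtaining a convex even cycle through $[z,x]$ and $[z,y]$ whose vertex $w$ opposite to $z$ lies in $I(o,x)\cap I(o,y)$; since there is no convex even cycle of length $>4$, this cycle is a $4$-cycle, so $w$ is a common neighbour of $x$ and $y$, which is precisely the conclusion of the quadrangle condition. To forbid induced $K_{3,2}$, I would observe that an induced $K_{3,2}$ with parts $\{a_1,a_2,a_3\}$, $\{b_1,b_2\}$ contains the two induced $4$-cycles $a_1b_1a_2b_2$ and $a_2b_1a_3b_2$, which are convex by Lemma~\ref{lem:4CycleConvex} (applicable since mediangle graphs satisfy the cycle condition) yet share the two consecutive edges $[b_1,a_2]$ and $[a_2,b_2]$, contradicting the intersection-of-even-cycles condition.

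For the converse ``quasi-median $\Rightarrow$ mediangle and no convex even cycle of length $>4$'', I would first rule out a convex even cycle $x_0,\dots,x_{2n-1}$ of length $2n\ge 6$ by taking $o=x_0$ and applying the quadrangle condition to $x_{n-1},x_{n+1}$ (both at distance $n-1$ from $o$) and their common neighbour $x_n$ (at distance $n$): this yields a common neighbour $w$ of $x_{n-1},x_{n+1}$ with $d(o,w)=n-2$, so $w\in I(o,x_{n-1})$; convexity of the cycle forces $w$ onto it, but the only vertex of a cycle of length $\ge 6$ adjacent to both $x_{n-1}$ and $x_{n+1}$ is $x_n\neq w$, a contradiction. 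For the cycle condition I would run the quadrangle condition to produce $w$ and check that $\{z,x,w,y\}$ is an induced $4$-cycle ($x\not\sim y$ and $z\not\sim w$ follow from $K_4^-$-freeness and the distance constraints); to see it is convex I would compute $I(z,w)$ and $I(x,y)$ directly, showing via $K_4^-$- and $K_{3,2}$-freeness that $z,w$ (resp. $x,y$) have exactly the two common neighbours $x,y$ (resp. $z,w$). Finally, for the intersection-of-even-cycles condition, since all convex even cycles are now $4$-cycles, two distinct convex $4$-cycles sharing two edges would share a path of two consecutive edges, producing three pairwise non-adjacent common neighbours of a non-adjacent pair, i.e. an induced $K_{3,2}$, a contradiction.

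The step I expect to be the main obstacle is the convexity of the $4$-cycle in the converse direction. The temptation is to invoke Lemma~\ref{lem:4CycleConvex}, but that lemma presupposes the cycle condition, which is exactly what is being established there; using it would be circular. The honest route is the direct interval computation: one must show that the non-adjacent pairs at distance $2$ arising in this configuration cannot acquire a third common neighbour, and this is precisely where $K_{3,2}$-freeness (together with $K_4^-$-freeness, used to guarantee the candidate common neighbours are pairwise non-adjacent) does the essential work, mirroring on the quasi-median side the intersection-of-even-cycles condition used on the mediangle side.
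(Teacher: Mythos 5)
Your proposal is correct and takes essentially the same approach as the paper: both arguments rest on identifying the convex even cycles of a quasi-median graph with its induced $4$-cycles (long cycles killed by the quadrangle condition plus convexity, convexity of induced $4$-cycles obtained from $K_{3,2}$- and $K_4^-$-freeness), and then matching the remaining axioms, with the cycle condition corresponding to the quadrangle condition and the intersection-of-even-cycles condition corresponding to $K_{3,2}$-freeness. You simply spell out the axiom-matching that the paper leaves implicit, and your concern about circularity is well placed: the paper likewise avoids invoking Lemma~\ref{lem:4CycleConvex} in the quasi-median-to-mediangle direction, proving convexity of induced $4$-cycles directly from forbidden subgraphs, exactly as you do.
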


\begin{proof}
The proposition follows from the fact that the convex even cycles in a quasi-median graph $X$ are the induced $4$-cycles. Let $C \subset X$ be a convex even cycle. Fix a vertex $o \in C$, let $o' \in C$ denote the vertex opposite to it, and let $a,b \in C$ denote the neighbours of $o'$. Then the quadrangle condition provides a vertex $o'' \in X$ adjacent to both $a,b$ that belongs to $I(o,a) \cap I(o,b) \subset C$. This implies that $C$ must have length $4$. Conversely, an induced $4$-cycle is automatically isometrically embedded, so, if $X$ contains an induced $4$-cycle that is not convex, then there must be an induced copy of $K_{3,2}$ in $X$, which is impossible. 
\end{proof}

\begin{proof}[Proof of Proposition~\ref{prop:Mmediangle}.]
The proposition is a direct consequence of Proposition~\ref{prop:QMmediangle} once we know that median graphs coincide with quasi-median graphs with no $3$-cycles \cite[(25) p. 149]{MR605838} (see also \cite[Proposition~3]{MR2397349} and \cite[Corollary~2.92]{QM}).
\end{proof}

\paragraph{Cayley graphs of Coxeter groups.} Given a graph $\Gamma$ and a labelling $\lambda : E(\Gamma) \to \mathbb{N}_{\geq 3}$ of its edges, the \emph{Coxeter group} $C(\Gamma)$ is defined by the presentation
$$\langle u \in V(\Gamma) \mid u^2=1 \ (u \in V(\Gamma)), \ (uv)^{\lambda(u,v)}=1 \ (\{u,v\} \in E(\Gamma)) \rangle.$$
Its Cayley graph, constructed with respect to its generating set $V(\Gamma)$, is referred to as a \emph{Coxeter graph}.

\begin{prop}\label{prop:CoxeterPlurilith}
Coxeter graphs are mediangle.
\end{prop}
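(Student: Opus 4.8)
The plan is to verify the four axioms of Definition~\ref{def:Plurilith} directly for a Coxeter graph $X = \mathrm{Cayl}(C(\Gamma), V(\Gamma))$, exploiting the fact that this is a bipartite graph (since every generator is an involution, every edge connects elements of opposite parity with respect to word length, so there are no $3$-cycles) and that its well-known wall/reflection structure controls distances. The guiding principle is that in a Coxeter graph the convex even cycles are exactly the $2m$-cycles coming from rank-two dihedral parabolic subgroups $\langle u,v\rangle$ with $\lambda(u,v)=m$, and that distance in the Cayley graph equals the number of reflections (equivalently, walls) separating two group elements.

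First I would record the combinatorial geometry of Coxeter graphs that I am allowed to use from Davis's book as cited in the introduction: the deletion condition / exchange condition, the characterisation of geodesics via reduced words, and the classical wall structure in which each wall separates $X$ into exactly two convex halfspaces with $d(g,h)$ equal to the number of walls separating $g$ and $h$. From the exchange condition one gets the key local fact that if $d(o,x)=d(o,y)$ with $x,y$ adjacent via a generator, then going one step back toward $o$ from each gives a common neighbour in both intervals; this is precisely the (Triangle Condition), and because $X$ is bipartite the ``common neighbour'' produced lies one step closer, so the condition reads off immediately from the exchange condition. The (Intersection of Triangles) axiom (no induced $K_4^-$) is automatic: $X$ is triangle-free, so it contains no $K_4^-$ at all.

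Next I would treat the two cycle axioms, which are the substantive part. For the (Cycle Condition), given $o,x,y,z$ with $d(o,x)=d(o,y)=d(o,z)-1$ and $x,y$ adjacent to $z$, the edges $[z,x]$ and $[z,y]$ correspond to two distinct generators $u,v$; writing $z=w$ I would use that $x=wu$, $y=wv$ are both closer to the identity (after translating $o$ to the identity by a group element, so $o$ plays the role of $1$), which forces $w$ to have reduced expressions starting with both $u$ and $v$, hence $\langle u,v\rangle$ is a finite dihedral group of some order $2m$ and the coset $w\langle u,v\rangle$ spans a convex $2m$-cycle containing $[z,x]$ and $[z,y]$; the vertex opposite $z$ on this cycle is $w\cdot \mathrm{wo}(u,v)$ (the longest element of the dihedral factor), and a reduced-word/deletion-condition computation shows it lies in $I(o,x)\cap I(o,y)$. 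Convexity of this dihedral cycle follows because its vertices form a parabolic coset and parabolic cosets are convex in Coxeter graphs. For (Intersection of Even Cycles), two convex even cycles are two dihedral parabolic cosets $g\langle u,v\rangle$ and $h\langle u',v'\rangle$; if their intersection contained two edges it would contain a path of length two, hence three collinear group elements generating reflections that force the two rank-two parabolics and the two cosets to coincide, so the intersection is at most one edge — this uses the standard intersection property of parabolic cosets.

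The main obstacle I expect is getting the convexity of the dihedral cycles and the ``opposite vertex lies in both intervals'' claim cleanly from first principles rather than quoting a black box; the cleanest route is to reduce everything to rank-two standard parabolic subgroups, prove that each such coset is convex using the deletion condition (any geodesic between two elements of the coset can be chosen to use only the two generators $u,v$), and then do the explicit dihedral computation where reduced words are just alternating words $uvuv\cdots$ of length $\le m$. Once convexity of parabolic cosets and the description of geodesics in terms of reduced words are in hand, all four axioms follow by short word-combinatorial arguments, so the real work is organising these standard Coxeter facts in the language of Definition~\ref{def:Plurilith}.
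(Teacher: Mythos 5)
Your proposal is correct and follows essentially the same route as the paper: bipartiteness disposes of the Triangle and $K_4^-$ axioms, the Cycle Condition comes from the standard descent-set fact (the paper cites \cite[Lemma~4.7.3.$(iii)$]{Davis}) producing a convex dihedral cycle whose opposite vertex is the translate of the longest element of the finite rank-two parabolic $\langle u,v\rangle$, and the intersection axiom follows because a dihedral cycle is determined by two consecutive edges. The only step to make explicit is that every convex even cycle is actually a dihedral coset (the paper glosses this as following easily from the cycle condition and convexity of dihedral cycles), since your argument for the Intersection of Even Cycles axiom implicitly uses it.
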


\noindent
The proposition is a particular case of Theorem~\ref{thm:BigIntro}. Nevertheless, we include a short elementary proof.

\begin{proof}[Proof of Proposition~\ref{prop:CoxeterPlurilith}.]
Coxeter graphs are bipartite (as a consequence, for instance, of the reduction and normal form that hold in Coxeter groups \cite{MR0254129} (see also \cite[Theorem~3.4.2]{Davis})), so the triangle condition and the no $K_4^-$ condition hold. If $u,v \in \Gamma$ are adjacent vertices, then
$$1, \ u, \ uv, \ uvu, \ldots, (uv)^{\lambda(u,v)}=1$$
defines a $2\lambda(u,v)$-cycle in the Coxeter graph $X(\Gamma)$ of $C(\Gamma)$, which we refer to as a \emph{dihedral cycle}. Dihedral cycles are convex as a consequence of the normal form that holds in Coxeter groups \cite{MR0254129} (see also \cite[Theorem~3.4.2]{Davis}). Observe that, as a consequence of \cite[Lemma~4.7.3.$(iii)$]{Davis}, $X(\Gamma)$ satisfies the cycle condition with respect to its dihedral cycles. More precisely, if $[1,r]$ and $[1,s]$ are two distinct edges such that $r,s \in I(o,g)$ for some $g \in X(\Gamma)$, then $\langle r,s \rangle$ is a finite subgroup, namely a dihedral group of order $2\lambda(r,s)$ whose longest element is $m:=\langle r,s \rangle^{\lambda(r,s)}$, and $d(1,g)=d(1,m)+d(m,g)$. By translating such a configuration by elements of $C(\Gamma)$, one obtains the desired cycle condition for dihedral cycles. From this, it follows easily that every convex cycle must be dihedral. Moreover, because a dihedral cycle is uniquely determined by two consecutive edges, the intersection between any two distinct dihedral cycles cannot contain more than one edge. 
\end{proof}

\paragraph{Small cancellation complexes.} Here we assume that the reader is familiar with classical small cancellation theory in polygonal complexes, and refer to \cite{MR1812024, MR1888425} for more details. Roughly speaking, a simply connected polygonal complex $P$ satisfies the condition $C(p)$ for some integer $p$ if the boundary of a polygon in $P$ cannot be covered by fewer than $p$ other polygons; and it satisfies the condition $T(q)$ for some integer $q$ if the links of its vertices do not contain cycles of length strictly between $2$ and $q$. In view of the restriction on the intersections between long cycles in Definition~\ref{def:Plurilith}, we focus our attention on polygonal complexes that are \emph{even} (i.e. all the polygons have even length) and \emph{simple} (i.e. we assume that the intersection between any two distinct polygons contains at most one edge). This is a strong restriction, but such complexes already provide a lot of interesting examples; see Section~\ref{section:Conclusion} for some of them.

\begin{prop}\label{prop:SmallCancellation}
One-skeleta of simple, even, simply connected polygonal complexes that satisfy the small cancellation conditions C(4)-T(4) are bipartite mediangle graphs.
\end{prop}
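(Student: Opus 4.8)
The plan is to verify the four conditions of Definition~\ref{def:Plurilith} for the one-skeleton $X$ of such a complex $P$, together with bipartiteness. Bipartiteness is immediate: since $P$ is even and simply connected, every cycle in $X$ bounds a disc whose boundary decomposes into boundaries of even polygons, so every cycle has even length, and a graph is bipartite precisely when it contains no odd cycle. Because $X$ is bipartite, it has no $3$-cycle, which instantly gives the no-$K_4^-$ condition (\textbf{Intersection of Triangles}), and simplicity gives the \textbf{Intersection of Even Cycles} condition once we know that the relevant convex even cycles are exactly the polygon boundaries. So the substance lies in the \textbf{Triangle Condition} and the \textbf{Cycle Condition}.

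For the Triangle Condition, since $X$ has no $3$-cycles, the hypothesis $d(x,y)=1$ with $d(o,x)=d(o,y)$ can never occur in a bipartite graph (two adjacent vertices lie in opposite parts, so their distances to any fixed $o$ differ in parity). Hence the Triangle Condition is vacuously satisfied. This is the bipartite simplification one should exploit throughout. The real work is the Cycle Condition: given $o,x,y,z$ with $d(o,x)=d(o,y)=d(o,z)-1$ and $z$ adjacent to both $x,y$, I must produce a convex even cycle through the edges $[z,x],[z,y]$ whose vertex opposite $z$ lies in $I(o,x)\cap I(o,y)$. The natural candidate is the boundary of a polygon of $P$ containing the two edges $[z,x]$ and $[z,y]$; such a polygon exists because the $C(4)$--$T(4)$ conditions force the link of $z$ to behave like a nonpositively curved structure, and two distinct edges at $z$ that both point ``away from $o$'' should be the initial edges of a unique polygon. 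The standard tool here is a Gauss--Bonnet or nonpositive-curvature argument for $C(4)$--$T(4)$ complexes (as in the references \cite{MR1812024, MR1888425}) showing that such complexes are CAT(0)-like, in particular that geodesics and disc diagrams are well controlled.

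The key steps, in order, are: (1) establish bipartiteness and deduce the vacuity of the Triangle Condition and the no-$K_4^-$ condition; (2) prove that polygon boundaries are convex cycles in $X$ and, conversely, that every convex even cycle is a polygon boundary, so that simplicity yields the Intersection of Even Cycles condition; (3) use the $C(4)$--$T(4)$ nonpositive curvature, via a minimal disc diagram argument, to verify the Cycle Condition by selecting the appropriate polygon and checking that its antipodal vertex $w$ satisfies $w\in I(o,x)\cap I(o,y)$. The main obstacle I expect is step~(3), and within it the convexity claim of step~(2): showing that polygon boundaries are genuinely convex (not merely isometrically embedded) and that the antipodal vertex falls on geodesics back to $o$ requires the small-cancellation machinery to rule out shortcuts through neighbouring polygons. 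The cleanest route is to invoke the CAT(0) metric on the $C(4)$--$T(4)$ complex and transfer convexity and the gate/interval conditions from the CAT(0) geometry to the combinatorial graph metric, reducing the Cycle Condition to the statement that two geodesic edges emanating from $z$ and forming part of a combinatorial corner must be the initial edges of a flat polygon, which is exactly what $T(4)$ guarantees about the links.
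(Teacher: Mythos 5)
Your steps (1) and (2), and the overall skeleton, match the paper's proof: bipartiteness (from evenness plus simple connectivity) makes the Triangle Condition and the no-$K_4^-$ condition automatic; the convex even cycles are identified with polygon boundaries; and simplicity then gives the Intersection of Even Cycles condition. The paper also, like you, reduces everything to proving the Cycle Condition with respect to polygon boundaries. The problem is your execution of that key step. Your final reduction -- that ``two geodesic edges emanating from $z$ and forming part of a combinatorial corner must be the initial edges of a flat polygon, which is exactly what $T(4)$ guarantees about the links'' -- is not what $T(4)$ says. $T(4)$ only forbids $3$-cycles in links; it gives no reason whatsoever for two edges meeting at $z$ to lie in a \emph{common polygon} (think of two polygons wedged at a vertex, or a tree-like complex: most pairs of edges at a vertex span no polygon at all). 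The existence of the required polygon is forced by the \emph{global} hypothesis $d(o,x)=d(o,y)=d(o,z)-1$, and extracting it is where all the work lies. The paper does this with a genuinely global argument: fill the loop $\alpha\cup[x,z]\cup[z,y]\cup\beta$ (where $\alpha,\beta$ are geodesics from $o$ to $x$ and $y$) with a disc diagram of minimal area among all counterexamples, use the classification of $C(4)$--$T(4)$ disc diagrams to show the diagram is nondegenerate, is neither a single polygon nor a ladder, and hence carries at least three $i$-shells with $i\leq 2$; then rule out shells along the geodesic sides (unless they are squares, which can be flipped to decrease area) to locate shells at the corners $x$ and $z$, and derive a contradiction by a square flip. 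You name ``a minimal disc diagram argument'' in passing, but you never commit to it; instead you pivot to a local link statement that is simply false as stated.

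Your alternative route via a CAT(0) metric has a second, independent gap: even granting that the regular-Euclidean-polygon metric on a simple, even $T(4)$ complex is CAT(0), ``transferring convexity and the gate/interval conditions from the CAT(0) geometry to the combinatorial graph metric'' is not a formality. Combinatorial geodesics and CAT(0) geodesics are different objects, and combinatorial interval statements such as ``the vertex opposite $z$ lies in $I(o,x)\cap I(o,y)$'' do not follow from CAT(0) convexity without an argument of essentially the same diagrammatic nature as the one you are trying to avoid (this is already visible for CAT(0) square complexes, where medianness of the $1$-skeleton is a theorem, not a transfer). So the proposal, as written, does not close the Cycle Condition, which is the only substantive point of the proposition.
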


\begin{proof}
Let $X$ be a simple, even, simply connected polygonal complex satisfying C(4)-T(4). Because $X$ is simply connected and even, its one-skeleton $X^{(1)}$ is necessarily bipartite. Consequently, the triangle condition and the no $K_4^-$ condition are automatic. The classification of disc diagrams in C(4)-T(4) complexes (see for instance \cite[Theorem~9.4]{MR1888425}) easily shows that boundaries of polygons are convex in $X^{(1)}$ and that a $4$-cycle always bound a polygon (using the fact that $X$ is simple). Now we claim that $X^{(1)}$ satisfies the cycle condition with respect to the boundaries of the polygons in $X$. This will imply the cycle condition and the fact that the convex cycles of $X^{(1)}$ are exactly the boundaries of the polygons in $X$. Since $X$ is simple, the intersection between any two distinct convex cycles cannot contain more than one edge, concluding the proof of the proposition.

\medskip \noindent
Assume for contradiction that our claim fails. Let $o,x,y,z \in X$ be the distinct vertices with $d(o,x)=d(o,y)=d(o,z)-1$ and $d(z,x)=d(z,y)=1$ in our configuration that contradicts our cycle condition. Let $D \to X$ be a disc diagram bounded by $\alpha \cup [x,z] \cup [z,y] \cup \beta$ where $\alpha,\beta$ are geodesics from $o$ to $x$ and from $o$ to $y$. We choose $o,x,y,z$ and $\alpha,\beta$ in order to minimise the area of $D$ among all the counterexamples of our cycle condition. As a consequence, $\alpha,\beta$ cannot meet at an interior point, which implies that $D$ is non-degenerate (i.e. homeomorphic to a disc). Observe that $D$ cannot be a single polygon nor a ladder. As a consequence of the classification of disc diagrams in C(4)-T(4) complexes (see for instance \cite[Theorem~9.4]{MR1888425}), $D$ has to contains at least three $i$-shells with $i \leq 2$. Because $\alpha,\beta$ are geodesics, the outer path of such an $i$-shell cannot lie entirely on $\alpha$ or $\beta$, unless the $i$-shell is a square. But, in the latter case, we would be able to decrease the are of $D$ by modifying $\alpha$ or $\beta$. Therefore, there must be an $i$-shell containing $[x,z]$ and the last edge of $\alpha$ in its outer path, as well as an $i$-shell containing $[y,z]$ and the last edge of $\beta$ in its outer path. Such $i$-shells must be squares since $\alpha \cup [x,z]$ and $\beta \cup [y,z]$ are geodesics. But then, by flipping one of these two squares, one finds a new counterexample to our cycle condition with a disc diagram of smaller area, a contradiction.
\end{proof}

\begin{remark}
It is worth noticing that C(6)-T(3) complexes may not produced mediangle graphs. For instance, a wheel of three $4$-cycles (i.e. the one-skeleton of a $3$-cube minus a vertex) is the one-skeleton of a square complex satisfying the small cancellation condition C(6)-T(3) but it is not mediangle.
\end{remark}

\paragraph{Hypercellular graphs.} Introduced in \cite{MR4065838} as generalising median and cellular graphs \cite{MR1379365}, hypercellular graphs can be defined as isometrically embedded subgraphs in hypercubes in which all finite convex subgraphs can be obtained from Cartesian products of edges and even cycles by successive gated amalgamations. 

\begin{prop}
Hypercellular graphs are mediangle.
\end{prop}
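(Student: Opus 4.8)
The plan is to verify the four conditions of Definition~\ref{def:Plurilith} using the two features built into the definition of hypercellular graphs: they embed isometrically in hypercubes (so they are bipartite), and their finite convex subgraphs are gated amalgams of Cartesian products of edges and even cycles. Bipartiteness immediately dispatches the two conditions involving triangles. Indeed, if $x,y$ are adjacent they lie in opposite classes of the bipartition, so $d(o,x)$ and $d(o,y)$ have different parities and cannot coincide; hence the hypotheses of the Triangle Condition are never met and it holds vacuously. Likewise a bipartite graph has no $3$-cycle, hence no induced $K_4^-$, giving the Intersection of Triangles.

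The content lies in the Cycle Condition and the Intersection of Even Cycles, both of which concern finite configurations. Since the convex hull of a finite vertex set in a partial cube is finite, I would, given a configuration $o,x,y,z$ (resp. two convex even cycles $C,C'$), pass to the finite convex hull $Y$ of $\{o,x,y,z\}$ (resp. of $V(C)\cup V(C')$). As a convex subgraph of the convex set $Y$ is convex in $X$, and as $I^X(a,b)=I^Y(a,b)$ for $a,b\in Y$, it is enough to establish these two conditions inside every finite convex subgraph, i.e. to prove that every finite convex subgraph of $X$ is mediangle; being bipartite, only the Cycle Condition and the Intersection of Even Cycles remain to be checked there. Such a subgraph is a gated amalgam of Cartesian products of edges and even cycles, so I would induct on the number of cells.

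In the base case of a product $F_1\times\cdots\times F_m$ of edges and even cycles, distances, intervals and convexity split along the factors, and the convex even cycles are precisely the factor cycles and the squares $e_i\times e_j$; a factor-by-factor check shows that any two meet in at most one edge and verifies the Cycle Condition (for $x,y$ differing from $z$ in the same cyclic factor one uses the whole factor cycle, with the antipode of $z$ as the required vertex; for $x,y$ differing in two factors one uses the square). For the inductive step, write $G=G_1\cup_{G_0}G_2$ with $G_0$ gated in both parts (so $G_0,G_1,G_2$ are gated, hence convex, in $G$). The key lemma is that every convex cycle of $G$ lies entirely in $G_1$ or in $G_2$: a convex cycle meeting both $G_1\setminus G_0$ and $G_2\setminus G_0$ must cross the separator $G_0$ in at least two vertices $u,v$; choosing $u,v$ to bound a maximal arc with interior in $G_1\setminus G_0$, the joint convexity of the cycle and of $G_0$ forces the shorter complementary arc into $G_0$, leaving no vertex available in $G_2\setminus G_0$. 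Granting this, the Cycle Condition localises to one part: the configuration with $x\in G_1\setminus G_0$, $y\in G_2\setminus G_0$ and $z\in G_0$ is impossible, since $x,y\in I(o,z)$ would force $o$ to avoid both $G_1$ and $G_2$; otherwise $\{x,y,z\}$ lies in some $G_i$ and, replacing $o$ by its gate $o_i\in G_i$ (which preserves the distance relations and satisfies $I^{G_i}(o_i,x)=I(o,x)\cap G_i$), the inductive hypothesis in $G_i$ applies. For the Intersection of Even Cycles, each of the two cycles lies in one part; if both lie in the same $G_i$ the inductive hypothesis applies, and if they lie in different parts their intersection lies in $G_0$, where I would combine the gatedness of long convex cycles (Theorem~\ref{thm:LongCyclesConvex}, available in the smaller mediangle graphs $G_i$ by induction) with the fact that the only gated subgraphs of a convex cycle of length $>4$ are its vertices, edges and the whole cycle to conclude that $C\cap G_0$, and hence $C\cap C'$, is at most an edge; the $4$-cycle case follows directly from convexity, a path of length $2$ never being convex in a $4$-cycle.

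The main obstacle is the inductive step over gated amalgams, and specifically the cross-case of the Intersection of Even Cycles: bounding the overlap of two convex cycles lying on opposite sides of the amalgam is exactly where one must combine the gatedness of long convex cycles with the rigidity of the gated subgraphs of a single cycle in order to push the intersection with $G_0$ down to a single edge.
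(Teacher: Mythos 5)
Your proof is correct, but it takes a genuinely different route from the paper's. The paper's argument is short because it leans on two structural results quoted from \cite{MR4065838}: Theorem~1 there (convex cycles in hypercellular graphs are gated), which gives the Intersection of Even Cycles condition via a two-line gate computation, and Proposition~8 there, which produces for every triple of vertices a gated box $P$ (a product of edges and even cycles) containing a quasi-median triple, so that the Cycle Condition only has to be checked inside such a box, where it is immediate. You instead work directly from the amalgam-based definition: you reduce both non-vacuous conditions to finite convex subgraphs (using finiteness of convex hulls of finite sets in partial cubes), then run a structural induction over the gated amalgamations, with products of edges and even cycles as the base case. Your key lemma -- a convex cycle cannot cross a gated amalgam, because the shorter arc between two vertices of $C\cap G_0$ is a geodesic and hence lies in $G_0$ -- is correct, as is the localisation of the cycle configuration via gates (the mixed case $x\in G_1\setminus G_0$, $y\in G_2\setminus G_0$, $z\in G_0$ indeed contradicts convexity of $G_1$ or $G_2$). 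The delicate cross-case of the intersection condition also goes through: you bootstrap the paper's own Theorem~\ref{thm:LongCyclesConvex}, applied to the smaller mediangle graphs supplied by the induction hypothesis, together with the classification of gated subgraphs of an even cycle; alternatively, one can avoid invoking the closure of gated sets under intersection by noting that the gate in $G_0$ of the vertex of $C$ antipodal to a middle vertex of $C\cap G_0$ must lie in $C$ and be that antipode. In terms of trade-offs: the paper's proof buys brevity at the price of citing nontrivial external structure theory, while yours is self-contained relative to the definition (modulo standard facts on partial cubes, boxes in Cartesian products, and gated sets) at the price of length -- in effect, your induction re-derives, in the special cases needed, the content of the results cited from \cite{MR4065838}.
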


\begin{proof}
Let $X$ be a hypercellular graph. Because $X$ is bipartite, the triangle condition and the condition on the intersections of triangles from Definition~\ref{def:Plurilith} are empty, and a fortiori satisfied. Now, let $A,B \subset X$ be two convex cycles sharing at least one edge. Let $a$ and $b$ denote the two endpoints of $A \cap B$. Travelling along $A \backslash A \cap B$ from $a$ to $b$, let $x \in A$ denote the last vertex that is closer to $a$ than $b$. So $d(x,b)=d(x,a)+1$. Because the path from $x$ to $a$ in $A$ is a geodesic intersecting $B$ only at $a$, $a$ must be the gate of $x$ in $B$ (according to \cite[Theorem~1]{MR4065838}, convex cycles are gated in hypercellular graphs), hence
$$1+ d(x,a)= d(x,b)=d(x,a)+d(a,b),$$
hence $d(a,b)=1$. In other words, $A \cap B$ is reduced to a single edge. Thus, the condition on the intersections of even cycles from Definition~\ref{def:Plurilith} holds.

\medskip \noindent
Finally, let us verify the cycle condition. So let $o,x,y,z \in X$ be four vertices satisfying $d(o,x)=d(o,y)=d(o,z)-1$, $d(x,z)=d(y,z)=1$, and $d(x,y)=2$. According to \cite[Proposition~8]{MR4065838}, there exists a gated subgraph $P$ and three vertices $o',x',y' \in P$ such that $P$ is isometric to a product of edges and even cycles and
$$\left\{ \begin{array}{l} d(o,x)=d(o,o')+ d(o',x')+ d(x',x) \\ d(o,y)=d(o,o')+d(o',y')+d(y',y) \\ d(x,y)= d(x,x')+d(x',y')+d(y',y) \end{array} \right..$$
Because $z$ cannot belong to $I(o,x) \cup I(o,y)$, necessarily $z$ belongs to $I(x',y') \backslash \{x',y'\}$. But $d(x,y)=2$, so the only possibility is that $x=x'$ and $y=y'$. So $x,y \in P$, also $z \in P$ because $P$ is convex. But the cycle condition holds in a product of edges and even cycles, so it holds in particular for $o',x,y,z$. Necessarily, it also holds for $o,x,y,z$.  
\end{proof}

\section{Rotation groups}\label{section:Rotation}

\subsection{Rotation systems}

\noindent
Generalising \emph{reflection systems} in order to obtain \emph{rotations}, i.e. isometries that fix pointwise codimension-one subspaces and permute freely the connected components they delimit, one naturally gets the following definition:

\begin{definition}
A \emph{rotation presystem} is the data of a group $G$, a collection of subgroups $\mathcal{R}$, and a connected graph $X$ on which $G$ acts such that the following conditions are satisfied:
\begin{itemize}
	\item $\mathcal{R}$ is stable under conjugation and it generates $G$.
	\item For every clique $C \subset X$, there exists a unique $R \in \mathcal{R}$ such that $R$ acts freely-transitively on the vertices of $C$. One refers to $R$ as the \emph{rotative-stabiliser} of~$C$.
	\item Every subgroup in $\mathcal{R}$ is the rotative-stabiliser of a clique of $X$.
\end{itemize}
If the following additional condition is satisfied, $(G \curvearrowright X, \mathcal{R})$ is a \emph{rotation system}:
\begin{itemize}
	\item For every clique $C \subset X$, removing the edges of all the cliques having the same rotative-stabiliser as $C$ separates any two vertices in $C$.
\end{itemize}
\end{definition}

\noindent
In this subsection, we focus on the following statement:

\begin{prop}\label{prop:FreeTransitive}
Let $(G \curvearrowright X, \mathcal{R})$ be rotation system. The action $G \curvearrowright X$ is free and transitive on the vertices.
\end{prop}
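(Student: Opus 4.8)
The plan is to prove transitivity and freeness separately, extracting from the rotation system axioms the facts that the orbit of a vertex is both all of $X$ and that point-stabilisers are trivial.

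For transitivity, I would argue that the orbit $G \cdot o$ of a fixed basepoint $o$ is all of $X$. Since $X$ is connected, it suffices to show that whenever a vertex $v$ lies in the orbit, every neighbour of $v$ does too. So suppose $v = g \cdot o$ and let $w$ be adjacent to $v$. The edge $[v,w]$ lies in some clique $C$, and by the second axiom there is a rotative-stabiliser $R \in \mathcal{R}$ acting freely-transitively on the vertices of $C$. In particular, there is some $r \in R$ with $r \cdot v = w$, so $w = r g \cdot o$ lies in the orbit. Since $\mathcal{R}$ generates $G$ this propagation covers all of $X$ by induction on the distance from $v$; hence $G \cdot o = X$ and the action is transitive.

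For freeness, I would show that the stabiliser of the basepoint $o$ is trivial; transitivity then forces all point-stabilisers to be trivial (they are conjugate). Suppose $g \in G$ fixes $o$. Since $\mathcal{R}$ generates $G$, write $g = r_n \cdots r_1$ as a product of elements of the conjugates appearing in $\mathcal{R}$; more usefully, I would try to track how $g$ moves $o$ across hyperplanes. The key tool is the fourth (separation) axiom: removing the edges of all cliques sharing the rotative-stabiliser of a clique $C$ separates the vertices of $C$. This is exactly the hyperplane-separation mechanism, so each rotative-stabiliser $R$ determines a hyperplane $J_R$ whose cliques all have $R$ as rotative-stabiliser, and the orbit structure of $R$ permutes the sectors delimited by $J_R$. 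The plan is to use Theorem~\ref{thm:BigHyp}.$(iii)$ (distance equals number of separating hyperplanes) to show that a nontrivial element must move $o$: if $g$ fixes $o$ but $g \neq 1$, one writes $g$ as a reduced product of rotations and derives a contradiction, for instance by choosing a clique $C$ through $o$ whose rotative-stabiliser $R$ has $g$ acting nontrivially, so that $g$ crosses $J_R$ and cannot return $o$ to the same sector.

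The main obstacle will be the freeness argument, specifically controlling the interaction between the separation axiom and the free-transitive clique action to rule out nontrivial stabilisers. Transitivity is essentially immediate from the clique-by-clique propagation, but freeness genuinely requires the fourth axiom and a length/normal-form control: I expect the cleanest route is to establish that $X$ is (isomorphic to a candidate for) the Cayley graph $\mathrm{Cayl}(G, \mathcal{R}_o)$, so that the orbit map $G \to X$, $g \mapsto g \cdot o$, is a bijection. Concretely, I would show the orbit map is injective by arguing that if $g \cdot o = o$ then, using the separation axiom to realise hyperplanes and counting crossings along a path expressing $g$, the minimal word length of $g$ in the generators $\mathcal{R}_o$ must be zero, forcing $g = 1$. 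The delicate point is verifying that distinct group elements cannot collapse to the same vertex without the hyperplane machinery double-counting or missing a crossing, and this is where Theorem~\ref{thm:BigHyp} and the uniqueness of rotative-stabilisers (Lemma~\ref{lem:InterClique}) must be combined carefully.
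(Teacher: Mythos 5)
Your transitivity argument is correct and coincides with the paper's Lemma~\ref{lem:Transitive}: propagation along edges via rotative-stabilisers of cliques is exactly how the paper proves it (and this part only needs the presystem axioms). The problem is your freeness argument, which is circular as planned. You want to invoke Theorem~\ref{thm:BigHyp}.$(iii)$ (distance equals the number of separating hyperplanes) on $X$, but that theorem is a statement about \emph{mediangle} graphs, and at this point nothing guarantees that the graph $X$ of a rotation system is mediangle. That fact is Theorem~\ref{thm:RotationPlurilith}, which is proved \emph{after} Proposition~\ref{prop:FreeTransitive}, and its proof genuinely depends on it: in the proof of Lemma~\ref{lem:CycleConditionRotation} one concludes $\langle b,a \rangle^k = \langle a,b \rangle^k$ precisely ``because $G$ acts on $X$ with trivial vertex-stabilisers'', i.e.\ using the freeness you are trying to establish. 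The same objection applies to your citation of Lemma~\ref{lem:InterClique}, which assumes $X$ has no induced $K_4^-$ --- not yet known here; the tool available at this stage is Lemma~\ref{lem:InterCliqueRotation}, proved directly from the rotation-system axioms.

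The paper avoids all hyperplane machinery and argues combinatorially. First (a step missing from your plan) one shows that the set $S$ of rotative-stabilisers of cliques \emph{containing} $o$ generates $G$, by induction on the distance from $o$ to a clique: translating a clique closer to $o$ by a suitable element of $S$ reduces the distance. Then, given $g$ with $g \cdot o = o$, one writes $g = s_1 \cdots s_n$ with $s_i \in S \setminus \{1\}$ and considers the loop $o, s_1 \cdot o, \ldots, s_1 \cdots s_n \cdot o = o$. The separation axiom forces this loop to re-cross the barrier of the clique $C$ containing $[o, s_1 \cdot o]$, say at an edge $[s_1 \cdots s_k \cdot o, s_1 \cdots s_{k+1} \cdot o]$ lying in a clique $C'$ with the same rotative-stabiliser as $C$. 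Since $s_{k+1}$ lies in the rotative-stabiliser of a clique $C''$ containing $o$, the translate $s_1 \cdots s_k \cdot C''$ also contains that edge, so Lemma~\ref{lem:InterCliqueRotation} gives $C' = s_1 \cdots s_k \cdot C''$; hence the conjugate $(s_1 \cdots s_k) s_{k+1} (s_1 \cdots s_k)^{-1}$ lies in the rotative-stabiliser of $C$, and absorbing it into $s_1$ rewrites $g$ as a product of $n-1$ elements of $S$. Iterating yields $g = 1$. This word-shortening argument is the substitute for your ``counting crossings'' idea, and it is exactly what makes the proof non-circular: the crossing-counting is done with the raw separation axiom rather than with Theorem~\ref{thm:BigHyp}. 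If you want to salvage your approach, you must redo the barrier-separation bookkeeping from scratch at the level of rotation systems (as the paper does in its Lemmas~\ref{lem:BarrierGeod} and~\ref{cor:BarrierGeod} for the later mediangle argument), not import it from the mediangle theory.
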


\noindent
In fact, the action is already vertex-transitive for rotation presystems:

\begin{lemma}\label{lem:Transitive}
Let $(G \curvearrowright X, \mathcal{R})$ be rotation presystem. The action $G \curvearrowright X$ is transitive on the vertices.
\end{lemma}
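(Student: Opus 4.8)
The plan is to prove vertex-transitivity by exploiting the connectedness of $X$ together with the fact that $\mathcal{R}$ generates $G$ and acts by rotative-stabilisers on cliques. The natural strategy is to show that the $G$-orbit of any fixed vertex $o$ is both open and closed in the graph-theoretic sense, i.e. that it is a union of connected components; since $X$ is connected, this forces the orbit to be everything.

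First I would fix a basepoint $o \in X$ and consider its orbit $G \cdot o$. The key local step is to show that if a vertex $v$ lies in $G \cdot o$, then every neighbour of $v$ also lies in $G \cdot o$. To see this, let $w$ be a neighbour of $v$, so that $v$ and $w$ are joined by an edge, which extends to a unique clique $C$ containing both. By the rotation presystem axioms, there is a rotative-stabiliser $R \in \mathcal{R}$ acting freely-transitively on the vertices of $C$. In particular, there exists $r \in R \subseteq G$ with $r \cdot v = w$. Since $v = g \cdot o$ for some $g \in G$, we get $w = r g \cdot o \in G \cdot o$. Hence $G \cdot o$ is closed under passing to neighbours.

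From this the conclusion follows immediately by induction on distance: given an arbitrary vertex $x \in X$, connectedness yields a path $o = x_0, x_1, \ldots, x_n = x$, and applying the previous step $n$ times shows successively that each $x_i \in G \cdot o$, so in particular $x \in G \cdot o$. Therefore $G \cdot o = X$, which is exactly vertex-transitivity.

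I do not expect a serious obstacle here; the lemma is essentially a formal consequence of the second axiom (existence of rotative-stabilisers acting transitively on each clique, combined with the fact that $\mathcal{R} \subseteq G$) and the connectedness of $X$. The only point requiring a moment's care is confirming that any edge is indeed contained in a clique so that the relevant rotative-stabiliser exists and can move one endpoint to the other; this is immediate since every edge lies in at least one maximal complete subgraph. The harder content of Proposition~\ref{prop:FreeTransitive}, namely freeness, is genuinely a separate matter and is precisely where the fourth (separation) axiom of a full rotation system must enter, so it is appropriately left out of this lemma.
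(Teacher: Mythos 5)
Your proof is correct and essentially identical to the paper's: both walk along a path provided by connectedness and, at each edge, use an element of the rotative-stabiliser of a clique containing that edge to move one endpoint onto the other, composing these elements to carry one vertex to the other. The only slip is your claim that the clique containing the edge $[v,w]$ is \emph{unique} — this is not yet available for rotation presystems (it is established only for rotation systems, in Lemma~\ref{lem:InterCliqueRotation}) — but your argument only uses existence of such a clique, so nothing is affected.
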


\begin{proof}
Let $x,y \in X$ be two vertices. Because $X$ is connected, there exists a path $z_1, \ldots, z_n$ from $x$ to $y$. For every $1 \leq i \leq n-1$, $z_i$ and $z_{i+1}$ are adjacent, so there exists an element $gg_i \in G$ in the rotative-stabiliser of a clique containing both $z_i,z_{i+1}$ such that $g_i \cdot z_i=z_{i+1}$. Then
$$g_{n-1} \cdots g_1 \cdot x = g_{n-1} \cdots g_1 \cdot z_1 = z_n=y,$$
concluding the proof of the lemma.
\end{proof}

\noindent
The following statement will be needed in the proof of Proposition~\ref{prop:FreeTransitive} and in the next section. Observe that the lemma implies that, in the graph, an edge belongs to a unique clique.

\begin{lemma}\label{lem:InterCliqueRotation}
Let $(G \curvearrowright X, \mathcal{R})$ be rotation system. The intersection between any two distinct cliques in $X$ is either empty or reduced to a single vertex.
\end{lemma}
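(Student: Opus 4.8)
The plan is to argue by contradiction: suppose two distinct cliques $C_1 \neq C_2$ intersect in more than one vertex. Since the intersection of complete subgraphs is complete, $C_1 \cap C_2$ would contain an edge, say $[x,y]$. The strategy is to exploit the separation axiom of rotation systems to derive a contradiction, much as in the spirit of Lemma~\ref{lem:InterClique} but now using the dynamical structure rather than a forbidden-subgraph condition.

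First I would record what the axioms give us about the edge $[x,y]$. Each clique has a unique rotative-stabiliser, so let $R_1, R_2 \in \mathcal{R}$ be the rotative-stabilisers of $C_1, C_2$ respectively. The key point is whether $R_1 = R_2$ or $R_1 \neq R_2$, and I expect both possibilities must be ruled out. If $R_1 = R_2 =: R$, then $R$ acts freely-transitively on the vertices of both $C_1$ and $C_2$; since $R$ acts freely on vertices and both cliques share the edge $[x,y]$, the unique element of $R$ sending $x$ to $y$ is determined, and I would argue that the orbit structure forces $C_1$ and $C_2$ to have the same vertex set, contradicting $C_1 \neq C_2$. Concretely, the vertices of $C_i$ form a single $R$-orbit containing $x$, and a free transitive action pins down this orbit uniquely, so $C_1$ and $C_2$ would coincide.

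The more delicate case is $R_1 \neq R_2$. Here I would invoke the fourth (separation) axiom applied to the clique $C_1$: removing the edges of all cliques sharing the rotative-stabiliser $R_1$ separates $x$ from $y$. The edge $[x,y]$ itself lies in $C_2$, whose rotative-stabiliser is $R_2 \neq R_1$; the crux is to show that $[x,y]$ therefore survives the edge-removal, giving a path (the single edge) from $x$ to $y$ that avoids all removed edges — contradicting that the removal separates $x$ and $y$. This requires knowing that the cliques sharing rotative-stabiliser $R_1$ are exactly those cliques $C$ with rotative-stabiliser $R_1$, and that $C_2$ is not among them because its stabiliser is $R_2$. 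The subtle ingredient is confirming that the edge $[x,y]$, living in $C_2$, is not simultaneously an edge of some clique with stabiliser $R_1$; but by the uniqueness of the rotative-stabiliser of a clique (the second axiom) together with the fact — which I would establish or cite — that an edge determines its clique, the edge $[x,y]$ belongs only to $C_2$, whose stabiliser is $R_2 \neq R_1$, so it is not removed.

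The main obstacle I anticipate is the case $R_1 \neq R_2$, specifically the bookkeeping needed to guarantee that the edge $[x,y]$ is genuinely preserved under the removal process dictated by $C_1$'s stabiliser. This hinges on the statement (noted in the remark preceding the lemma, that an edge belongs to a unique clique) which is itself part of what is being established, so I would need to be careful not to argue circularly: I would prove the uniqueness-of-clique-through-an-edge claim as a direct consequence of the separation axiom first, and only then deduce that distinct cliques meet in at most a vertex. In practice the two assertions are intertwined, so I would organise the proof to extract the separation-of-edge-endpoints consequence and apply it uniformly, handling the $R_1 = R_2$ subcase by the free-transitivity orbit argument and the $R_1 \neq R_2$ subcase by the surviving-edge contradiction.
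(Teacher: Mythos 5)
Your case $R_1 = R_2$ is fine: since a rotative-stabiliser preserves its clique and acts transitively on its vertices, each of $C_1$, $C_2$ equals the $R$-orbit of $x$, so they coincide. The fatal problem is your case $R_1 \neq R_2$. You want to show that the edge $[x,y]$ \emph{survives} the removal dictated by the separation axiom applied to $C_1$. It cannot: by the contradiction hypothesis, $[x,y]$ is an edge of $C_1$ itself, and $C_1$ is certainly a clique whose rotative-stabiliser is $R_1$, so $[x,y]$ is removed by definition --- the axiom deletes the edges of \emph{all} cliques with rotative-stabiliser $R_1$, and the membership of $[x,y]$ in the ``innocent'' clique $C_2$ does not protect it. For the same reason, your proposed repair (first establish that an edge determines its clique, then conclude) cannot be carried out: under the contradiction hypothesis the edge $[x,y]$ genuinely lies in two distinct cliques, and the uniqueness statement you would need is equivalent to the lemma being proved. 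The circularity you flagged is not a bookkeeping issue; it is unavoidable along this route.

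The way out, which is the paper's argument, is never to look for a surviving edge, but instead to extract group elements from whatever edge the barrier does contain. Fix $p_1 \in C_1 \setminus C_2$ and $p_2 \in C_2 \setminus C_1$ (these exist because distinct maximal complete subgraphs cannot contain one another) and let $J$ be the union of the edges of all cliques with rotative-stabiliser $R_1$. Since $J$ separates $x$ from $y$, the length-two path $x, p_2, y$, which lies inside $C_2$, must use an edge of $J$; say $[x,p_2] \in J$. Then $[x,p_2]$ lies in some clique with rotative-stabiliser $R_1$, so free transitivity on that clique gives $h \in R_1$ with $h \cdot x = p_2$, and transitivity on $C_1$ gives $g \in R_1$ with $g \cdot p_1 = x$. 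Hence $hg \cdot p_1 = p_2 \notin C_1$, contradicting the fact that every element of $R_1$ stabilises $C_1$ setwise. Note that this argument needs no case distinction on whether $R_1 = R_2$: your first case, though correct, is subsumed by it.
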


\begin{proof}
Assume for contradiction that there exist two distinct cliques $C_1,C_2 \subset X$ such that $C_1 \cap C_2$ contains an edge $[a,b]$. Fix two vertices $p_1 \in C_1 \backslash C_2$ and $p_2 \in C_2 \backslash C_1$. Let $J$ denote the union of the edges of all the cliques with the same rotative-stabiliser as $C_1$. Because $J$ separates $a$ and $b$, it has to contain at least one of the two edges $[p_2,a]$, $[p_2,b]$. Say that $[p_2,a] \in J$. So there exist two elements $g,h$ in the rotative-stabiliser of $C_1$ satisfying $g \cdot p_1 =a$ and $h \cdot a = p_2$. But then $hg \cdot p_1 = p_2 \notin C_2$, contradicting the fact that $g$ and $h$ stabilise $C_1$. 
\end{proof}

\begin{proof}[Proof of Proposition~\ref{prop:FreeTransitive}.]
We already know from Lemma~\ref{lem:Transitive} that the action is transitive. So let $o \in X$ be a vertex and $g \in G$ an element such that $g \cdot o = o$. Our goal is to show that $g=1$.

\medskip \noindent
Let $S$ denote the union of the rotative-stabilisers of all the cliques containing $o$. We claim that $S$ generates $G$. In order to prove this assertion, it suffices to show that, for every clique $C$, its rotative-stabiliser lies in $\langle  S \rangle$. We argue by induction over $d(o,C)$. If $d(o,C)=0$, then $C$ lies in $S$ by definition of $S$. If $d(o,C) \geq 1$, then let $e \subset X$ denote the first edge of a geodesic $\gamma$ from $o$ to $C$. Let $s \in S$ be an element of the rotative-stabiliser of a clique containing $e$ that sends the second endpoint of $e$ to $o$. Then $s \cdot \gamma\backslash e$ defines a path from $o$ to $s \cdot C$ of length $\mathrm{lg}(\gamma)-1$, hence $d(o,s \cdot C)<d(o,C)$. Our induction assumption implies that the rotative-stabiliser of $sC$ lies in $\langle S \rangle$. Since $s \in S$, so does the rotative-stabiliser of $C$, concluding the proof of our claim.

\medskip \noindent
Write $g$ as a product $s_1 \cdots s_n$ where $s_1, \ldots, s_n \in S \backslash \{1\}$. If $n=0$, then $g=1$ and there is nothing to prove, so we assume that $n \geq 1$. Observe that
$$o, s_1 \cdot o, s_1s_2 \cdot o, \ldots, s_1 \cdots s_n \cdot o=g \cdot o=o$$
defines a loop in $X$. Let $C$ be a clique containing the edge $[o,s_1 \cdot o]$. Because the edges of all the cliques with that same rotative-stabiliser as $C$ separates $X$, there must exist some $2 \leq k \leq n-1$ such that $[s_1 \cdots s_k \cdot o, s_1 \cdots s_{k+1} \cdot o]$ belongs to a clique $C'$ with the same rotative-stabiliser as $C$. On the other hand, we know that $s_{k+1}$ belongs to the rotative-stabiliser of some clique $C''$ containing $o$, so the conjugate $s_{k+1}^{s_1 \cdots s_k}$ belongs to the rotative-stabiliser of the clique $s_1 \cdots s_k \cdot C''$. Since the latter contains $s_1 \cdots _k \cdot o$, it also contains the $s_{k+1}^{s_1 \cdots s_k}$-translate of $s_1 \cdots s_k \cdot o$, namely $s_1 \cdots s_{k+1} \cdot o$. Thus, the cliques $C'$ and $s_1 \cdots s_k \cdot C''$ contain both $s_1 \cdots s_k \cdot o$ and $s_1 \cdots s_{k+1} \cdot o$, which implies, according to Lemma~\ref{lem:InterCliqueRotation}, that the cliques coincide. A fortiori, $s_{k+1}^{s_1 \cdots s_k}$ belongs to the rotative-stabiliser of $C$, so we can write
$$g= s_1 \cdots s_n = s_{k+1}^{s_1 \cdots s_k} \cdot s_1 s_2 \cdots s_{k-1}s_{k+1} \cdots s_n = s_1' s_2 \cdots s_{k-1} s_{k+1} \cdots s_n$$
where $s_1':= s_{k+1}^{s_1 \cdots s_k}s_1$ belongs to $\mathrm{stab}_\circlearrowright(C) \subset S$. Thus, we obtain a new expression of $g$ as a product of elements of $S$, but which is shorter. By iterating the argument, we eventually conclude that $g=1$, as desired.
\end{proof}

\subsection{Mediangle geometry}

\noindent
In this section, we show how rotation systems and mediangle graphs are connected. Namely:

\begin{thm}\label{thm:RotationPlurilith}
For every rotation system $(G \curvearrowright X, \mathcal{R})$, the graph $X$ is mediangle.
\end{thm}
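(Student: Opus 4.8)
The goal is to verify the four defining conditions of Definition~\ref{def:Plurilith} for the graph $X$ underlying a rotation system $(G \curvearrowright X, \mathcal{R})$. Let me think about what structure is already available and what needs to be established.

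We have from Proposition~\ref{prop:FreeTransitive} that $G$ acts freely and transitively on the vertices, so $X$ is a Cayley-type graph and we may identify vertices with group elements, fixing a basepoint $o$. From Lemma~\ref{lem:InterCliqueRotation} every edge lies in a unique clique, which immediately yields the no-$K_4^-$ condition (Intersection of Triangles): if $X$ contained an induced $K_4^-$, two distinct cliques would share an edge. The hyperplanes in the sense of Definition~\ref{def:Plurilith} must be matched against the separating sets $J$ coming from rotative-stabilisers — the fourth axiom of a rotation system says that the edges of all cliques sharing a rotative-stabiliser with $C$ separate the vertices of $C$. So a central identification to make early is that these rotation-theoretic "walls" are precisely the equivalence classes of edges appearing in the mediangle hyperplane relation.

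The main structural input I would need is that $G$, acting freely and transitively with $\mathcal{R}$ as its rotative-stabilisers, is forced to be a periagroup; but since this direction of Theorem~\ref{thm:BigIntro} is proved precisely by showing rotation groups are periagroups (Section~\ref{section:Rotation}), the cleanest route may be to establish the triangle and cycle conditions directly from the separation axiom. For the \textbf{Triangle Condition}, given $o,x,y$ with $d(o,x)=d(o,y)$ and $x,y$ adjacent, the edge $[x,y]$ lies in a unique clique $C$ with rotative-stabiliser $R$; I would use the separating wall $J$ of $C$ to show that a geodesic from $o$ cannot cross $J$ before reaching $C$, so the freely-transitive action of $R$ on $C$ produces a vertex $z\in C$ that is simultaneously one step closer to $o$ along both $I(o,x)$ and $I(o,y)$. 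For the \textbf{Cycle Condition}, given $o,x,y,z$ with $d(o,x)=d(o,y)=d(o,z)-1$, the two edges $[z,x],[z,y]$ lie in (possibly distinct) cliques; here the dihedral structure forced by the periagroup relations $\langle a,b\rangle^{\lambda} = \langle b,a\rangle^{\lambda}$ produces the required convex even cycle, with the opposite vertex lying in $I(o,x)\cap I(o,y)$. The \textbf{Intersection of Even Cycles} condition should again follow from Lemma~\ref{lem:InterCliqueRotation} together with the fact that a convex even cycle is determined by two consecutive edges once the relevant dihedral subgroup is pinned down.

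I expect the hard part to be the \textbf{Cycle Condition} and the convexity of the even cycles, because this is exactly where the genuinely non-right-angled behaviour enters: one must show that when two adjacent edges through $z$ generate a \emph{finite dihedral} subgroup (the case $\lambda > 2$ with both vertex-groups of order two), the resulting $2\lambda$-cycle is convex and realises the cycle condition, whereas when the subgroup is a direct product (the case $\lambda = 2$) one gets the $4$-cycle instead. Controlling distances within these cycles and proving their convexity will likely require relating word length in $G$ to the separation count by the walls $J$, i.e. establishing a normal-form or at least a "reduced word crosses each wall at most once" statement for this action — essentially bootstrapping a mediangle-graph hyperplane theory from the rotation-system axioms. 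This interplay between the combinatorial separation axiom and the algebraic dihedral relations is where I anticipate the real technical work to lie; the other three conditions feel comparatively routine given Lemmas~\ref{lem:InterCliqueRotation} and Proposition~\ref{prop:FreeTransitive}.
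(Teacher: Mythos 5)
Your outline matches the paper's architecture in its easy parts: the no-$K_4^-$ condition does follow from Lemma~\ref{lem:InterCliqueRotation}; the triangle condition is indeed proved via the separation axiom applied to the ``wall'' (the paper calls it a \emph{barrier}) of the clique containing $[x,y]$; your anticipated ``reduced word crosses each wall at most once'' statement is exactly the paper's Lemma~\ref{lem:BarrierGeod} and Corollary~\ref{cor:BarrierGeod}; and the intersection-of-even-cycles condition is disposed of, as you guess, by the observation that the relevant cycles are determined by two consecutive edges. You also correctly sense the circularity danger in appealing to the periagroup structure, since that structure (Proposition~\ref{prop:RotationPeriagroup}) is derived from, not prior to, the mediangle property.

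However, there is a genuine gap precisely at the point you flag as the hard part, and your proposed fallback does not close it. For the cycle condition you write that ``the dihedral structure forced by the periagroup relations $\langle a,b\rangle^{\lambda}=\langle b,a\rangle^{\lambda}$ produces the required convex even cycle.'' At this stage of the argument no such relation is available: nothing in the rotation-system axioms says that two rotations $a,b$ attached to the cliques through $z$ generate a finite dihedral group, and if they generated, say, an infinite dihedral group, no even cycle would close up at all. The paper manufactures the relation geometrically (Lemma~\ref{lem:CycleConditionRotation}): Claim~\ref{claim:CCZero} shows that as long as the barrier $A$ (resp.\ $B$) does not cross the alternating path $y, b\cdot y, ba\cdot y,\dots$ (resp.\ $y, a\cdot y, ab\cdot y,\dots$), one can extend both paths as geodesics towards $o$; since $d(o,y)$ is finite the extension must stop, the separation axiom then forces the two paths to arrive at vertices identified by a rotation, and the \emph{freeness} of the action (Proposition~\ref{prop:FreeTransitive}, trivial vertex stabilisers) converts the coincidence of vertices $b\langle a,b\rangle^{k-1}\cdot y=\langle a,b\rangle^{k}\cdot y$ into the group relation $\langle a,b\rangle^{k}=\langle b,a\rangle^{k}$. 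This is the step your proposal is missing: the dihedral relation is an output of the geometry, not an input. A second, smaller omission is the convexity of the resulting dihedral cycles, which the paper obtains by rerunning the proof of Proposition~\ref{prop:ConvexCriterion} with dihedral cycles in place of convex even cycles and then noting that a dihedral cycle is determined by two consecutive edges; you state this is needed but give no mechanism for it.
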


\noindent
We begin by verifying that the triangle condition holds.

\begin{prop}\label{prop:TriangleConditionRotation}
For every rotation system $(G \curvearrowright X, \mathcal{R})$, the graph $X$ satisfies the triangle condition.
\end{prop}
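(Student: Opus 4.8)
The plan is to reduce the triangle condition to a statement about a single clique and then identify the required vertex with the closest vertex of that clique to $o$. Set $n := d(o,x) = d(o,y)$. By Lemma~\ref{lem:InterCliqueRotation} the edge $[x,y]$ lies in a unique clique $C$; write $R$ for its rotative-stabiliser and let $J$ denote the set of edges of all cliques whose rotative-stabiliser is $R$ (so the separation axiom says any two vertices of $C$ lie in distinct components of the graph obtained by deleting the edges of $J$). The first observation is that it suffices to produce a vertex $z \in C$ with $d(o,z) = n-1$: such a $z$ is distinct from $x,y$, is adjacent to both since $C$ is a clique, and satisfies $d(o,z)+d(z,x) = n = d(o,x)$ and likewise for $y$, so $z \in I(o,x) \cap I(o,y)$.

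The heart of the argument will be the claim that the vertex of $C$ minimising the distance to $o$ is unique and that a geodesic joining it to $o$ uses no edge of $J$. To prove it, I would take any vertex $z \in C$ at minimal distance $m := d(o,C)$ and suppose a geodesic $\gamma$ from $o$ to $z$ contained an edge $[p,q] \in J$. Then $p,q$ lie in a common clique $D$ with rotative-stabiliser $R$, so by Proposition~\ref{prop:FreeTransitive} there is a non-trivial $\rho \in R$ with $\rho \cdot q = p$; since $R$ stabilises $C$, the vertex $z'' := \rho^{-1} \cdot z$ again lies in $C$, and replacing the subpath of $\gamma$ from $q$ to $z$ by its $\rho^{-1}$-translate (which runs from $\rho^{-1} \cdot q = p$ to $z''$) yields a walk from $o$ to $z''$ of length $m-1$, contradicting minimality. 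Hence every minimiser lies in the same component as $o$ after deleting the edges of $J$; since the separation axiom forces distinct vertices of $C$ into distinct components, the minimiser is unique.

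With the claim in hand I would conclude as follows. Let $z$ be the unique closest vertex of $C$, with $m = d(o,z)$; every other vertex of $C$ then lies strictly farther from $o$ than $m$. As $x \neq y$ are both at distance $n$, neither can be this unique minimiser: if, say, $z = x$, then $y \neq z$ would force $d(o,y) > m = n$, which is absurd. Thus $z \notin \{x,y\}$ and $n > m$. On the other hand $z$ and $x$ are adjacent, whence $m \geq n-1$; therefore $m = n-1$, and $z$ is the sought common neighbour.

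I expect the only real obstacle to be the uniqueness of the closest vertex of $C$, which is precisely where the separation axiom of a rotation system is indispensable: without it, $x$ and $y$ could both be closest points of $C$ and no interior vertex would exist. The folding argument using the rotative-stabiliser $R$ together with the free-transitivity of Proposition~\ref{prop:FreeTransitive} is what converts this axiom into the required distance estimate; everything else is bookkeeping with intervals.
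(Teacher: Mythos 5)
Your proof is correct, and it runs on the same engine as the paper's --- folding a path across a barrier by an element of the rotative-stabiliser and then invoking the separation axiom --- but the architecture is genuinely different. The paper routes everything through its general lemmas on barriers: Lemma~\ref{lem:AtMostCC} produces a vertex $c \in C$ lying in the same domain as $o$, convexity of domains (from Lemma~\ref{lem:BarrierGeod}) gives a geodesic $\gamma$ from $o$ to $c$ avoiding the barrier $J$, and Corollary~\ref{cor:BarrierGeod} (a path is a geodesic if and only if it crosses each barrier at most once) certifies that $\gamma \cup [c,x]$ and $\gamma \cup [c,y]$ are geodesics, forcing $d(o,c)=d(o,x)-1=d(o,y)-1$. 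You instead prove one self-contained claim: the vertex of $C$ closest to $o$ is unique, and geodesics to it avoid $J$. This is in effect a gatedness statement for cliques extracted directly from the rotation-system axioms (the paper only establishes gatedness of cliques later, in the mediangle setting, Lemma~\ref{lem:CliquesGated}, which could not be quoted here without circularity), and you then finish by pure exclusion: the unique minimiser can be neither $x$ nor $y$, so its distance to $o$ is exactly $n-1$. Your route buys self-containedness and bypasses Corollary~\ref{cor:BarrierGeod} entirely; the paper's route buys lemmas it needs anyway for the cycle condition (Lemma~\ref{lem:CycleConditionRotation}) and Theorem~\ref{thm:RotationPlurilith}.

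Two small repairs, neither of which is a genuine gap. First, $\rho$ is used inconsistently: having chosen $\rho \cdot q = p$, the subpath of $\gamma$ from $q$ to $z$ should be translated by $\rho$ itself, landing at $z'' := \rho \cdot z$; as written, the $\rho^{-1}$-translate starts at $\rho^{-1} \cdot q$, which equals $p$ only if $\rho$ has order two. Second, the existence of $\rho \in R$ sending $q$ to $p$ is not given by Proposition~\ref{prop:FreeTransitive} (which concerns the action of $G$ on all of $X$ and would only produce some $g \in G$, not an element of $R$); it follows directly from the definition of a rotation system, since the clique $D$ containing $[p,q]$ has rotative-stabiliser $R$, which acts freely-transitively on the vertices of $D$.
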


\noindent
In the graph given by a rotation system, we define a \emph{barrier} as the union of the edges of all the cliques with the same rotative-stabiliser as a given clique. The connected components of the graph obtained by removing the edges from a barrier are the \emph{domains} delimited by that barrier. Barriers and domains correspond to hyperplanes and sectors in mediangle graphs. However, we introduce a different terminology in this section because we do not know yet that our graph is mediangle.

\medskip \noindent
We know from the definition of a rotation system that, given a clique, the associated barrier separates any two vertices of the cliques, so there at least as many domains as there are vertices in the clique. The next lemma shows that there are actually no more domains delimited by the barrier.

\begin{lemma}\label{lem:AtMostCC}
Let $(G \curvearrowright X, \mathcal{R})$ be a rotation presystem. For every clique $C \subset X$ and every vertex $x \in X$, there exists some $c \in C$ such that $x$ lies in the same domain as $c$.
\end{lemma}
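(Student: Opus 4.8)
The plan is to induct on the distance $d(x,C):= \min_{c \in C} d(x,c)$. Throughout, write $R$ for the rotative-stabiliser of $C$ and $J$ for the barrier associated to $C$, i.e. the union of the edges of all cliques whose rotative-stabiliser is $R$. The crucial preliminary observation is that every $g \in G$ maps the barrier of a clique $D$ onto the barrier of $gD$: indeed, $g$ sends cliques to cliques, and if $R_D$ is the rotative-stabiliser of $D$, then $gR_Dg^{-1}$ acts freely-transitively on $gD$, so by uniqueness it is the rotative-stabiliser of $gD$. In particular, since $R$ is the rotative-stabiliser of $C$, every $r \in R$ permutes the vertices of $C$, so $rC=C$ and therefore $r \cdot J = J$. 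Consequently $R$ acts on the graph obtained from $X$ by deleting the edges of $J$, hence permutes its domains.

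For the base case $d(x,C)=0$ we have $x \in C$ and there is nothing to prove. Suppose $d(x,C)=k \geq 1$ and fix a geodesic from $x$ to a closest vertex $c_0 \in C$; its first edge $[x,y]$ satisfies $d(y,C)=k-1$, since $d(y,c_0)=d(x,c_0)-1=k-1$ while $d(y,C)\geq d(x,C)-1=k-1$. By the induction hypothesis there exists $c_1 \in C$ lying in the same domain as $y$.

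Now there are two cases. If $[x,y] \notin J$, then $x$ and $y$ lie in the same domain, which therefore contains $c_1$, and we take $c:=c_1$. Otherwise $[x,y] \in J$, so $[x,y]$ is an edge of some clique $C'$ whose rotative-stabiliser is $R$ (note that uniqueness of cliques containing an edge is not needed here, only the existence of such a $C'$). As $R$ acts freely-transitively on the vertices of $C'$, there is $r \in R$ with $r \cdot x = y$. Applying $r^{-1}$, which preserves $J$ and hence maps domains to domains, we find that $x = r^{-1} \cdot y$ lies in the same domain as $r^{-1} \cdot c_1$; and since $r^{-1} \in R$ preserves $C$, the vertex $c:=r^{-1} \cdot c_1$ belongs to $C$, completing the induction.

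The only delicate point is the barrier-preservation used for the translation by $r$; once the identity ``$g$ maps the barrier of $D$ to the barrier of $gD$'' is established and combined with $rC=C$, the remainder is a routine induction. Observe that no hypothesis beyond the rotation presystem axioms (free-transitivity of rotative-stabilisers, conjugation-stability of $\mathcal{R}$, and connectedness of $X$) enters the argument, which is why the separation axiom of a full rotation system is not invoked here.
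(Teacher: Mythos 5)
Your proof is correct, and it rests on exactly the same mechanism as the paper's: an edge of the barrier $J$ lies in a clique whose rotative-stabiliser is $R$ itself, so the element of $R$ carrying one endpoint of that edge to the other folds the configuration across $J$ while preserving $C$ setwise. The paper packages this as an iterated path-shortening (replacing the tail $\gamma_{b,p}$ of a path from $x$ to $p \in C$ by $r \cdot \gamma_{b,p}$ until the path avoids $J$ entirely), whereas you run an induction on $d(x,C)$ and invoke the $R$-equivariance of the domain decomposition; the difference is organisational rather than conceptual.
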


\begin{proof}
Fix an arbitrary point $p \in C$ and an arbitrary path $\gamma$ from $x$ to $p$. If $J$ does cross $\gamma$, then $p$ lies in the same domain as $p$ and we are done. Otherwise, there exists an edge $[a,b]$ along $\gamma$ lying in a clique with the same rotative-stabiliser as $C$. Let $r \in G$ denote the element of this rotative-stabiliser that sends $b$ to $a$. Observe that $r \cdot p$ still belongs to $C$ since $r$ stabilises $C$. Replacing the subpath $\gamma_{a,p}$ of $\gamma$ with $r \cdot \gamma_{b,p}$ yields a shorter path from $p$ to a vertex in $C$. By iterating the process, we eventually find a path from $x$ to a point of $C$ that does not cross $J$.
\end{proof}

\noindent
Next, we show that geodesics and barriers have the same behaviour as in mediangle graphs.

\begin{lemma}\label{lem:BarrierGeod}
Let $(G \curvearrowright X, \mathcal{R})$ be a rotation presystem. A path in $X$ that crosses twice a barrier can be shortened.
\end{lemma}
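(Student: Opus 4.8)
The statement asserts that a path crossing a barrier $J$ twice can be shortened. My plan is to mimic the argument already used for hyperplanes in the proof of Theorem~\ref{thm:BigHyp}$.(iii)$, replacing gated cliques and projection-partitions by the rotative-stabiliser action. Let $\gamma$ be a path crossing the barrier $J$ associated to a clique $C$, and let $e=[a,b]$ and $f=[p,q]$ be the first and second edges of $\gamma$ lying in $J$, i.e. lying in cliques $C_a$ and $C_p$ with the same rotative-stabiliser $R$ as $C$. The key structural input is the domain decomposition of $X\backslash\backslash J$: by Lemma~\ref{lem:AtMostCC} together with the separation axiom, the domains delimited by $J$ are in bijection with the vertices of any clique having rotative-stabiliser $R$, and each vertex of $X$ lies in the domain of exactly one such vertex.

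First I would record that the second endpoint $b$ of $e$ and the first endpoint $p$ of $f$ lie in the same domain, since the subpath $\gamma_{b,p}$ does not cross $J$ by the choice of $e,f$. Now I use $R$ to perform a "folding" of the path. Since $R$ acts freely-transitively on the vertices of $C_a$, pick the element $r\in R$ with $r\cdot a = b$. Applying $r$ to the tail $\gamma_{b,\bullet}$ yields a new path; because $R$ stabilises the family of cliques defining $J$ (conjugation-stability of $\mathcal{R}$ ensures $R$ permutes the barrier setwise), the crossing structure is preserved, and one can splice $\gamma_{\text{start},a}$ with $r\cdot\gamma_{b,\bullet}$. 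The effect is to eliminate the first crossing $e$: the concatenation now agrees at $a$ and continues into the domain on the far side via the $R$-translate, so the resulting path has strictly fewer edges lying in $J$ but the same endpoints. The heart of the verification is that this splicing genuinely reduces length, which follows because the two crossings $e,f$ are "parallel" across $J$ (their second endpoints $b,p$ being co-domain) so that $r$ carries the segment between them to a segment lying entirely on one side, removing both crossing edges at the cost of a shorter detour.

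Concretely, I would set up the analogue of the displayed distance inequalities in the Theorem~\ref{thm:BigHyp} proof: letting $C_a$ be the clique containing $[a,b]$ and projecting $q$ onto it via the domain partition, one shows the projection $c$ of $q$ is distinct from $b$ (as $[p,q]\in J$ forces $p,q$ into different domains), and then that replacing $\gamma_{a,q}$ by the edge $[a,c]$ followed by a geodesic $[c,q]$ strictly shortens $\gamma$. The rotative-stabiliser $R$ supplies the edge $[a,c]$: since $R$ acts transitively on $C_a$, the vertices $a,c\in C_a$ are adjacent, so $[a,c]$ is a genuine edge. The distance bookkeeping $d(c,q)\le d(b,p)$, mirrored by the reverse inequality, gives equality of the relevant distances and hence the net saving of at least the two crossing edges.

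The main obstacle I anticipate is that, unlike in the mediangle setting, I cannot yet invoke gatedness of cliques (Lemma~\ref{lem:CliquesGated}) or the convexity of domains, since those rest on $X$ being mediangle — which is precisely what is being established downstream. So the projection of $q$ onto $C_a$ must be understood purely combinatorially through the domain partition of Lemma~\ref{lem:AtMostCC}, and the "minimising distance" characterisation of the gate has to be replaced by a direct argument using the free-transitive $R$-action and Lemma~\ref{lem:InterCliqueRotation} (which guarantees cliques meet in at most a vertex, so the folding is unambiguous). Getting the distance inequalities without a convexity hypothesis is the delicate point; I expect it to go through by choosing $e,f$ as the \emph{first and last} edges of $\gamma$ in $J$ and exploiting that everything between the corresponding domains can be translated by a single element of $R$.
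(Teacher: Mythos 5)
The core of your proposal --- translating the subpath between the two crossings by an element of the rotative-stabiliser and reconnecting inside the clique of the second crossing --- is exactly the mechanism of the paper's proof, and it alone suffices. With $r\in R$ satisfying $r\cdot b=a$ (note: you wrote $r\cdot a=b$, and you translate the whole tail $\gamma_{b,\bullet}$, which would change the terminal endpoint of the path; only the subsegment $\gamma_{b,p}$ between the crossings should be translated), one observes that the clique containing $[p,q]$ has rotative-stabiliser $R$ as well, so $r$ stabilises it setwise, hence $r\cdot p$ lies in that clique and $d(r\cdot p,q)\le 1$. Replacing $\gamma_{a,q}$ by $r\cdot\gamma_{b,p}\cup[r\cdot p,q]$ keeps both endpoints, deletes two edges and adds at most one, so it shortens $\gamma$. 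This uses nothing beyond the presystem axioms.

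The genuine gap lies in what you call the ``heart of the verification''. You reduce the shortening to the inequality $d(c,q)\le d(b,p)$, where $c$ is ``the projection of $q$ onto $C_a$ via the domain partition''. There are two problems. First, in a rotation \emph{presystem} this projection is not well defined: Lemma~\ref{lem:AtMostCC} produces a vertex of $C_a$ sharing a domain with $q$, but the uniqueness of that vertex is precisely the separation axiom, i.e.\ the additional condition that upgrades a presystem to a rotation system; you invoke that axiom (and also Lemma~\ref{lem:InterCliqueRotation}, which is likewise stated and proved only for rotation systems), whereas the lemma to be proved is stated for presystems, so at best your argument establishes a weaker statement. Second, even granting separation, the inequality $d(c,q)\le d(b,p)$ is asserted as ``bookkeeping'' but never derived, and it cannot be obtained by mimicking Theorem~\ref{thm:BigHyp}.$(iii)$: there the corresponding strict inequality rests on gatedness of cliques, which, as you yourself point out, is unavailable at this stage. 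The only way to prove it is the translation argument itself (translate $\gamma_{b,p}$ by $r'\in R$ with $r'\cdot b=c$ and use separation to force $r'\cdot p=q$), at which point the projection $c$ is a superfluous detour. In short: keep the translation, drop the projection bookkeeping, and the proof closes in a few lines within the stated generality.
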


\begin{proof}
Let $\gamma$ be a path crossing some barrier $J$ (at least) twice. Let $[a,b]$ and $[p,q]$ be respectively the first and second edges along $\gamma$ that lie in $J$. Let $r \in G$ denote the element of the rotative-stabiliser of some clique from $J$ containing $[a,b]$ that sends $b$ to $a$. Because $r$ stabilises some clique containing $[p,q]$, $r \cdot p$ is at distance $\leq 1$ from $q$. By replacing the subsegment $\gamma_{a,q}$ of $\gamma$ by $r \cdot \gamma_{b,p} \cup [r \cdot p,q]$, one obtains a new path of length $\leq \mathrm{lg}(\gamma)-1$. 
\end{proof}

\begin{cor}\label{cor:BarrierGeod}
Let $(G \curvearrowright X, \mathcal{R})$ be a rotation system. A path in $X$ is a geodesic if and only if it crosses each barrier at most once. As a consequence, the distance between any two vertices in $X$ coincides with the number of barriers separating them.
\end{cor}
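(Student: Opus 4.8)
The plan is to derive both directions from Lemma~\ref{lem:BarrierGeod} together with the elementary bookkeeping of barrier-crossings. The forward implication is essentially immediate: if a path crosses some barrier twice, then Lemma~\ref{lem:BarrierGeod} shortens it, so it is not a geodesic. Contrapositively, every geodesic crosses each barrier at most once. The substance lies in the converse and in the counting statement, and the first thing I would set up is the correct bookkeeping of crossings.

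The preliminary observation I would record is that each edge of $X$ crosses exactly one barrier. Indeed, by the remark following Lemma~\ref{lem:InterCliqueRotation} an edge lies in a unique clique, and each clique determines a unique rotative-stabiliser and hence a unique barrier; so ``length of a path'' equals ``total number of barrier-crossings counted with multiplicity''. Next I would note that moving along an edge of a barrier $J$ switches the domain delimited by $J$, while moving along any other edge keeps one inside the same connected component of $X \backslash \backslash J$. Consequently, two vertices $x,y$ are separated by $J$ if and only if every path between them crosses $J$ an odd number of times; in particular a path crossing each barrier at most once crosses $J$ exactly when $J$ separates its endpoints.

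With this in hand I would prove the distance formula. Writing $N(x,y)$ for the number of barriers separating $x$ and $y$, the inequality $d(x,y) \geq N(x,y)$ is clear, since any path from $x$ to $y$ must cross each separating barrier. For the reverse inequality, take any geodesic $\gamma$ from $x$ to $y$: by the forward implication it crosses each barrier at most once, so by the previous paragraph the barriers it crosses are precisely the separating ones, each exactly once, whence $\mathrm{lg}(\gamma)$ equals $N(x,y)$. This gives $d(x,y) = N(x,y)$.

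Finally, for the converse implication, let $\gamma$ be an arbitrary path from $x$ to $y$ crossing each barrier at most once. The same argument shows that the barriers it crosses are exactly the $N(x,y)$ separating ones, each once, so $\mathrm{lg}(\gamma) = N(x,y) = d(x,y)$, and $\gamma$ is a geodesic. The main point requiring care is the parity-versus-separation correspondence for barrier-crossings; once that is pinned down from the definition of domains as connected components of $X \backslash \backslash J$, everything else is formal, the genuinely dynamical input (the shortening move) having already been supplied by Lemma~\ref{lem:BarrierGeod}.
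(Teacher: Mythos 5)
Your overall route is the same as the paper's: Lemma~\ref{lem:BarrierGeod} gives the forward implication, and the rest is a count of separating barriers. But your key bookkeeping claim --- that $x,y$ are separated by $J$ if and only if every path between them crosses $J$ an odd number of times --- is false, and the error sits precisely at the point where rotation systems differ from reflection systems. A barrier is not a two-sided wall: it can delimit three or more domains. Take $G=\mathbb{Z}/3$ acting on its Cayley graph $K_3$; this is a rotation system whose unique barrier consists of all three edges and delimits three domains (the three vertices). The path $1 \to a^2 \to a$ crosses this barrier twice, an even number, yet $1$ and $a$ are separated by it. So separation does not force an odd number of crossings, and no parity argument is available.

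Fortunately you never need the full parity statement, only two facts: (a) a path crossing $J$ zero times stays in a single domain, so its endpoints are not separated by $J$; and (b) a path crossing $J$ exactly once has separated endpoints, because the two sub-paths on either side of the crossing edge each stay in a single domain, while the crossing edge itself has its endpoints in distinct domains. Note that (b) is not ``pinned down from the definition of domains as connected components of $X \backslash \backslash J$'', as you suggest: it is exactly the fourth axiom of a rotation system (the separation axiom), and it genuinely fails for presystems; this is also the step where the paper's proof invokes ``because we are in a rotation system''. With (a) and (b) in place of the parity claim, your argument goes through verbatim: a geodesic crosses each barrier at most once by Lemma~\ref{lem:BarrierGeod}, the barriers it crosses are then exactly the separating ones, each crossed once, so $d(x,y)=N(x,y)$; and a path crossing each barrier at most once has length $N(x,y)=d(x,y)$, hence is a geodesic. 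Once repaired, this is essentially the paper's proof.
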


\begin{proof}
Of course, the distance between two vertices $x,y \in X$ is at least the number $N(x,y)$ of hyperplanes separating them. On the other hand, because of Lemma~\ref{lem:BarrierGeod} and because we are in a rotation system, the edges of a geodesic yield pairwise distinct hyperplanes that separate its endpoints, hence $d(x,y) \leq N(x,y)$. Thus, $d(x,y)=N(x,y)$. Consequently, a path is a geodesic if and only if it only crosses the hyperplanes separating its endpoints and only once, which amounts to saying that it crosses each hyperplane at most once. 
\end{proof}

\begin{proof}[Proof of Proposition~\ref{prop:TriangleConditionRotation}.]
Let $o,x,y \in X$ be three vertices satisfying $d(o,x)=d(o,y)$ and $d(x,y)=1$. Let $C$ denote the clique containing the edge $[x,y]$ and let $c \in C$ denote the vertex given by Lemma~\ref{lem:AtMostCC}. Because domains are convex as a consequence of Lemma~\ref{lem:BarrierGeod}, there exists a geodesic $\gamma$ from $o$ to $c$ that is not crossed by the barrier $J$ containing $C$. Consequently, the concatenations $\gamma \cup [c,x]$ and $\gamma \cup [c,y]$ do not cross $J$ twice. It follows from Corollary~\ref{cor:BarrierGeod} that these two paths are geodesics, hence $d(o,c)=d(o,x)-1=d(o,y)-1$, as desired.
\end{proof}

\noindent
The next step towards the proof of Theorem~\ref{thm:RotationPlurilith} is to show that the cycle condition holds. In fact, we will show that that the cycle condition holds for a specific family of cycles, and next we will observe that these cycles actually coincide with the convex even cycles.

\begin{definition}
Let $(G \curvearrowright X, \mathcal{R})$ be a rotation system. Fix a vertex $o \in X$ and let $a,b \in G$ be two elements that belong to the rotative-stabilisers of two distinct cliques containing $o$. If there exists some $n \geq 2$ such that $\langle a,b \rangle^{m} = \langle b,a \rangle^m$, then
$$o, a \cdot o, ab \cdot o, \ldots, \langle a,b \rangle^m \cdot o = \langle b,a \rangle^m \cdot o, \ldots, ba \cdot o , b \cdot o, o$$
is a \emph{dihedral cycle} in $X$. 
\end{definition}

\noindent
As announced, we begin by showing the cycle condition holds with respect to dihedral cycles.

\begin{lemma}\label{lem:CycleConditionRotation}
Let $(G \curvearrowright X, \mathcal{R})$ be a rotation system. For all vertices $o,x,y,z \in X$ satisfying $d(o,x)=d(o,z)=d(o,y)-1$ and $d(y,x)=d(y,z)=1$, there exists a dihedral cycle $C$ containing the edges $[y,x],[y,z]$ such that the vertex opposite to $o$ belongs to $I(o,x) \cap I(o,z)$. 
\end{lemma}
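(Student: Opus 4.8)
The plan is to build the dihedral cycle explicitly, using that the action is free and transitive on vertices (Proposition~\ref{prop:FreeTransitive}). Let $C_1$ and $C_2$ be the (unique, by Lemma~\ref{lem:InterCliqueRotation}) cliques carrying the edges $[y,x]$ and $[y,z]$, with rotative-stabilisers $R_1,R_2$, and let $a\in R_1$, $b\in R_2$ be the elements sending $y$ to $x$ and to $z$ respectively. I would first record two structural facts. If $C_1=C_2$, then $x,y,z$ would lie in a common clique $C$; applying Lemma~\ref{lem:AtMostCC} together with convexity of domains (Corollary~\ref{cor:BarrierGeod}), the distance from $o$ to the vertices of $C$ would be minimised at a single vertex with all the others equidistant, which is incompatible with $d(o,x)=d(o,z)=d(o,y)-1$. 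Hence $C_1\neq C_2$, and therefore $R_1\neq R_2$, since two distinct cliques through $y$ cannot share a rotative-stabiliser (free transitivity would force them equal). Likewise $x\not\sim z$, for otherwise $\{x,y,z\}$ would span a triangle, hence a common clique. Thus $a,b$ are rotations of two distinct cliques through $y$, exactly the data needed to form a dihedral cycle based at $y$.

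Writing $J_1,J_2$ for the barriers of $C_1,C_2$, since $x\in I(o,y)$ and $[x,y]\in J_1$, Corollary~\ref{cor:BarrierGeod} shows that $o$ lies in the same $J_1$-domain as $x$; symmetrically $o$ lies in the $J_2$-domain of $z$. The dihedral cycle we seek has $y$ as its vertex farthest from $o$ and the \emph{opposite} vertex $p$ nearest to $o$, reached from $y$ by the two alternating \emph{arms} $y,ay,aby,\dots$ and $y,by,bay,\dots$. To locate $p$, I would take a vertex of the orbit $\langle a,b\rangle\cdot y$ minimising the distance to $o$: using that cliques are gated (Lemma~\ref{lem:CliquesGated}, applicable since $X$ satisfies the triangle condition by Proposition~\ref{prop:TriangleConditionRotation} and has no induced $K_4^-$ by Lemma~\ref{lem:InterCliqueRotation}), such a minimiser $p$ is a strict local minimum, every clique-neighbour of $p$ inside the orbit being strictly farther from $o$. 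Once one knows that the two arms are geodesics descending monotonically to this common vertex $p$, the meeting relation $\langle a,b\rangle^m\cdot y=\langle b,a\rangle^m\cdot y$ (with $2m$ the length of the cycle) yields $\langle a,b\rangle^m=\langle b,a\rangle^m$ by freeness, so the cycle is dihedral; and the length bookkeeping $d(o,p)=d(o,y)-m$, $d(o,x)=d(o,y)-1$, $d(x,p)=m-1$ gives $p\in I(o,x)\cap I(o,z)$, as required.

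The main obstacle is the descent, namely that the two alternating arms actually strictly decrease their distance to $o$ and meet at $p$. This is where the hypotheses genuinely bite: a priori $\langle a,b\rangle$ could be the free product $R_1\ast R_2$, in which case no such $m$ exists and there is no dihedral cycle, so the conclusion must be forced by the rotation-system axioms together with the assumption that \emph{both} neighbours $x,z$ of $y$ are strictly closer to $o$. Concretely, the first descent step $d(o,ab\cdot y)=d(o,y)-2$ reduces, after applying the isometry $a^{-1}$ and using $ab\cdot y=a\cdot z$, to the statement that $a^{-1}o$ still lies in the $z$-domain of $J_2$, i.e.\ that the rotation $a$ does not carry $o$ out of that domain. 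I would prove this, and its iterates along the arms, by exploiting convexity of domains (Corollary~\ref{cor:BarrierGeod}): were the branches through $x$ and $z$ not to reconnect, the separation axiom would let at most one of the two clique-directions at $y$ decrease the distance to $o$, contradicting that both $x$ and $z$ are closer. An alternative, perhaps cleaner, route is to restrict the whole rotation system to the orbit of $\langle R_1\cup R_2\rangle$, reducing to the rank-two case, where a direct analysis shows that a vertex with two descending neighbours forces the finite dihedral configuration. Establishing that the alternating arms are precisely the geodesics down to $p$ is the delicate heart of the argument.
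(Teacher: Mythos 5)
Your setup is correct and matches the paper's: the same elements $a\in R_1$, $b\in R_2$ with $a\cdot y=x$, $b\cdot y=z$, the two alternating arms $(y,ay,aby,\dots)$ and $(y,by,bay,\dots)$, the use of freeness of the action (Proposition~\ref{prop:FreeTransitive}) to convert the vertex equation $\langle a,b\rangle^k\cdot y=\langle b,a\rangle^k\cdot y$ into the group relation, the distance bookkeeping for the interval condition, and the observation that the two cliques (equivalently the two barriers) must be distinct. But the argument has a genuine gap exactly where you say the "delicate heart" lies, and the justification you offer for it is circular. You assert that if the arms failed to descend and reconnect, then "the separation axiom would let at most one of the two clique-directions at $y$ decrease the distance to $o$" --- but this is precisely (the contrapositive of) the statement being proved; no mechanism is given for why the separation axiom forces it. Likewise, your reduction of the first descent step to "$a^{-1}o$ still lies in the $z$-domain of $J_2$" is a correct reformulation, but it is a nontrivial claim ($a^{-1}$ does not preserve the barrier $J_2$; it sends it to the barrier of $a^{-1}R_2a$), and you give no proof of it or of its iterates. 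Your alternative route --- restricting to the orbit of $\langle R_1\cup R_2\rangle$ --- does not obviously produce a rotation system (connectivity and the separation axiom need not pass to the induced subgraph), and in any case the statement concerns distances in $X$ to the external vertex $o$, which the induced metric on the orbit does not control; and even granting the reduction, the "direct analysis" in rank two is exactly the same unsolved problem.

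The missing engine is the paper's key claim: a \emph{folding} argument. Given a geodesic $\gamma$ from $y$ to $o$ whose initial segment passes through $x$ and avoids $B$, one uses that $B$ separates $y$ from $o$ to locate the (unique, by Corollary~\ref{cor:BarrierGeod}) edge $[p,q]$ of $\gamma$ lying in $B$; since $b$ stabilises the clique of $[p,q]$ (same rotative-stabiliser), maps the $B$-domain of $y$ (containing $p$) to the $B$-domain of $z$ (which contains $q$, because $q$ lies on the $o$-side and $o$ is in the domain of $z$), and each domain meets that clique in exactly one vertex (Lemma~\ref{lem:AtMostCC} plus the separation axiom), one gets $b\cdot p=q$. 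Splicing $[y,z]\cup b\cdot\gamma_{y,p}\cup\gamma_{q,o}$ yields a geodesic from $y$ to $o$ beginning with the $b$-translate of the initial segment. Iterating this claim alternately in $a$ and $b$ is what makes both arms descend; and when the iteration first fails, say because $B$ contains the edge $[\langle a,b\rangle^{k-1}y,\langle a,b\rangle^{k}y]$, the very same domain bookkeeping forces $b\langle a,b\rangle^{k-1}\cdot y=\langle a,b\rangle^{k}\cdot y$, which is the reconnection. So descent and reconnection are not two facts to be established separately, as your plan treats them: both are outputs of the single translation-and-splice mechanism, which is absent from your proposal.
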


\begin{proof}
Let $a \in G$ (resp. $b \in G$) denote the element in the rotative-stabiliser of the clique containing $[y,x]$ (resp. $[x,z]$) satisfying $a \cdot y =x$ (resp. $b \cdot y=z$). Let $A$ (resp. $B$) denote the barrier containing $[y,x]$ (resp. $[y,z]$).

\begin{claim}\label{claim:CCZero}
Let $\gamma$ be a geodesic from $x$ to $o$ passing through $y$ (resp. $z$) and let $\gamma_0$ be a subpath containing $[x,y]$ (resp. $[x,z]$). If $\gamma$ does not cross $B$ (resp. $A$), then $[y,z] \cup b \cdot \gamma_0$ (resp. $[y,x] \cup a \cdot \gamma_0$) lies on a geodesic from $x$ to $o$.
\end{claim}

\noindent
\begin{minipage}{0.5\linewidth}
Because $B$ separates $y$ and $o$, there must be some edge $[p,q] \subset \gamma$ lying in $B$, say with $p$ closer to $y$ than $q$. Since $B$ does not cross $\gamma_0$ by assumption, $p$ does not belong to the interior of $\gamma_0$. Observe that $q=b \cdot p$. Indeed, $b$ stabilises the clique containing $[p,q]$ and it sends the domain containing $y$ (and a fortiori $p$) to the domain containing $z$ (and a fortiori $q$). Thus, $b \cdot \gamma_{y,p}$ is a geodesic from $z$ to $b \cdot q=p$ containing $b \cdot \gamma_0$ that has the same length as $\gamma_{y,p}$. 
\end{minipage}
\begin{minipage}{0.48\linewidth}
\begin{center}
\includegraphics[width=0.7\linewidth]{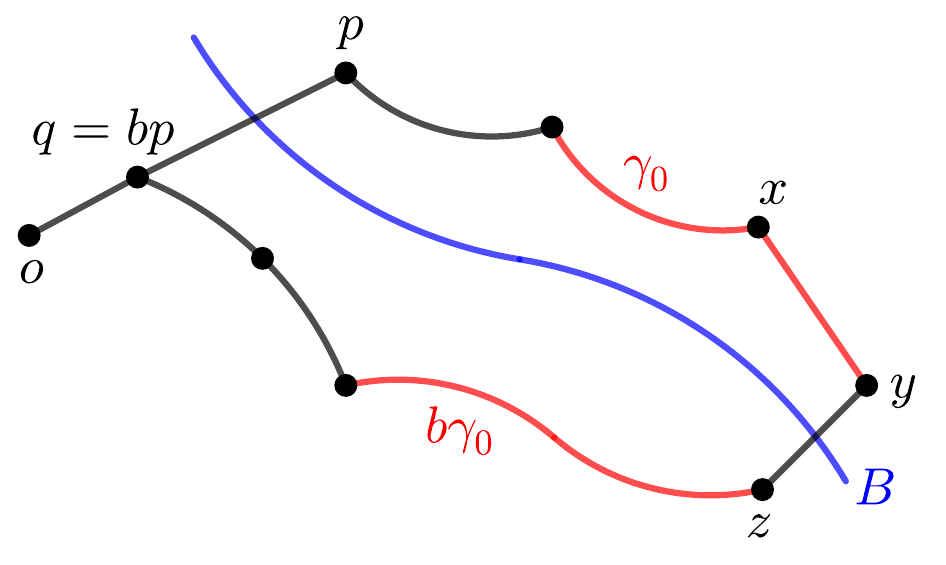}
\end{center}
\end{minipage}

\medskip \noindent
Because
$$\mathrm{lg}([y,z] \cup b \cdot \gamma_{y,p} \cup \gamma_{q,o}) = \mathrm{lg}( \gamma_{q,o}) + \mathrm{lg}(\gamma_{y,p}) + 1 = \mathrm{lg}(\gamma)=d(o,y),$$
we deduced that $[y,z] \cup b \cdot \gamma_0$ lies on the geodesic $[y,z] \cup b \gamma_{y,p} \cup \gamma_{q,o}$, concluding the proof of Claim~\ref{claim:CCZero}. 

\medskip \noindent
Now, we apply Claim~\ref{claim:CCZero} iteratively. For every $k \geq 0$, if $A$ and $B$ do not cross $(y,a \cdot y, ab \cdot y, \ldots, \langle a,b \rangle^k y)$ and $(y,b \cdot y, ba \cdot y, \ldots, \langle b,a \rangle^k \cdot y)$ respectively, then we know that we can extend these two geodesics towards $o$ by adding respectively the vertices $\langle a,b \rangle^{k+1} \cdot y$ and $\langle b,a \rangle^{k+1}$. The process has to stop after at most $d(y,o)$ steps. So there exists a first $k \geq 1$ such that $A$ separates $\langle b,a \rangle^{k-1} \cdot y$ and $\langle b,a \rangle^{k-1} \cdot y$ or $B$ separates $\langle a,b \rangle^{k-1} \cdot y$ and $\langle a,b \rangle^{k-1} \cdot y$. Up to switching the roles played by $a$ and $b$, we assume that we are in the latter case. 

\medskip \noindent
\begin{minipage}{0.5\linewidth}
Because $b$ stabilises the clique containing $\langle a,b \rangle^{k-1} \cdot y$ and $\langle a,b \rangle^{k} \cdot y$, and sends the domain delimited by the corresponding barrier that contains $y$ (a fortiori $\langle a,b \rangle^{k-1} \cdot y$) to the domain containing $z$ (and a fortiori $\langle a,b \rangle^k \cdot y$), we must have $b \langle a,b \rangle^{k-1} \cdot y = \langle a,b \rangle^k \cdot y$, hence $\langle b,a \rangle^k = \langle a,b \rangle^k$ because $G$ acts on $X$ with trivial vertex-stabilisers. 
\end{minipage}
\begin{minipage}{0.48\linewidth}
\begin{center}
\includegraphics[width=0.9\linewidth]{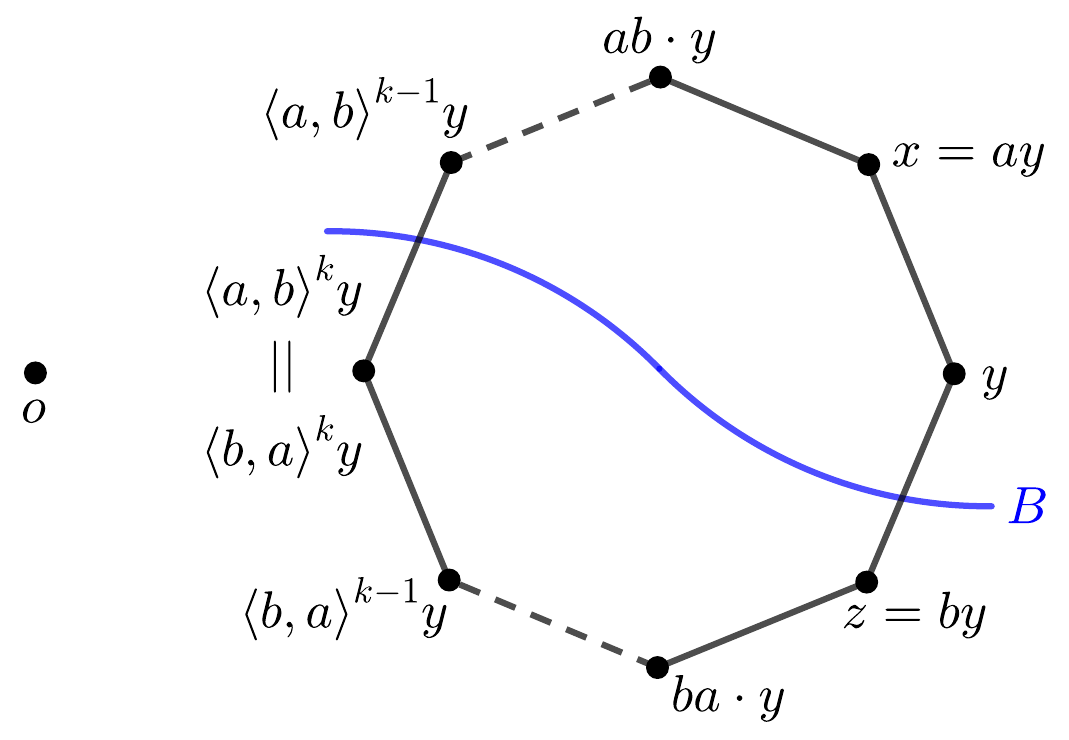}
\end{center}
\end{minipage}

\medskip \noindent
Observe that $k \geq 1$, which amounts to saying that $A \neq B$. Indeed, otherwise $x$ and $z$ would be adjacent, and the triangle condition provided by Proposition~\ref{prop:TriangleConditionRotation} combined with Lemma~\ref{lem:InterCliqueRotation} would imply that $d(o,y)=d(o,x)$. Therefore, our dihedral cycle is constructed, concluding the proof of the lemma.
\end{proof}

\begin{proof}[Proof of Theorem~\ref{thm:RotationPlurilith}.]
We already know from Proposition~\ref{prop:TriangleConditionRotation} that the triangle condition is satisfied and from Lemma~\ref{lem:InterCliqueRotation} that two $3$-cycles intersecting along at least an edge lie in a common clique. It remains to deal with convex even cycles.

\begin{claim}\label{claim:ConvexDihedral}
In $X$, an even cycle is convex if and only if it is dihedral. 
\end{claim}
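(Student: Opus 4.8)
The plan is to prove the biconditional of Claim~\ref{claim:ConvexDihedral} in two directions, leveraging the machinery already established. First I would handle the easy direction: every dihedral cycle is convex. A dihedral cycle has the form $o, a\cdot o, ab\cdot o, \ldots$ built from two rotative-stabiliser elements $a,b$ with $\langle a,b\rangle^m = \langle b,a\rangle^m$. The key observation is that the edges along the two halves of the cycle belong to barriers that alternate between the barrier of $a$-type cliques and the barrier of $b$-type cliques. Using Corollary~\ref{cor:BarrierGeod} (a path is a geodesic iff it crosses each barrier at most once) and the fact that the distance equals the number of separating barriers, I would show that each of the two half-paths from $o$ to the opposite vertex crosses each relevant barrier exactly once, hence is a geodesic; and that any geodesic between two vertices of the cycle stays inside it by a counting argument on which barriers it must cross.

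For the converse, that every convex even cycle is dihedral, I would argue by induction on the length (or on the distance to a basepoint). Given a convex even cycle $C$, pick a vertex $o \in C$ and look at the vertex $o'$ opposite to it along $C$, together with the two neighbours $x,z$ of $o'$ on $C$. These satisfy $d(o,x)=d(o,z)=d(o,o')-1$ and $d(o',x)=d(o',z)=1$, precisely the hypothesis of Lemma~\ref{lem:CycleConditionRotation}. That lemma produces a dihedral cycle $D$ containing the edges $[o',x],[o',z]$ whose vertex opposite to $o'$ lies in $I(o,x)\cap I(o,z)$. The plan is then to show $D = C$: since both are convex even cycles sharing the two consecutive edges $[o',x]$ and $[o',z]$, and since (by the reasoning in the excerpt, e.g. the intersection-of-even-cycles phenomenon established for barriers via Lemma~\ref{lem:BarrierGeod}) two distinct convex cycles cannot share more than one edge, the cycles must coincide.

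The main obstacle I anticipate is establishing rigorously that distinct convex even cycles intersect in at most one edge, in the present setting where we do not yet know $X$ is mediangle and hence cannot invoke Definition~\ref{def:Plurilith} directly. I would derive this from the barrier structure instead: two opposite edges of a convex even cycle determine the same rotative-stabiliser, so a convex even cycle is essentially pinned down by the two distinct rotative-stabilisers whose barriers it crosses and by one incident vertex, much as in the Coxeter case where a dihedral cycle is determined by two consecutive edges. Concretely, I would show that if a convex even cycle crosses the barriers $A$ (of $a$-cliques) and $B$ (of $b$-cliques), then the alternation of crossings forces $\langle a,b\rangle^m = \langle b,a\rangle^m$ for the appropriate $m$, and that the cycle is exactly the dihedral cycle built from $a,b$ based at any of its vertices; uniqueness then follows because the group acts freely on vertices by Proposition~\ref{prop:FreeTransitive}.

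A secondary technical point, which I would address before the induction, is verifying that the opposite vertex $o'$ of a convex even cycle $C$ genuinely satisfies $d(o,o') = \tfrac{1}{2}\mathrm{lg}(C)$ and that $x,z$ are at distance $d(o,o')-1$ from $o$; this is where convexity of $C$ is used, ensuring that the two arcs of $C$ from $o$ to $o'$ are both geodesics and cross disjoint sets of barriers. Once this is in place the induction is routine, and the two directions together establish that the convex even cycles of $X$ are precisely its dihedral cycles, which is exactly what Claim~\ref{claim:ConvexDihedral} asserts and what is needed to feed the cycle condition of Lemma~\ref{lem:CycleConditionRotation} into Definition~\ref{def:Plurilith}.
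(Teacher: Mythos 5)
Your overall architecture (prove both directions, apply Lemma~\ref{lem:CycleConditionRotation} at the vertex opposite to a basepoint, and note that convexity forces the two arcs of the cycle to be geodesics) matches the paper's, but both of your key steps have genuine gaps. For the direction ``dihedral $\Rightarrow$ convex'', your plan is to show the two half-arcs are geodesics and then deduce convexity ``by a counting argument on which barriers'' a geodesic must cross. This is the hard direction, and the counting argument does not suffice: barriers in a rotation system are not two-sided walls but can delimit many domains, and Corollary~\ref{cor:BarrierGeod} only tells you that a geodesic between two vertices of the dihedral cycle crosses exactly the separating barriers, each once, \emph{in some order}. The set of barriers crossed does not pin the intermediate vertices to the cycle; to do that you would have to control the order of the crossings and rule out, at each vertex, the existence of edges lying in a ``wrong'' barrier, which is precisely the local analysis you are trying to bypass. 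The paper avoids this entirely with a different idea: since the triangle and cycle conditions hold with dihedral cycles playing the role of convex even cycles (Proposition~\ref{prop:TriangleConditionRotation} and Lemma~\ref{lem:CycleConditionRotation}), the proof of Proposition~\ref{prop:ConvexCriterion} can be repeated verbatim in this setting, so a connected subgraph is convex as soon as every dihedral cycle having at least half of its length inside it lies entirely inside it. A dihedral cycle is uniquely determined by two consecutive edges, so this local condition holds trivially for a dihedral cycle, and convexity follows at once.

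For the direction ``convex $\Rightarrow$ dihedral'', your conclusion that $D=C$ is circular. You invoke the fact that two distinct convex even cycles share at most one edge, but at this stage that property is not available: it is one of the mediangle axioms, and mediangle-ness of $X$ is exactly what Theorem~\ref{thm:RotationPlurilith} is in the middle of proving (in the paper, the intersection property for convex even cycles is \emph{deduced from} Claim~\ref{claim:ConvexDihedral} in the following paragraph, not used to prove it). Your proposed repair --- showing that a convex even cycle crossing the barriers of $a$ and $b$ ``is exactly the dihedral cycle built from $a,b$'' --- is word for word the implication convex $\Rightarrow$ dihedral you are trying to establish, so the argument loops; even your preliminary assertion that opposite edges of a convex even cycle lie in the same barrier is not free here, again because barriers are not two-sided. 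The paper needs none of this: the dihedral cycle $C'$ produced by Lemma~\ref{lem:CycleConditionRotation} has its two arcs lying on geodesics ending at the vertex opposite to $p$, which belongs to $I(q,a)\cap I(q,b)$; since $C$ is convex, these intervals, hence both arcs, hence all of $C'$, lie in $C$, and a cycle contained in a cycle must equal it. So the second direction follows from convexity of $C$ alone, using neither the intersection property nor the first direction.
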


\noindent
Let $C$ be a convex even cycle. Fix two opposite vertices $p,q \in C$ and let $a,b \in C$ denote the two neighbours of $p$. By applying Lemma~\ref{claim:CCZero} to these vertices, we know that the edges $[p,a],[p,b]$ span a dihedral cycle $C'$ such that the opposite vertex of $p$ lies in $I(p,q)$. Because $C$ is convex, necessarily $C=C'$. A fortiori, $C$ is dihedral. Conversely, we need to show that dihedral cycles are convex. By reproducing the proof of Proposition~\ref{prop:ConvexCriterion} word for word, but replacing convex even cycles with dihedral cycle - which is possible thanks to Proposition~\ref{prop:TriangleConditionRotation} and Lemma~\ref{lem:CycleConditionRotation} - we know that a subgraph $Y$ in $X$ is convex if and only if it is connected and locally convex in the sense that every dihedral cycle with at least half its length in $Y$ must actually lie entirely in $Y$. But a dihedral cycle is entirely determined by two consecutive edges, so it has to be convex, concluding the proof of Claim~\ref{claim:ConvexDihedral}.

\medskip \noindent
Finally, as already said, a dihedral cycle is uniquely determined by two consecutive edges, so the intersection between any two distinct dihedral cycles must contain at most one edge. The similar property for convex even cycles then follows from Claim~\ref{claim:ConvexDihedral}. 
\end{proof}

\subsection{Algebraic structure of rotation groups}

\noindent
In this section, our goal is to show that a rotation group is naturally isomorphic to a periagroup, as defined by Definition~\ref{def:Periagroups}. Formally:

\begin{prop}\label{prop:RotationPeriagroup}
Let $(G \curvearrowright X, \mathcal{R})$ be a rotation system. Fix a basepoint $o \in X$ and let $\mathscr{C}_o$ denote the set of cliques containing $o$. Then $G$ is a periagroup with $\mathcal{R}_o:= \{ \mathrm{stab}_\circlearrowright(C) \mid C \in \mathscr{C}_o\}$ as a basis. More precisely, let $\Gamma$ denote the graph whose vertex-set is $\mathscr{C}_o$ and whose edges connect two cliques if their hyperplanes are transverse; and let $\lambda : E(\Gamma) \to \mathbb{N}_{\geq 2}$ denote the map $(C_1,C_2) \mapsto  \pi / \measuredangle (C_1,C_2)$, where $\measuredangle(C_1,C_2)$ is the angle between the hyperplanes containing $C_1,C_2$. Then the identity map $\mathcal{R}_o \to \mathcal{R}_o$ induces an isomorphism $\Pi(\Gamma,\lambda,\mathcal{R}_o) \to G$.
\end{prop}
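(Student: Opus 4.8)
The plan is to establish that $G$ satisfies the relative presentation defining $\Pi(\Gamma,\lambda,\mathcal{R}_o)$, and that this presentation is complete in the sense that no further relations hold. Throughout, I will exploit Theorem~\ref{thm:RotationPlurilith}, which tells us that $X$ is mediangle, so that the entire machinery of hyperplanes, angles, and the characterisation of convex cycles as dihedral cycles (Claim~\ref{claim:ConvexDihedral}) is available. First I would verify that $\mathcal{R}_o$ is genuinely a basis in the sense of the definition given in the introduction: the vertex-set of the relevant graph is $\mathcal{R}_o$, and I must check that two rotative-stabilisers $A=\mathrm{stab}_\circlearrowright(C_1)$ and $B=\mathrm{stab}_\circlearrowright(C_2)$ satisfy $\langle A,B\rangle \neq A\ast B$ exactly when their hyperplanes are transverse. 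When the hyperplanes are \emph{not} transverse, one expects $\langle A,B\rangle$ to be a free product because the cliques $C_1,C_2$ span, under the action, a tree-like configuration with no relations; when they \emph{are} transverse, the dihedral cycle they bound forces the relation $\langle a,b\rangle^m=\langle b,a\rangle^m$ with $m=\pi/\measuredangle(C_1,C_2)$.

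Next I would check that the defining relations of the periagroup actually hold in $G$. Given transverse hyperplanes through $C_1,C_2$ with angle $\measuredangle(C_1,C_2)=\pi/m$, the convex even cycle realising this transversality has length $2m$, and reading off the edge-labels around this dihedral cycle (via the free-transitive action of the rotative-stabilisers on cliques) yields precisely $\langle a,b\rangle^m=\langle b,a\rangle^m$ for all non-trivial $a\in A$, $b\in B$. I would also confirm the hypothesis in Definition~\ref{def:Periagroups} that whenever $\lambda(\{u,v\})>2$ the vertex-groups have order two: this is exactly the content of Proposition~\ref{prop:InterCycleTriangle}, which forbids a convex cycle of length $>4$ from meeting a $3$-cycle along an edge, so a thick clique (order $\geq 3$ rotative-stabiliser) can only be transverse at angle $\pi/2$.

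The main work, and the expected obstacle, is the surjectivity-with-no-extra-relations step: I must show that the natural homomorphism $\varphi:\Pi(\Gamma,\lambda,\mathcal{R}_o)\to G$ induced by the identity on $\mathcal{R}_o$ is an isomorphism. Surjectivity follows from the claim, proved inside Proposition~\ref{prop:FreeTransitive}, that $S=\bigcup_{C\in\mathscr{C}_o}\mathrm{stab}_\circlearrowright(C)$ generates $G$. Injectivity is the heart of the matter. The strategy is to use that $G$ acts freely and transitively on the vertices of $X$ (Proposition~\ref{prop:FreeTransitive}), so $G$ is identified with $V(X)$ via $g\mapsto g\cdot o$, and under this identification $X$ becomes $\mathrm{Cayl}(G,S)$. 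An element of the kernel of $\varphi$ would give a word in the generators representing a nontrivial loop at $o$ in $\mathrm{Cayl}(\Pi(\Gamma,\lambda,\mathcal{R}_o),\mathcal{R}_o)$ that maps to the trivial loop in $X$; I would argue that every such loop can be reduced to the trivial word using only the defining relations of the periagroup. Concretely, I would mimic the reduction argument at the end of the proof of Proposition~\ref{prop:FreeTransitive}: a closed walk at $o$ must cross some barrier twice, and by the separation property one locates a clique $C'$ sharing a rotative-stabiliser with a clique $C$ met earlier; pushing the relevant generator back along the walk via conjugation corresponds exactly to applying a dihedral relation (when the two crossings are linked by a dihedral cycle) or a cancellation. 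The delicate point is bookkeeping: translating each geometric move — a flip across a convex even cycle of length $2m$, a triangle-shortening, or a backtrack removal (Lemmas~\ref{lem:FlipGeod} and~\ref{lem:ShorteningPath}) — into an application of precisely one of the periagroup relations $\langle a,b\rangle^m=\langle b,a\rangle^m$, and verifying that these suffice to trivialise any word in $\ker\varphi$. I expect this combinatorial translation between the geometry of mediangle flips and the algebra of the periagroup presentation to be the most technical part, whereas the identification of the labelling via angles and the verification of the order-two constraint are comparatively direct consequences of the results already established.
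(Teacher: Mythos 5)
Your proposal is correct and follows essentially the same route as the paper: identify $X$ with $\mathrm{Cayl}(G,\mathcal{R}_o)$ via Proposition~\ref{prop:FreeTransitive}, use Theorem~\ref{thm:RotationPlurilith} and Claim~\ref{claim:ConvexDihedral} to know that the convex cycles are exactly the $3$-cycles and the dihedral cycles, and obtain the presentation from the reduction moves of Lemmas~\ref{lem:FlipGeod} and~\ref{lem:ShorteningPath}. The paper merely packages your surjectivity-plus-injectivity (loop-reduction) step more compactly, observing that Lemma~\ref{lem:ShorteningPath} implies $X$ becomes simply connected after filling in its convex cycles with discs, so the relations read off those cycles (vertex-group multiplication from $3$-cycles, and $\langle a,b\rangle^k=\langle b,a\rangle^k$ with $k=\pi/\measuredangle$ from dihedral cycles) constitute a presentation of $G$ coinciding with that of $\Pi(\Gamma,\lambda,\mathcal{R}_o)$ --- which is exactly the bookkeeping you describe.
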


\begin{proof}
It follows from Proposition~\ref{prop:FreeTransitive} that $X$ coincides with $\mathrm{Cayl}(G, \mathcal{R}_o)$. Moreover, we know from Theorem~\ref{thm:RotationPlurilith} that $X$ is a mediangle graph and it follows from Lemma~\ref{lem:ShorteningPath} that $X$ can be made simply connected by filling in with discs its convex cycles. And the convex even cycles are described by Claim~\ref{claim:ConvexDihedral}, namely they are dihedral. Thus, we can compute an explicit presentation for $G$. In order to prove our proposition, it suffices to notice that this presentation coincides with $\Pi(\Gamma,\lambda,\mathcal{R}_o)$.

\medskip \noindent
Let $e,f \subset X$ be two edges containing $o$ that span a convex cycle $C$. Let $a \in G$ (resp. $b \in G$) denote the element of the rotative-stabiliser of the clique containing $e$ (resp. $f$) that sends $o$ to the second endpoint of $e$ (resp. $f$). If $C$ has length $3$, then $e,f$ belongs to the same clique $Q$ and $ba^{-1}=c$ is the relation in $\mathrm{stab}_\circlearrowright(Q) \in \mathcal{R}_o$ associated to $C$ where $c \in \mathrm{stab}_\circlearrowright(Q)$ is the element that sends the second endpoint of $e$ to the second endpoint of $f$. If $C$ has even length, then we know from Claim~\ref{claim:ConvexDihedral} that is dihedral, so its associated relation is $\langle a,b \rangle^k = \langle b,a \rangle^k$ where $k$ is half the length of $C$, which also coincides with $\pi/ \measuredangle$ where $\measuredangle$ denotes the angle between the hyperplanes given by the cliques containing $e,f$. 

\medskip \noindent
Thus, the presentation we get coincides with the relative presentation of $\Pi(\Gamma, \lambda, \mathcal{R}_o)$ given by Definition~\ref{def:Periagroups}. 
\end{proof}

\section{Periagroups}\label{section:Periagroups}

\subsection{Algebraic structure}

\noindent
In this section, we record a few results on the algebraic structure of periagroup that will be useful in the next section.

\medskip \noindent
Let $\Gamma$ be a graph, $\lambda : E(\Gamma) \to \mathbb{N}_{\geq 2}$ a labelling, and $\mathcal{G}= \{G_u \mid u \in V(\Gamma)\}$ a collection of groups indexed by $V(\Gamma)$. Assume that, for every edge $\{u,v\} \in E(\Gamma)$, if $\lambda(u,v)>2$ then $G_u,G_v$ are both cyclic of order two. Let $\Phi \leq \Gamma$ denote the subgraph given by the vertices labelled by vertex-groups that are cyclic of order two, and $\Psi \leq \Gamma$ the subgraph given by the vertices labelled by vertex-groups of order $>2$. Roughly speaking, $\Phi$ is the Coxeter part of our periagroup and $\Psi$ its graph product part.

\medskip \noindent
Our periagroup $\Pi := \Pi(\Gamma,\lambda,\mathcal{G})$ clearly projects onto the Coxeter group $C(\Phi)$. We begin by observation that the kernel of this morphism is naturally a graph product of groups. Before stating a precise result, we need to introduce some notation. 
\begin{itemize}
	\item For every vertex $u \in \Psi$, we denote by $\ell(u)$ the (standard) subgroup of $C(\Phi)$ generated by the vertices of $\Phi$ that are connected to $u$ by an edge. 
	\item Let $\Lambda$ be the graph whose vertex-set is $\{ g \ell(u) \mid g \in C(\Phi), u \in \Psi\}$ and whose edges connect two cosets $g \ell(u), h\ell(v)$ if they intersect in $C(\Phi)$ and if $u,v$ are adjacent in~$\Psi$.
	\item For all $u \in \Psi$ and $g \in C(\Phi)$, set $H_{g\ell(u)}:= G_u$ and $\mathcal{H}:= \{ H_{g \ell(u)} \mid g \in C(\Phi), u \in \Psi\}$.
\end{itemize}
Our claim above is justified by the following statement:

\begin{prop}\label{prop:SemiDirect}
The periagroup $\Pi$ is isomorphic to $\Lambda \mathcal{H} \rtimes C(\Phi)$ where $C(\Phi)$ acts on the graph product $\Lambda \mathcal{H}$ by permuting the vertex-groups according to its action on $\Lambda$ by left-multiplication. More precisely, $\Pi = P \rtimes C(\Phi)$ where $P$ is a graph product with $\{ gG_ug^{-1} \mid u \in \Psi, g \in C(\Phi) \text{ with } \mathrm{tail}(g) \cap \ell(u) = \emptyset \}$ as a basis.
\end{prop}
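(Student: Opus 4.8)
The plan is to exhibit mutually inverse homomorphisms between $\Pi$ and $Q := \Lambda\mathcal{H} \rtimes C(\Phi)$, so that the whole statement reduces to a bookkeeping of relations rather than anything geometric. Write $\pi : \Pi \to C(\Phi)$ for the canonical projection killing the vertex-groups indexed by $\Psi$ and restricting to the identity on $\Phi$; this is well defined because every relation of $\Pi$ involving a vertex of $\Psi$ is a commutation (recall that any edge of $\Gamma$ meeting $\Psi$ carries the label $2$, since a label $>2$ forces both endpoints to have order two). Dually, $C(\Phi)$ should embed as a complement through the splitting $\iota : C(\Phi) \to \Pi$ sending each $\Phi$-generator to itself; this is legitimate because the Coxeter relations of $C(\Phi)$ are literally among the defining relations of $\Pi$. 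I write $\bar g := \iota(g)$.

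First I would build $\psi : \Pi \to Q$ by the universal property of the relative presentation of $\Pi$. Send a $\Phi$-generator $s_u$ to $(1,s_u)$ and, for $u \in \Psi$, send $G_u$ identically onto the vertex-group $H_{\ell(u)} = G_u$ sitting at the base vertex $\ell(u)$ of $\Lambda$. The Coxeter relations within $\Phi$ hold because they hold in the complement $C(\Phi) \leq Q$. A commutation between two adjacent $u,v \in \Psi$ holds because $\ell(u)$ and $\ell(v)$ are adjacent in $\Lambda$ (they share $1$ and $u,v$ are adjacent in $\Psi$). The crucial check is the commutation between $s_w$ (with $w \in \Phi$) and $G_u$ (with $u \in \Psi$) for an edge $\{w,u\}$: here $w$ is adjacent to $u$, so $s_w \in \ell(u)$, hence $s_w$ fixes the vertex $\ell(u)$ under the left-multiplication action and therefore acts trivially on $H_{\ell(u)}$, yielding the required commutation.

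Next I would build the inverse $\phi : Q \to \Pi$ via the universal property of the semidirect product. On the complement take $\iota$. On $\Lambda\mathcal{H}$, send the vertex-group $H_{g\ell(u)} = G_u$ to the conjugate $\bar g\, G_u\, \bar g^{-1} \leq \Pi$. This is independent of the coset representative $g$: if $g^{-1}g' \in \ell(u)$ then $\bar g^{-1}\bar{g'}$ is a product of generators $s_w$ with $w$ adjacent to $u$, each of which commutes with $G_u$ in $\Pi$ (again because the edge $\{w,u\}$ has label $2$), so the two conjugates coincide. The graph-product relations of $\Lambda\mathcal{H}$ are respected: for adjacent vertices $g\ell(u), h\ell(v)$ one uses that the cosets intersect to pass to a common representative, reducing the claim to the commutation of $G_u$ and $G_v$ in $\Pi$, which holds since $u,v$ are adjacent in $\Psi \subseteq \Gamma$. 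Compatibility with the $C(\Phi)$-action is immediate, since $\bar k (\bar g\, G_u\, \bar g^{-1}) \bar k^{-1} = \overline{kg}\, G_u\, \overline{kg}^{-1}$ is exactly $\phi$ of the vertex-group at $k\cdot(g\ell(u)) = (kg)\ell(u)$.

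Finally I would verify that $\phi$ and $\psi$ are mutually inverse by checking them on generators---on the $\Phi$-generators and on the base vertex-groups $G_u$ (with $u \in \Psi$), which together generate both sides since every vertex of $\Lambda$ is a $C(\Phi)$-translate of some $\ell(u)$---where both composites are visibly the identity. The refined statement then follows: since $\pi \circ \phi$ is the standard projection of $Q$, we get $P := \ker\pi = \phi(\Lambda\mathcal{H})$, a graph product whose vertex-groups are the distinct conjugates $\bar g\, G_u\, \bar g^{-1}$; choosing in each coset $g\ell(u)$ the minimal-length representative---characterised by $\mathrm{tail}(g) \cap \ell(u) = \emptyset$, i.e. $g$ has no right descent into the standard parabolic $\ell(u)$---yields precisely the announced basis. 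I expect the main obstacle to be the well-definedness of $\phi$ on $\Lambda\mathcal{H}$: one must see that left cosets of $\ell(u)$ index exactly the distinct $\Pi$-conjugates of $G_u$ lying in $P$, and that adjacency in $\Lambda$ records precisely when two such conjugates commute. Both hinge on the label-$2$ constraint for edges meeting $\Psi$.
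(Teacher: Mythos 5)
Your proposal is correct and follows essentially the same route as the paper: both arguments are pure presentation bookkeeping against the same decomposition $\Lambda\mathcal{H}\rtimes C(\Phi)$, resting on the same key facts (every edge of $\Gamma$ meeting $\Psi$ carries label $2$, adjacency in $\Lambda$ encodes commutation, and each coset $g\ell(u)$ has a unique representative with $\mathrm{tail}(g)\cap\ell(u)=\emptyset$, which is the paper's Lemma~\ref{lem:Tail}). The only difference is packaging: the paper performs a Tietze-style simplification of the relative presentation of the semidirect product until it visibly coincides with the presentation of $\Pi$, whereas you construct the two mutually inverse homomorphisms explicitly via universal properties --- the substantive verifications are the same in either formulation.
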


\noindent
In a Coxeter group, say $C(\Xi)$, the \emph{tail} of an element $g$ refers to $\mathrm{tail}(g):= \{ s \in \Xi \mid |gs|<|g| \}$, where $|\cdot|$ denotes the word length associated to the canonical generating set of $C(\Xi)$. Before proving Proposition~\ref{prop:SemiDirect}, we record the following observation:

\begin{lemma}\label{lem:Tail}
Let $\Xi$ be a labelled graph and $H \leq C(\Xi)$ a standard subgroup. For every $g \in C(\Xi)$, the coset $gH$ contains a unique element $m$ satisfying $\mathrm{tail}(m) \cap H = \emptyset$. It coincides with the unique element of $gH$ of minimal length in $C(\Xi)$.
\end{lemma}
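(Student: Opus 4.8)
The statement is the classical description of minimal-length representatives for cosets of a standard parabolic subgroup of a Coxeter group, so the plan is to reduce to, and if necessary reprove, that theory. Since $H$ is standard, $H = \langle T \rangle = C(\Xi_T)$ for some $T \subseteq V(\Xi)$, and the classical fact $H \cap V(\Xi) = T$ shows that $\mathrm{tail}(m) \cap H = \mathrm{tail}(m) \cap T$, i.e.\ the condition $\mathrm{tail}(m) \cap H = \emptyset$ says exactly that $m$ has no right descent lying in $T$. I would first recall two standard facts about $C(\Xi)$ (provable from the deletion/exchange condition, which underlies the normal form already invoked for Coxeter graphs): a reduced word for an element of $W_T := H$ uses only letters of $T$; and consequently every nontrivial $h \in H$ admits a right descent $t \in T$, meaning $|ht| < |h|$.

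The core of the argument is a length-additivity lemma. Let $m$ be an element of minimal length in $gH$ (which exists by well-ordering of $\mathbb{N}$); I would prove that $|mh| = |m| + |h|$ for every $h \in H$ by induction on $|h|$. The base case $h = 1$ is trivial. For the inductive step, pick a right descent $t \in T$ of $h$ and set $h' := ht$, so $|h'| = |h| - 1$ and $h = h' t$. The inductive hypothesis gives $|mh'| = |m| + |h'|$, so concatenating reduced words $m = s_1 \cdots s_p$ and $h' = s_{p+1} \cdots s_{p+q}$ (with all $s_{p+1}, \dots, s_{p+q} \in T$) yields a reduced word for $mh'$. If $|mh| = |mh't| < |mh'|$, the exchange condition produces an index $i$ with $mh't = s_1 \cdots \widehat{s_i} \cdots s_{p+q}$, and here lies the main obstacle: a case analysis on the position of the deleted letter. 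If $i > p$, cancelling the intact prefix $m$ expresses $h = h't$ as a word of length $q - 1 = |h| - 2$, contradicting $|h| = q+1$. If $i \leq p$, writing $m' := s_1 \cdots \widehat{s_i} \cdots s_p$ gives $mh = m'h'$, hence $m' = m h (h')^{-1} \in mH = gH$ with $|m'| \leq p - 1 < |m|$, contradicting the minimality of $m$. Both cases being impossible, $|mh| = |mh'| + 1 = |m| + |h|$.

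With additivity in hand the two assertions follow quickly. Taking $t \in T$ shows $|mt| = |m| + 1 > |m|$, so $\mathrm{tail}(m) \cap H = \emptyset$, giving existence. For uniqueness, suppose $n \in gH$ satisfies $\mathrm{tail}(n) \cap H = \emptyset$ and write $n = mh$ with $h \in H$; if $h \neq 1$, choosing a right descent $t \in T$ of $h$ gives, by additivity, $|nt| = |m| + |ht| = |m| + |h| - 1 = |n| - 1$, so $t \in \mathrm{tail}(n) \cap H$, a contradiction. Hence $h = 1$ and $n = m$, proving that $m$ is the unique element of $gH$ with $\mathrm{tail}(m) \cap H = \emptyset$. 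Finally, additivity also gives $|mh| = |m| + |h| > |m|$ for every $h \neq 1$, so $m$ is the strictly shortest element of $gH$; in particular it is the unique element of minimal length, which completes the proof.
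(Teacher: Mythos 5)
Your proof is correct, but it is organised differently from the paper's. The paper constructs the tail-free element by descent: starting from an arbitrary $m_0 \in gH$, whenever some generator $s \in H$ lies in $\mathrm{tail}(m_0)$ it replaces $m_0$ by the strictly shorter element $m_0s \in gH$, so the process terminates at some $m$ with $\mathrm{tail}(m) \cap H = \emptyset$; it then invokes, essentially as a known fact of Coxeter theory, the length additivity $|mh| = |m| + |h|$ for all $h \in H$ whenever $\mathrm{tail}(m) \cap H = \emptyset$, and concludes in one line that $m$ is the unique element of minimal length (and hence also the unique tail-free element). You instead anchor the argument at a minimal-length element of $gH$ and \emph{prove} the additivity $|mh| = |m| + |h|$ from scratch, by induction on $|h|$ via the Exchange Condition, with the case analysis on whether the deleted letter falls in the prefix $m$ (contradicting minimality of $m$ in $gH$) or in the suffix $h'$ (contradicting $|h| = |h'| + 1$); tail-freeness, uniqueness, and minimality then all flow from this additivity. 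What your version buys is self-containedness: the only external inputs are the Exchange Condition and the fact that reduced words for elements of the standard subgroup $\langle T \rangle$ use only letters of $T$, whereas the paper's assertion ``because $\mathrm{tail}(m) \cap H = \emptyset$ we have $|mh| = |m| + |h|$'' is precisely the standard parabolic-coset lemma that you reprove. The cost is length: the paper's proof is considerably shorter because it treats that additivity as known.
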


\begin{proof}
First, we claim that $gH$ contains an element whose tail is disjoint from $H$. Fix an arbitrary element $m_0 \in gH$. If its tail is disjoint from $H$, we are done. Otherwise, there exists a generator $s \in H$ such that $|m_0s|<|m_0|$. As a consequence of the Exchange Condition (see \cite[page~35]{Davis}), we can write $m_0$ as $m_1s$ for some $m_1 \in C(\Xi)$ satisfying $|m_1|<|m_0|$. Observe that $m_1$ also belongs to $gH$. By iterating the argument, we construct a sequence of elements $m_0,m_1, \ldots$ in $gH$ of smaller and smaller length. The process has to stop, and we get an element $m$ whose tail is disjoint from $H$, as desired.

\medskip \noindent
Next, observe that, for every $k \in gH$, there exists some $h \in H$ such that $k=mh$. Because $\mathrm{tail}(m) \cap H = \emptyset$, we have $|k|=|mh|=|m|+|h|$, which greater than $|m|$ unless $k=m$. Thus, $m$ is the unique element in $gH$ of minimal length.
\end{proof}

\begin{proof}[Proof of Proposition~\ref{prop:SemiDirect}.]
The semidirect product $\Lambda \mathcal{H} \rtimes C(\Phi)$ admits as a relative presentation
$$\left\langle u \in \Phi, H_{g\ell(v)}  \left( \begin{array}{l} v \in \Psi, \\ g \in C(\Phi) \end{array} \right) \left| \begin{array}{l} [H_{g \ell(u)}, H_{h \ell(v)}]=1 \text{ if $g \ell(u),h \ell(v)$ adjacent in $\Lambda$} \\ hH_{g \ell(u)} h^{-1} = H_{hg \ell(u)}, \ g,h \in C(\Phi), u \in \Psi \end{array}, \mathrm{Cox} \right. \right\rangle,$$
where $\mathrm{Cox}$ is the set of relations coming from the Coxeter presentation of $C(\Phi)$, namely
$$u^2=1, \ u \in \Phi \text{ and } \langle u,v \rangle^{\lambda(u,v)}=\langle v,u \rangle^{\lambda(u,v)}, \ \{u,v\} \in E(\Phi).$$
In order to simplify the presentation, we assume that $\mathrm{tail}(g) \cap \ell(u) = \emptyset$ in our $H_{g \ell(u)}$ and we use the relation $H_{g \ell(u)}= g H_{\ell(u)} g^{-1}$ to remove redundant generators. 

\medskip \noindent
Observe that our commutation relations reduce to: $[H_{\ell(u)},H_{\ell(v)}]=1$ for all $u,v \in \Psi$ adjacent. Indeed, if $g \ell(u)$ and $h \ell(v)$ are adjacent in $\Lambda$, then the cosets $g \ell(u), h \ell(v)$ intersect, which implies that there exist $a \in \ell(u)$ and $b \in \ell(v)$ such that $h=gab$. Then
$$\begin{array}{lcl} \left[H_{g \ell(u)},H_{h\ell(v)}\right] & = & \left[gH_{\ell(u)}g^{-1}, h H_{\ell(v)}h^{-1}\right]= g \left[ H_{\ell(u)}, a H_{\ell(v)} a^{-1}\right] g^{-1} \\ \\ & = & ga \left[H_{\ell(u)}, H_{\ell(v)}\right] a^{-1}g^{-1}, \end{array}$$
which implies that our reduction holds. 

\medskip \noindent
Our second set of relations, describing the action by conjugation of $C(\Phi)$ on $\Lambda \mathcal{H}$, reduces to: $gH_{\ell(u)}g^{-1}= H_{ \ell(u)}$, $g \in \ell(u), u \in \Psi$. Or, equivalently, to: $[H_{\ell(u)},v]=1$, $u \in \Psi$, $v \in \Phi$ such that $\{u,v\} \in E(\Gamma)$ and $\lambda(u,v)=2$. 

\medskip \noindent
Thus, we have simplified our presentation as
$$\left\langle u \in \Phi, H_{\ell(v)} \ (v \in \Psi) \left| \begin{array}{l} \left[ H_{\ell(u)},H_{\ell(v)} \right]=1, \ u,v \in \Psi \text{ adjacent} \\ \left[ H_{\ell(u)},v \right]=1, \ u \in \Psi, v \in \Phi \text{ adjacent with } \lambda(u,v)=2 \end{array}, \mathrm{Cox} \right. \right\rangle,$$
which coincides with the presentation of $\Pi$ given by Definition~\ref{def:Periagroups}. This concludes the proof of our proposition.
\end{proof}

\begin{cor}\label{cor:InterVertexGroups}
For all $u,v \in \Gamma$ and $g \in \Pi$, if $gG_ug^{-1}$ and $G_v$ intersect non-trivially, then they are equal. Moreover, if $u \neq v$, then $G_u,G_v$ are both cyclic of order two.
\end{cor}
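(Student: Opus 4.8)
The plan is to reduce everything to the semidirect decomposition $\Pi = P \rtimes C(\Phi)$ provided by Proposition~\ref{prop:SemiDirect}, where $P$ is a graph product, together with the standard malnormality property of graph products: two conjugates of vertex-groups whose intersection is non-trivial must coincide, and distinct vertex-groups intersect trivially (see \cite{GreenGP, QM}). Throughout I write $\pi \colon \Pi \to C(\Phi)$ for the retraction with kernel $P$, recalling that $\pi$ is the identity on $G_w$ for $w \in \Phi$ and kills $G_w$ for $w \in \Psi$. The argument then splits into cases according to whether $u$ and $v$ lie in the Coxeter part $\Phi$ or the graph-product part $\Psi$.

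First I would dispose of the case $u,v \in \Phi$, which needs neither Proposition~\ref{prop:SemiDirect} nor $\pi$. Here $G_u$ and $G_v$ are cyclic of order two, and since conjugation is an automorphism $gG_ug^{-1}$ is cyclic of order two as well. Two subgroups of order two with non-trivial intersection are necessarily equal, so $gG_ug^{-1} = G_v$; and if moreover $u \neq v$, the required conclusion that both vertex-groups are cyclic of order two is immediate.

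Next I would treat the mixed cases and show the intersection is in fact trivial, so that the hypothesis cannot hold. If $u \in \Psi$ and $v \in \Phi$, then $G_u \le P$, hence $gG_ug^{-1} \le P$ by normality of $P$, while $G_v \cap P = \ker(\pi|_{G_v}) = \{1\}$ since $\pi$ is injective on $G_v$; therefore $gG_ug^{-1} \cap G_v \le P \cap G_v = \{1\}$. If instead $u \in \Phi$ and $v \in \Psi$, then $G_v \le P$, and the non-trivial element $gsg^{-1}$ of $gG_ug^{-1}$, where $s$ generates $G_u$, satisfies $\pi(gsg^{-1}) = \pi(g)\,s\,\pi(g)^{-1} \neq 1$ in $C(\Phi)$, as it is a conjugate of a Coxeter generator; hence $gG_ug^{-1} \cap P = \{1\}$ and again the intersection with $G_v$ is trivial. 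Since $\Phi$ and $\Psi$ are disjoint, $u \neq v$ holds automatically here, and the statement is vacuously true.

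Finally comes the main case $u,v \in \Psi$. Writing $g = ph$ with $p \in P$ and $h \in C(\Phi)$, the conjugate $hG_uh^{-1}$ is, by the $C(\Phi)$-action on $P$ described in Proposition~\ref{prop:SemiDirect}, a vertex-group of the graph product $P$, so $gG_ug^{-1} = p(hG_uh^{-1})p^{-1}$ is a $P$-conjugate of a vertex-group of $P$; likewise $G_v$ is a vertex-group of $P$. I would then invoke the graph-product fact: if their intersection is non-trivial, the two vertex-groups involved coincide, which gives $gG_ug^{-1} = G_v$ when $u = v$, and is impossible when $u \neq v$, since the $\Lambda$-vertices carrying these vertex-groups project to the distinct $\Psi$-vertices $u$ and $v$ and hence are distinct. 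Thus $u \neq v$ with non-trivial intersection never occurs in this case, in agreement with the ``moreover'' clause. The main obstacle is precisely this last case: it rests on the malnormality of vertex-groups in graph products and on checking that conjugation by an arbitrary $g \in \Pi$ lands inside the class of $P$-conjugates of vertex-groups of $P$, which is exactly where the semidirect structure of Proposition~\ref{prop:SemiDirect} is needed.
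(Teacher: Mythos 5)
Your proposal is correct and follows essentially the same route as the paper: split into the cases $u,v \in \Phi$ (two groups of order two), the mixed cases (trivial intersection via the semidirect decomposition of Proposition~\ref{prop:SemiDirect}), and $u,v \in \Psi$ (reduce to the standard fact that in a graph product a conjugate of a vertex-group meeting another vertex-group non-trivially forces the vertex-groups, and hence the subgroups, to coincide). Your write-up merely fills in details the paper leaves implicit, such as using the retraction onto $C(\Phi)$ in the mixed cases and writing $g=ph$ with $p \in P$, $h \in C(\Phi)$ to see that $gG_ug^{-1}$ is a $P$-conjugate of a vertex-group of $P$.
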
 

\begin{proof}
If $u \in \Phi$ and $v \in \Psi$, or if $v \in \Phi$ and $u \in \Psi$, then we know from Proposition~\ref{prop:SemiDirect} that $gG_ug^{-1}$ and $G_v$ interest trivially. If $u,v \in \Phi$, it is clear that $gG_ug^{-1}$ and $G_v$ are equal if they intersect non-trivially (since they are two groups of order two). If $u,v \in \Psi$, it follows from Proposition~\ref{prop:SemiDirect} that $\Pi$ contains a subgroup isomorphic to a graph product having $G_v$ and $gG_ug^{-1}$ as vertex-groups. But, in a graph product, two distinct vertex-groups always have a trivial intersection. They are not conjugate either. Since $C(\Phi)$ permutes the vertex-groups of $\Lambda \mathcal{H}$ without sending a conjugate of $G_u$ to a conjugate of $G_v$ if $u \neq v$, our corollary is proved.
\end{proof}

\begin{remark}
Similarly, the periagroup $\Pi$ surjects onto the graph product $\Psi \mathcal{G}$, and this quotient map splits, giving a decomposition of $\Pi$ as a semidirect product $C(\Lambda) \rtimes \Psi \mathcal{G}$. Here, $\Lambda$ is the graph whose vertex-set is $\{g \langle \mathrm{link}_\Psi(u) \rangle \mid g \in \Psi \mathcal{G},u \in \Phi\}$ and whose edges connect two cosets $g \langle \mathrm{link}_\Psi(u) \rangle$, $h \langle \mathrm{link}_\Psi(v) \rangle$ if they intersect and if $u,v$ are adjacent in $\Phi$, such an edge of $\Lambda$ being labelled by $\lambda(u,v)$. The proof is a straightforward adaptation of the proof of Proposition~\ref{prop:SemiDirect}. As this decomposition will not be used in the sequel, we omit the details. 
\end{remark}

\noindent
By adding commutation relations from every vertex-group from the graph product part to all the other vertex-groups, we obtain another interesting quotient, namely $\Pi \twoheadrightarrow C(\Phi) \oplus \bigoplus_{u \in \Psi} G_u$. As a consequence, one immediately deduces that subgroups generated by different vertex-groups intersect trivially. More generally:

\begin{lemma}\label{lem:DisjointSubgraphsSubgroups}
For any two disjoint subgraphs $\Lambda_1,\Lambda_2 \leq \Gamma$, the subgroups $\langle \Lambda_1 \rangle$ and $ \langle \Lambda_2 \rangle$ intersect trivially.\qed
\end{lemma}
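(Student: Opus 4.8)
The plan is to exploit the quotient map mentioned in the paragraph immediately preceding the statement. Recall that by adding to the periagroup presentation a commutation relation between every pair of vertex-groups, one obtains a surjection
\[
\pi : \Pi(\Gamma,\lambda,\mathcal{G}) \twoheadrightarrow C(\Phi) \oplus \bigoplus_{u \in \Psi} G_u,
\]
onto a group that is, up to the Coxeter factor, a \emph{direct sum} of all the vertex-groups. The point of passing to this abelianised-in-the-graph-direction quotient is that in a direct sum the vertex-groups genuinely sit as internal direct factors, so their supports are easy to detect. First I would make this precise by defining, for each element, its \emph{support}: the quotient $C(\Phi) \oplus \bigoplus_{u \in \Psi} G_u$ is itself a graph product over the \emph{complete} graph on $V(\Gamma)$ (a direct sum), and for a standard subgroup $\langle \Lambda_i \rangle$ of $\Pi$ its image under $\pi$ lands in the sub-direct-sum indexed by the vertices of $\Lambda_i$, namely $C(\Phi \cap \Lambda_i) \oplus \bigoplus_{u \in \Psi \cap \Lambda_i} G_u$.

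Next I would argue that because $\Lambda_1$ and $\Lambda_2$ are disjoint subgraphs, their vertex-sets are disjoint subsets of $V(\Gamma)$, so the images $\pi(\langle \Lambda_1 \rangle)$ and $\pi(\langle \Lambda_2 \rangle)$ lie in complementary sub-direct-sums of $C(\Phi) \oplus \bigoplus_{u \in \Psi} G_u$. In a direct sum indexed by a disjoint union of index sets, two subgroups supported on disjoint index sets intersect trivially. Hence
\[
\pi\big(\langle \Lambda_1 \rangle \cap \langle \Lambda_2 \rangle\big) \subseteq \pi(\langle \Lambda_1 \rangle) \cap \pi(\langle \Lambda_2 \rangle) = \{1\}.
\]
This shows $\langle \Lambda_1 \rangle \cap \langle \Lambda_2 \rangle \subseteq \ker \pi$, which is not yet enough: the kernel of $\pi$ is large, so I still need to promote ``intersection lies in the kernel'' to ``intersection is trivial''.

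To close this gap I would refine the argument so that it applies not just to the single coarse quotient $\pi$ but detects each vertex-group separately. The clean way is to observe that for \emph{each fixed} vertex $w \in V(\Gamma)$ there is a retraction $\rho_w : \Pi \to G_w$ (or $\rho_w : \Pi \to C(\Phi)$ restricting to the relevant $\mathbb{Z}/2$ factors) obtained by killing every vertex-group other than $G_w$; this is well defined because every defining relation of the periagroup involves at most two vertices and becomes trivial once one of them, and everything off $\{w\}$, is killed, with the $\lambda(u,v)>2$ relations surviving only inside the Coxeter part where they are already consistent. If $g \in \langle \Lambda_1 \rangle \cap \langle \Lambda_2 \rangle$, then $g$ is a word in the vertex-groups indexed by $V(\Lambda_1)$ and simultaneously a word in those indexed by $V(\Lambda_2)$; applying $\rho_w$ for a vertex $w \in V(\Lambda_1)$ gives, on one hand, a controlled image, and on the other hand (since $w \notin V(\Lambda_2)$) the image $\rho_w(g)=1$ because $g$ lies in $\langle \Lambda_2 \rangle$ on which $\rho_w$ vanishes. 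Running this over all $w \in V(\Lambda_1)$ forces every syllable of $g$ to be trivial, so $g = 1$. The main obstacle is making the retraction $\rho_w$ rigorously well defined and verifying it kills $\langle \Lambda_2 \rangle$: one must check that setting all vertex-groups except $G_w$ to the identity is compatible with every periagroup relation, which follows because each relation $\langle G_u,G_v\rangle^{\lambda(u,v)}=\langle G_v,G_u\rangle^{\lambda(u,v)}$ collapses to a tautology as soon as one of $G_u,G_v$ is trivialised. Once that compatibility is in hand, the normal-form/support bookkeeping is routine, and the lemma follows.
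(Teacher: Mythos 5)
Your first step follows the paper's (the quotient $\Pi\twoheadrightarrow C(\Phi)\oplus\bigoplus_{u\in\Psi}G_u$), but already there your justification is off: this quotient is \emph{not} a graph product over the complete graph on $V(\Gamma)$. The Coxeter factor $C(\Phi)$ is a single undecomposed summand --- generators joined by an odd-labelled edge do not commute in it --- so $\pi(\langle\Lambda_1\rangle)$ and $\pi(\langle\Lambda_2\rangle)$ are not ``supported on disjoint index sets'': both meet that common factor, in the standard parabolic subgroups generated by $\Phi\cap\Lambda_1$ and $\Phi\cap\Lambda_2$. Their intersection is indeed trivial, but that is the Coxeter-theoretic fact $W_S\cap W_T=W_{S\cap T}$, not a direct-sum formality. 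The fatal problems, however, are in your repair step. First, the retraction $\rho_w\colon\Pi\to G_w$ simply does not exist when $w$ lies in the Coxeter part and some incident edge carries an odd label: killing $G_v$ in the relation $\langle w,v\rangle^{2k+1}=\langle v,w\rangle^{2k+1}$ does not produce a tautology, it produces $w^{k+1}=w^k$, i.e.\ $w=1$, which is false in $G_w\cong\mathbb{Z}/2$. (Conceptually, two generators joined by an odd-labelled edge are already conjugate in the dihedral group they generate, so no homomorphism can kill one without killing the other; this is precisely why the paper's quotient only adds commutations involving the $\Psi$-vertex-groups and never splits $C(\Phi)$.) Your parenthetical fallback $\Pi\to C(\Phi)$ is well defined, but it does not vanish on $\langle\Lambda_2\rangle$ when $\Lambda_2$ meets $\Phi$, so it cannot play the role your argument needs. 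Second, even in the right-angled situation where all the $\rho_w$ do exist, your final inference is a non sequitur: knowing $\rho_w(g)=1$ for every $w\in V(\Lambda_1)$ only says that, in a word for $g$ over the $\Lambda_1$-vertex-groups, the product of the $G_w$-syllables is trivial, not that each syllable is trivial. For two non-adjacent $u,v\in\Lambda_1$, the commutator $[a,b]$ with $a\in G_u$, $b\in G_v$ is killed by every single-vertex retraction yet is non-trivial.

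To close the gap one must treat the Coxeter part as a block, which is what the paper's quotient is designed to do. Project $\Pi\to C(\Phi)$ by killing only the $\Psi$-vertex-groups (legitimate, since every relation involving them is a commutation). An element $g\in\langle\Lambda_1\rangle\cap\langle\Lambda_2\rangle$ maps into the intersection of the parabolics on $\Phi\cap\Lambda_1$ and $\Phi\cap\Lambda_2$, which is trivial by $W_S\cap W_T=W_{S\cap T}$; hence $g$ lies in the kernel, which by Proposition~\ref{prop:SemiDirect} is a graph product $P$ whose vertex-groups are conjugates $wG_uw^{-1}$ with $u\in\Psi$, $w\in C(\Phi)$. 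One then checks that $\langle\Lambda_i\rangle\cap P$ is generated by those vertex-groups of $P$ with $u\in\Psi\cap\Lambda_i$ and $w$ in the parabolic on $\Phi\cap\Lambda_i$; since $\Lambda_1$ and $\Lambda_2$ are disjoint, these are disjoint families of vertex-groups of $P$, and the graph-product case of the lemma --- where a retraction argument does work, provided you retract onto the whole family $\langle\Lambda_1\rangle$ at once rather than vertex by vertex --- gives $g=1$.
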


\noindent
Here, we use the following convenient notation: given a subgraph $\Lambda \leq \Gamma$, we denote by $\langle \Lambda \rangle$ the subgroup of $\Pi$ generated by the vertex-groups indexing the vertices of $\Lambda$.

\subsection{Periagroups as rotation groups}

\noindent
In this section, we prove the last piece of Theorem~\ref{thm:BigIntro} by showing that every periagroup arises as a rotation group.

\begin{prop}\label{prop:PeriagroupsAsRotation}
Let $\Gamma$ be a graph, $\lambda : E(\Gamma) \to \mathbb{N}_{\geq 2}$ a labelling map, and $\mathcal{G}= \{ G_u \mid u \in V(\Gamma)\}$ a collection of groups indexed by the vertices of $\Gamma$. Assume that, for every edge $\{u,v\} \in E(\Gamma)$, if $\lambda(u,v)>2$ then $G_u,G_v$ are both cyclic of order two. Letting $\Pi$ denote the periagroup $\Pi(\Gamma, \lambda, \mathcal{G})$, $(\Pi \curvearrowright \mathrm{Cayl}(\Pi,\mathcal{G}), \{ gGg^{-1} \mid g \in \Pi, G \in \mathcal{G} \})$ is a rotation system.
\end{prop}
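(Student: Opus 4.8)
The plan is to verify the four axioms of a rotation system directly for the action of $\Pi$ on $Y := \mathrm{Cayl}(\Pi, \mathcal{G})$, where the edges of $Y$ join $g$ and $gh$ whenever $h$ lies in some vertex-group $G_u$. First I would record the easy part: the collection $\{ gG_ug^{-1} \mid g \in \Pi, G_u \in \mathcal{G}\}$ is by construction stable under conjugation, and since $\Pi$ is generated by the vertex-groups $\mathcal{G}$, this collection generates $\Pi$. This settles the first bullet of the definition. For the clique structure, I would identify the cliques of $Y$: two distinct neighbours $gh_1, gh_2$ of $g$ (with $h_1, h_2 \in G_u \cup G_v$) are themselves adjacent precisely when $h_1^{-1}h_2$ lies in a single vertex-group, which by Lemma~\ref{lem:DisjointSubgraphsSubgroups} and the normal form forces $h_1, h_2$ to belong to the \emph{same} vertex-group $G_u$. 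Hence the cliques of $Y$ are exactly the cosets $gG_u$ (for $u \in V(\Gamma)$, $g \in \Pi$), and the conjugate $gG_ug^{-1}$ acts freely and transitively on the vertices of $gG_u$ by left multiplication. Uniqueness of this rotative-stabiliser, together with the fact that every conjugate $gG_ug^{-1}$ arises as the rotative-stabiliser of the clique $gG_u$, then disposes of the second and third bullets; freeness of the action here rests on the normal form for periagroups.

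The substantive work is the fourth (separation) axiom: for a clique $C = gG_u$, removing the edges of all cliques with the same rotative-stabiliser $gG_ug^{-1}$ must separate any two vertices of $C$. The strategy I would use is to produce a $\Pi$-invariant ``wall'' homomorphism detecting the barrier. Concretely, for a fixed vertex-group $G_u$ I would map $\Pi$ onto the quotient in which one collapses every relation except those recording the coset structure relative to the conjugates of $G_u$; the most robust version is to use a length/parity function. For the graph product part this is the standard syllable-counting argument, and for the Coxeter part this is the reflection-wall separation in Coxeter graphs. Rather than redo either, the cleanest route is to invoke the semidirect decomposition $\Pi = P \rtimes C(\Phi)$ from Proposition~\ref{prop:SemiDirect}: a vertex $u$ lies either in the graph-product part $\Psi$ or the Coxeter part $\Phi$, and in each case the separation property for the barrier associated to $gG_ug^{-1}$ reduces to the corresponding known statement (syllable length in $\Lambda\mathcal{H}$, respectively the wall structure of $C(\Phi)$). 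Any path from one vertex of $C$ to another that avoids all edges of cliques conjugate into $gG_ug^{-1}$ would violate the relevant length-change count, giving the contradiction.

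The main obstacle I anticipate is precisely this separation axiom, because it is where the interaction between the Coxeter relations and the commutation relations is genuinely at play: one must check that conjugation by elements crossing \emph{other} walls does not spuriously move a vertex between the two halves delimited by the $gG_ug^{-1}$-barrier. The key technical input is that distinct cosets $gG_u$ and $g'G_u$ sharing a rotative-stabiliser (i.e. $g'G_ug'^{-1} = gG_ug^{-1}$, so $g^{-1}g' \in N(gG_ug^{-1})$) are organised coherently along the direction transverse to the wall; this is exactly the normal-form control that underlies both the graph-product and Coxeter cases. I would phrase the argument uniformly by assigning to each edge of $Y$ the conjugacy-data of its defining vertex-group together with a $\pm$ sign recording on which side of the wall it sits, and then checking that a closed path crossing a given wall does so an even number of times — which is the parity obstruction that forbids the two endpoints of an edge of $C$ from being joined in the complement of the barrier. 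Once all four bullets are verified, the conclusion that $(\Pi \curvearrowright Y, \{gG_ug^{-1}\})$ is a rotation system follows, completing the converse direction of Theorem~\ref{thm:BigIntro}.
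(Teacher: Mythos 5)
Your treatment of the first three axioms is fine and matches the paper: the identification of cliques with cosets $gG_u$ is exactly Lemma~\ref{lem:CliquePeria}, and the rotative-stabiliser discussion follows from it together with the embedding of vertex-groups. The problem is the fourth (separation) axiom, which is the actual substance of the proposition, and your proposed reduction via Proposition~\ref{prop:SemiDirect} does not work as stated. First, for $u \in \Psi$ there is no homomorphism $\Pi \to \Lambda\mathcal{H}$ at all: the graph product is a normal subgroup of $\Pi$, not a retract, so ``syllable counting in $\Lambda\mathcal{H}$'' cannot be applied to an arbitrary path in $\mathrm{Cayl}(\Pi,\mathcal{G})$. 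Second, for $u \in \Phi$ the retraction $\rho : \Pi \to C(\Phi)$ is too coarse: the barrier of $C = G_u$ consists of edges $[g,gv]$ with $gvg^{-1} = u$ \emph{in $\Pi$}, whereas the $\rho$-preimage of the Coxeter wall of $u$ consists of edges with $\rho(g)v\rho(g)^{-1} = u$ in $C(\Phi)$, a strictly weaker condition. For instance, if $w \in \Psi$ does not commute with $v \in \Phi$, then $wvw^{-1} \neq v$ in $\Pi$ while $\rho(wvw^{-1}) = v$; so the edge $[w,wv]$ lies in the preimage of the wall but not in the barrier. The projection argument therefore only shows that a path from $1$ to $u$ crosses this larger set of edges, which does not yield the separation axiom. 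Third, your ``$\pm$ sign / even number of crossings'' formulation is intrinsically a two-sector statement: when $G_u$ has order $>2$ the wall has one sector per element of $G_u$, a closed path may perfectly well cross it three times (go around a triangle in the clique $G_u$), and the correct obstruction is the vanishing of a $G_u$-valued product of crossing data, not a parity.

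What is missing is precisely a mechanism that tracks the conjugacy data of the wall \emph{exactly} along a path, with sector labels valued in the rotative-stabiliser rather than in $\{\pm 1\}$. This is what the paper does, following \cite[Lemma~3.3.5]{Davis}: it defines $\mathscr{S} = \{(G,g) \mid G \in \mathcal{R},\, g \in G\}$ and, for each generator $s$, the bijection $\varphi_s : (G,g) \mapsto (sGs^{-1}, s^{\delta(G,s)}g)$ with $\delta(G,s) = 1$ if $s \in G$ and $0$ otherwise; one then checks by hand that $\varphi$ respects the three types of relations of $\Pi$ (relations inside a vertex-group, commutations, dihedral relations -- the last case using Proposition~\ref{prop:SemiDirect} and Corollary~\ref{cor:InterVertexGroups}), so that $w \mapsto \varphi_w$ descends to $\Pi \to \mathrm{Bij}(\mathscr{S})$. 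Applying $\varphi_w$ to $(G_u,1)$ for a word $w = s_1 \cdots s_n$ labelling a path from $1$ to $s \in G_u \setminus \{1\}$ forces some $s_k^{s_{k+1}\cdots s_n}$ to lie in $G_u$, and Corollary~\ref{cor:InterVertexGroups} then identifies the corresponding edge of the path as lying in a clique whose rotative-stabiliser is exactly $G_u$, i.e.\ in the barrier. Your proposal gestures at this cocycle idea but replaces it with a reduction and a parity count that both fail; to repair it you would essentially have to build this representation, which is the heart of the paper's proof.
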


\noindent
From now on, we follow the notations used in Proposition~\ref{prop:PeriagroupsAsRotation}. 

\begin{lemma}\label{lem:CliquePeria}
The cliques in $X:=\mathrm{Cayl}(\Pi, \mathcal{G})$ coincide with the cosets of the vertex-groups $gG_u$, $g \in \Pi$, $u \in \Gamma$. 
\end{lemma}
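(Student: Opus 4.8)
The plan is to show the two inclusions between the set of cliques of $X = \mathrm{Cayl}(\Pi, \mathcal{G})$ and the set of cosets $\{ gG_u \mid g \in \Pi, u \in V(\Gamma)\}$. First I would verify that each coset $gG_u$ spans a complete subgraph: by the very definition of the Cayley graph with respect to the generating set $\bigcup_{u} G_u \setminus \{1\}$, two elements $ga, gb$ of $gG_u$ (with $a, b \in G_u$ distinct) are joined by an edge since $a^{-1}b \in G_u \setminus \{1\}$ is a generator. Thus $gG_u$ is complete, and $\Pi$ acts on $X$ by left multiplication preserving cosets, so it suffices to treat cosets $G_u$ containing the identity $1$ and then translate.

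The more substantial direction is showing that a coset $G_u$ is \emph{maximal} as a complete subgraph, and conversely that every clique has this form. The key obstacle is controlling which generators can simultaneously be adjacent to $1$ and to a given element of $G_u$: I need to rule out the existence of a vertex $h$ adjacent to every element of $G_u$ but lying outside $G_u$. Suppose $h \notin G_u$ is adjacent to two distinct elements $a, b \in G_u$. Adjacency means $a^{-1}h$ and $b^{-1}h$ are generators, i.e. each lies in some $G_v \setminus \{1\}$. I would argue, using the normal form or reduced-word theory for periagroups (available through the mediangle/quasi-median geometry and the semidirect product decompositions of Section~\ref{section:Periagroups}, together with Corollary~\ref{cor:InterVertexGroups} and Lemma~\ref{lem:DisjointSubgraphsSubgroups}), that $a^{-1}h$ and $b^{-1}h$ must lie in the same vertex-group, which forces $ab^{-1} \in G_u$ to coincide with a product of two elements from another vertex-group, and ultimately that $h \in G_u$, a contradiction. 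This is where the interplay between the group presentation and the adjacency structure is delicate.

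Concretely, I expect the cleanest route is to invoke the established geometry: by Theorem~\ref{thm:RotationPlurilith}-style reasoning or directly from the reduced word theory, a generator $s = a^{-1}h \in G_v$ and $t = b^{-1}h \in G_w$ with $a \ne b$ in $G_u$ satisfy $st^{-1} = a^{-1}b \in G_u \setminus \{1\}$. If $v \ne w$, then $st^{-1}$ is a length-two reduced word (unless $v,w$ are joined by an edge allowing cancellation or reordering), which cannot equal a single generator of $G_u$ unless $v = w = u$; analysing the edge-case where $v, w$ are adjacent in $\Gamma$ and using Corollary~\ref{cor:InterVertexGroups} to preclude a nontrivial intersection $G_v \cap G_w$ then pins down $h \in G_u$. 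Thus no proper superset of $G_u$ is complete, so $G_u$ is a clique.

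Finally, for the reverse inclusion, I would take an arbitrary clique $C \subset X$ containing $1$ (after translating), pick a neighbour $s \in C$, so $s \in G_u \setminus \{1\}$ for some $u$, and show every other vertex of $C$ also lies in $G_u$: any $t \in C$ adjacent to both $1$ and $s$ gives $t, s^{-1}t \in \bigcup_v G_v \setminus \{1\}$, and the same length-two reduced word analysis shows $t \in G_u$. Hence $C \subseteq G_u$, and by maximality $C = G_u$. The main obstacle throughout is the careful case analysis of when a product of two generators from possibly different vertex-groups can again be a single generator, which is precisely where the reduced-word/normal-form machinery for periagroups must be used; I would lean on the structural results already proved (Proposition~\ref{prop:SemiDirect}, Corollary~\ref{cor:InterVertexGroups}, Lemma~\ref{lem:DisjointSubgraphsSubgroups}) rather than redoing the combinatorics from scratch.
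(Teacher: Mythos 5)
Your skeleton matches the paper's proof: show that the cosets $gG_u$ are complete, show that a common neighbour of two distinct elements of such a coset is forced back into the coset (equivalently, that every $3$-cycle in $X$ has all its edges labelled by a single vertex-group), and conclude by maximality. The genuine problem is the machinery you invoke for the key step. At this point in the paper's logical development, no normal form, no reduced-word theory, and no mediangle structure on $\mathrm{Cayl}(\Pi,\mathcal{G})$ is available: the normal form of Section~\ref{section:Periagroups} is deduced from Theorem~\ref{thm:BigIntro}, whose proof goes through Proposition~\ref{prop:PeriagroupsAsRotation}, whose proof in turn uses Lemma~\ref{lem:CliquePeria} itself; likewise, applying Theorem~\ref{thm:RotationPlurilith} to this Cayley graph presupposes knowing the action is a rotation system, which is again Proposition~\ref{prop:PeriagroupsAsRotation}. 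So ``reduced-word theory \ldots available through the mediangle/quasi-median geometry'' and ``Theorem~\ref{thm:RotationPlurilith}-style reasoning'' are circular, and your length-two reduced-word analysis (including the vague edge-case where $v,w$ are adjacent in $\Gamma$) is left without any non-circular justification.

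The repair is exactly the purely algebraic tool you cite in passing but never actually deploy: Lemma~\ref{lem:DisjointSubgraphsSubgroups}, which rests only on the quotient $\Pi \twoheadrightarrow C(\Phi) \oplus \bigoplus_{u \in \Psi} G_u$ and on Proposition~\ref{prop:SemiDirect}, hence is available. With $s=a^{-1}h\in G_v\setminus\{1\}$, $t=b^{-1}h\in G_w\setminus\{1\}$, and $st^{-1}=a^{-1}b\in G_u\setminus\{1\}$, each of these three elements lies in the subgroup generated by the vertex-groups of the other two, so Lemma~\ref{lem:DisjointSubgraphsSubgroups} yields $u\in\{v,w\}$, $v\in\{u,w\}$, and $w\in\{u,v\}$, which forces $u=v=w$; then $h=as\in G_u$, a contradiction. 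This is precisely the paper's proof, and it requires no case analysis on whether $v,w$ are adjacent or on the value of $\lambda(v,w)$. As written, your proposal rests the pivotal step on circular machinery, so it has a genuine gap, albeit one that is fixable with a tool you already name.
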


\begin{proof}
Let $C$ be a $3$-cycle in the Cayley graph. If $g \in C$ is one of its vertices, its two other vertices can be written as $ga, gb$ where $a \in G_u$, $b \in G_v$ for some $u,v \in \Gamma$. Moreover, because $ga$ and $gab$ are adjacent, there must exist some $w \in \Gamma$ such that $a^{-1}b \in G_w$. Thus, $G_w \cap \langle G_u,G_v \rangle$ contains a non-trivial element, namely $a^{-1}b$. Lemma~\ref{lem:DisjointSubgraphsSubgroups} implies that $w \in \{u,v\}$. Similarly, $G_u \cap \langle G_v,G_w \rangle$ contains $a = b^{-1} \cdot (a^{-1}b)$ and $G_v \cap \langle G_u ,G_w \rangle$ contains $b=a \cdot a^{-1}b$. So we must have $u=v=w$. Thus, we have proved that every $3$-cycle in $X$ has all its edges labelled by the same vertex-group. This implies that every complete subgraph lies in the coset of some vertex-group. Since cosets of vertex-groups are complete subgraphs, the desired conclusion follows. 
\end{proof}

\begin{proof}[Proof of Proposition~\ref{prop:PeriagroupsAsRotation}.]
Set $\mathcal{R}:=\{ gGg^{-1} \mid g \in \Pi, G \in \mathcal{G} \}$ and $X:=\mathrm{Cayl}(\Pi, \mathcal{G})$. First, we claim that $(\Pi \curvearrowright X,\mathcal{R})$ is a rotation presystem. It is clear that $\mathcal{R}$ generates $\Pi$ and that it is stable under conjugation. It follows from Lemma~\ref{lem:CliquePeria} that every clique of $X$ has a (unique) rotative-stabiliser, namely its own stabiliser. And, conversely, the same lemma shows every subgroup in $\mathcal{R}$ is the (rotative-)stabiliser of a clique. Thus, our claim is proved.

\medskip \noindent
Following \cite[Lemma~3.3.5]{Davis}, we construct a representation of $\Pi$ that will correspond, a fortiori, to the action of $\Pi$ of the set of sectors of the mediangle graph $X$. Of course, since we do not know yet that $X$ is a mediangle graph, we have to define the representation directly. Let $\mathscr{S}$ the set of the pairs $(G,g)$ where $G \in \mathcal{R}$ and $g \in G$. For every generator $s \in \Pi$, define
$$\varphi_s : \left\{ \begin{array}{ccc} \mathscr{S} & \to & \mathscr{S} \\ (G,g) & \mapsto & (sGs^{-1}, s^{\delta(G,s)}g) \end{array} \right.,$$
where $\delta(G,s):= 1$ if $s \in G$ and $0$ otherwise, and where the notation $a^b$ refers to $b^{-1}ab$. A direct computation shows that $\varphi_s \circ \varphi_{s^{-1}} = \mathrm{Id} = \varphi_{s^{-1}} \circ \varphi_s$, so $\varphi_s$ defines a bijection of $\mathscr{S}$. For every word $w:=s_1 \cdots s_n$ of generators, we set $\varphi_w:= \varphi_{s_1} \circ \cdots \circ \varphi_{s_n}$. Our goal is to show that $w \mapsto \varphi_w$ induces a morphism $\Pi \to \mathrm{Bij}(\mathscr{S})$. It suffices to show that, if $w$ is a relation from the presentation of $\Pi$, then $\varphi_w= \mathrm{Id}$. First of all, notice that a straightforward proof by induction shows that
$$\varphi_{s_1 \cdots s_n} : (G,g) \mapsto \left( (s_1 \cdots s_n) \cdot G \cdot (s_1 \cdots s_n)^{-1}, \prod\limits_{i=1}^n s_i^{\delta \left( G,s_i^{s_{i+1}\cdots s_n} \right) } \cdot g \right).$$
We distinguish three types of relations. First, assume that $w=1$ is a relation in some vertex-group $G_u$. Observe that, as a consequence of Corollary~\ref{cor:InterVertexGroups}, for every $1 \leq i \leq n$ and for every $G \in \mathcal{R}$, $s_i^{s_{i+1}\cdots s_n}$ belongs to $G$ if and only if $G=G_u$. Consequently, $\varphi_{w}(G,g)=(G,g)$ for all $G \neq G_u$ and $g \in G$; and $\varphi_{w}(G_u,g)=(G_u, s_1 \cdots s_n \cdot g)=(G_u,g)$ for every $g \in G_u$. Thus, $\varphi_{w}= \mathrm{Id}$ as desired.

\medskip \noindent
Next, assume that $w=1$ is a commutation relation between two vertex-groups, i.e. $w=aba^{-1}b^{-1}$ where $a \in G_u$, $b \in G_v$ for some adjacent vertices $u,v \in \Gamma$. A direct computation shows that, for all $G \in \mathcal{R}$ and $g \in G$, we have
$$\varphi_w(G,g)= \left( G, a^{\delta(G,a)} b^{\delta(G,b)} a^{-\delta(G,a)} b^{-\delta(G,b)} \cdot g \right) = (G, g)$$
since $a$ and $b$ commute in $\Pi$. So $\varphi_w= \mathrm{Id}$ as desired.

\medskip \noindent
Finally, assume that $w=1$ is a dihedral relation between two vertex-groups of order two, say $(uv)^{\lambda(u,v)}=1$ where we identify for convenience the vertices $u,v \in \Gamma$ with the generators of their corresponding vertex-groups. For all $G \in \mathcal{R}$ and $g \in G$, we have
$$\varphi_w(G,g)=\left( G, \prod_{i=1}^{\lambda(u,v)} u^{\delta \left(G,u^{v(uv)^{\lambda(u,v)-i}} \right)} v^{\delta \left( G,v^{(uv)^{\lambda(u,v)-i}} \right)} \cdot g \right).$$
If all the $\delta(\cdot,\cdot)$ is zero, then we have $\varphi_w(G,g)=(G,g)$ as desired. Otherwise, it follows from Corollary~\ref{cor:InterVertexGroups} that $G$ is a conjugate of $G_u$ or $G_v$ in $\langle G_u,G_v \rangle$. But we know from Proposition~\ref{prop:SemiDirect} that $\langle G_u,G_v \rangle$ is isomorphic to a dihedral group of order $2\lambda(u,v)$ and the restrictions of $\varphi_u,\varphi_v$ to $\{ (gG_zg^{-1},\epsilon) \mid \epsilon \in \{1,z\}, g \in \langle G_u,G_v \rangle, z \in \{u,v\} \}$ coincides with the actions of $u$ and $v$ on the halfspaces of the Cayley graph of the dihedral group $\langle u,v \rangle$ (this how we constructed the $\varphi_s$). So $\varphi_w(G,g)=(G,g)$ in all these remaining cases, proving that $\varphi_w= \mathrm{Id}$.

\medskip \noindent
Thus, we have a well-defined morphism $\Pi \to \mathrm{Bij}(\mathscr{S})$. We are now ready to conclude the proof of our proposition. So let $C$ be a clique in $X$ with two vertices $x,y \in C$ and let $J$ denote the union of all the cliques having the same rotative-stabiliser as $C$. We want to prove that $J$ separates $x$ and $y$. It follows from Lemma~\ref{lem:CliquePeria} that, up to translating by an element of $\Pi$, we can assume without loss of generality that $C=G_u$ for some $u \in V(\Gamma)$, that $x=1$, and that $y=s$ for some generator $s$. Given a path $\gamma$ in $X$ from $1$ to $s$, the word $w=s_1 \cdots s_n$ labelling it has to be equal to $s$ in $\Pi$. Consequently,
$$(G_u,s)= \varphi_s(G_u,1)=\varphi_w(G_u,1)= \left( G_u, \prod\limits_{i=1}^n s_i^{\delta\left( G_u, s_i^{s_{i+1}\cdots s_n} \right)} \right).$$
Necessarily, there exists some $1 \leq k \leq n$ such that $\delta\left( G_u,s_k^{s_{k+1}\cdots s_n} \right) \neq 0$, which amounts to saying that $s_k^{s_{k+1} \cdots s_n} \in G_u$. After conjugation by $s$, the latter becomes $(s_1 \cdots s_k) \cdot s_{k+1} \cdot (s_1 \cdots s_k)^{-1} \in G_u$. But this element also belongs to the rotative-stabiliser of the clique $Q$ that contains the edge of $\gamma$ between $s_1 \cdots s_k$ and $s_1 \cdots s_{k+1}$. Since $Q$ and $C$ have the same rotative-stabiliser as a consequence of Corollary~\ref{cor:InterVertexGroups} (which applies thanks to Lemma~\ref{lem:CliquePeria}), we conclude that $\gamma$ crosses $J$. Thus, our proposition is finally proved.
\end{proof}

\begin{proof}[Proof of Theorem~\ref{thm:BigIntro}.]
The theorem is the combination of Proposition~\ref{prop:RotationPeriagroup}, Theorem~\ref{thm:RotationPlurilith}, Proposition~\ref{prop:FreeTransitive}, and Proposition~\ref{prop:PeriagroupsAsRotation}. 
\end{proof}

\subsection{Normal form}

\noindent
Fix a graph $\Gamma$, a collection of groups $\mathcal{G}=\{G_u \mid u \in V(\Gamma)\}$ indexed by the vertices of $\Gamma$, and a labelling $\lambda : E(\Gamma) \to \mathbb{N}_{\geq 2}$ such that, for every edge $\{u,v\} \in E(\Gamma)$, if $\lambda(u,v) >2$ then $G_u,G_v$ are both cyclic of order two. We denote the associated periagroup $\Pi(\Gamma,\lambda,\mathcal{G})$ by $\Pi$.

\medskip \noindent
A \emph{word} in $\Pi$ is a product $g_1 \cdots g_n$ where $n \geq 0$ and where, for every $1 \leq i \leq n$, $g_i$ belongs to $G_i$ for some $G_i \in \mathcal{G}$; the $g_i$'s are the \emph{syllables} of the word, and $n$ is the \emph{length} of the word. Clearly, the following operations on a word does not modify the element of $\Pi$ it represents:
\begin{description}
	\item[(reduction)] remove the syllable $g_i$ if $g_i=1$;
	\item[(fusion)] if $g_i,g_{i+1} \in G$ for some $G \in \mathcal{G}$, replace the two syllables $g_i$ and $g_{i+1}$ by the single syllable $g_ig_{i+1} \in G$;
	\item[(dihedral relation)] if there exist $\{u,v\} \in E(\Gamma)$ such that $g_i\cdots g_{i+\lambda(u,v)-1}= \langle a,b \rangle^{\lambda(u,v)}$ for some $a \in G_u$, $b \in G_v$, then replace this subword with $\langle b,a \rangle^{\lambda(u,v)}$.
\end{description}
A word is \emph{graphically reduced} if its length cannot be shortened by applying these elementary moves.

\medskip \noindent
The following statement extends what is known about the word problem in Coxeter groups \cite{MR0254129} (see also \cite[Theorem~3.4.2]{Davis}) and graph products of groups \cite{GreenGP}.

\begin{prop}
Let $g \in \Pi$. A word $w$ representing $g$ has minimal length among all the words representing $g$ if and only if it is graphically reduced. Moreover, any two graphically reduced words representing $g$ can be obtained from each other by applying dihedral relations.
\end{prop}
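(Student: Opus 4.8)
The plan is to exploit the identification from Theorem~\ref{thm:BigIntro} of $\Pi$ with the vertex-set of the mediangle graph $X := \mathrm{Cayl}(\Pi,\mathcal{G})$, and to translate the combinatorics of words into the geometry of edge-paths. To a word $w=g_1\cdots g_n$ all of whose syllables are nontrivial I associate the edge-path $P_w:=(1, g_1, g_1g_2, \ldots, g_1\cdots g_n = g)$ from $1$ to $g$; since the generating set of $X$ consists of the nontrivial elements of the vertex-groups, each step is a genuine edge and $\mathrm{lg}(P_w)=n$. The first task is to set up a precise dictionary between the three word moves and the three path operations of Lemma~\ref{lem:ShorteningPath}. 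A reduction deletes a degenerate step, so after reductions one may always assume all syllables nontrivial. A fusion merging $g_i,g_{i+1}$ into a single syllable corresponds to shortening the triangle $(g_1\cdots g_{i-1}, g_1\cdots g_i, g_1\cdots g_{i+1})$ when $g_ig_{i+1}\neq 1$, and to removing a backtrack (followed by a reduction) when $g_ig_{i+1}=1$. Conversely, by Lemma~\ref{lem:CliquePeria} every clique is a coset of a vertex-group, and by Lemma~\ref{lem:InterClique} an edge lies in a unique clique, so the vertices involved in any triangle-shortening or backtrack lie in a common coset $hG_u$; this forces the two removed syllables into $G_u$, i.e.\ the path operation is realised by a fusion (and possibly a reduction). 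Finally, since the convex even cycles of $X$ are exactly the dihedral cycles, namely cosets of the dihedral subgroups $\langle G_u,G_v\rangle$ attached to the edges $\{u,v\}$ (Claim~\ref{claim:ConvexDihedral}), a flip across such a cycle replaces a subword whose syllables alternate as $a,b,a,\ldots = \langle a,b\rangle^{\lambda(u,v)}$ by $\langle b,a\rangle^{\lambda(u,v)}$, which is precisely a dihedral relation (reducing to the commutation $ab\mapsto ba$ when $\lambda(u,v)=2$).

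With this dictionary in hand, the first assertion follows quickly. Any word for $g$, after deleting trivial syllables, yields an edge-path from $1$ to $g$ whose length is at least $d(1,g)$ by Theorem~\ref{thm:BigHyp}.$(iii)$; conversely a geodesic yields a word of length $d(1,g)$, so the minimal word length representing $g$ equals $d(1,g)$. If $w$ is not graphically reduced, some sequence of moves produces a strictly shorter word for $g$, so $w$ is not minimal. For the converse, suppose $w$ is graphically reduced; I claim $P_w$ is a geodesic. Otherwise $\mathrm{lg}(P_w)>d(1,g)$, and Lemma~\ref{lem:ShorteningPath} turns $P_w$ into a geodesic through a sequence of flips, backtrack-removals and triangle-shortenings. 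Flips preserve length while the other two strictly decrease it, so at least one length-decreasing operation occurs; transporting the whole sequence through the dictionary exhibits a sequence of dihedral relations, fusions and reductions that shortens $w$, contradicting graphical reducedness. Hence $P_w$ is a geodesic, $n=d(1,g)$, and $w$ is minimal.

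For the second assertion, let $w$ and $w'$ be two graphically reduced words representing $g$. By the first part both are minimal, so $P_w$ and $P_{w'}$ are geodesics from $1$ to $g$. Lemma~\ref{lem:FlipGeod} connects them by a finite sequence of flips, and under the dictionary each flip is a dihedral relation; therefore $w$ and $w'$ differ by a sequence of dihedral relations, as required.

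The main technical point to get right is the dictionary itself, and in particular the exactness of the flip--dihedral-relation correspondence: one must check that a flip across a dihedral cycle of length $2\lambda(u,v)$ acts on the relevant subword exactly as the alternating replacement $\langle a,b\rangle^{\lambda(u,v)}\mapsto \langle b,a\rangle^{\lambda(u,v)}$, using that the edges of a dihedral cycle are alternately labelled by the two generators and that, for $\lambda(u,v)>2$, the vertex-groups $G_u,G_v$ have order two so these generators are uniquely determined. A secondary subtlety is the bookkeeping around trivial syllables: one should first normalise by reductions so that $P_w$ is a genuine edge-path, and note that a backtrack-removal may create a trivial syllable that a subsequent reduction deletes. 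Everything else is a direct transcription of Lemmas~\ref{lem:ShorteningPath} and~\ref{lem:FlipGeod}.
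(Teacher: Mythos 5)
Your proof is correct and follows essentially the same route as the paper's: both identify words representing $g$ with paths from $1$ to $g$ in the mediangle Cayley graph $\mathrm{Cayl}(\Pi,\mathcal{G})$, match reductions/fusions/dihedral relations with backtrack-removals/triangle-shortenings/flips, and then invoke Lemma~\ref{lem:ShorteningPath} for the minimality criterion and Lemma~\ref{lem:FlipGeod} for the assertion that two graphically reduced words differ by dihedral relations. The only difference is one of detail: you justify the dictionary explicitly (via Lemma~\ref{lem:CliquePeria}, Lemma~\ref{lem:InterClique}, and Claim~\ref{claim:ConvexDihedral}) and handle trivial syllables by normalising first, whereas the paper asserts the correspondence directly and allows stationary paths.
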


\begin{proof}
There is a bijection between the words representing $g$ and the path in $X:= \mathrm{Cayl}(\Pi,\mathcal{G})$ from $1$ to $g$. (Here, we allow paths to stay at a given point in order to take into account the possibly trivial syllables of words.) But we know from Theorem~\ref{thm:BigIntro} that $X$ is a mediangle graph, and, as a consequence of Lemma~\ref{lem:ShorteningPath}, a path between two given vertices has minimal length if and only if it cannot be shortened by removing backtracks, shortening triangles, and applying flips. But removing a backtrack to the path amounts to applying a fusion and a reduction to the corresponding word; shortening a triangle amounts to applying a fusion; and applying a flip amounts to applying a dihedral relation. Consequently, a word has minimal length if and only if it is graphically reduced. For the same reason, the second assertion of our proposition follows from Lemma~\ref{lem:FlipGeod}. 
\end{proof}

\begin{cor}
A periagroup of groups with solvable word problem has a solvable word problem.
\end{cor}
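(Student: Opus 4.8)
The plan is to turn the normal form just established into a decision procedure. Recall that an element $g \in \Pi$ is trivial if and only if some (equivalently, every) word representing it reduces to the empty word; and by the previous proposition a word $w$ has minimal length among the representatives of the element it defines exactly when it is graphically reduced. Hence, to solve the word problem it suffices, given a word $w$, to produce a graphically reduced word representing the same element and to test whether that word is empty. Throughout I assume, as is implicit in the statement, that $\Gamma$ is finite and that each vertex-group of $\mathcal{G}$ comes with a solvable word problem, so that one can decide whether a given syllable is trivial and whether two syllables of the same vertex-group are equal.

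The first observation I would record is that, among the three elementary moves, only the dihedral relation preserves the length of a word, whereas both reduction and fusion strictly decrease it by one. Consequently, if $w$ is not graphically reduced, then in any length-shortening sequence of moves every move applied before the first reduction or fusion is a dihedral relation. Writing $D(w)$ for the set of words obtainable from $w$ by dihedral relations alone, this means that $w$ is graphically reduced if and only if no word in $D(w)$ admits a reduction or a fusion, i.e. no word in $D(w)$ contains a trivial syllable or two consecutive syllables lying in a common vertex-group.

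The crucial point, and the step I expect to require the most care, is that $D(w)$ is finite and effectively computable. Finiteness rests on two remarks: a dihedral relation preserves the number of syllables, and it never introduces a new syllable-value, since replacing $\langle a,b \rangle^{\lambda(u,v)}$ by $\langle b,a \rangle^{\lambda(u,v)}$ merely rearranges occurrences of the two elements $a,b$ already present in that block. Thus every word in $D(w)$ consists of exactly $n$ syllables, where $n$ denotes the length of $w$, each taking one of the finitely many values occurring among the syllables of $w$; so $D(w)$ is finite and can be enumerated by a breadth-first search. Detecting where a dihedral relation applies amounts to recognising a subword of the shape $\langle a,b \rangle^{\lambda(u,v)}$, which means matching the pattern of vertex-groups against $\Gamma$ and $\lambda$ and testing equalities of group elements; all of this is decidable using the solvable word problem in the vertex-groups.

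Putting these together yields the algorithm: given $w$, compute $D(w)$; if some element of $D(w)$ admits a reduction or a fusion, perform it to obtain a strictly shorter word $w'$ and recurse; otherwise $w$ is graphically reduced, and $w$ represents the identity if and only if it is the empty word. Since the length strictly decreases at each recursive step, the procedure terminates, and its correctness is exactly the content of the previous proposition. The one genuinely delicate point, as noted, is the finiteness of $D(w)$, which is what prevents the length-preserving dihedral moves from looping indefinitely; everything else is bookkeeping built on top of the word problems in the vertex-groups.
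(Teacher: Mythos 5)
Your proof is correct and follows essentially the same route as the paper: apply the normal form proposition to reduce the word problem to computing a graphically reduced representative, and iterate length-decreasing moves, deciding graphical reducedness by exhaustively exploring the words reachable via length-preserving (dihedral) moves, all built on the solvable word problems of the vertex-groups. In fact, your explicit treatment of $D(w)$ --- finite because dihedral relations preserve the number of syllables and introduce no new syllable values --- makes rigorous the paper's looser justification that ``there exist only finitely many words of a given length'', which taken literally fails when vertex-groups are infinite, so your version is, if anything, the more careful one.
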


\begin{proof}
Because there exist only finitely many words of a given length, one can check whether or not a given word is graphically reduced with the word problem is already solvable in vertex-groups (in order to determine whether a reduction can be applied). If our word is not graphically reduced, we shorten its length by applying a reduction or a fusion, and we iterate. After a finite amount of time, either we obtain the empty word and our initial word represents the trivial element; or we obtain a non-empty word and our initial word represents a non-trivial element of $\Pi$. 
\end{proof}

\noindent
We conclude this section by observing that one can easily deduce from our geometric framework an analogue of the Exchange Condition that holds for Coxeter groups (see for instance \cite[page~35]{Davis}). 

\begin{prop}
Let $g \in G$ be an element and $s \in G_u$ a generator. If $|gs|=|g|$, then $g$ can be represented as a graphically reduced word of the form $ws'$ where $s' \in G_u \backslash \{1,s^{-1}\}$; if $|gs|<|g|$, then $g$ can be represented as a graphically reduced word of the form $w s^{-1}$. 
\end{prop}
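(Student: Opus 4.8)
The plan is to transport the whole statement into the mediangle Cayley graph $X := \mathrm{Cayl}(\Pi,\mathcal{G})$, where by the preceding normal-form proposition the word length $|g|$ equals the distance $d(1,g)$ (graphically reduced words being exactly the geodesics from $1$ to $g$). The decisive object is the clique $C := gG_u$, which by Lemma~\ref{lem:CliquePeria} is a genuine clique and which contains both $g = g\cdot 1$ and $gs = g \cdot s$ as distinct vertices, distinct since the generator $s$ is nontrivial. By Lemma~\ref{lem:CliquesGated} the clique $C$ is gated; I would let $m \in C$ denote the gate of the basepoint $1$ and set $k := d(1,m)$.

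First I would record the elementary distance dichotomy inside $C$ forced by gatedness: for every $c \in C$ one has $d(1,c) = d(1,m) + d(m,c)$, and since $C$ is a clique $d(m,c) \in \{0,1\}$. Hence $m$ is the unique vertex of $C$ at distance $k$ from $1$, and every other vertex of $C$ lies at distance exactly $k+1$. Reading this off for the pair $g, gs$ leaves three mutually exclusive possibilities: $g = m$, in which case $|gs| = |g|+1$ and we are outside the hypotheses; $gs = m$, in which case $|gs| < |g|$; or neither of $g, gs$ equals $m$, in which case $|gs| = |g|$.

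Next I would treat the two relevant cases. If $|gs| < |g|$, then $gs = m$, so $g = (gs)s^{-1} = m s^{-1}$ with $d(1,g) = d(1,m)+1$; taking any graphically reduced word $w$ representing $gs$ (of length $k$) and appending the syllable $s^{-1} \in G_u \backslash \{1\}$ gives a word $w s^{-1}$ representing $g$ of length $k+1 = |g|$, which is therefore graphically reduced by the normal-form proposition. If $|gs| = |g|$, then neither $g$ nor $gs$ is the gate, so $m = gt$ for some $t \in G_u$ with $t \neq 1$ (because $m \neq g$) and $t \neq s$ (because $m \neq gs$); then $g = m t^{-1}$ with $s' := t^{-1} \in G_u \backslash \{1,s^{-1}\}$, and the same recipe --- a reduced word $w$ for $m$ followed by the syllable $s'$ --- produces a graphically reduced word $w s'$ of length $|g|$ of the desired form.

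The only step needing genuine care is the distance dichotomy in $C$, but it is an immediate consequence of the gatedness of cliques combined with the identification of the mediangle distance with the word length; everything else is a bookkeeping case analysis of where the gate of $1$ falls among $\{g, gs\}$. The real conceptual input --- that a word of length $|g|$ is automatically graphically reduced --- is already supplied by the preceding proposition, so no further work on reducedness is required.
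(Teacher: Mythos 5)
Your proof is correct, but it follows a genuinely different route from the paper's. The paper splits the two cases and treats them with separate tools: for $|gs|=|g|$ it applies the triangle condition directly to $1,g,gs$ to produce a common neighbour $h$ with $|h|=|g|-1$, and then invokes Lemma~\ref{lem:CliquePeria} (every $3$-cycle lies in a single coset of a vertex-group) to see that the generator $s'$ with $g=hs'$ lies in $G_u$; for $|gs|<|g|$ it simply reads off the label of the last edge of a geodesic from $1$ to $g$ passing through $gs$. You instead work with the gate $m$ of $1$ in the clique $C=gG_u$ (Lemma~\ref{lem:CliquesGated}), and the single dichotomy of which vertex of $C$ is the gate yields both cases at once; membership of the new syllable in $G_u$ is automatic because the gate lies in $C$, so you never need the $3$-cycle argument. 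In the case $|gs|=|g|$ the two proofs in fact locate the same vertex ($h=m$, since a common neighbour of $g$ and $gs$ at distance $|g|-1$ from $1$ must be the gate of the clique containing them): the paper's version uses less machinery per case (the bare triangle condition), while yours trades that for a more uniform argument whose bookkeeping stays inside one gated clique. Both rest on the same external inputs --- the identification of $|\cdot|$ with $d(1,\cdot)$ in the mediangle Cayley graph, Lemma~\ref{lem:CliquePeria}, and the normal-form proposition guaranteeing that minimal-length words are graphically reduced --- so neither is more general, but your packaging is arguably cleaner.
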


\begin{proof}
Assume first that $|gs|=|g|$. Since $|\cdot|$ coincides with the distance from $1$ in $X:= \mathrm{Cayl}(\Pi,\mathcal{G})$, we can apply the triangle condition and find a common neighbour $h$ of $g$ and $gs$ such that $|h|=|g|-1$. Let $s'$ be a generator such that $g=hs'$. Observe that $s' \neq 1,s^{-1}$ since $h,g,gs$ are pairwise distinct. Moreover, we deduce from Lemma~\ref{lem:CliquePeria} that $s'$ belongs to the same vertex-groups as $s$. We conclude that, if $w$ is graphically reduced word representing $h$, then $ws'$ is a graphically reduced word representing $g$, as desired.

\medskip \noindent
Next, assume that $|gs|=|g|-1$. The graphically reduced word labelling a geodesic in $X$ from $1$ to $g$ passing through $gs$ can be written as $wa$ where $a$ is the generator labelling the last edge $[gs,g]$. Necessarily, we have $a=s^{-1}$, as desired. 
\end{proof}

\section{Applications}

\subsection{Rotation subgroups}

\noindent
This subsection is dedicated to Theorem~\ref{thm:IntroRotationSub}. Its proof will be a rather easy consequence of the following geometric observation:

\begin{thm}\label{thm:PingPong}
Let $G$ be a group acting on a mediangle graph $X$ and let $\mathcal{J}$ be a $G$-invariant collection of hyperplanes. Assume that, for every $J \in \mathcal{J}$, $\mathrm{stab}_\circlearrowright(J)$ permutes freely and transitively the sectors delimited by $J$. There exists a convex subgraph $Y \subset X$ such that $G$ decomposes as the semidirect product $\mathrm{Rot} \rtimes \mathrm{stab}(Y)$, where $\mathrm{Rot}:= \langle \mathrm{stab}_\circlearrowright (J), J \in \mathcal{J} \rangle$. Moreover, $\mathrm{Rot}$ is a periagroup with $\{ \mathrm{stab}_\circlearrowright(J), \text{ $J$ tangent to $Y$}\}$ as a basis.
\end{thm}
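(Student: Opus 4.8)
The plan is to reduce the statement to the dictionary between rotation systems and periagroups furnished by Theorem~\ref{thm:BigIntro}, applied to an auxiliary quotient graph on which $\mathrm{Rot}$ acts. Fix a basepoint $o \in X$. For each $J \in \mathcal{J}$ write $\sigma_o(J)$ for the sector delimited by $J$ that contains $o$, and set $Y := \bigcap_{J \in \mathcal{J}} \sigma_o(J)$. Since sectors are convex by Theorem~\ref{thm:BigHyp}.(ii), $Y$ is a nonempty convex subgraph. More generally, declare two vertices \emph{$\mathcal{J}$-equivalent} when no hyperplane of $\mathcal{J}$ separates them; the classes are exactly the intersections of one sector per hyperplane, so each is convex and $Y$ is the class of $o$. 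Because every edge of $X$ lies in a single hyperplane, two adjacent vertices lying in distinct classes are separated by exactly one hyperplane of $\mathcal{J}$, so one obtains a well-defined graph $\Omega$ whose vertices are the classes and whose edges, each labelled by a hyperplane of $\mathcal{J}$, join classes separated by a single such hyperplane. As $\mathcal{J}$ is $G$-invariant, $G$ permutes the classes and hence acts on $\Omega$, and $\Omega$ is connected because $X$ is.

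Next I would prove that $(\mathrm{Rot} \curvearrowright \Omega, \{\mathrm{stab}_\circlearrowright(J) : J \in \mathcal{J}\})$ is a rotation system, so that Propositions~\ref{prop:FreeTransitive} and~\ref{prop:RotationPeriagroup} apply. The cliques of $\Omega$ should be the images of the cliques $C \subset J$ of $X$ contained in some $J \in \mathcal{J}$: by Theorem~\ref{thm:BigHyp}.(i) such a $C$ meets every sector of $J$ in exactly one vertex and its edges all lie in $J$, so its classes form a complete subgraph of $\Omega$ all of whose edges carry the label $J$. The conjugation-invariance of the family of rotative-stabilisers is immediate from $G$-invariance of $\mathcal{J}$, and it generates $\mathrm{Rot}$ by definition. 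The separation axiom reduces to the wall theory of $X$: deleting from $\Omega$ all cliques with a given rotative-stabiliser amounts to deleting every $\Omega$-edge labelled by the corresponding $J$, and since $J$ separates $X$ (Theorem~\ref{thm:BigHyp}.(i)) while each class lies inside a single sector of $J$, no surviving $\Omega$-path can cross $J$.

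Granting the rotation-system structure, Proposition~\ref{prop:FreeTransitive} yields that $\mathrm{Rot}$ acts freely and transitively on the vertices of $\Omega$, i.e. on the $\mathcal{J}$-classes, and Proposition~\ref{prop:RotationPeriagroup} identifies $\mathrm{Rot}$ as a periagroup whose basis is the set of rotative-stabilisers of the cliques through the basepoint $Y$; these cliques are exactly those coming from hyperplanes $J \in \mathcal{J}$ tangent to $Y$, giving the asserted basis. The semidirect decomposition then follows formally: conjugation sends $\mathrm{stab}_\circlearrowright(J)$ to $\mathrm{stab}_\circlearrowright(gJ)$, so $\mathrm{Rot}$ is normal in $G$; free transitivity of $\mathrm{Rot}$ on classes gives $\mathrm{Rot} \cap \mathrm{stab}(Y) = \{1\}$ (a rotation fixing the class $Y$ is trivial); and for $g \in G$ the class $gY$ equals $rY$ for some $r \in \mathrm{Rot}$, whence $r^{-1}g \in \mathrm{stab}(Y)$ and $G = \mathrm{Rot} \cdot \mathrm{stab}(Y)$. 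Thus $G = \mathrm{Rot} \rtimes \mathrm{stab}(Y)$.

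The main obstacle I anticipate is the verification that $\mathrm{Rot} \curvearrowright \Omega$ genuinely satisfies the clique axioms of a rotation system, and in particular that $\mathrm{stab}_\circlearrowright(J)$ acts \emph{freely and transitively on the vertices of each $\Omega$-clique}. The hypothesis only provides a free transitive action on the \emph{sectors} of $J$, whereas a single $X$-clique $C \subset J$ need not be preserved by $\mathrm{stab}_\circlearrowright(J)$; reconciling these — showing that the $\mathrm{stab}_\circlearrowright(J)$-orbit of a class across $J$ is precisely the vertex set of an $\Omega$-clique, and that these are all the cliques of $\Omega$ — is the delicate combinatorial point, and it is exactly what makes Proposition~\ref{prop:FreeTransitive} applicable and thereby delivers freeness, transitivity, and the periagroup structure in one stroke.
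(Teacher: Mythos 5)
Your proposal follows exactly the route of the paper's own proof: the same quotient graph $\Omega$ on the $\mathcal{J}$-classes, the same claim that $(\mathrm{Rot} \curvearrowright \Omega, \{\mathrm{stab}_\circlearrowright(J) \mid J \in \mathcal{J}\})$ is a rotation system, the same appeal to Propositions~\ref{prop:FreeTransitive} and~\ref{prop:RotationPeriagroup}, and the same formal derivation of $G = \mathrm{Rot} \rtimes \mathrm{stab}(Y)$ from normality, freeness, and transitivity. However, the step you explicitly defer in your last paragraph --- that $\mathrm{stab}_\circlearrowright(J)$ preserves each $\Omega$-clique and acts freely and transitively on its vertices, and that these exhaust the cliques of $\Omega$ --- is precisely where the mathematical content of the theorem sits. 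As written, your proposal reduces the theorem to its hardest step rather than proving it, so this is a genuine gap.

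Here is how it closes (and, in compressed form, how the paper closes it). First, your stated worry that ``a single $X$-clique $C \subset J$ need not be preserved by $\mathrm{stab}_\circlearrowright(J)$'' dissolves once one recalls how the notation is used (cf.\ the proof of Corollary~\ref{cor:RotativeStabPeria}): the rotative-stabiliser of a hyperplane is the subgroup stabilising \emph{every} clique of $X$ contained in $J$, so it preserves each such clique by definition. The remaining work is combinatorial: (a) all edges of a complete subgraph of $\Omega$ carry the same label, because a hyperplane separating two vertices of a triangle of classes must separate one of the other two pairs; (b) two classes adjacent in $\Omega$ with label $J$ contain the two endpoints of an actual edge of $J$, since a geodesic between representatives crosses $J$ once and no other member of $\mathcal{J}$, so its initial and terminal segments stay inside the two classes (Theorem~\ref{thm:BigHyp}.(iii)); (c) the relation ``equal or separated by $J$ alone'' is an equivalence relation on classes whose non-singleton classes are exactly the $\Omega$-cliques labelled $J$. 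Now, given an $\Omega$-clique $C$ labelled $J$, let $Q \subset J$ be the unique $X$-clique (Lemma~\ref{lem:InterClique}) containing an edge produced by (b). Every vertex of $Q$ is joined by an edge of $J$ to a vertex whose class lies in $C$, so by (c) the classes of all vertices of $Q$ lie in $C$; since $Q$ meets every sector of $J$ (Theorem~\ref{thm:BigHyp}.(i)) and distinct vertices of $C$ lie in distinct sectors, the quotient restricts to a bijection $Q \to C$. This bijection is $\mathrm{stab}_\circlearrowright(J)$-equivariant, so free transitivity on sectors transfers to free transitivity on the vertices of $C$; uniqueness holds because a non-trivial element of $\mathrm{stab}_\circlearrowright(J')$ with $J' \neq J$ pushes every vertex of $C$ across $J'$, hence off $C$, as vertices of $C$ are separated by $J$ only. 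With this claim supplied, the rest of your argument (including the separation axiom, where deleting possibly more edges than those labelled $J$ only helps) goes through as written.
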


\noindent
Here, we say that a hyperplane $J$ is \emph{tangent} to a subgraph $Y$ if there exists an edge in $J$ that has exactly one endpoint in $Y$. 

\begin{proof}[Proof of Theorem~\ref{thm:PingPong}.]
For any two vertices $x,y \in X$, we denote by $\beta(x,y)$ the number of hyperplanes in $\mathcal{J}$ that separate $x$ and $y$. Because $\delta$ defines a pseudo-metric, the relation $\sim$ given by $\beta(\cdot,\cdot)=0$ defines an equivalence relation on the vertices of $X$. Let $Y$ denote a $\sim$-class, thought of as a subgraph of $X$. Alternatively, $Y$ is the intersection of all the sectors delimited by hyperplanes in $\mathcal{J}$ that contain a given basepoint. So $Y$ is a convex subgraph. Let $\Omega$ denote the graph whose vertex-set is $X / \sim$ and whose edges connect two classes at $\delta$-distance $1$. The picture to keep in mind is that $\Omega$ is the graph dual to the decomposition of $X$ induces by $\mathcal{J}$. Because $\mathcal{J}$ is $G$-invariant, $G$ naturally acts on $\Omega$. Setting $\mathcal{R}$ as the collection of rotative-stabilisers of hyperplanes in $\mathcal{J}$, we want to prove that:

\begin{claim}\label{claim:MergeRotation}
$(\mathrm{Rot} \curvearrowright \Omega, \mathcal{R})$ is a rotation system.
\end{claim}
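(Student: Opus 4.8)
The plan is to verify the four axioms of a rotation system for $(\mathrm{Rot} \curvearrowright \Omega, \mathcal{R})$ directly, using that the quotient $\Omega$ faithfully records the $\mathcal{J}$-hyperplane structure of $X$. Write $R_J := \mathrm{stab}_\circlearrowright(J)$ for $J \in \mathcal{J}$. The first axiom is immediate: $\mathcal{R} = \{R_J : J \in \mathcal{J}\}$ generates $\mathrm{Rot}$ by definition, and since $\mathcal{J}$ is $G$-invariant with $R_{gJ} = gR_Jg^{-1}$, the family $\mathcal{R}$ is stable under conjugation; the $G$-action descends to $\Omega$ because $\sim$ is $G$-invariant, so $\mathrm{Rot}$ acts on $\Omega$. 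Before turning to cliques I would record that $d_\Omega([x],[y]) = \beta(x,y)$: a geodesic of $X$ crosses each hyperplane at most once by Theorem~\ref{thm:BigHyp}, so projecting it to $\Omega$ yields a path of length $\beta(x,y)$, while any $\Omega$-path lifts to an $X$-path crossing at least $\beta(x,y)$ hyperplanes of $\mathcal{J}$. In particular $\Omega$ is connected and two classes are adjacent exactly when separated by a single hyperplane of $\mathcal{J}$.

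The heart of the argument is to identify the cliques of $\Omega$. I would first prove the \emph{lifting} statement: every clique $\mathcal{C}$ of $\Omega$ equals $[C] := \{[c] : c \in C\}$ for a hyperplane $J \in \mathcal{J}$ and a clique $C \subset J$ of $X$. Given two adjacent classes $[x],[y]$, the unique $\mathcal{J}$-crossing edge of a geodesic realising $\beta(x,y)=1$ provides adjacent representatives lying in a common clique of the unique separating hyperplane. If $[x_0]$ were adjacent to $[x_1]$ across $J_1$ and to $[x_2]$ across $J_2$ with $J_1 \neq J_2$, then both $J_1$ and $J_2$ would separate $x_1$ from $x_2$ (each $x_i$ lies on $x_0$'s side of the other hyperplane), forcing $\beta(x_1,x_2) \geq 2$ and contradicting adjacency; hence all pairs in a clique of $\Omega$ are separated by one common $J$. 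The chosen representatives then lie pairwise in cliques of $J$ and are pairwise adjacent, so by the clique structure (Lemma~\ref{lem:InterClique}) they span a single clique $C \subset J$ with $[C] = \mathcal{C}$. Conversely, for a clique $C \subset J$ any two vertices of $C$ are adjacent and separated precisely by $J$ (Theorem~\ref{thm:BigHyp}), so $[C]$ is a complete subgraph of distinct vertices, and its maximality follows from the lifting.

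Next I would check that $R_J$ is the rotative-stabiliser of $[C]$. Here the crucial input is that $R_J$ does not merely permute the sectors of $J$ freely-transitively but, being the rotative-stabiliser, \emph{stabilises each clique of $J$} and acts on its vertices exactly as it acts on the corresponding sectors; this cannot be read off from sector-transitivity alone, and it is where I expect the real work, since an arbitrary subgroup acting regularly on sectors could drag a clique across transverse hyperplanes of $\mathcal{J}$ and destroy adjacency in $\Omega$. Granting it, and using that $C$ meets each sector of $J$ in exactly one vertex (Theorem~\ref{thm:BigHyp}) so that $[\,\cdot\,] : C \to [C]$ is a bijection, $R_J$ permutes $[C]$ freely and transitively: freely because $r[c_S] = [c_S]$ would force $r$ to fix the sector $S$, whence $r = 1$, and transitively because $R_J$ is transitive on sectors. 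Uniqueness of $R_J$ among $\mathcal{R}$ follows because every edge of $[C]$ is dual to $J$, so any $R_{J'}$ acting transitively on $[C]$ must be the rotative-stabiliser of a clique of $J$, giving $R_{J'} = R_J$; and axiom three holds since each $R_{gJ} = gR_Jg^{-1}$ is the rotative-stabiliser of the $\Omega$-clique $[C]$ for any clique $C \subset gJ$.

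Finally, for the separation axiom I would push the separation property of hyperplanes in $X$ (Theorem~\ref{thm:BigHyp}) through the quotient. The cliques of $\Omega$ sharing the rotative-stabiliser $R_J$ are exactly the $[C']$ with $C' \subset J$, and the union of their edges is precisely the set of $\Omega$-edges dual to $J$. Removing these edges disconnects the classes lying in distinct sectors of $J$: any $\Omega$-path between two such classes lifts to an $X$-path crossing $J$, hence traversing a removed edge. Since the vertices of $\mathcal{C} = [C]$ lie in pairwise distinct sectors of $J$, they are separated, establishing the last axiom and completing the verification that $(\mathrm{Rot} \curvearrowright \Omega, \mathcal{R})$ is a rotation system. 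The main obstacle throughout is the clique-stabilisation property of $R_J$ discussed above; everything else is a faithful transcription of the $X$-hyperplane theory to the collapsed graph $\Omega$.
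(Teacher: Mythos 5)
Your overall route is the same as the paper's: identify each clique of $\Omega$ with the projection of a clique of $X$ lying in a hyperplane $J\in\mathcal{J}$, transport the free--transitive action of $R_J$ on sectors through this identification, and get the separation axiom by lifting $\Omega$-paths to $X$-paths that must cross $J$. However, there is a genuine gap at the lifting step, and it is not where you locate the difficulty. The sentence ``the chosen representatives then lie pairwise in cliques of $J$ and are pairwise adjacent, so by Lemma~\ref{lem:InterClique} they span a single clique $C\subset J$'' does not hold up: your adjacent representatives are chosen \emph{per pair} of vertices of $\mathcal{C}$, so for a triangle $\{c_1,c_2,c_3\}$ you obtain three cliques $K_{12},K_{13},K_{23}\subset J$ and, in general, two different representatives of each class $c_i$; nothing provides a single system of pairwise adjacent representatives, and Lemma~\ref{lem:InterClique} (two cliques share at most a vertex) cannot glue the $K_{ij}$ into one clique. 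The point is not cosmetic, because distinct cliques of $J$ can project to \emph{distinct} cliques of $\Omega$: in the grid $\mathbb{Z}^2$ with $\mathcal{J}$ consisting of one vertical hyperplane $V$ and one horizontal hyperplane $H$ (and $G$ generated by the two corresponding reflections, so the hypotheses of Theorem~\ref{thm:PingPong} hold), $\Omega$ is a $4$-cycle and the edges of $V$ above $H$ project to one edge of $\Omega$ while those below project to the opposite edge. So one must actually prove that the cliques $K_{1i}$ all project to the same clique of $\Omega$, and this is precisely where the group enters: choosing $g\in R_J$ with $gS_1=S_i$, clique-stabilisation gives $g(K_{1i}\cap S_1)=K_{1i}\cap S_i$, hence $gc_1=c_i$, and also $g(K_{12}\cap S_1)=K_{12}\cap S_i$, hence $[K_{12}\cap S_i]=gc_1=c_i$; running over all $i$ yields $[K_{12}]\supseteq\mathcal{C}$, and equality follows from maximality of $\mathcal{C}$. (The paper compresses exactly this work into the phrase ``the quotient modulo $\sim$ induces a bijection $K\to C$''; the equivariance argument above is what justifies it.)

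By contrast, the input you single out as ``the real work'' and then grant --- that $R_J$ stabilises each clique of $J$ and acts on its vertices as on the sectors --- is not something to be proved: $\mathrm{stab}_\circlearrowright(J)$ is, by definition, the subgroup stabilising every clique contained in $J$ (the notation is imported from the quasi-median setting of \cite{QM}; this is also how it is used without comment in the proofs of Claim~\ref{claim:MergeRotation} and Corollary~\ref{cor:RotativeStabPeria}). So granting it is legitimate, but you grant the definitional part while skipping the place where it must actually be deployed, namely the gluing above. A smaller issue: your uniqueness argument (``any $R_{J'}$ acting transitively on $[C]$ must be the rotative-stabiliser of a clique of $J$'') is an assertion rather than a proof; the paper's reasoning is a displacement argument: for $J'\neq J$, all of $[C]$ lies in a single sector of $J'$ (its vertices are pairwise separated only by $J$), so any non-trivial $g\in R_{J'}$, acting freely on the sectors of $J'$, moves $[C]$ entirely off itself and hence $R_{J'}$ cannot act transitively on $[C]$ unless $R_{J'}=R_J$. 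Your remaining steps --- the identity $d_\Omega=\beta$, the free--transitive action once $\mathcal{C}=[K]$ is known, axiom three, and the separation axiom via lifting paths --- are correct and match the paper.
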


\noindent
First, it is clear that $\mathcal{R}$ is stable under conjugation and that it generates $\mathrm{Rot}$. 

\medskip \noindent
Next, let $C$ be a clique of $\Omega$. There must exist a hyperplane $J \in \mathcal{J}$ such that representatives of $C$ lie in pairwise distinct sectors delimited by $J$. Consequently, if $K$ is a complete subgraph lying in $J$ and containing exactly one vertex in each of these sectors, then the quotient modulo $\sim$ induces a bijection $K \to C$. Because $\mathrm{stab}_\circlearrowright(J)$ stabilises $K$ and permutes freely and transitively the sectors delimited by $J$, it follows that $\mathrm{stab}_\circlearrowright(J)$ acts freely and transitively on the vertices of $C$. For any other hyperplane $J' \in \mathcal{J}$ and any element $g \in \mathrm{stab}_\circlearrowright(J')$, $K$ and $g \cdot K$ lie in distinct sectors delimited by $J'$, so $g$ cannot stabilise $C$. Thus, $\mathrm{stab}_\circlearrowright(J)$ is the rotative-stabiliser of $C$. Moreover, every path in $\Omega$ between two vertices of $C$ lifts to a path in $X$ between the corresponding two vertices of $K$. Because these two vertices lie in distinct sectors delimited by $J$, the latter path must contain an edge in some clique $Q \subset J$ that connects two distinct sectors. Because $\mathrm{stab}_\circlearrowright(J)$ is clearly the rotative-stabiliser of $Q/ \sim$ as well, we conclude that removing the edges of all the cliques having $\mathrm{stab}_\circlearrowright(J)$ as rotative-stabiliser separates any two vertices in $C$.

\medskip \noindent
Finally, observe that, for every $J \in \mathcal{J}$ and for every clique $K \subset J$, $\mathrm{stab}_\circlearrowright(J)$ is the rotative-stabiliser of the clique $C:= K/\sim$. This concludes the proof of Claim~\ref{claim:MergeRotation}. 

\medskip \noindent
We deduce from Proposition~\ref{prop:RotationPeriagroup} that $\mathrm{Rot}$ is a periagroup with a basis given by the rotative-stabilisers of the cliques of $\Omega$ containing $Y$ (thought of as a vertex of $\Omega$). By construction, these stabilises coincide with the rotative-stabilisers of the hyperplanes in $\mathcal{J}$ tangent to $Y$ (thought of as a subgraph of $X$). This proves the second assertion of our theorem.

\medskip \noindent
We also deduce from Proposition~\ref{prop:FreeTransitive} that $\mathrm{Rot} \cap \mathrm{stab}(Y) = \{1\}$, so, since $\mathrm{Rot}$ is clearly normal in $G$, we have $\langle \mathrm{Rot}, \mathrm{stab}(Y) \rangle= \mathrm{Rot} \rtimes \mathrm{stab}(Y)$. In order to prove our theorem, it remains to show that $\mathrm{Rot}$ and $\mathrm{stab}(Y)$ generate $G$. Fix an element $g \in G$. Because $\mathrm{Rot}$ acts transitively on the vertices of $\Omega$, according to Proposition~\ref{prop:FreeTransitive}, there exists $r \in \mathrm{Rot}$ such that $rg \cdot Y =Y$. Hence $rg \in \mathrm{stab}(Y)$, and a fortiori $g \in \mathrm{Rot} \cdot \mathrm{stab}(Y)$. Our theorem is proved. 
\end{proof}

\noindent
Fix a graph $\Gamma$, a collection of groups $\mathcal{G}=\{G_u \mid u \in V(\Gamma)\}$ indexed by the vertices of $\Gamma$, and a labelling $\lambda : E(\Gamma) \to \mathbb{N}_{\geq 2}$ such that, for every edge $\{u,v\} \in E(\Gamma)$, if $\lambda(u,v) >2$ then $G_u,G_v$ are both cyclic of order two. We denote by $\Pi$ the corresponding periagroup $\Pi(\Gamma, \lambda, \mathcal{G})$.

\begin{lemma}\label{lem:RotativeStabPeria}
Any two cliques in $X:=\mathrm{Cayl}(\Pi, \mathcal{G})$ that lie in the same hyperplane have the same stabiliser.
\end{lemma}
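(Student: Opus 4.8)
The plan is to exploit the description of cliques from Lemma~\ref{lem:CliquePeria}, namely that every clique of $X$ is a coset $gG_u$, whose setwise stabiliser for the left action is exactly $gG_ug^{-1}$, together with the fact that the convex even cycles of $X$ are dihedral cycles. The latter is available because $(\Pi \curvearrowright X, \mathcal{R})$ is a rotation system by Proposition~\ref{prop:PeriagroupsAsRotation}, so Claim~\ref{claim:ConvexDihedral} applies. Since ``having the same stabiliser'' is an equivalence relation on cliques, and a hyperplane is by definition the transitive closure of the relation identifying edges that lie in a common $3$-cycle or are opposite in a convex even cycle, it suffices to check that two cliques related by a single such elementary move share their stabiliser.

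First I would dispose of the $3$-cycle move: if two edges lie in a common $3$-cycle, then that $3$-cycle spans a complete subgraph and hence lies in a single clique by the absence of induced $K_4^-$ (Lemma~\ref{lem:InterClique}); so the two edges determine the very same clique and there is nothing to prove. Thus the only substantial case is that of two cliques $C_1,C_2$ containing two opposite edges $e_1,e_2$ of a convex even cycle $C$.

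For this case I would translate by an element of $\Pi$---which replaces both stabilisers by the same conjugate and hence preserves the desired equality---so as to assume that $C$ passes through $1$ and that one of the two opposite edges is $e_1=[1,a]$ with $a\in G_u$. By Claim~\ref{claim:ConvexDihedral}, $C$ is then a dihedral cycle $1,a,ab,aba,\dots$ with $a\in G_u$, $b\in G_v$, $\{u,v\}\in E(\Gamma)$, half-length $\ell=\lambda(u,v)$, and dihedral relation $\langle a,b\rangle^{\ell}=\langle b,a\rangle^{\ell}$. The clique $C_1$ is $G_u$ with stabiliser $G_u$; writing $w_0:=\langle a,b\rangle^{\ell}$ for the vertex of $C$ opposite to $1$, the edge $e_2$ is $[w_0,w_0x]$ for the appropriate generator $x\in\{a,b\}$, so that $C_2=w_0G_x$ has stabiliser $w_0G_xw_0^{-1}$. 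The heart of the matter is the identity $w_0xw_0^{-1}=a$, which I would read off from the dihedral relation: when $\ell=2$ it is the commutation $[G_u,G_v]=1$, and when $\ell>2$ the vertex-groups are involutions and it is the standard fact that conjugation by the longest element $w_0$ of the dihedral group $\langle a,b\rangle$ sends the generator at one end of the reduced word to the one at the opposite end (treating the two parities of $\ell$ separately). Once this identity is established, $a$ is a nontrivial element of both $G_u=\mathrm{stab}(C_1)$ and $w_0G_xw_0^{-1}=\mathrm{stab}(C_2)$, whence these two conjugates of vertex-groups coincide by Corollary~\ref{cor:InterVertexGroups}.

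The main obstacle is precisely the dihedral identity $w_0xw_0^{-1}=a$ in the long-cycle case, where one must correctly identify the label $x$ of the opposite edge (which depends on the parity of $\ell$) and confirm that conjugation by the antipode $w_0$ realises the central symmetry of the cycle. Everything else---the reduction to a single elementary move, the $3$-cycle case, and the translation normalisation---is routine bookkeeping.
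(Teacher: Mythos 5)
Your proposal is correct and follows essentially the same route as the paper's proof: reduce to two cliques containing opposite edges of a convex even cycle, invoke Claim~\ref{claim:ConvexDihedral} to see the cycle is dihedral, and then read off the equality of stabilisers from the commutation relation (length $4$) or the conjugation action of the longest element $w_0$ of the dihedral group (length $>4$, with the same parity split the paper uses). The only cosmetic difference is that you conclude via Corollary~\ref{cor:InterVertexGroups} from a common non-trivial element, whereas the paper directly exhibits $\langle a\rangle$ (resp. $G_u$) as the common stabiliser.
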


\begin{proof}
Let $C_1,C_2 \subset X$ be two cliques lying in the same hyperplane. It suffices to consider the case where $C_1,C_2$ contains two opposite edges from some convex even cycle. First, assume that $C_1$ and $C_2$ are two opposite edges in a convex cycle of even length $>4$. As a consequence of Claim~\ref{claim:ConvexDihedral}, up to translating $C_1,C_2$ by an element of $\Pi$, there exist two adjacent vertex $u,v \in \Gamma$ satisfying $\lambda(u,v) \geq 3$ and two elements $a \in G_u$, $b \in G_v$ such that 
$$C_1=[1,a] \text{ and } C_2= \left\{ \begin{array}{ll} \langle a,b \rangle^{\lambda(u,v)} [1,a] & \text{if $\lambda(u,v)$ is even} \\ \langle a,b \rangle^{\lambda(u,v)} [1,b] & \text{if $\lambda(u,v)$ is odd} \end{array} \right..$$
Then $\langle a \rangle$ is the common stabiliser of $C_1$ and $C_2$. Next, assume that $C_1$ and $C_2$ contain two opposite edges from some $4$-cycle. It follows from Claim~\ref{claim:ConvexDihedral} and Lemma~\ref{lem:CliquePeria} that, up to translating by an element of $\Pi$, there exist two adjacent vertices $u,v \in \Gamma$ satisfying $\lambda(u,v)$ and an element $g \in G_v$ such that $C_1=G_u$ and $C_2=gG_u$. Because $g$ commutes with $G_u$, we conclude that $G_u$ is the common stabiliser of $C_1$ and $C_2$. 
\end{proof}

\begin{cor}\label{cor:RotativeStabPeria}
For every hyperplane $J$ of $X$, $\mathrm{stab}_\circlearrowright(J)$ is conjugate to a vertex-group and it permutes freely and transitively the sectors delimited by $J$
\end{cor}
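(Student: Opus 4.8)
The plan is to read off both assertions from the structural results already in place, so that essentially no new geometry is required. The starting point is Lemma~\ref{lem:CliquePeria}, which identifies the cliques of $X$ with the cosets $gG_u$ ($g \in \Pi$, $u \in V(\Gamma)$). For such a clique $C = gG_u$, an element $h \in \Pi$ fixes $C$ setwise under left multiplication if and only if $hgG_u = gG_u$, i.e. $g^{-1}hg \in G_u$, i.e. $h \in gG_ug^{-1}$. Thus the setwise stabiliser of $C$ is the conjugate $gG_ug^{-1}$, and it acts on the vertices of $C$ by $gag^{-1} \cdot (gb) = g(ab)$ for $a,b \in G_u$, which is manifestly free and transitive. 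Hence the rotative-stabiliser $\mathrm{stab}_\circlearrowright(C) = gG_ug^{-1}$ is a conjugate of a vertex-group.

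First I would deduce the first assertion. By definition $\mathrm{stab}_\circlearrowright(J)$ is the common rotative-stabiliser of the cliques contained in $J$, which is well defined precisely because of Lemma~\ref{lem:RotativeStabPeria}. Choosing any clique $C = gG_u \subset J$, the previous paragraph gives $\mathrm{stab}_\circlearrowright(J) = gG_ug^{-1}$, so it is conjugate to a vertex-group.

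Next I would establish the free-transitive action on sectors. Fix a clique $C \subset J$ and write $R := \mathrm{stab}_\circlearrowright(J)$. By Theorem~\ref{thm:BigHyp}.$(i)$, distinct vertices of $C$ lie in distinct sectors delimited by $J$ and every sector contains a vertex of $C$; hence the assignment $v \mapsto S(v)$ sending each vertex of $C$ to the sector containing it is a bijection from the vertex set of $C$ onto the set of sectors. Since $R$ stabilises $C$ it stabilises the hyperplane $J$ determined by the edges of $C$, and therefore permutes its sectors compatibly with this bijection, namely $r \cdot S(v) = S(rv)$ for $r \in R$ and $v \in C$. Because $R$ acts freely and transitively on the vertices of $C$ (and in fact the whole action of $\Pi$ on $X$ is free on vertices by Proposition~\ref{prop:FreeTransitive}), transporting along $v \mapsto S(v)$ shows at once that $R$ acts freely and transitively on the sectors: given sectors $S(v)$ and $S(w)$, the unique $r \in R$ with $rv = w$ is the unique element of $R$ carrying one to the other.

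The argument is essentially bookkeeping, so I do not expect a serious obstacle; the only point requiring a little care is the compatibility $r \cdot S(v) = S(rv)$, which rests on $R$ preserving $J$ (it stabilises $C$, hence sends the edges of $C$, all lying in $J$, to edges of $C$, and a graph automorphism carries hyperplanes to hyperplanes, so $R\cdot J = J$). Once this is in place, freeness and transitivity on sectors are immediate transports of the corresponding properties on the vertices of $C$.
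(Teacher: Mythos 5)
Your proof is correct and follows essentially the same route as the paper: both identify $\mathrm{stab}_\circlearrowright(J)$ with the stabiliser of any clique in $J$ via Lemma~\ref{lem:RotativeStabPeria}, then use Lemma~\ref{lem:CliquePeria} to see this stabiliser is a conjugate $gG_ug^{-1}$ acting on sectors as $G_u$ acts on itself by left multiplication. The only difference is that you make explicit, via Theorem~\ref{thm:BigHyp}.$(i)$, the bijection between vertices of the clique and sectors delimited by $J$, a point the paper's terser proof leaves implicit.
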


\begin{proof}
It follows from Lemma~\ref{lem:RotativeStabPeria} that $\mathrm{stab}_\circlearrowright(J)$ coincides with the stabiliser of any clique in $J$. It follows from Lemma~\ref{lem:CliquePeria} that $\mathrm{stab}_\circlearrowright(J)$ is conjugate to a vertex-group $G$ and that its action on the set of sectors delimited by $J$ coincides with the action of $G$ on itself by left-multiplication. This proves our corollary.
\end{proof}

\begin{proof}[Proof of Theorem~\ref{thm:IntroRotationSub}.]
Let $\{ G_i \mid i \in I\}$ be a collection of conjugates of vertex-groups, and let $\mathrm{Rot}$ denote the subgroup it generates. We deduce from Lemma~\ref{lem:RotativeStabPeria} that, for every $i \in I$, there exists some hyperplane $J_i$ such that $G_i= \mathrm{stab}_\circlearrowright(J_i)$. Set $\mathcal{J}:= \mathrm{Rot} \cdot \{ J_i \mid i \in I\}$. It follows from Theorem~\ref{thm:PingPong} (which applies according to Corollary~\ref{cor:RotativeStabPeria}) that $\mathrm{Rot}$ is a periagroup with vertex-groups that are vertex-groups of $\Pi$. 
\end{proof}

\subsection{Parabolic subgroups}

\noindent
This subsection is dedicated to the proof of Theorem~\ref{thm:IntroParabolic}. Before turning to its proof, we focus on a local characterisation of gated subgraphs in mediangle graphs in the same spirit as Proposition~\ref{prop:ConvexCriterion}. 

\begin{prop}\label{prop:LocallyGated}
Let $X$ be a mediangle graph. A subgraph $Y \subset X$ is gated if and only if it is connected and locally gated. 
\end{prop}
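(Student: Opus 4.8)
The plan is to prove the two implications separately, the forward one being routine and the converse carrying all the weight. First recall what local gatedness should mean here: $Y$ meets every clique and every convex even cycle in a gated subgraph of that cell. Since the proper nonempty gated subgraphs of a clique are exactly its vertices, and those of a convex even cycle are exactly its vertices and edges, this unpacks to three conditions: $Y$ is locally convex, $Y$ contains the whole clique of any edge it contains (clique-closure), and $Y$ contains any convex even cycle in which it contains a path of length at least two (the even-cycle clause). For the forward implication, a gated subgraph is convex, hence nonempty-connected and locally convex. It is also cell-closed: if $Y$ contained an edge $[a,b]$ of a clique but not its third vertex $c$, then the gate of $c$ would have to lie in $I(c,a)\cap I(c,b)=\{c\}$, forcing $c\in Y$; and if $Y$ contained a length-two subpath $[y_1,m,y_2]$ of a convex even cycle $E$ but not the antipode $m'$ of $m$, then, using that $E$ is gated by Theorem~\ref{thm:LongCyclesConvex}, the gate of $m'$ would lie in $I(m',y_1)\cap I(m',y_2)=\{m'\}$, forcing $m'\in Y$ and then $E\subseteq Y$ by convexity.

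For the converse, suppose $Y$ is connected and locally gated. Then $Y$ is locally convex, so Proposition~\ref{prop:ConvexCriterion} makes it convex. It remains to produce, for each $x\in X$, a gate, i.e. a vertex $y^\ast\in Y$ on a geodesic from $x$ to every $z\in Y$. Since the gate must be the unique closest vertex of $Y$ to $x$, the whole problem reduces to showing that the set of closest points of $Y$ to $x$ is a singleton. I would assume for contradiction two distinct closest points and choose such a pair $y_1,y_2$ with $d(y_1,y_2)$ minimal; set $k:=d(x,Y)$.

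The two base cases are clean and use exactly the extra content of local gatedness. If $d(y_1,y_2)=1$, the triangle condition gives a common neighbour $z\in I(x,y_1)\cap I(x,y_2)$ with $d(x,z)=k-1$; as $z$ lies in the clique spanned by $(z,y_1,y_2)$, clique-closure puts $z\in Y$, contradicting $k=d(x,Y)$. If $d(y_1,y_2)=2$, their common neighbour $m\in I(y_1,y_2)\subseteq Y$ satisfies $d(x,m)=k+1$ (it cannot be $k$, else $\{y_1,m\}$ would be a closer pair), so the cycle condition produces a convex even cycle $E$ through $y_1,m,y_2$ whose vertex $m'$ opposite to $m$ lies in $I(x,y_1)\cap I(x,y_2)$, giving $d(x,m')<k$; but $Y$ contains the length-two subpath $[y_1,m,y_2]$ of $E$, so the even-cycle clause forces $E\subseteq Y$ and in particular $m'\in Y$, again contradicting $k=d(x,Y)$.

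This settles $d(y_1,y_2)\le 2$, and the genuinely delicate point is to reduce $d(y_1,y_2)\ge 3$ to these base cases. I would fix a geodesic $w_0=y_1,\dots,w_d=y_2$ inside the convex subgraph $Y$ and study the profile $i\mapsto d(x,w_i)$, which starts and ends at $k$ and has all interior values $\ge k+1$ by minimality. At a flat step $d(x,w_i)=d(x,w_{i+1})=k+1$ the triangle condition and clique-closure produce a closest point of $Y$ at distance $<d$ from $y_1$; at a strict peak $d(x,w_{i-1})=d(x,w_{i+1})=d(x,w_i)-1$ the cycle condition and the even-cycle clause produce a convex even cycle contained in $Y$, whose antipodal vertex is either strictly closer than $k$ (impossible) or a closest point yielding a shorter pair. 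The hard part will be the bookkeeping of these flat steps and peaks at levels above $k+1$: to guarantee termination in a base case I expect one must make an extremal choice of the configuration (for instance, among all $Y$-geodesics joining a minimal closest pair, one minimising $\sum_i d(x,w_i)$) so that higher plateaus can be pushed down and no tall peak survives on a short cycle. Once the closest point is shown unique, it is the gate, and $Y$ is gated.
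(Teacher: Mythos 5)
Your forward implication and your two base cases are sound, but the converse rests on a reduction that is simply not valid: you assert that, $Y$ being convex, it suffices to show that each $x\in X$ has a \emph{unique} nearest point in $Y$. Uniqueness of the nearest point is only a necessary condition for gatedness (the remark in Section~\ref{section:Notation} says that a gate, \emph{if it exists}, is the unique nearest vertex, not the converse). In a general graph the implication ``convex $+$ unique nearest points $\Rightarrow$ gated'' is false: in the $5$-cycle with vertices $1,\dots,5$, the path $Y=\{1,2,3\}$ is convex and every vertex has a unique nearest point in $Y$, yet $Y$ is not gated, since the nearest point $3$ of the vertex $4$ does not lie in $I(4,1)$. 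Of course the $5$-cycle is not mediangle, but the example shows that your reduction is not a formal triviality; whether it holds in mediangle graphs is itself a claim of essentially the same depth as the proposition you are proving, it is established nowhere in the paper, and you give no argument for it. So even a completed version of your induction would prove uniqueness of nearest points and still not prove gatedness. A second, independent gap is that the induction is in fact not completed: for $d(y_1,y_2)\geq 3$ you only sketch a hoped-for scheme (extremal choice of the $Y$-geodesic, pushing down plateaus) and explicitly defer the ``bookkeeping'' that would guarantee termination.

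The paper avoids both problems by attacking the gate property directly. Fix the nearest point $x'$ of $x$ and suppose some $y\in Y$ satisfies $x'\notin I(x,y)$; choose such a $y$ with $d(x',y)$ minimal. Note that this witness $y$ is in general \emph{not} a nearest point of $x$, which is precisely the information your reduction discards. Writing $a,b$ for the last two vertices before $y$ on a $Y$-geodesic from $x'$ to $y$, minimality forces $d(x,y)\in\{d(x,a),d(x,a)-1\}$; the cycle condition (in the first case) or the triangle condition (in the second) then produces a $3$-cycle or a convex even cycle through $a,b,y$, which local gatedness forces into $Y$, and a distance computation (using Theorem~\ref{thm:LongCyclesConvex} and projections when the cycle is long) contradicts either the minimality of $d(x',y)$ or the choice of $x'$. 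Your two base cases are exactly the right local moves --- they mirror the paper's use of the triangle and cycle conditions --- but they must be run on this ``minimal bad witness'' configuration rather than on pairs of nearest points.
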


\noindent
A subgraph $Y$ is \emph{locally gated} if every $3$-cycle having an edge in $Y$ lies entirely in $Y$ and if every convex even cycle having two consecutive edges in $Y$ lies entirely in $Y$. 

\begin{proof}[Proof of Proposition~\ref{prop:LocallyGated}.]
It is clear that a gated subgraph is connected, locally convex, and locally gated. From now on, we assume that $Y$ is connected, locally convex, and locally gated. We already know from Proposition~\ref{prop:ConvexCriterion} that $Y$ is convex. Fix a vertex $x \in X$ and let $x' \in Y$ be a vertex at minimal distance from $x$. We want to prove that, for every $y \in Y$, we have $x' \in I(x,y)$. For contradiction, assume that there exists some $y \in Y$ such that $x' \notin I(x,y)$; we choose such a $y$ with $d(y,x')$ minimal. Along a geodesic from $x'$ to $y$, let $a$ denote the neighbour of $y$ and $b$ the neighbour of $a$ distinct from $y$. Because $x' \notin I(x,y)$ but $x' \in I(x,a)$, we must have $d(x,y) \in \{ d(x,a), d(x,a)-1\}$. 

\begin{center}
\includegraphics[width=0.9\linewidth]{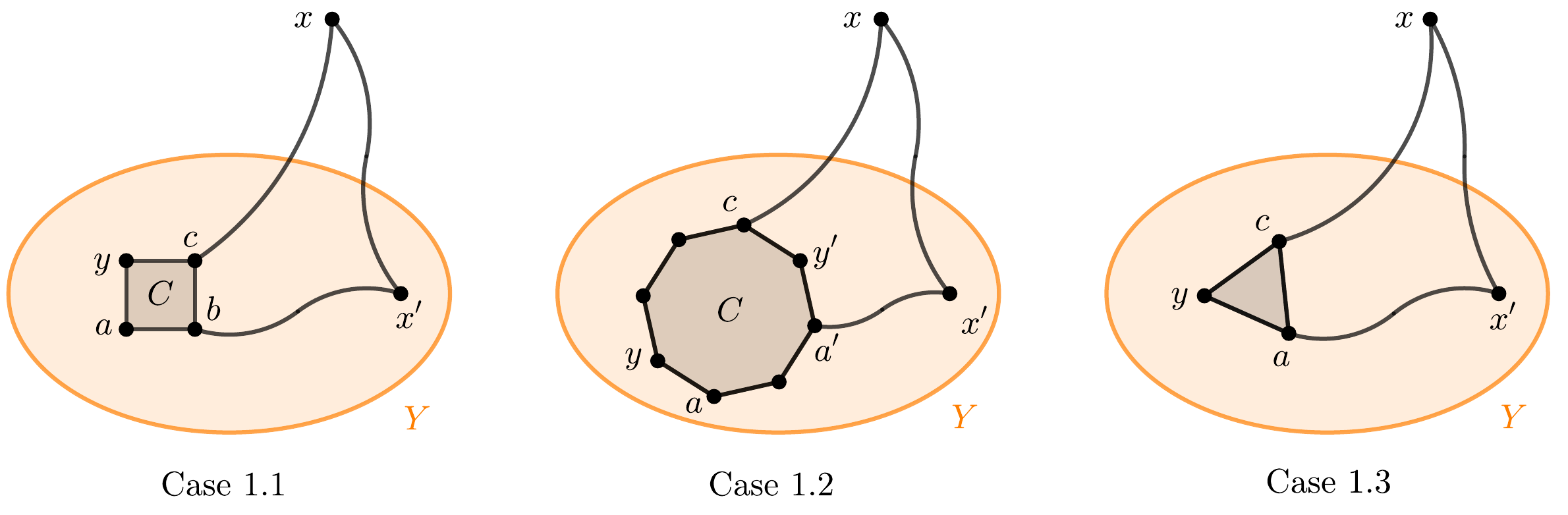}
\end{center}

\medskip \noindent
\underline{Case 1:} Assume first that $d(x,y)=d(x,a)-1$. Then the cycle condition applies and we find a convex even cycle $C$ spanned by the edges $[a,y],[a,b]$ such that the vertex $c$ opposite to $a$ belongs to $I(x,a)$. Observe that, because $Y$ is locally gated, $C \subset Y$. 

\medskip \noindent
\underline{Case 1.1:} Assume that $C$ has length four. Observe that 
$$d(x',c) \geq d(x',y)-d(y,c) = d(x',b)+2-d(y,c)=d(x',b)+1.$$
On the other hand, $d(x',c) \leq d(x',b)+d(b,c)= d(x',b)+1$. So $d(x',c)=d(x',b)+1$. In particular, $d(x',c)<d(x,y)$. Moreover, 
$$d(x,c)=d(x,a)-2=d(x,x')+d(x',b)-1= d(x,x')+d(x',c)-2,$$
so $x' \notin I(x,c)$. This contradicts our choice of $y$.

\medskip \noindent
\underline{Case 1.2:} Assume that $C$ has length $2\ell >4$. We know from Theorem~\ref{thm:LongCyclesConvex} that $C$ is gated. Let $a' \in C$ denote the projection of $x'$ on $C$. Observe that, because $a$ and $b$ lie on a geodesic from $x'$ to $y$, necessarily $a'$ lies between $b$ and $c$ on $C$. If $a'=c$, then by combining $d(x,a)=d(x,c)+\ell$ with
$$d(x,a)=d(x,x')+d(x',a)=d(x,x')+d(x',c)+\ell,$$
we deduce that $d(x,c')=d(x,x')+d(x',c)$. But then
$$d(x,y)=d(x,c)+d(c,y)= d(x,x')+d(x',c)+d(c,y),$$
which implies that $x' \in I(x,y)$, contradicting our assumptions. Therefore, we must have $a' \neq c$. As a consequence, $a'$ has a neighbour $y'$ lying between $a'$ and $c$ (with possibly $y'=c$). We have
$$\begin{array}{lcl} d(x,y') & = & d(x,c)+d(c,y') = d(x,a)- d(a,c)+d(c,y') = d(x,a)-d(a,y') \\ \\ & = & d(x,x')+d(x',a')+d(a',a) - d(a,y') = d(x,x')+d(x',a')-1 \\ \\ & = & d(x,x')+ d(x',y')-2 \end{array}$$
hence $x' \notin I(x,y')$ with $d(x',y') <d(x',y)$, contradicting our choice for $y$.

\medskip \noindent
\underline{Case 2:} Assume finally that $d(x,y)=d(x,a)$. By applying the triangle condition, we find a common neighbour $c$ of $y$ and $a$ that lies in $I(x,y) \cap I(x,a)$. Observe that $d(x',c) \geq d(x',y)-1=d(x',a)$, so
$$d(x,c)=d(x,a)-1=d(x,x')+d(x',a)-1 \leq d(x,x')+d(x',c)-1,$$
which implies that $x' \notin I(x,c)$. On the other hand, we know that $d(x',c) \geq d(x',y)-1$ and $d(x',c) \leq d(x',a)+1=d(x',y)$, so either $d(x',c)=d(x',y)$ and we apply Case 1 to get a contradiction, or $d(x',c)=d(x',y)-1$ and we already get a contradiction. 
\end{proof}

\noindent
Fix a graph $\Gamma$, a collection of groups $\mathcal{G}=\{G_u \mid u \in V(\Gamma)\}$ indexed by the vertices of $\Gamma$, and a labelling $\lambda : E(\Gamma) \to \mathbb{N}_{\geq 2}$ such that, for every edge $\{u,v\} \in E(\Gamma)$, if $\lambda(u,v) >2$ then $G_u,G_v$ are both cyclic of order two. We denote the associated periagroup $\Pi(\Gamma,\lambda,\mathcal{G})$ by $\Pi$. We want to prove Theorem~\ref{thm:IntroParabolic}, namely, we want to prove that the intersection between any two parabolic subgroups of $\Pi$ is again a parabolic subgroup. We begin by observing the following consequence of Proposition~\ref{prop:LocallyGated}. 

\begin{cor}\label{cor:ParabolicGated}
For every $\Xi \leq \Gamma$, the subgraph $\langle \Xi \rangle$ of $X:= \mathrm{Cayl}(\Pi,\mathcal{G})$ is gated.
\end{cor}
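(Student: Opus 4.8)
The plan is to invoke the local characterisation of gated subgraphs provided by Proposition~\ref{prop:LocallyGated}: it suffices to prove that $\langle \Xi \rangle$ is connected and locally gated. The whole argument rests on one elementary observation about edges. If an edge $[g, gs]$ of $X$, with $s \in G_u \setminus \{1\}$, has both endpoints in $\langle \Xi \rangle$, then $s = g^{-1}(gs) \in G_u \cap \langle \Xi \rangle$. Applying Lemma~\ref{lem:DisjointSubgraphsSubgroups} to the disjoint subgraphs $\{u\}$ and $\Xi$ shows $G_u \cap \langle \Xi \rangle = \{1\}$ whenever $u \notin V(\Xi)$, so we must have $u \in V(\Xi)$, and hence $G_u \subseteq \langle \Xi \rangle$. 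In other words, an edge of $X$ internal to $\langle \Xi \rangle$ carries a label belonging to a vertex-group indexed by a vertex of $\Xi$.

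Connectedness is then immediate: the subgraph $\langle \Xi \rangle$ is precisely the Cayley graph of the subgroup $\langle \Xi \rangle$ with respect to the generating set $\bigcup_{u \in V(\Xi)} G_u$, which is connected. For the triangle part of local gatedness, recall from Lemma~\ref{lem:CliquePeria} that every clique of $X$ is a coset $gG_u$ of a vertex-group, so any $3$-cycle lies inside such a coset. If one of its edges lies in $\langle \Xi \rangle$, the observation above gives $u \in V(\Xi)$ together with an endpoint $h \in \langle \Xi \rangle$; then $gG_u = hG_u \subseteq \langle \Xi \rangle$, so the whole clique, and a fortiori the $3$-cycle, lies in $\langle \Xi \rangle$.

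For the convex even cycle part, I would use Claim~\ref{claim:ConvexDihedral}, which identifies the convex even cycles of $X$ with the dihedral cycles. Such a cycle is therefore a translate of the orbit of a basepoint under alternating products of two elements $a \in G_u$, $b \in G_v$ with $u,v$ adjacent, and all of its vertices lie in a single coset of $\langle G_u, G_v \rangle$. Suppose two consecutive edges $[v_0,v_1]$ and $[v_1,v_2]$ of such a cycle lie in $\langle \Xi \rangle$; these carry labels in $G_u$ and $G_v$ respectively, so the edge observation forces $u, v \in V(\Xi)$, whence $\langle G_u, G_v \rangle \subseteq \langle \Xi \rangle$. Since $v_1 \in \langle \Xi \rangle$ and the entire cycle is contained in the coset $v_1 \langle G_u, G_v \rangle$, the whole cycle lies in $\langle \Xi \rangle$. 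Thus $\langle \Xi \rangle$ is locally gated, and Proposition~\ref{prop:LocallyGated} yields the corollary.

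The only genuinely non-routine ingredient is the edge observation, and through it the trivial-intersection statement Lemma~\ref{lem:DisjointSubgraphsSubgroups}; once that is in place, checking the two local conditions is just a matter of reading off the coset structure of cliques and dihedral cycles. I expect no real obstacle beyond carefully matching the combinatorial data (which vertex-group labels a given edge) to the algebraic data (which vertex of $\Gamma$ it comes from).
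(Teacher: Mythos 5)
Your proof is correct and follows essentially the same route as the paper: reduce to connectedness plus local gatedness via Proposition~\ref{prop:LocallyGated}, then use the description of cliques (Lemma~\ref{lem:CliquePeria}) and of convex even cycles as dihedral cycles (Claim~\ref{claim:ConvexDihedral}). The paper's proof is just a terser version of yours; your explicit edge-label observation via Lemma~\ref{lem:DisjointSubgraphsSubgroups} is exactly the detail the paper leaves implicit.
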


\begin{proof}
As a subgraph, $\langle \Xi \rangle$ is clearly connected, and it is locally gated as a consequence of the descriptions of $3$-cycles and convex even cycles respectively given by Lemma~\ref{lem:CliquePeria} and Claim~\ref{claim:ConvexDihedral} (which applies according to Proposition~\ref{prop:PeriagroupsAsRotation}). 
\end{proof}

\begin{proof}[Proof of Theorem~\ref{thm:IntroParabolic}.]
Let $\Phi,\Psi \leq \Gamma$ be two subgraphs and $g,h \in \Pi(\Gamma, \mathcal{G})$ two elements. According to Corollary~\ref{cor:ParabolicGated}, the subgraphs $g \langle \Phi \rangle$ and $h \langle \Psi \rangle$ are gated, so projections on them are well-defined. Because $g \langle \Phi \rangle g^{-1} \cap h \langle \Psi \rangle h^{-1}$ stabilises both $g \langle \Phi \rangle$ and $h \langle \Psi \rangle$, it has to stabilise the projection $P$ of $h \langle \Psi \rangle$ on $g \langle \Phi \rangle$. Let $\Xi$ denote the subgraph of $\Gamma$ given by generators labelling the edges of $P$. Assuming, up to translating by an element of $\Pi(\Gamma, \mathcal{G})$, that $P$ contains the vertex $1$, it follows that the intersection between our two parabolic subgroups lies in $\langle \Xi \rangle$. It remains to verify that $\langle \Xi \rangle$ stabilises both $g \langle \Phi \rangle$ and $h \langle \Psi \rangle$. So fix a generator $s \in \langle \Xi \rangle$. By definition of $\Xi$, there exists an edge in $g \langle \Phi \rangle$ labelled by $s$ that is the projection of an edge of $h \langle \Psi \rangle$. As a consequence of Corollary~\ref{cor:ProjLip}, these two edges belong to the same hyperplane, so $s$ belongs to the rotative-stabiliser of a hyperplane that intersects both $g \langle \Phi \rangle$ and $h \langle \Psi \rangle$. In order to conclude the proof, it suffices to notice that:

\begin{claim}
For every subgraph $\Delta \leq \Gamma$ and every clique $C \subset \langle \Delta \rangle$, $\mathrm{stab}(C)$ stabilises the subgraph $\langle \Delta \rangle$. 
\end{claim}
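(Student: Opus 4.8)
The plan is to unwind the statement using the explicit description of cliques in $X := \mathrm{Cayl}(\Pi,\mathcal{G})$ provided by Lemma~\ref{lem:CliquePeria}, and then to reduce everything to the single fact that the vertex set of the subgraph $\langle \Delta \rangle$ is a subgroup of $\Pi$. First I would write the clique $C$ as a coset $h G_u$ of a vertex-group, for some $h \in \Pi$ and some $u \in V(\Gamma)$; this is exactly what Lemma~\ref{lem:CliquePeria} gives, noting that a clique of $X$ contained in the convex (indeed gated, by Corollary~\ref{cor:ParabolicGated}) subgraph $\langle \Delta \rangle$ is still a clique of $X$ and hence a coset of a vertex-group.

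The key elementary observation is that $h$ itself is a vertex of $C$, since $1 \in G_u$ gives $h = h\cdot 1 \in h G_u = C$. The hypothesis $C \subset \langle \Delta \rangle$ therefore forces $h \in \langle \Delta \rangle$, and, $\langle \Delta \rangle$ being a subgroup, also $h^{-1} \in \langle \Delta \rangle$. Translating $C$ back by $h^{-1}$ then yields $G_u = h^{-1} C \subseteq h^{-1}\langle \Delta \rangle = \langle \Delta \rangle$, so that both $h$ and the entire vertex-group $G_u$ sit inside $\langle \Delta \rangle$. (One could equivalently deduce $u \in V(\Delta)$ from Lemma~\ref{lem:DisjointSubgraphsSubgroups}, but the direct inclusion is cleaner.)

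It remains to identify the stabiliser and conclude. Since $\Pi$ acts on its Cayley graph by left multiplication, $g \cdot C = C$ is equivalent to $g h G_u = h G_u$, i.e.\ to $h^{-1} g h \in G_u$, so that $\mathrm{stab}(C) = h G_u h^{-1}$. Combining this with the previous paragraph, each of $h$, $G_u$, and $h^{-1}$ lies in the subgroup $\langle \Delta \rangle$, whence $\mathrm{stab}(C) = h G_u h^{-1} \subseteq \langle \Delta \rangle$; as left multiplication by an element of $\langle \Delta \rangle$ preserves $\langle \Delta \rangle$, this is precisely the claim. I do not anticipate a genuine obstacle here: the content is bookkeeping once $C$ is recognised as a coset of a vertex-group. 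The only points deserving care are the direction of the action — making sure $\mathrm{stab}(C)$ comes out as the conjugate $h G_u h^{-1}$ rather than $G_u$ — and the harmless identification of $C$ as a vertex-group coset via Lemma~\ref{lem:CliquePeria}.
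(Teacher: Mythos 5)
Your proposal is correct and follows essentially the same route as the paper: both identify $C$ as a coset of a vertex-group based at an element of $\langle \Delta \rangle$ (the paper writes $C = k\langle v\rangle$ with $k \in \langle \Delta \rangle$, $v \in \Delta$) and conclude that $\mathrm{stab}(C)$ is the conjugate $kG_vk^{-1} \subset \langle \Delta \rangle$, hence stabilises the subgraph. Your write-up merely makes explicit the bookkeeping the paper leaves implicit (that the base point $h$ lies in $C \subset \langle \Delta \rangle$ and that translating back by $h^{-1}$ puts $G_u$ inside $\langle \Delta \rangle$), which is a harmless elaboration rather than a different argument.
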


\noindent
We can write $C$ as $k \langle v \rangle$ for some element $k \in \langle \Delta \rangle$ and some vertex $v \in \Delta$. Then $\mathrm{stab}(C) = k \langle v \rangle k^{-1} \subset \langle \Delta \rangle$, so $\mathrm{stab}(C)$ stabilises the subgraph $\langle \Delta \rangle$. 
\end{proof}

\section{Concluding remarks and open questions}\label{section:Conclusion}

\noindent
In this article, we have introduced and motivated mediangle graphs as a natural geometry to consider in geometric group theory. This leads to many questions that remain to explore. We record some of these questions in this final section.

\paragraph{Cellular structure.} All the examples of mediangle graphs met in Section~\ref{section:Examples} can be naturally embedded with higher dimensional cell structures, typically contractible and often CAT(0). It is natural to look for a similar structure in mediangle graphs. In other words, we would like to identify specific subgraphs, the \emph{monoliths}, such that each monolith arises with a clear cell structure and such that filling in all the monoliths of a mediangle graph produces a cell structure with remarkable properties such as being contractible or admitting a CAT(0) metric.

\medskip \noindent
In Cayley graphs of periagroups, such a cell structure can be easily defined. Given a labelled graph $(\Gamma,\lambda)$ and a collection of groups $\mathcal{G}$ defining a periagroup $\Pi:= \Pi(\Gamma,\lambda, \mathcal{G})$, define a monolith as a subgraph of the form $g \langle \Lambda \rangle$ where $g \in \Pi$ is an element and where $\Lambda \leq \Gamma$ is a complete subgraph such that the Coxeter group associated to the vertices of $\Lambda$ with vertex-groups of order two is finite. A monolith is thus isomorphic to the product of a finite Coxeter graph with a (possibly infinite) complete graph. In particular, it can be realised as a convex polytope in a (possibly infinite-dimensional) Euclidean space. This cell structure naturally extends the known cases of Coxeter groups and graph products of groups, and it is reasonable to conjecture that it can also be endowed with a CAT(0) metric. This conjecture is also supported by \cite{Soergel}, which constructs CAT(0) cell complexes for Dyer groups in the same spirit.

\medskip \noindent
However, identifying monoliths in full generality is not so easy. Mediangle graphs are not built from obvious ``elementary blocks''.

\begin{question}
With a well-chosen definition of monoliths, is it always possible to endow a mediangle graph with a contractible or CAT(0) cell structure?
\end{question}

\noindent
More algebraically, one can ask whether a group acting geometrically on a mediangle graph is of type $F_\infty$, and of type $F$ when it is torsion-free.

\medskip \noindent
Here is a natural attempt for a definition of monoliths in full generality: in a mediangle graph, a monolith is a gated subgraph whose hyperplanes are all pairwise transverse. However, the structure of such monoliths is not clear. For instance, are they convex subgraphs in products of complete graphs with finite Coxeter graphs? A good indication that our guess is a good guess would be a positive answer to the following question:

\begin{question}
If a group acts on a mediangle graph with bounded orbits, does it stabilise a monolith?
\end{question}

\noindent
A positive answer to this question, combined with a contractible cell structure on monoliths, would exhibit mediangle graphs as a source of classifying spaces for proper actions.

\paragraph{Coxeter graphs.} As mentioned in the introduction, median graphs can be thought of as the combinatorial geometry of right-angled Coxeter groups in the sense that a graph is median if and only if it can be realised as a convex subgraph in the Cayley graph of some right-angled Coxeter group. Similarly, quasi-median graphs can be thought of as the combinatorial geometry of graph products of groups in the sense that a graph is quasi-median if and only if it can be realised as a gated subgraph in the Cayley graphs of some graph products of groups. So one naturally asks whether every mediangle graph can be realised as a gated subgraph in the Cayley graph of some periagroup. 

\medskip \noindent
Unfortunately, this not the case. For instance, consider the mediangle graph $X$ below. By considering the link of a vertex belonging to the $4$-cycle and to two $6$-cycles, one sees that there is essentially a unique candidate for a periagroup containing $X$ in its Cayley graph: the Coxeter group $(2,3,3)$. The corresponding Coxeter graph $Y$ is the one-skeleton of the truncated octahedron, and, even though $X$ can be realised as a subgraph in $Y$, such a subgraph cannot be convex. 
\begin{center}
\includegraphics[width=0.7\linewidth]{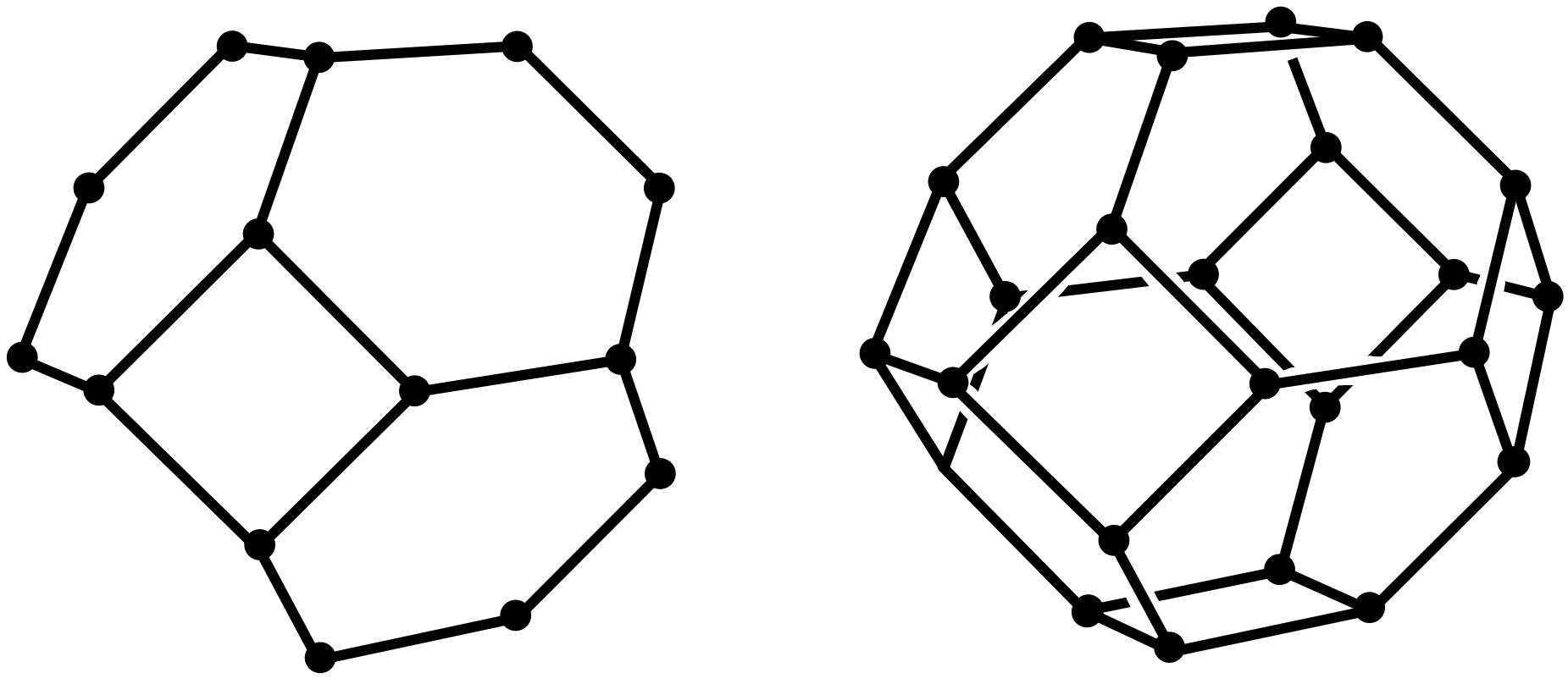}
\end{center}

\noindent
Consequently, there might exist a subfamily of mediangle graphs with a richer structure that remain to identify. 

\begin{question}\label{question:Coxeter}
Which mediangle graphs can be realised as gated subgraphs in Cayley graphs of periagroups? Which bipartite mediangle graphs can be realised as convex subgraphs in Coxeter graphs?
\end{question}

\noindent
A (too) particular case would be to ask for local reflection symmetries. More precisely, say that a graph $X$ is \emph{locally Coxeter} if, for any two adjacent vertices $x,y \in X$, there exists a bijection $\rho : \mathrm{link}(x) \to \mathrm{link}(y)$ such that $y$ is sent to $x$ and such that two neighbours $a,b \in \mathrm{link}(x)$ of $x$ span a convex cycle of length $\ell$ if and only if so do $\rho(a),\rho(b)$. Is it true that a bipartite mediangle graph that is locally Coxeter can be realised as a convex subgraph in a Coxeter graph? Is it isomorphic to a Coxeter graph?

\medskip \noindent
We emphasize that requiring that the subgraph in Question~\ref{question:Coxeter} is convex or gated is fundamental. Indeed, we already know that mediangle graphs embed isometrically in products of complete graphs, similarly to median and quasi-median graphs.

\begin{prop}
Every mediangle graph embeds isometrically in a product of (possibly infinitely many) complete graphs.
\end{prop}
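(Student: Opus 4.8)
The plan is to mimic the classical Djoković--Winkler style embedding used for median and quasi-median graphs, now realised through the hyperplane machinery already developed for mediangle graphs. The target complete graphs will be indexed by the hyperplanes of $X$, and each coordinate will record ``which sector one is in''. Concretely, I would fix a mediangle graph $X$ and, for each hyperplane $J$, let $S_J$ denote the set of sectors delimited by $J$. By Theorem~\ref{thm:BigHyp}.$(i)$ the sectors delimited by $J$ are in bijection with the vertices of any clique $C \subset J$ (via the projection-partition of Claim~\ref{claim:HypCliqueSameProj}), so I would turn $S_J$ into a complete graph on this vertex set. The map I would study is
$$\Phi : X \to \prod_{J} S_J, \quad x \mapsto \left( \mathrm{sect}_J(x) \right)_J,$$
where $\mathrm{sect}_J(x)$ is the sector delimited by $J$ that contains $x$, equivalently $\mathrm{proj}_C(x)$ for a chosen clique $C \subset J$.

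The key steps are as follows. First I would check that $\Phi$ is well-defined and injective: two distinct vertices $x \neq y$ are separated by at least one hyperplane (since by Theorem~\ref{thm:BigHyp}.$(iii)$ the distance equals the number of separating hyperplanes, which is positive when $x \neq y$), so they differ in that coordinate. Second, and this is the heart of the matter, I would show that $\Phi$ is isometric. For this, recall that the distance in a (possibly infinite) product of complete graphs is the Hamming distance, i.e. the number of coordinates in which two tuples differ. The number of coordinates in which $\Phi(x)$ and $\Phi(y)$ differ is exactly the number of hyperplanes $J$ for which $x,y$ lie in distinct sectors, which is precisely the number of hyperplanes separating $x$ and $y$. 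By Theorem~\ref{thm:BigHyp}.$(iii)$ this count equals $d(x,y)$, so $d_{\mathrm{Hamming}}(\Phi(x),\Phi(y)) = d(x,y)$ and $\Phi$ is an isometric embedding. Third, I would verify that $\Phi$ is a graph morphism compatible with adjacency: if $x,y$ are adjacent then the edge $[x,y]$ lies in a unique hyperplane $J_0$, and Claim~\ref{claim:HypCliqueSameProj} shows $x,y$ have the same sector for every $J \neq J_0$ and distinct sectors for $J_0$; hence $\Phi(x)$ and $\Phi(y)$ differ in exactly one coordinate, so they are adjacent in the product. This last point is actually subsumed by the isometry statement (adjacent vertices are at distance one, mapped to distance one).

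The main obstacle, and the only place where genuine care is needed, is making precise the identification of sectors with vertices of a clique so that the coordinate spaces are well-defined independently of choices, and confirming that an edge lies in exactly one hyperplane (so that adjacency is correctly reflected). Both facts are already packaged in Theorem~\ref{thm:BigHyp}: item $(i)$ gives the bijection between sectors of $J$ and vertices of any clique $C \subset J$, and the hyperplane relation together with item $(iii)$ guarantees that the edges of $X$ are partitioned among the hyperplanes, each edge belonging to a single hyperplane. Once these are invoked, the verification that the Hamming distance in the product matches the number of separating hyperplanes is routine, and the equality with $d(x,y)$ is exactly Theorem~\ref{thm:BigHyp}.$(iii)$. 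I would therefore expect the proof to be short, with essentially all the work delegated to the hyperplane theory established earlier.
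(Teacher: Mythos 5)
Your proposal is correct and follows essentially the same route as the paper: coordinates indexed by the hyperplanes, the coordinate map given by projection to a clique inside each hyperplane (equivalently, the sector containing the vertex), and the isometry obtained from the fact that the distance equals the number of separating hyperplanes (Theorem~\ref{thm:BigHyp}.$(iii)$). The only cosmetic difference is that the paper fixes one clique per hyperplane and uses gatedness of cliques (Lemma~\ref{lem:CliquesGated}) together with Corollary~\ref{cor:ProjLip} to identify ``distinct projections'' with ``separated by the hyperplane'', whereas you phrase the coordinates directly in terms of sectors via Claim~\ref{claim:HypCliqueSameProj}, which amounts to the same argument.
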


\begin{proof}
Let $X$ be a mediangle graph. Fix a collection of cliques $\{C_i \mid i \in I\}$ such that every hyperplane contains exactly one clique in this collection. We know from Lemma~\ref{lem:CliquesGated} that cliques are gated, so there is a well-defined projection $\pi_i : X \to C_i$ for every $i \in I$. We claim that
$$\pi : \left\{ \begin{array}{ccc} X & \to & \prod\limits_{i \in I} C_i \\ x & \mapsto & (\pi_i(x))_{i \in I} \end{array} \right.$$
is an isometric embedding. Indeed, it follows from Corollary~\ref{cor:ProjLip} that, for every $i \in I$, two vertices have distinct projections on $C_i$ if and only if they are separated by the hyperplane containing $C_i$. Consequently, for any two vertices $x,y \in X$, the distance between $\pi(x)$ and $\pi(y)$ coincides with the number of hyperplanes separating $x,y$ in $X$, which also coincides with the distance between $x,y$ in $X$ according to Theorem~\ref{thm:BigHyp}.$(iii)$.
\end{proof}

\noindent
A more algebraic related question is to generalise the theory of special cube complexes introduced in \cite{MR2377497} in the spirit of its quasi-median generalisation \cite{SpecialBis}. In other words, is it possible to define \emph{special actions} on mediangle graphs so that every group admitting such an action embeds into a periagroup as a gated-cocompact subgroup? Or into another families of groups admitting mediangle Cayley graphs?

\medskip \noindent
In this direction, it is natural to associate to every hypergraph $\Gamma$ whose hyperedges are cyclically ordered the \emph{palindromic group}
$$\mathrm{Pal}(\Gamma):= \langle V(\Gamma) \mid u_1 \cdots u_n =u_n \cdots u_1, \ \{u_1 < \cdots < u_n \} \in E(\Gamma) \rangle.$$
When hyperedges have size two, one recovers right-angled Artin groups. Then, given a bipartite mediangle graph $X$, define its \emph{crossing hypergraph} $\Delta X$ as the hypergraph whose vertices are the hyperplanes of $X$ and whose hyperedges are given by the (cyclically ordered) hyperplanes crossing a given convex cycle. By extension to the (quasi-)median case, it is natural to ask whether the Cayley graph of $\mathrm{Pal}(\Delta X)$ is mediangle and whether $X$ embeds as a convex subgraph in it. Unfortunately, Cayley graphs of palindromic groups may not be mediangle.

\paragraph{Groups acting on mediangle graphs.} As mentioned in the introduction, median and quasi-median graphs, although tightly connected to right-angled Coxeter groups and graph products of groups, have interesting applications in many other families of groups. Therefore, it is natural to ask whether, similarly, there exist interesting families of groups that are not obviously related to periagroups but that act non-trivially on mediangle graphs. We did not address this question in this article, which remains to be explored, but we would like to mention at least one intersection question in this direction.

\begin{question}\label{question:Palindrome}
Given a hypergraph $\Gamma$ whose hyperedges are totally ordered, when is $\mathrm{Cayl}(\mathrm{Pal}(\Gamma) , V(\Gamma))$ mediangle?
\end{question}

\noindent
There are necessary restrictions on $\Gamma$. For instance, the intersection between two distinct hyperedges cannot contain more than a single vertex; if a vertex is $2$-adjacent (i.e.\ belongs to a hyperedge of size $2$) with at least two vertices of hyperedge, then it must be $2$-adjacent to all the vertices of this hyperedge. However, these conditions are not sufficient, and there is not obvious candidate for an exhaustive list of conditions. 

\noindent
The simplest palindromic group is $P_n:= \langle x_1, \ldots, x_n \mid x_1 \cdots x_n = x_n \cdots x_1 \rangle$. Observe that this presentation satisfies the small cancellation condition $C(2n)-T(4)$, so it follows from Proposition~\ref{prop:SmallCancellation} that $\mathrm{Cayl}(P_n,\{x_1, \ldots, x_n\})$ is a mediangle graph. Many other palindromic groups have their presentations that satisfy the small cancellation condition C(4)-T(4), and so are covered by Proposition~\ref{prop:SmallCancellation}, but this represents only a small portion of palindromic groups. 

\medskip \noindent
A more geometric problem related to Question~\ref{question:Palindrome} is:

\begin{question}
Which hypergraphs with cyclically ordered hyperedges are crossing hypergraphs of mediangle graphs?
\end{question}

\noindent
For instance, a hypergraph with vertex-set $\{a,b,c,d\}$ with hyperedges $\{a,b,c\}$ and $\{b,c,d\}$ is not the crossing hypergraph of a mediangle graph. This contrasts with crossing graphs of (quasi-)median graphs, which can be arbitrary graphs.

\paragraph{Roller boundary.} A median graph has a natural boundary, defined in terms of \emph{ultrafilters} or \emph{orientations}, with the convenient property that it is a graph and that all of its components are again median graphs \cite{Roller}. Geometrically, this \emph{Roller graph} can also be defined in terms of infinite geodesic rays \cite{MR4071367}. A similar construction for Coxeter groups is possible in terms of infinite words (which amounts to considering infinite geodesic rays) as illustrated in \cite{MR3403997}. It would be interesting to unify this construction in the framework of mediangle graphs. 

\medskip \noindent
An \emph{orientation} of a mediangle graph $X$ is a map $\sigma$ that chooses for each hyperplane one of its sectors and that satisfies $\bigcap_{1 \leq i \leq n} \sigma(J_i) \neq \emptyset$ for all hyperplanes $J_1, \ldots, J_n $. For instance, if $x \in X$ is a vertex, the map $\sigma_x$ that maps each hyperplane to its sector containing $x$ is an orientation, referred to as a \emph{principal orientation}. Let $\bar{X}$ denote the graph whose vertices are the orientations and whose edges connect two orientations whenever they differ on a single hyperplane. The \emph{Roller boundary} $\mathfrak{R}X$ of $X$ is the $\bar{X}\backslash X$ where we identify $X$ with its image in $\bar{X}$ under $x \mapsto \sigma_x$ (which induces a graph embedding whose image is a whole connected component of $\bar{X}$). 

\medskip \noindent
A more geometric description of $\mathfrak{R}X$ would be to fix a vertex $o \in X$, to define the vertices of $\mathfrak{R}X$ as the infinite geodesic rays starting from $o$ modulo the relation that identifies two rays if they cross the same hyperplane, and to define the edges of $\mathfrak{R}X$ by imposing that any two rays that cross the same hyperplanes but one are adjacent. 

\begin{question}
Given a mediangle graph, is a connected component of its Roller boundary mediangle?
\end{question}

\noindent
For median graphs, the Roller boundary play a key role in the proofs of the rigidity rank-one conjecture and the Tits alternative \cite{MR2827012}. Therefore, an algebraic application of the previous geometric considerations would be to prove similar results for groups acting on mediangle graphs. 

\medskip \noindent
We plan to investigate Roller boundaries of mediangle graphs in a forthcoming work.

\paragraph{Local-to-global characterisation.} (Quasi-)median graphs can be easily recognised thanks to local criteria. For mediangle graphs, this is more delicate. 

\medskip \noindent
\begin{minipage}{0.3\linewidth}
\includegraphics[width=0.7\linewidth]{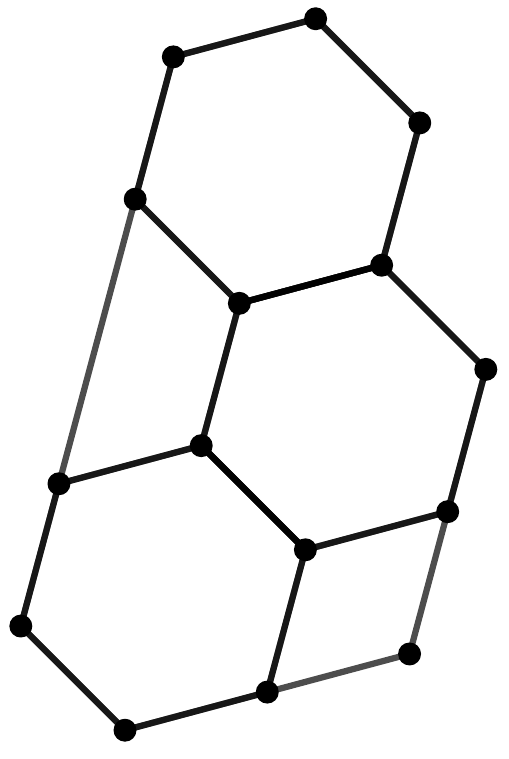}
\end{minipage}
\begin{minipage}{0.68\linewidth}
For instance, the graph on the left has an obvious structure of simply connected polygonal complex, links are isomorphic to links of vertices in Coxeter polytopes (so do not provide local obstruction to being mediangle), but the graph is not mediangle.

\medskip \noindent
Nevertheless, a criterion in the spirit of \cite[Theorem~3.1]{MR4223822}, which characterises weakly modular graphs, is still possible.
\end{minipage}

\begin{question}
Is there an easy way to recognise mediangle graphs?
\end{question}

\paragraph{Wallspaces with angles.} Cubulating spaces with walls is fundamental in the study of groups acting on median graphs, both for producing examples and to study such actions in full generality. Does there exist a similar construction for mediangle graphs? Here, in addition to hyperplanes, we have well-defined angles between transverse hyperplanes. Is there a reasonable definition of \emph{wallspaces with angles} such that a mediangle graph can be associated to every such space?

\paragraph{Combination theorems.} In \cite{QM}, we introduce a general framework that allows us to prove combination theorems for groups acting on quasi-median graphs, including graph products of groups. (See \cite{TSG} for a gentle introduction to the main ideas of the construction.) This approach can be used to show that many properties are stable under graph products of groups (over finite graphs) including: acting (metrically) properly on a median graph, being a-T-menable, being CAT(0) \cite{QM}; being Helly \cite{Helly}; being coarse median, and being strongly shortcut \cite{AutGP}. It would be natural to extend \cite{QM} to mediangle graphs, and in particular to periagroups.

\addcontentsline{toc}{section}{References}

\bibliographystyle{alpha}
{\footnotesize\bibliography{Mediangle}}

@book {Davis,
    AUTHOR = {Davis, M.},
     TITLE = {The geometry and topology of {C}oxeter groups},
    SERIES = {London Mathematical Society Monographs Series},
    VOLUME = {32},
 PUBLISHER = {Princeton University Press, Princeton, NJ},
      YEAR = {2008},
     PAGES = {xvi+584},
      ISBN = {978-0-691-13138-2; 0-691-13138-4},
   MRCLASS = {20F55 (05B45 05C25 51-02 57M07)},
  MRNUMBER = {2360474},
MRREVIEWER = {Ralf Koehl},
}

@article {MR4223822,
    AUTHOR = {Chalopin, J. and Chepoi, V. and Hirai, H. and
              Osajda, D.},
     TITLE = {Weakly modular graphs and nonpositive curvature},
   JOURNAL = {Mem. Amer. Math. Soc.},
  FJOURNAL = {Memoirs of the American Mathematical Society},
    VOLUME = {268},
      YEAR = {2020},
    NUMBER = {1309},
     PAGES = {vi+159},
      ISSN = {0065-9266},
      ISBN = {978-1-4704-4362-7; 978-1-4704-6349-6},
   MRCLASS = {05C12 (05C75 05E45 20F67 51K05)},
  MRNUMBER = {4223822},
       DOI = {10.1090/memo/1309},
       URL = {https://doi-org.ezpum.biu-montpellier.fr/10.1090/memo/1309},
}

@article {MR4065838,
    AUTHOR = {Chepoi, V. and Knauer, K. and Marc, T.},
     TITLE = {Hypercellular graphs: partial cubes without {$Q_3^-$} as
              partial cube minor},
   JOURNAL = {Discrete Math.},
  FJOURNAL = {Discrete Mathematics},
    VOLUME = {343},
      YEAR = {2020},
    NUMBER = {4},
     PAGES = {111678, 28},
      ISSN = {0012-365X},
   MRCLASS = {05C62 (05C40)},
  MRNUMBER = {4065838},
MRREVIEWER = {Marko Jakovac},
       DOI = {10.1016/j.disc.2019.111678},
       URL = {https://doi-org.ezpum.biu-montpellier.fr/10.1016/j.disc.2019.111678},
}

@incollection {MR1379365,
    AUTHOR = {Bandelt, H.-J. and Chepoi, V.},
     TITLE = {Cellular bipartite graphs},
      NOTE = {Discrete metric spaces (Bielefeld, 1994)},
   JOURNAL = {European J. Combin.},
  FJOURNAL = {European Journal of Combinatorics},
    VOLUME = {17},
      YEAR = {1996},
    NUMBER = {2-3},
     PAGES = {121--134},
      ISSN = {0195-6698},
   MRCLASS = {05C75 (05C12 05C85)},
  MRNUMBER = {1379365},
MRREVIEWER = {Ortrud R. Oellermann},
       DOI = {10.1006/eujc.1996.0011},
       URL = {https://doi-org.ezpum.biu-montpellier.fr/10.1006/eujc.1996.0011},
}

@article{TSG,
     author = {Genevois, A.},
     title = {Groups acting on quasi-median graphs. {An} introduction},
     journal = {S\'eminaire de th\'eorie spectrale et g\'eom\'etrie},
     pages = {43--68},
     publisher = {Institut Fourier},
     address = {Grenoble},
     volume = {35},
     year = {2017-2019},
     doi = {10.5802/tsg.363},
     language = {en},
     url = {https://tsg.centre-mersenne.org/articles/10.5802/tsg.363/}
}

@article{Helly,
     author = {Chalopin, J. and Chepoi, V. and Genevois, A. and Hirai, H. and Osajda, D.},
     title = {Helly groups},
     journal = {arxiv:2002.06895},
     year = {2020},
}

@article {MR1748966,
    AUTHOR = {Chepoi, V.},
     TITLE = {Graphs of some {${\rm CAT}(0)$} complexes},
   JOURNAL = {Adv. in Appl. Math.},
  FJOURNAL = {Advances in Applied Mathematics},
    VOLUME = {24},
      YEAR = {2000},
    NUMBER = {2},
     PAGES = {125--179},
      ISSN = {0196-8858},
   MRCLASS = {57M07 (05C10 20F67)},
  MRNUMBER = {1748966},
       DOI = {10.1006/aama.1999.0677},
       URL = {https://doi-org.ezpum.biu-montpellier.fr/10.1006/aama.1999.0677},
}

@article {MR2397349,
    AUTHOR = {Bandelt, H.-J. and Chepoi, V.},
     TITLE = {The algebra of metric betweenness. {II}. {G}eometry and
              equational characterization of weakly median graphs},
   JOURNAL = {European J. Combin.},
  FJOURNAL = {European Journal of Combinatorics},
    VOLUME = {29},
      YEAR = {2008},
    NUMBER = {3},
     PAGES = {676--700},
      ISSN = {0195-6698},
   MRCLASS = {05C12 (05C75 06B05)},
  MRNUMBER = {2397349},
MRREVIEWER = {Bo\v{s}tjan Bre\v{s}ar},
       DOI = {10.1016/j.ejc.2007.03.003},
       URL = {https://doi-org.ezpum.biu-montpellier.fr/10.1016/j.ejc.2007.03.003},
}

@book {MR605838,
    AUTHOR = {Mulder, H.},
     TITLE = {The interval function of a graph},
    SERIES = {Mathematical Centre Tracts},
    VOLUME = {132},
 PUBLISHER = {Mathematisch Centrum, Amsterdam},
      YEAR = {1980},
     PAGES = {iii+191},
      ISBN = {90-6196-208-0},
   MRCLASS = {05C99 (05C65 06B99)},
  MRNUMBER = {605838},
MRREVIEWER = {M. E. Watkins},
}

@article {MR1888425,
    AUTHOR = {McCammond, J. and Wise, D.},
     TITLE = {Fans and ladders in small cancellation theory},
   JOURNAL = {Proc. London Math. Soc. (3)},
  FJOURNAL = {Proceedings of the London Mathematical Society. Third Series},
    VOLUME = {84},
      YEAR = {2002},
    NUMBER = {3},
     PAGES = {599--644},
      ISSN = {0024-6115},
   MRCLASS = {20F06},
  MRNUMBER = {1888425},
MRREVIEWER = {Vladimir N. Bezverkhni\u{\i}},
       DOI = {10.1112/S0024611502013424},
       URL = {https://doi-org.ezpum.biu-montpellier.fr/10.1112/S0024611502013424},
}

@article {MR2377497,
    AUTHOR = {Haglund, F. and Wise, D.},
     TITLE = {Special cube complexes},
   JOURNAL = {Geom. Funct. Anal.},
  FJOURNAL = {Geometric and Functional Analysis},
    VOLUME = {17},
      YEAR = {2008},
    NUMBER = {5},
     PAGES = {1551--1620},
      ISSN = {1016-443X},
   MRCLASS = {20F36 (20F55 20F67)},
  MRNUMBER = {2377497},
MRREVIEWER = {Patrick Bahls},
       DOI = {10.1007/s00039-007-0629-4},
       URL = {https://doi-org.ezpum.biu-montpellier.fr/10.1007/s00039-007-0629-4},
}

@article {MR2827012,
    AUTHOR = {Caprace, P.-E. and Sageev, M.},
     TITLE = {Rank rigidity for {CAT}(0) cube complexes},
   JOURNAL = {Geom. Funct. Anal.},
  FJOURNAL = {Geometric and Functional Analysis},
    VOLUME = {21},
      YEAR = {2011},
    NUMBER = {4},
     PAGES = {851--891},
      ISSN = {1016-443X},
   MRCLASS = {20F65 (20F67 53C24)},
  MRNUMBER = {2827012},
MRREVIEWER = {Tetsu Toyoda},
       DOI = {10.1007/s00039-011-0126-7},
       URL = {https://doi-org.ezpum.biu-montpellier.fr/10.1007/s00039-011-0126-7},
}

@article {QM,
    AUTHOR = {Genevois, A.},
     TITLE = {Cubical-like geometry of quasi-median graphs and applications to geometric group theory},
   JOURNAL = {PhD Thesis, arXiv:1712.01618},
	YEAR = {2017},
}

@article {SpecialBis,
    AUTHOR = {Genevois, A.},
     TITLE = {Special cube complexes revisited: a quasi-median generalisation},
   JOURNAL = {arXiv:2002.01670},
	YEAR = {2020},
}

@article {Roller,
    AUTHOR = {Roller, M.},
     TITLE = {Pocsets, median algebras and group actions: an extended study of {D}unwoody's construction and {S}ageev's theorem},
   JOURNAL = {dissertation},
	YEAR = {1998},
}

@article {MR3403997,
    AUTHOR = {Lam, T. and Thomas, A.},
     TITLE = {Infinite reduced words and the {T}its boundary of a {C}oxeter
              group},
   JOURNAL = {Int. Math. Res. Not. IMRN},
  FJOURNAL = {International Mathematics Research Notices. IMRN},
      YEAR = {2015},
    NUMBER = {17},
     PAGES = {7690--7733},
      ISSN = {1073-7928},
   MRCLASS = {20F55 (20F65 20F69)},
  MRNUMBER = {3403997},
MRREVIEWER = {Stefan Witzel},
       DOI = {10.1093/imrn/rnu182},
       URL = {https://doi-org.ezpum.biu-montpellier.fr/10.1093/imrn/rnu182},
}

@article {MR4071367,
    AUTHOR = {Genevois, A.},
     TITLE = {Contracting isometries of {${\rm CAT}(0)$} cube complexes and
              acylindrical hyperbolicity of diagram groups},
   JOURNAL = {Algebr. Geom. Topol.},
  FJOURNAL = {Algebraic \& Geometric Topology},
    VOLUME = {20},
      YEAR = {2020},
    NUMBER = {1},
     PAGES = {49--134},
      ISSN = {1472-2747},
   MRCLASS = {20F65 (20F67)},
  MRNUMBER = {4071367},
MRREVIEWER = {Michael L. Mihalik},
       DOI = {10.2140/agt.2020.20.49},
       URL = {https://doi-org.ezpum.biu-montpellier.fr/10.2140/agt.2020.20.49},
}

@article {AutGP,
    AUTHOR = {Genevois, A.},
     TITLE = {Automorphisms of graph products of groups and acylindrical hyperbolicity},
   JOURNAL = {arXiv:1807.00622},
	YEAR = {2018},
}

@article {Wreath,
    AUTHOR = {Genevois, A. and Tessera, R.},
     TITLE = {Asymptotic geometry of lamplighters over one-ended groups},
   JOURNAL = {arXiv:2105.04878},
	YEAR = {2021},
}

@article {GreenGP,
    AUTHOR = {Green, E.},
     TITLE = {Graph products of groups},
   JOURNAL = {PhD Thesis},
	YEAR = {1990},
}

@article {MR4295519,
    AUTHOR = {Genevois, A. and Martin, A.},
     TITLE = {Automorphisms of graph products of groups from a geometric
              perspective},
   JOURNAL = {Proc. Lond. Math. Soc. (3)},
  FJOURNAL = {Proceedings of the London Mathematical Society. Third Series},
    VOLUME = {119},
      YEAR = {2019},
    NUMBER = {6},
     PAGES = {1745--1779},
      ISSN = {0024-6115},
   MRCLASS = {20F65 (20F28)},
  MRNUMBER = {4295519},
       DOI = {10.1112/plms.12282},
       URL = {https://doi-org.ezpum.biu-montpellier.fr/10.1112/plms.12282},
}

@article {MR4033512,
    AUTHOR = {Genevois, A.},
     TITLE = {Embeddings into {T}hompson's groups from quasi-median
              geometry},
   JOURNAL = {Groups Geom. Dyn.},
  FJOURNAL = {Groups, Geometry, and Dynamics},
    VOLUME = {13},
      YEAR = {2019},
    NUMBER = {4},
     PAGES = {1457--1510},
      ISSN = {1661-7207},
   MRCLASS = {20F65 (05C25)},
  MRNUMBER = {4033512},
MRREVIEWER = {Bruno P. Zimmermann},
       DOI = {10.4171/ggd/530},
       URL = {https://doi-org.ezpum.biu-montpellier.fr/10.4171/ggd/530},
}

@book {MR1812024,
    AUTHOR = {Lyndon, R. and Schupp, P.},
     TITLE = {Combinatorial group theory},
    SERIES = {Classics in Mathematics},
      NOTE = {Reprint of the 1977 edition},
 PUBLISHER = {Springer-Verlag, Berlin},
      YEAR = {2001},
     PAGES = {xiv+339},
      ISBN = {3-540-41158-5},
   MRCLASS = {20Fxx (20Exx 57M07)},
  MRNUMBER = {1812024},
       DOI = {10.1007/978-3-642-61896-3},
       URL = {https://doi-org.ezpum.biu-montpellier.fr/10.1007/978-3-642-61896-3},
}

@incollection {MR0254129,
    AUTHOR = {Tits, J.},
     TITLE = {Le probl\`eme des mots dans les groupes de {C}oxeter},
 BOOKTITLE = {Symposia {M}athematica ({INDAM}, {R}ome, 1967/68), {V}ol. 1},
     PAGES = {175--185},
 PUBLISHER = {Academic Press, London},
      YEAR = {1969},
   MRCLASS = {20.10},
  MRNUMBER = {0254129},
MRREVIEWER = {J. E. Humphreys},
}

@article {Excentric,
    AUTHOR = {Genevois, A.},
     TITLE = {Quasi-isometrically rigid subgroups in right-angled Coxeter groups},
   JOURNAL = {arxiv:1909.04318, to appear in Algebraic \& Geometric Topology},
      YEAR = {2019},
}

@article {RAAGRACG,
    AUTHOR = {Dani, P. and Levcovitz, I.},
     TITLE = {Right-angled {A}rtin subgroups of right-angled {C}oxeter and {A}rtin groups},
   JOURNAL = {arxiv:2003.05531},
      YEAR = {2020},
}

@article {Soergel,
    AUTHOR = {Soergel, M.},
     TITLE = {A generalization of the {D}avis-{M}oussong complex for {D}yer groups},
   JOURNAL = {arxiv:2212.03017},
      YEAR = {2022},
}

@article {MR2466021,
    AUTHOR = {Krammer, D.},
     TITLE = {The conjugacy problem for {C}oxeter groups},
   JOURNAL = {Groups Geom. Dyn.},
  FJOURNAL = {Groups, Geometry, and Dynamics},
    VOLUME = {3},
      YEAR = {2009},
    NUMBER = {1},
     PAGES = {71--171},
      ISSN = {1661-7207},
   MRCLASS = {20F55 (20F10)},
  MRNUMBER = {2466021},
MRREVIEWER = {Robert H. Gilman},
       DOI = {10.4171/GGD/52},
       URL = {https://doi-org.ezpum.biu-montpellier.fr/10.4171/GGD/52},
}

@article {MR3365774,
    AUTHOR = {Antol\'{\i}n, Y. and Minasyan, A.},
     TITLE = {Tits alternatives for graph products},
   JOURNAL = {J. Reine Angew. Math.},
  FJOURNAL = {Journal f\"{u}r die Reine und Angewandte Mathematik. [Crelle's
              Journal]},
    VOLUME = {704},
      YEAR = {2015},
     PAGES = {55--83},
      ISSN = {0075-4102},
   MRCLASS = {20F65 (03E10)},
  MRNUMBER = {3365774},
MRREVIEWER = {J\"{o}rg Lehnert},
       DOI = {10.1515/crelle-2013-0062},
       URL = {https://doi-org.ezpum.biu-montpellier.fr/10.1515/crelle-2013-0062},
}

@article {MR2371978,
    AUTHOR = {Duncan, A. and Kazachkov, I. and Remeslennikov, V.},
     TITLE = {Parabolic and quasiparabolic subgroups of free partially
              commutative groups},
   JOURNAL = {J. Algebra},
  FJOURNAL = {Journal of Algebra},
    VOLUME = {318},
      YEAR = {2007},
    NUMBER = {2},
     PAGES = {918--932},
      ISSN = {0021-8693},
   MRCLASS = {20F99},
  MRNUMBER = {2371978},
       DOI = {10.1016/j.jalgebra.2007.08.032},
       URL = {https://doi-org.ezpum.biu-montpellier.fr/10.1016/j.jalgebra.2007.08.032},
}

@article {MR1023969,
    AUTHOR = {Deodhar, V.},
     TITLE = {A note on subgroups generated by reflections in {C}oxeter
              groups},
   JOURNAL = {Arch. Math. (Basel)},
  FJOURNAL = {Archiv der Mathematik},
    VOLUME = {53},
      YEAR = {1989},
    NUMBER = {6},
     PAGES = {543--546},
      ISSN = {0003-889X},
   MRCLASS = {20H15 (20F55)},
  MRNUMBER = {1023969},
MRREVIEWER = {Fran\c{c}ois Digne},
       DOI = {10.1007/BF01199813},
       URL = {https://doi-org.ezpum.biu-montpellier.fr/10.1007/BF01199813},
}

@article {MR1076077,
    AUTHOR = {Dyer, M.},
     TITLE = {Reflection subgroups of {C}oxeter systems},
   JOURNAL = {J. Algebra},
  FJOURNAL = {Journal of Algebra},
    VOLUME = {135},
      YEAR = {1990},
    NUMBER = {1},
     PAGES = {57--73},
      ISSN = {0021-8693},
   MRCLASS = {20F55 (17B20 20H15)},
  MRNUMBER = {1076077},
MRREVIEWER = {Robert B\'{e}dard},
       DOI = {10.1016/0021-8693(90)90149-I},
       URL = {https://doi-org.ezpum.biu-montpellier.fr/10.1016/0021-8693(90)90149-I},
}

\Address

%

\end{document}